\numberwithin{equation}{section}
\newtheorem{theorem}{Theorem}[section]
\newtheorem{proposition}[theorem]{Proposition}
\newtheorem{corollary}[theorem]{Corollary}
\newtheorem{lemma}[theorem]{Lemma}
\newtheorem{claim}[theorem]{Claim}
\theoremstyle{definition}
\newtheorem{definition}[theorem]{Definition}
\newtheorem{remark}[theorem]{Remark}
\theoremstyle{remark}
\newcommand{\R}{\mathbb{R}}  
\newcommand{\Sph}{\mathbb{S}}  
\DeclareMathAlphabet{\mathbbold}{U}{bbold}{m}{n}
\newcommand*{\1}{\mathbbold{1}}
\newcommand*{\id}{id}
\DeclareMathOperator{\ind}{ind}
\DeclareMathOperator{\Tr}{Tr}
\DeclareMathOperator{\supp}{supp}
\DeclareMathOperator{\Vol}{Vol}
\DeclareMathOperator{\BMO}{BMO}
\DeclarePairedDelimiter\floor{\lfloor}{\rfloor}
\DeclarePairedDelimiter\ceil{\lceil}{\rceil}
\DeclarePairedDelimiterX\set[2]{\{}{\}}{\,#1 \;\delimsize\vert\; #2\,}
\newcommand*{\SHM}[1]{u_{#1}} 
\newcommand*{\abs}[1]{\left|#1\right|} 
\newcommand*{\norm}[1]{\left\|#1\right\|} 
\newcommand*{\brr}[1]{\left(#1\right)} 
\newcommand*{\brs}[1]{\left[#1\right]} 
\newcommand*{\brc}[1]{\left\{#1\right\}} 
\newcommand*{\brt}[1]{\left\langle #1\right\rangle } 
\mathchardef\mhyphen="2D 
\renewcommand*{\epsilon}{\varepsilon}
\renewcommand*{\phi}{\varphi}
\DeclareMathOperator{\iml}{\mathfrak{Im}}
\newcommand*{\oset}[3][0.45ex]{%
  \mathrel{\mathop{#3}\limits^{
    \vbox to#1{\kern-2\ex@
    \hbox{$\scriptstyle#2$}\vss}}}}
\def\Xint#1{\mathchoice
{\XXint\displaystyle\textstyle{#1}}%
{\XXint\textstyle\scriptstyle{#1}}%
{\XXint\scriptstyle\scriptscriptstyle{#1}}%
{\XXint\scriptscriptstyle\scriptscriptstyle{#1}}%
\!\int}
\def\XXint#1#2#3{{\setbox0=\hbox{$#1{#2#3}{\int}$ }
\vcenter{\hbox{$#2#3$ }}\kern-.6\wd0}}
\def\dashint{\Xint-}
\newcommand{\overbar}[1]{\mkern 1.5mu\overline{\mkern-1.5mu#1\mkern-1.5mu}\mkern 1.5mu}
\newcommand*{\nEigen}[2]{\overline{\lambda}_{#1,#2}} %
\newcommand*{\NEigen}[2]{\Lambda_{#1,#2}} %
\newcommand*{\m}[1]{\brt{#1}} 
\newcommand*{\tensor}[1]{\mathfrak{N}[#1]} 
\renewcommand*{\t}{\boldsymbol{\tau}} 
\newcommand*{\tf}[1]{#1^*#1} 
\newcommand*{\F}{u} 
\newcommand*{\wH}[2][]{H^{1}\ifx\empty#1\empty(#2)\else(#1,#2)\fi} 
\begin{document}
\author{Denis Vinokurov}
\date{}
\title{Eigenvalue optimization in higher dimensions and $p$-harmonic maps}
\maketitle
\begin{abstract}
  We prove existence results for optimization problems for the $k$th Laplace eigenvalue
  on closed Riemannian manifolds of dimension $m \geq 3$, depending on the choice of normalization.
  One such normalization leads to eigenvalue optimization within a conformal class,
  for which existence of maximizers was previously known only in dimension two.
  We also prove that all absolutely continuous maximizers of the normalized eigenvalue functionals are
  always induced by $p$-harmonic maps into spheres, where $p \in [2,m]$.
  For $p$ sufficiently close to $m$, the maximizers are always Hölder-continuous, whereas for $p<m$ no bubbling occurs.
  A key tool in our analysis is the application of techniques from the theory of topological tensor products,
  which appear to be well suited for studying eigenvalue-related optimization problems.
\end{abstract}
\tableofcontents

\section{Introduction and main results}
\subsection{Eigenvalue optimization in higher dimensions}
Let $(M,g)$ be a closed Riemannian manifold with $m = \dim M$, and let
$\Delta_g = \delta_g d \colon C^\infty(M) \to C^\infty(M)$ be the associated
Laplace-Beltrami operator, where $\delta_g$ is the formal adjoint of $d$.
The spectrum of $\Delta_g$ forms the following
nondecreasing sequence:
\begin{equation}
  0 = \lambda_0(g) < \lambda_1(g) \leq \lambda_2(g) \leq
  \cdots\nearrow \infty,
\end{equation}
where the eigenvalues are repeated according to their multiplicities. The problem of geometric eigenvalue optimization
within a conformal class is concerned with identifying metrics on which the following supremum is achieved:
\begin{equation}\label{eq:conf-opt}
  \Lambda_k([g]) = \sup_{\tilde{g}\in[g]} \lambda_k(\tilde{g})\Vol_{\tilde{g}}(M)^{2/m} \leq C([g])k^{2/m},
\end{equation}
where the upper bound was established in~\cite{Korevaar:1993:upper-bounds-for-eigenval}. By the same paper,
the supremum over all metrics on $M$ (equivalently, $\sup_{[g]} \Lambda_k([g])$) is also finite when $m = 2$, but becomes infinite in
higher dimensions (see~\cite{Colbois-Dodziuk:1994:metrics-with-large-eigenval}).

One of the main interests in these optimization problems lies with its connection to
$m$-harmonic maps $u\colon M \to \Sph^n$ (see~\cite{Colbois-ElSoufi:2003:extremal,Karpukhin-Metras:2022:higher-dim}): the critical metrics for the functional
$\tilde{g} \mapsto \lambda_k(\tilde{g})\Vol_{\tilde{g}}(M)^{2/m}$ are of the form $\tilde{g} = \abs{du}^2_g g$.

Lots of progress has been made in studying eigenvalue optimization on surfaces; see~\cite{Karpukhin-Nadirashvili-Penskoi-Polterovich:2022:existence,
Karpukhin:2021:index-of-min-shperes, Penskoi:2013:extrem-metrics} and references therein. In particular, the works \cite{
  Nadirashvili-Sire:2015:conf-spec-and-harm-maps,
  Nadirashvili-Sire:2015:higher-order-max,
  Petrides:2014:heat-kernel,
  Petrides:2018:exist-of-max-eigenval-on-surfaces,
  Karpukhin-Nadirashvili-Penskoi-Polterovich:2022:existence} establish the existence of maximizing metrics on surfaces for~\eqref{eq:conf-opt} (under some natural assumptions); see also~\cite{Vinokurov:2025:sym-eigen-val-lms}
for a simplified version of the proof. Far less is known about conformal class optimization in higher dimensions.
Explicit examples of maximizers can be found in \cite{ElSoufi-Ilias:1986:hersch, Kim:2022:second-sphere-eigenval}.
See  also \cite*{Kim:2025:proj-space-2d-eigenval,
Karpukhin-Metras-Polterovich:2024:dirac-eigenval-opt,
Perez-Ayala:2021:conf-laplace-sire-xu-norm,
Perez-Ayala:2022:extr-metrics-paneitz-operator,
Premoselli-Vetois:2024:nonexist-2nd-conformal,
Humbert-Petrides-Premoselli:2025:extrem-eigenval-GJMS-subcrit,
Gursky-Perez-Ayala:2022:2d-eigenval-conform-laplacian,
Colbois-Girouard-Gordon-Sher:2024:recent-develop-on-steklov}
for some related eigenvalue optimization results.

The current paper addresses the question of the existence of maximizing metrics in dimensions $m \geq 3$. In dimension two,
a priori eigenvalue multiplicity bounds were one of the key ingredients for the proof. In dimensions $m \geq 3$, we no longer have
such multiplicity bounds, as shown in~\cite{ColinDeVerdiere:1986:unbounded-mult}. Here, we present a framework, based on the theory
of topological tensor products, that allows working with uncontrolled multiplicities in the context of eigenvalue-related optimization problems.

More generally, we also prove the existence of maximizers for a family of problems related to $p$-harmonic maps to spheres.
Let $\alpha \in L^1_+(M)$ be a nonnegative function and let
$\mu \in \mathcal{M}_+^c(M)$ be a continuous (that is, nonatomic) nonnegative Radon measure. We fix a background metric $g$ and define the following variational eigenvalues
$\lambda_k(\alpha,\mu) \in [0,\infty)$:
\begin{equation}\label{eq:eigen-var-char}
  \lambda_k(\alpha,\mu) = \inf_{F_{k+1}} \sup_{\phi \in F_{k+1}} \frac{\int \abs{d\phi}^2_g \alpha dv_g}{\int \phi^2 d\mu},
\end{equation}
where $v_g$ denotes the volume measure associated with $g$, and the infimum is taken over all subspaces $F_{k+1} \subset C^\infty(M)$
whose images in $L^2(\mu)$ have dimension $(k+1)$. (We can also define $\lambda_k(\alpha,\mu)$ for atomic measures $\mu$ by
setting $\lambda_k(\alpha,\mu) = \infty$ whenever $\dim L^2(\mu) \leq k$.)
Note that $\lambda_k(g)=\lambda_k(1,v_g)$; equivalently, in the notation $\lambda_k(\alpha,\mu)$ we write $\lambda_k(g)=\lambda_k(1,1)$ by identifying the density $1$ with the measure
$v_g$, so we have $\lambda_k(\rho^2 g) = \lambda_k(\rho^{m-2},\rho^m)$.
For each $p \geq 2$, we can define the following normalization:
\begin{equation}
  \nEigen{k}{p}(\alpha,\mu) = \lambda_{k}(\alpha,\mu)\frac{\mu(M)}{\norm{\alpha}_{L^{\frac{p}{p-2}}}}.
\end{equation}
The critical points of $\nEigen{k}{p}$ correspond to $p$-harmonic maps into spheres
(see Lemma~\ref{lem:lambda-alph-mu-subdiff} and also \cite[Section~5]{Petrides-Tewodrose:2024:eigenvalue-via-clarke}).
Namely, if the pair
$(\alpha,\mu)$ is critical for $\nEigen{k}{p}$ and sufficiently regular, one necessarily has
\begin{equation}\label{p-crit-meas}
  \alpha = \abs{du}^{p-2}_g,\ \mu = \abs{du}^p_g,\
  \text{ and }\ \delta_g(\abs{du}^{p-2}_g du) = \abs{du}^p_g u,
\end{equation}
for some $p$-harmonic map $u \colon M \to \Sph^n$.

We now investigate the following optimization problem:
\begin{equation}\label{eq:eigen-p-sup}
  \NEigen{k}{p}(g) = \sup \set*{\nEigen{k}{p}(\alpha,\mu)}{\alpha, \mu \in C^\infty(M),\ \alpha,\mu > 0}.
\end{equation}
Note that $\NEigen{k}{m}(g)$ is conformally invariant, $\Lambda_k([g]) \leq \NEigen{k}{m}([g])$, and
both problems share the same critical points, which implies that the two suprema coincide provided $\NEigen{k}{m}([g])$ admits a maximizer.

Continuing our investigation, let us remark that $\NEigen{k}{p}(g) = \infty$ when $p > m$.
This can be seen by choosing $\mu_{\epsilon,\epsilon'} = \alpha_{\epsilon,\epsilon'} =\chi_{B_\epsilon(x)} + \epsilon'$, where $\chi_{B_\epsilon(x)}$ is
the characteristic function of the geodesic ball $B_\epsilon(x)$.
For $p=2$, since $\alpha /\norm{\alpha}_{L^\infty}\leq 1$, we obtain
$\lambda_k(\alpha,\mu)/\norm{\alpha}_{L^\infty} \leq \lambda_k(1, \mu)$.
Thus, $\NEigen{k}{2}(g) = \sup_\mu \nEigen{k}{2}(1, \mu)$, and this case has been addressed in \cite{Karpukhin-Stern:2024:harm-map-in-higher-dim, Vinokurov:2025:higher-dim-harm-eigenval}.
Assuming that $\Vol_g (M) = 1$, Hölder's inequality yields
\begin{equation}\label{ineq:for-dif-p}
  \NEigen{k}{2}(g) \leq \NEigen{k}{p}(g) \leq \NEigen{k}{m}([g])\leq C([g])k^\frac{2}{m},
\end{equation}
where the final bound follows from
\cite[Theorem~2.1]{Colbois-ElSoufi-Savo:2015:-laplace-with-density};
see also \cite[Theorem~5.3]{Grigoryan-Netrusov-Yau:2004:eignval-of-ellipt-oper},
combined with an estimate $\int_{B_r(x)} \alpha \leq Cr^2\norm{\alpha}_{L^{\frac{m}{m-2}}}$.

In Section~\ref{sec:applic-tensor-prod}, we develop techniques based on the theory of topological tensor products (see Lemma~\ref{lem:main}), which we believe,
is an essential tool for many eigenvalue-related optimization problems, especially when dealing with uncontrolled multiplicities or
functionals depending on infinitely many eigenvalues.
As an application, we present the following main results of the paper.

Let $\floor{x}$ and $\ceil{x}$ denote the floor and ceiling functions for $x\in \R$, and let $\Sph^\infty = \Sph(\ell^2)$ denote the Hilbert unit sphere.
\begin{theorem}\label{thm:main-0}
  Let $(M,g)$ be a closed connected Riemannian manifold of dimension $m \geq 2$, $p\in [2,m]$, $d = 3 + \floor{p+2\sqrt{p-1}}$,
  $(\alpha,\mu) \in L^{p/(p-2)}_+(M)\times L^1_+(M)$, and we fix $\alpha \equiv 1$ if $p=2$. Then $\nEigen{k}{p}(\alpha,\mu) \leq \NEigen{k}{p}(g)$.
  If equality occurs, then, up to rescaling,
  \begin{equation}
    \alpha = \abs{du}^{p-2}
    \quad\text{and}\quad
    \mu = \abs{du}^{p}
  \end{equation}
  for some spectrally stable (see Definition~\ref{def:stable-map})
  $p$-harmonic map $u \in H^{1,p}(M,\Sph^\infty)$ such that
  $u \in C^{1,\gamma}(M\setminus\Sigma, \Sph^\infty)$ away from a closed set $\Sigma$ of Hausdorff dimension at most $m-d$, where $\gamma \in (0,1)$.
\end{theorem}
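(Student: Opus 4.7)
The strategy decomposes into three stages: (i) extending the bound $\nEigen{k}{p}(\alpha,\mu) \leq \NEigen{k}{p}(g)$ from smooth positive weights to the full $L^{p/(p-2)}_+ \times L^1_+$ class; (ii) identifying the equality case with the critical-point conclusion of Lemma~\ref{lem:lambda-alph-mu-subdiff}; and (iii) extracting the partial regularity of the resulting $p$-harmonic map.

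For (i), given a general $(\alpha,\mu)$, I would regularize by mollification together with a small positive shift to obtain smooth strictly positive pairs $(\alpha_n,\mu_n)$ converging to $(\alpha,\mu)$ in $L^{p/(p-2)} \times L^1$. Since $\nEigen{k}{p}(\alpha_n,\mu_n) \leq \NEigen{k}{p}(g)$ by definition of the supremum, the bound reduces to the semicontinuity statement $\nEigen{k}{p}(\alpha,\mu) \leq \liminf_n \nEigen{k}{p}(\alpha_n,\mu_n)$. The variational principle~\eqref{eq:eigen-var-char} is a min-max over $(k+1)$-dimensional subspaces of $L^2(\mu)$; because multiplicities are uncontrolled, classical eigenspace perturbation arguments are unavailable, and this is exactly where Lemma~\ref{lem:main} is designed to intervene: a suitably chosen projective tensor product Banach space should accommodate any multiplicity structure along the sequence and supply the required compactness.

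For (ii), if $\nEigen{k}{p}(\alpha,\mu) = \NEigen{k}{p}(g)$, then by (i) the pair $(\alpha,\mu)$ is a global maximizer on the extended class, hence a critical point. Applying Lemma~\ref{lem:lambda-alph-mu-subdiff} yields a weakly $p$-harmonic map $u \in H^{1,p}(M,\Sph^\infty)$ satisfying $\delta_g(\abs{du}^{p-2}du) = \abs{du}^p u$, together with the constitutive identifications $\alpha = \abs{du}^{p-2}$ and $\mu = \abs{du}^p$ up to rescaling. The target sphere is a priori infinite-dimensional because the absence of multiplicity bounds allows the optimizing eigenspace to be infinite-dimensional. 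Spectral stability in the sense of Definition~\ref{def:stable-map} then follows from the second-order variation of $\nEigen{k}{p}$ at the maximum.

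For (iii), from the critical-point equations $u$ is stationary: beyond the divergence-form equation, the invariance of $\nEigen{k}{p}$ under diffeomorphisms of $M$ produces a stress-energy identity and hence a monotonicity formula on the normalized $p$-energy. Applying the partial regularity theory for stationary $p$-harmonic maps into spheres, together with Federer-type dimension reduction and the stability analysis of homogeneous tangent maps, gives $u \in C^{1,\gamma}(M \setminus \Sigma,\Sph^\infty)$ with $\dim_{\mathcal{H}}\Sigma \leq m - d$; the critical dimension $d = 3 + \floor{p + 2\sqrt{p-1}}$ is determined by the first eigenvalue of the Jacobi operator on spherical harmonics at which a nontrivial stable homogeneous singularity becomes possible. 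The principal obstacle will be stage (i): setting up the tensor-product convergence so that both the Dirichlet numerator and the $L^2(\mu)$ denominator of the Rayleigh quotient behave jointly well under only weak convergence of the densities, while the competing min-max subspaces are allowed to exhibit arbitrary multiplicity behavior along the approximation.
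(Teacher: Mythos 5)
There are genuine gaps, concentrated in stages (i) and (ii).

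\textbf{Stage (i).} You attribute the bound $\nEigen{k}{p}(\alpha,\mu)\le\NEigen{k}{p}(g)$ to a lower-semicontinuity step along a mollified regularization, supplied by Lemma~\ref{lem:main}. This misidentifies the mechanism. The paper's Proposition~\ref{prop:upper-cont-and-bound} does \emph{not} invoke the tensor-product toolkit at all; it uses the truncation $\alpha_n=\max\{\alpha,1/n\}$, $\mu_n=\min\{\mu,n\}$, for which the variational characterization directly gives the monotone inequality $\lambda_k(\alpha,\mu)\le\lambda_k(\alpha_n,\mu_n)$, then combines this with weak$^*$ upper semicontinuity (and, for the operator-level convergence, Kato's monotone-convergence theorem for resolvents and polar decomposition). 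Mollification destroys the monotone structure in both the numerator and the denominator, and the normalization $\mu(M)/\norm{\alpha}_{L^{q}}$ makes the two effects compete; you would need to prove the lower-semicontinuity statement you rely on, and Lemma~\ref{lem:main} does not furnish it.

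\textbf{Stage (ii).} The step ``global maximizer $\Rightarrow$ apply Lemma~\ref{lem:lambda-alph-mu-subdiff} $\Rightarrow$ weakly $p$-harmonic map with $\alpha=|du|^{p-2}$, $\mu=|du|^p$'' is not justified as stated. Lemma~\ref{lem:lambda-alph-mu-subdiff} requires $\lambda_k(\alpha,\mu)<\lambda_{ess}(\alpha,\mu)$ and a regular pair $(\alpha,\mu)$, neither of which is known a priori for an arbitrary maximizer in $L^{p/(p-2)}_+\times L^1_+$. More importantly, the subdifferential calculus is performed in a vector space $\alpha L^\infty\times\mu L^\infty$, and where $\alpha$ or $\mu$ vanish on sets of positive measure there are no admissible two-sided variations and the convex-hull structure of the subdifferential does not by itself produce a single eigenmap into a sphere or the pointwise constitutive identities. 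The paper circumvents precisely these difficulties by (a) passing through the truncated interior sets $X_\epsilon$, (b) applying Ekeland's variational principle to manufacture a maximizing sequence with near-exact multiplier relations (Proposition~\ref{prop:max-sequence}, Corollary~\ref{cor:max-sequence}), and (c) using Lemma~\ref{lem:main} and Lemmas~\ref{lem:strong-conv},~\ref{lem:regularity-in-good-pt} to extract a strongly convergent subsequence of eigenmaps in $H^{1,p}(M,\ell^2)$ up to isometry, from which the identities $\alpha=|du|^{p-2}$, $\mu=|du|^p$ and spectral stability are read off in the limit. This is where the tensor-product machinery actually earns its keep, not in stage (i). The claim that spectral stability ``follows from the second-order variation of $\nEigen{k}{p}$'' is also not what happens: stability is inherited in the limit from the stability of points of the approximating sequence (Definition~\ref{def:stable-points}), not computed by a direct Hessian argument.

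\textbf{Stage (iii).} Your regularity outline is essentially correct and matches Theorem~\ref{thm:reg-harm-map}, except for one inaccuracy: stationarity is derived in the paper from spectral stability via Lemma~\ref{lem:stable-energy-min} (stable $\Rightarrow$ energy-minimizing $\Rightarrow$ stationary), not from diffeomorphism invariance of the eigenvalue functional $\nEigen{k}{p}$, which is not a diffeomorphism-invariant quantity for $p<m$.

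In short: the three-stage skeleton is right, but stages (i) and (ii) as written are not correct proofs; the key missing ideas are truncation plus monotonicity (for (i)) and Ekeland's principle plus the tensor-product limit extraction (for (ii)).
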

\begin{remark}
  Thus, all absolutely continuous maximizing densities for $\nEigen{k}{p}$ are induced by $p$-harmonic maps.
  When $p=2$, for $\lambda_2$ in dimension $m=2$ and for $\lambda_1$ in dimensions $2\le m\le 5$, there is a complementary result due to
  \cite{Karpukhin-Stern:2024:new-character-of-conf-eigenval,Karpukhin-Stern:2024:harm-map-in-higher-dim}:
  in the theorem above, $\mu$ may be any Radon measure for which the identity map $H^1(M)\to L^2(\mu)$ is compact.
  Note that absolutely continuous measures need not be of this kind.
\end{remark}
\begin{remark}
  The theorem above also holds for $\mu \in H^{1,\frac{m}{m-1}}(M)^*$ when $m \geq 3$ (see Remark~\ref{rem:gkl-sobolev-duals} and Proposition~\ref{prop:upper-cont-and-bound}).
  Those measures induce the compact identity map $H^1(M,\alpha)\to L^2(M,\mu)$ if $\alpha \geq c > 0$.
  However, we do not know whether the eigenvalues associated with an arbitrary measure for which the identity map $H^1(M, \alpha)\to L^2(M,\mu)$ is compact
  can be approximated (or at least bounded from above) by the eigenvalues associated with smooth measures.
\end{remark}
The next theorem complements the existence theory in dimension two by providing a higher-dimensional analogue; see \cite{
  Nadirashvili-Sire:2015:higher-order-max,
  Petrides:2018:exist-of-max-eigenval-on-surfaces,
  Karpukhin-Nadirashvili-Penskoi-Polterovich:2022:existence,
  Petrides:2024:conf-class-opt-lms,
  Vinokurov:2025:sym-eigen-val-lms}.
\begin{theorem}\label{thm:main-2}
  Let $(M,g)$ be a closed connected Riemannian manifold of dimension $m \geq 3$.
  Then
  \begin{equation}
    \Lambda_k([g])^{m/2}= \Lambda_{k,m}([g])^{m/2} \geq \max_{1\leq r \leq k}
    \brc{\Lambda_{k-r}([g])^{m/2} + \Lambda_{r}(\Sph^m,[g_{\Sph^m}])^{m/2}}.
  \end{equation}
  If the inequality is strict, then there exists an
  $m$-harmonic map $u \in C^{1,\gamma}(M, \Sph^n)$
  such that $\lambda_{k}(\abs{du}_g^2 g) = 1$ and
  \begin{equation}
    \Lambda_k([g]) = \overline{\lambda}_{k}(\abs{du}_g^2 g)
    = \brr{\int_M \abs{du}_g^m dv_g}^{2/m}.
  \end{equation}
\end{theorem}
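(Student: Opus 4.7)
The plan is to combine a bubble-tree dichotomy for maximizing sequences of $\NEigen{k}{m}([g])$ with the structural rigidity supplied by Theorem~\ref{thm:main-0}. The starting point is the elementary inequality $\Lambda_k([g]) \leq \NEigen{k}{m}([g])$ noted earlier; the theorem's content is (i) an unconditional gap bound from below, and (ii) under strict inequality, attainment through an $m$-harmonic map, which forces the two suprema to coincide.

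First, I would establish the gap inequality
\begin{equation*}
  \NEigen{k}{m}([g])^{m/2}
  \geq \NEigen{k-r}{m}([g])^{m/2} + \NEigen{r}{m}(\Sph^m,[g_{\Sph^m}])^{m/2}
\end{equation*}
for each $1 \leq r \leq k$ by an explicit gluing. Take nearly maximizing smooth pairs $(\alpha_1,\mu_1)$ on $(M,g)$ for $\NEigen{k-r}{m}([g])$ and $(\alpha_2,\mu_2)$ on $\Sph^m$ for $\NEigen{r}{m}(\Sph^m)$, then transplant $(\alpha_2,\mu_2)$ into a tiny conformal chart around a point $x_0 \notin \supp \alpha_1 \cup \supp \mu_1$ via stereographic projection. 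Conformal invariance of $\norm{\alpha}_{L^{m/(m-2)}}$ (as an $m$-density) and of $\mu(M)$ under rescaling of $g$ guarantees that the two normalizing functionals add, while Rayleigh-quotient estimates using the first $k-r$ base eigenfunctions together with the first $r$ bubble eigenfunctions — supported in essentially disjoint regions — produce the correct lower bound on $\nEigen{k}{m}$ of the glued pair. Raising to the $m/2$ power and sending the errors to zero yields the claim.

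Next, take a smooth maximizing sequence $(\alpha_j,\mu_j)$ with $\norm{\alpha_j}_{L^{m/(m-2)}} = \mu_j(M) = 1$. Using Lemma~\ref{lem:main} together with a concentration-compactness decomposition, extract a limit pair $(\alpha_\infty,\mu_\infty) \in L^{m/(m-2)}_+(M) \times L^1_+(M)$ and a (finite) collection of bubbles concentrated at points $x_i \in M$; after conformal rescaling each bubble becomes a smooth pair on $\Sph^m$ absorbing spectral index $r_i \geq 1$ and contributing mass at most $\NEigen{r_i}{m}(\Sph^m)^{m/2}$. The topological tensor product framework furnishes the upper semicontinuity estimate
\begin{equation*}
  \NEigen{k}{m}([g])^{m/2}
  \leq \nEigen{k-r}{m}(\alpha_\infty,\mu_\infty)^{m/2}
  + \sum_i \NEigen{r_i}{m}(\Sph^m)^{m/2},
\end{equation*}
with $r = \sum_i r_i$, and the first term is bounded by $\NEigen{k-r}{m}([g])^{m/2}$. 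Any nontrivial bubble would saturate the theorem's inequality, contradicting the strict-inequality hypothesis; hence under that hypothesis no bubble forms and $(\alpha_\infty,\mu_\infty)$ is a genuine maximizer of $\nEigen{k}{m}$ with index $k$. Theorem~\ref{thm:main-0} with $p = m$ then identifies it with $(\abs{du}^{m-2},\abs{du}^m)$ for some spectrally stable $m$-harmonic map $u \in H^{1,m}(M, \Sph^\infty)$, regular off a closed $\Sigma$ of Hausdorff dimension at most $m - d$ where $d = 3 + \floor{m+2\sqrt{m-1}}$. Since $m \geq 3$ forces $d \geq m + 3 > m$, we have $\Sigma = \emptyset$ and $u \in C^{1,\gamma}(M,\Sph^\infty)$; the image lies in a finite-dimensional $\Sph^n$ because the eigenspace at $\lambda_k(\alpha_\infty,\mu_\infty)$ is finite-dimensional (compactness of $H^1 \hookrightarrow L^2(\mu_\infty)$) and, by spectral stability (Lemma~\ref{lem:lambda-alph-mu-subdiff}), the components of $u$ lie in that eigenspace. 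The smooth conformal metric $\tilde g = \abs{du}_g^2 g$ is then admissible for $\Lambda_k([g])$, giving the identification $\Lambda_k([g]) = \NEigen{k}{m}([g]) = \nEigen{k}{m}(\tilde g) = \brr{\int_M \abs{du}_g^m \, dv_g}^{2/m}$ with $\lambda_k(\tilde g) = 1$.

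The main obstacle I expect is the bubble-accounting step in the second paragraph: producing a genuine $\Sph^m$-bubble at each concentration point with its spectral contribution \emph{exactly} of the form $\NEigen{r_i}{m}(\Sph^m)^{m/2}$ and an additive splitting $k = (k-r) + \sum_i r_i$ of the eigenvalue index. In dimension two this is handled by capacity arguments and tight test-function constructions; here, the topological tensor product machinery of Lemma~\ref{lem:main} should be the right vehicle to distribute the spectral data between base and bubbles despite uncontrolled multiplicities, but matching the upper-semicontinuity bound to the gluing lower bound — so that the dichotomy is sharp and no spectral mass is lost in the necks — will require delicate interplay between weak convergence of densities, $H^{1,m}$-weak limits of eigenfunctions, and the tensor structure underlying the variational characterization.
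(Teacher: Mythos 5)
Your overall strategy — bubble-tree decomposition for a maximizing sequence, contradiction with the gluing lower bound under strict inequality, then identification of the residual maximizer via Theorem~\ref{thm:main-0} — is the right one and mirrors the paper (which cites \cite{Colbois-ElSoufi:2003:extremal} for the gluing step rather than redoing it). However, there are three concrete gaps.

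First, your bubble accounting omits the \emph{weights at infinity}. The decomposition in Proposition~\ref{prop:no-atoms-m} is not as clean as $\NEigen{k}{m}^{m/2} \leq \nEigen{k-r}{m}(\alpha_\infty,\mu_\infty)^{m/2} + \sum_i \NEigen{r_i}{m}(\Sph^m)^{m/2}$: a portion $\abs{w^\infty}<\delta$ of the measure can escape in the necks between bubbles, producing a prefactor $(1-\abs{w^\infty})^{m/2}$ on the left and, more importantly, reducing the available spectral budget to $\sum_i k_i = k - \#w^\infty$. You correctly anticipate in your closing paragraph that ``no spectral mass is lost in the necks'' is delicate, but this delicacy is precisely a Dirichlet-energy argument (each nonzero weight at infinity still consumes one unit of index through low-energy test functions $\phi_{i,\epsilon}$, per item~\ref{it:infinity-functions} of Definition~\ref{def:bubble-tree}), and the contradiction under strict inequality needs both the factor and the index loss to be tracked. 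Second, the step turning the bubble dichotomy into a contradiction needs an explicit induction on $k$: your upper bound terminates in $\NEigen{k-r}{m}([g])$ and $\NEigen{r_i}{m}(\Sph^m)$, whereas the hypothesis is stated in terms of $\Lambda_{k-r}$ and $\Lambda_r$, and the equalities $\Lambda_{k_i,m}(\Sph^m) = \Lambda_{k_i}(\Sph^m)$ for $k_i < k$ are exactly what one establishes inductively; without them the two sides of your dichotomy do not match.

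Third, and this is the step that is actually wrong rather than just incomplete: your argument for finite-dimensionality of the target. Compactness of $H^1(M) \hookrightarrow L^2(\mu_\infty)$ is neither the relevant statement nor sufficient. The eigenvalue problem for $\lambda_k(\abs{du}^{m-2},\abs{du}^m)$ lives on the weighted space $H^1(\mu,\alpha)$ of Section~\ref{sec:weighted-stuff}, which carries the weaker norm $\int \abs{d\phi}^2 \abs{du}^{m-2} + \int \phi^2\abs{du}^m$ and can be strictly larger than $H^1(M)$ when $\abs{du}$ vanishes; compactness of $H^1(M)\to L^2(\mu_\infty)$ says nothing about $H^1(\mu,\alpha)\to L^2(\mu)$. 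The paper's argument in Theorem~\ref{thm:reg-harm-map} is genuinely nontrivial: it first upgrades $V(du) = \abs{du}^{(p-2)/2}du$ to $H^1_{loc}$, then proves the estimate
\begin{equation}
  \norm{d(\abs{du}^{p/2}\phi)}_{L^2} \leq C \norm{du}_{L^\infty}\norm{\phi}_{H^1(\mu,\alpha)},
\end{equation}
showing that the identity $H^1(\mu,\alpha)\to L^2(\mu)$ factors through multiplication by $\abs{du}^{p/2}$ followed by the ordinary compact embedding $H^1 \to L^2$. This is the step that forces the coordinate functions of $u$ to span a finite-dimensional space and hence $u(M)\subset\Sph^n$, and it cannot be replaced by a soft compactness remark about $L^2(\mu_\infty)$.
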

\begin{remark}
  In other words, the theorem asserts that $\Lambda_k([g])$ is always achieved on a disjoint union
  \begin{equation}
    (M,\abs{du_0}^2_g g) \sqcup (\Sph^m, \abs{du_1}^2_{g_{\Sph^m}} g_{\Sph^m}) \sqcup \cdots \sqcup (\Sph^m, \abs{du_b}^2_{g_{\Sph^m}} g_{\Sph^m})
  \end{equation}
  for some $m$-harmonic maps $u_i$, where each map maximizes some $\overline{\lambda}_{k_i}$ with $\sum_i k_i = k$.
  This can be seen by inductively applying Theorem~\ref{thm:main-2} with $M = \Sph^m$.
\end{remark}
As noted by Romain Petrides, the following generalization of the 2-dimensional result \cite[Theorem~1]{Petrides:2014:heat-kernel} follows
immediately from \cite[Theorem~1]{Petrides:2015:Lambda-strictly-large-sph}, Theorem~\ref{thm:main-2}, and Corollary~\ref{cor:1st-max-on-minimal}.
\begin{corollary}\label{cor:1st-eigenval-acheived}
  For any manifold $(M,[g])$ of dimension $m \geq 3$, there exists an $m$-harmonic map
  $u \in C^{1,\gamma}(M, \Sph^n)$ such that
  \begin{equation}
    \Lambda_1([g]) = \overline{\lambda}_{1}(\abs{du}_g^2 g)
    = \brr{\int_M \abs{du}_g^m dv_g}^{2/m}.
  \end{equation}
\end{corollary}
\begin{theorem}\label{thm:main-1}
  Let $(M,g)$ be a closed connected Riemannian manifold of dimension $m$ and let $2 \leq p  < m$.
  Then there exists a $p$-harmonic map $u \in H^{1,p}(M,\Sph^\infty)$ such that
  \begin{equation}
    \lambda_k(\abs{du}^{p-2}_g, \abs{du}^p_g) = 1,
    \quad\text{and}\quad
    \Lambda_{k,p}(g) = \brr{\int \abs{du}^p_g dv_g}^{2/p}.
  \end{equation}
\end{theorem}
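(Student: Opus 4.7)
The plan is to produce a maximizer by a direct-method argument and then invoke Theorem~\ref{thm:main-0} to recover the $p$-harmonic structure. First, I pick a maximizing sequence $(\alpha_n, \mu_n)$ of smooth positive pairs, normalized so that $\norm{\alpha_n}_{L^{p/(p-2)}} = 1$ and $\mu_n(M) = 1$. By the uniform bound~\eqref{ineq:for-dif-p} the values $\lambda_k(\alpha_n,\mu_n)$ remain bounded, and, passing to a subsequence, I extract weak limits $\alpha_n \rightharpoonup \alpha_\infty$ in $L^{p/(p-2)}$ and $\mu_n \rightharpoonup^* \mu_\infty$ as Radon measures; since $M$ is compact, $\mu_\infty(M)=1$ automatically, whereas a priori only $\norm{\alpha_\infty}_{L^{p/(p-2)}} \leq 1$. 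I then invoke the tensor-product machinery (Lemma~\ref{lem:main} together with Proposition~\ref{prop:upper-cont-and-bound}) to obtain upper semicontinuity: $\lambda_k(\alpha_\infty, \mu_\infty) \geq \limsup_n \lambda_k(\alpha_n, \mu_n) = \NEigen{k}{p}(g)$. Combined with the defining inequality $\nEigen{k}{p}(\alpha_\infty,\mu_\infty) \leq \NEigen{k}{p}(g)$, this forces $\norm{\alpha_\infty}_{L^{p/(p-2)}} = 1$ and equality throughout.

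The main obstacle is the next step: ruling out atomic parts of $\mu_\infty$, that is, preventing the bubbling that occurs in the conformally invariant case $p=m$ and accounts for the sphere summands in Theorem~\ref{thm:main-2}. The hypothesis $p<m$ enters through the subcritical H\"older estimate
\begin{equation}
  \int_{B_r(x)} \alpha \leq C\, r^{2m/p}\, \norm{\alpha}_{L^{p/(p-2)}},
  \qquad
  \int \abs{d\eta_r}^2 \alpha \leq C\, r^{2m/p-2}\, \norm{\alpha}_{L^{p/(p-2)}},
\end{equation}
where $\eta_r$ is a standard cutoff supported in $B_{2r}(x)$ with $\abs{d\eta_r} \lesssim 1/r$; the exponent $2m/p-2$ is strictly positive precisely because $p<m$. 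Thus cutting off a Rayleigh-minimizing subspace near any would-be atom costs negligible Dirichlet energy. A concentration-compactness argument then shows that any Dirac mass in $\mu_\infty$ would let me split the $(k+1)$-dimensional test space into pieces of strictly smaller complexity without asymptotically changing the Rayleigh quotient, contradicting the maximality of $\NEigen{k}{p}(g)$. The same type of estimate rules out concentration of $\alpha_n$, forcing $\alpha_n \to \alpha_\infty$ strongly in $L^{p/(p-2)}$ and $\mu_\infty \in L^1_+(M)$.

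Once the maximizer $(\alpha_\infty,\mu_\infty) \in L^{p/(p-2)}_+(M)\times L^1_+(M)$ is in hand, Theorem~\ref{thm:main-0} yields, up to rescaling, $\alpha_\infty = \abs{du}^{p-2}_g$ and $\mu_\infty = \abs{du}^p_g$ for some $p$-harmonic $u \in H^{1,p}(M,\Sph^\infty)$. A direct computation $\mu_\infty(M)/\norm{\alpha_\infty}_{L^{p/(p-2)}} = \brr{\int \abs{du}_g^p\, dv_g}^{2/p}$ turns the identity $\NEigen{k}{p}(g) = \nEigen{k}{p}(\alpha_\infty,\mu_\infty)$ into the stated formula, with $\lambda_k(\abs{du}^{p-2}_g, \abs{du}^p_g) = 1$ after rescaling. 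The delicate point I expect to be the bottleneck is the non-concentration step: separating the harmless Dirichlet tail from the genuine contribution of $\mu_\infty$ requires quantitative care, and it is exactly here that the subcritical regime $p<m$ is indispensable, paralleling but simplifying the corresponding bubbling analysis in the critical case $p=m$ handled by Theorem~\ref{thm:main-2}.
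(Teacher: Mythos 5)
There is a genuine gap, at exactly the step you flag. You write $\nEigen{k}{p}(\alpha_\infty,\mu_\infty)\leq\NEigen{k}{p}(g)$ as ``the defining inequality'' and use it to force $\norm{\alpha_\infty}_{L^{p/(p-2)}}=1$ and equality in the Rayleigh chain. But $\NEigen{k}{p}(g)$ is a supremum taken over pairs in $L^{p/(p-2)}_+\times L^1_+$ (Proposition~\ref{prop:upper-cont-and-bound}); there is no such a priori bound for a weak$^*$ limit $\mu_\infty$ that is merely a Radon measure, and the paper explicitly flags the approximation of more general measures by smooth densities as open (see the Remark after Theorem~\ref{thm:main-0}). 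So invoking this inequality before establishing $\mu_\infty\in L^1$ is circular. The leap from ``no atoms'' to ``$\mu_\infty\in L^1_+(M)$'' is a second gap: your cutoff estimate $r^{2m/p-2}\norm{\alpha}_{L^{p/(p-2)}}$ does rule out Dirac masses (this is the content of the second bullet of Proposition~\ref{prop:no-atoms}), but it does not rule out a singular continuous part of $\mu_\infty$; for a measure concentrated on a hypersurface, with $\mu_\infty(B_r)\sim r^{m-1}$, the Rayleigh quotient of a cutoff shows no divergence when $p\geq 2$. Strong $L^{p/(p-2)}$-convergence of $\alpha_n$ similarly needs the norm convergence you are trying to prove, not merely non-concentration.

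The paper avoids both issues by not running a plain direct method. Corollary~\ref{cor:max-sequence} applies Ekeland's variational principle to replace an arbitrary maximizing sequence by one that solves an approximate Euler--Lagrange system $\delta(\alpha_\epsilon d\F_\epsilon)=\F_\epsilon\mu_\epsilon$ with $\alpha_\epsilon$ close to $|d\F_\epsilon|^{p-2}$ in $L^{p/(p-2)}$. Lemmas~\ref{lem:strong-conv} and~\ref{lem:regularity-in-good-pt} (this is where the tensor-product toolkit Lemma~\ref{lem:main} is actually used, not for semicontinuity) then identify the limit as $\alpha=|d\F|^{p-2}$ and $\mu=|d\F|^p+\sum_x w_x\delta_x$, so the continuous part of $\mu$ is \emph{automatically} $|d\F|^p\in L^1$ and only the atomic part remains to be excluded, which Proposition~\ref{prop:no-atoms} does when $p<m$. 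The chain $\Lambda_{k,p}(g)=\mu(M)^{2/p}\leq\mu(M)/\norm{\alpha}_{L^{p/(p-2)}}\leq\nEigen{k}{p}(|d\F|^{p-2},|d\F|^p)\leq\Lambda_{k,p}(g)$ then closes via Proposition~\ref{prop:upper-cont-and-bound} without circularity. Invoking Theorem~\ref{thm:main-0} at the end, as you propose, still requires first exhibiting a maximizer in $L^{p/(p-2)}_+\times L^1_+$, which is precisely the Ekeland/eigenmap machinery you skip.
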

In particular, for some values of $p < m$, neither bubbling nor singularities occur:
\begin{corollary}\label{cor:smooth-maximizers}
  Let $2 \leq p  < m$. If $m \leq 2 + \floor{p+2\sqrt{p-1}}$, or equivalently,
  $p \geq m-2\sqrt{m-2}$,
  the supremum $\Lambda_{k,p}(g)$ is achieved by a $p$-harmonic map
  $u \in C^{1,\gamma}(M, \Sph^n)$ for some finite $n$.
\end{corollary}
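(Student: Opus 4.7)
My plan is to derive the corollary by combining the existence result of Theorem~\ref{thm:main-1} with the equality case of Theorem~\ref{thm:main-0}, and then checking that the arithmetic hypothesis on $p$ forces the singular set to be empty.

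First I would invoke Theorem~\ref{thm:main-1} to produce a $p$-harmonic map $u \in H^{1,p}(M,\Sph^\infty)$ with $\lambda_k(\abs{du}_g^{p-2},\abs{du}_g^p) = 1$ and $\Lambda_{k,p}(g) = \brr{\int_M \abs{du}_g^p dv_g}^{2/p}$. Setting $\alpha = \abs{du}_g^{p-2}$ and $\mu = \abs{du}_g^p$, these lie in $L^{p/(p-2)}_+(M)$ and $L^1_+(M)$ respectively (since $u \in H^{1,p}$), and a one-line H\"older calculation,
\begin{equation}
  \nEigen{k}{p}(\alpha,\mu) = 1\cdot \frac{\int_M \abs{du}_g^p dv_g}{\brr{\int_M \abs{du}_g^p dv_g}^{(p-2)/p}} = \brr{\int_M \abs{du}_g^p dv_g}^{2/p} = \Lambda_{k,p}(g),
\end{equation}
shows that $(\alpha,\mu)$ saturates the upper bound $\nEigen{k}{p}(\alpha,\mu) \le \NEigen{k}{p}(g)$ supplied by Theorem~\ref{thm:main-0}.

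Next, the equality clause of Theorem~\ref{thm:main-0} upgrades $u$ (up to rescaling) to a spectrally stable $p$-harmonic map of class $C^{1,\gamma}(M\setminus\Sigma,\Sph^\infty)$ with $\Sigma$ closed and of Hausdorff dimension at most $m - d$, where $d = 3 + \floor{p+2\sqrt{p-1}}$. The hypothesis $m \le 2 + \floor{p+2\sqrt{p-1}} = d - 1$ rearranges to $m - d \le -1 < 0$; since Hausdorff dimension is nonnegative this forces $\Sigma = \emptyset$, so $u \in C^{1,\gamma}(M,\Sph^\infty)$. The stated equivalence $m \le 2 + \floor{p+2\sqrt{p-1}} \iff p \ge m - 2\sqrt{m-2}$ is a routine calculation: set $t = p-1$, $s = m-3$, and solve the quadratic $(s-t)^2 \le 4t$ in $t$.

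It remains to see that $u$ takes values in a finite-dimensional subsphere, so that $\Sph^\infty$ may be replaced by $\Sph^n$ for some finite $n$. By~\eqref{p-crit-meas}, each coordinate $u^i$ solves $\delta_g(\alpha\, du^i) = \mu u^i$, i.e.\ is an eigenfunction of the weighted problem at eigenvalue $1 = \lambda_k(\alpha,\mu)$; since $u$ is $C^{1,\gamma}$, the weights $\alpha,\mu$ are continuous, and the coordinates $\{u^i\}$ therefore span the eigenspace of a fixed eigenvalue of a weighted Laplace problem with compact resolvent, which is finite-dimensional. Hence $u$ factors as $M \to \Sph^n \hookrightarrow \Sph^\infty$ for some finite $n$, completing the proof. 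The one delicate point I anticipate is precisely this last step: finite multiplicity of $\lambda_k(\alpha,\mu)$ must be justified even where $\alpha$ may vanish on $\{du=0\}$, and for this I would appeal to the spectral stability condition of Definition~\ref{def:stable-map} rather than to a bare uniformly-elliptic weighted-Laplacian argument.
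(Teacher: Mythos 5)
Your route through Theorem~\ref{thm:main-1} and the equality clause of Theorem~\ref{thm:main-0} correctly produces a spectrally stable $p$-harmonic map with $C^{1,\gamma}$ regularity away from a set $\Sigma$ of Hausdorff dimension at most $m-d \leq -1$, hence $\Sigma = \varnothing$; that part, including the arithmetic equivalence, is sound (though it is a slight detour, since the proof of Theorem~\ref{thm:main-1} already applies Lemma~\ref{lem:regularity-in-good-pt}, which furnishes local spectral stability directly).

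The gap is in your final step. The assertion that the coordinate functions $u^i$ "span the eigenspace of a fixed eigenvalue of a weighted Laplace problem with compact resolvent, which is finite-dimensional" does not follow from the continuity of $\alpha = \abs{du}^{p-2}$ and $\mu = \abs{du}^p$: since $\alpha$ vanishes on $\{du = 0\}$, the weighted Rayleigh quotient is degenerate-elliptic, and there is no a priori reason for the identity map $\wH{\alpha} \to L^2(\mu)$ (equivalently $H^1_u \to L^2_u$ in the notation of Theorem~\ref{thm:reg-harm-map}) to be compact — indeed $\lambda_{ess}(\alpha,\mu) = \infty$ is precisely what has to be established, not assumed. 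You flag this as "the one delicate point" and say you would "appeal to spectral stability," but spectral stability alone does not give the compactness. The argument the paper actually runs (final paragraph of the proof of Theorem~\ref{thm:reg-harm-map}) is genuinely different: once $u \in C^{1,\gamma}(M)$ one has $\abs{du} \in L^\infty$, and a Bochner-type difference-quotient argument shows $V(du) := \abs{du}^{(p-2)/2}du \in H^1(M,\ell^2)$, leading to the estimate~\eqref{ineq:m-harm-comp-estim},
\begin{equation}
  \norm{d\brr{\abs{du}^{p/2}\phi}}_{L^2} \leq c \norm{du}_{L^\infty}\norm{\phi}_{H^1_u},
\end{equation}
which factors the embedding $H^1_u \to L^2_u$ through multiplication by $\abs{du}^{p/2}$ followed by the compact embedding $H^1(M) \to L^2(M)$. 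This is the content you are missing. The cleanest fix is simply to quote the last paragraph of Theorem~\ref{thm:reg-harm-map} — which, under the very hypothesis $m \leq p+2+2\sqrt{p-1}$ you have already verified, states that a spectrally stable $p$-harmonic map is $C^{1,\gamma}$ on all of $M$ and takes values in a finite-dimensional subsphere $\Sph^n \subset \Sph^\infty$ — rather than re-deriving finite-dimensionality from the eigenvalue equation.
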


\subsection{Examples on selected manifolds}\label{sec:examples}
If $u \colon M \to \Sph^n$ is a minimal immersion, one always has (see~\cite{Takahashi:1966:minimal-immersions}) that the coordinate functions of $u$ are the eigenfunctions
of the Laplacian $\Delta_g$, where $g = u^*g_{\Sph^n}$. The following result is a generalization of \cite{ElSoufi-Ilias:1986:hersch}.
\begin{theorem}\label{thm:1st-max-on-minimal}
  Let $u \colon (M,g) \to \Sph^n$ be an isometric minimal immersion of a closed manifold $M$ given by $\lambda_1(g)$-eigenfunctions of $\Delta_g$.
  Then for any $p \in [2,m]$ and any $(\alpha,\mu) \in L^{p/(p-2)}_+(M) \times \mathcal{M}_+^c(M)$, we have
  \begin{equation}
    \nEigen{1}{p}(\alpha,\mu) \leq  m\Vol_g(M)^{2/p} = \NEigen{1}{p}(M, g),
  \end{equation}
  with the equality if and only if $\alpha = 1$ and $\mu = 1$ up to rescaling (and a conformal diffeomorphism of $\Sph^m$ if $p=m$ and $M = \Sph^m$).
\end{theorem}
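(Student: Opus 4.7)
The plan is to adapt the classical Hersch-Li-Yau-El Soufi-Ilias strategy to the generalized eigenvalue $\lambda_1(\alpha,\mu)$. I will first rebalance $u$ by a Möbius transformation of $\Sph^n$ so that its $\mu$-barycenter vanishes, then use the rebalanced coordinates as test functions in the variational characterization of $\lambda_1(\alpha,\mu)$, and finally reduce the resulting estimate via Hölder's inequality to the conformal-volume identity for minimal immersions by first eigenfunctions.

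Concretely, let $\{\tau_a\}_{a\in B^{n+1}}$ denote the standard Möbius family on $\Sph^n$. Since $\mu$ is nonatomic and $u$ is a smooth immersion (fibers are discrete, so $u_*\mu$ is nonatomic on $\Sph^n$), the usual Brouwer-degree argument yields $a_* \in B^{n+1}$ with $\int_M (\tau_{a_*}\circ u)\,d\mu = 0$. Set $v = \tau_{a_*}\circ u$ and let $\phi$ be the conformal factor of $\tau_{a_*}$, so $\abs{dv}_g^2 = m\phi(u)^2$ pointwise. Since each component $v^i$ has $\mu$-mean zero, the subspace spanned by $1$ and $v^i$ is admissible in the min-max for $\lambda_1(\alpha,\mu)$ (or else $v^i$ vanishes $\mu$-a.e.\ and the corresponding inequality is trivial), yielding $\lambda_1(\alpha,\mu)\int (v^i)^2 d\mu \leq \int \abs{dv^i}_g^2 \alpha\, dv_g$. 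Summing over $i$ and using $\sum_i (v^i)^2 = 1$ and $\sum_i \abs{dv^i}_g^2 = m\phi(u)^2$ gives
\[
  \lambda_1(\alpha,\mu)\,\mu(M) \leq m \int_M \phi(u)^2\, \alpha\, dv_g.
\]

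I will then apply the three-factor Hölder inequality with exponents $(m/2,\,p/(p-2),\,pm/(2(m-p)))$, with standard $L^\infty$ conventions at the endpoints $p=m$ and $p=2$, to bound the right-hand side by $m\,\Vol_g(M)^{2/p-2/m}\,\norm{\alpha}_{L^{p/(p-2)}}\brr{\int_M\phi(u)^m\,dv_g}^{2/m}$. The essential geometric input is the El Soufi-Ilias conformal-volume identity: because the coordinates of $u$ are $\lambda_1$-eigenfunctions of a minimal isometric immersion, one has $\int_M\phi_\tau(u)^m\,dv_g \leq \Vol_g(M)$ for every Möbius $\tau$, with equality only when $\tau$ is an isometry of $\Sph^n$. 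Substituting yields $\nEigen{1}{p}(\alpha,\mu) \leq m\,\Vol_g(M)^{2/p}$; inserting $(\alpha,\mu) = (1,1)$ and invoking Takahashi's identity $\lambda_1(g) = m$ shows this bound is sharp, identifying $\NEigen{1}{p}(M, g) = m\,\Vol_g(M)^{2/p}$.

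For the equality characterization, I will trace the inequalities backwards: equality in the conformal-volume step forces $\tau_{a_*}$ to be an isometry of $\Sph^n$, hence $\phi\equiv 1$; equality in Hölder then forces $\alpha \propto \phi(u)^m \equiv 1$, so $\alpha$ is constant; and equality in each nontrivial Rayleigh bound forces every $v^i$ to be a $\lambda_1(\alpha,\mu)$-eigenfunction, which combined with $\Delta v^i = m v^i$ and the no-common-zero property $\sum_i (v^i)^2 = 1$ forces $\mu$ to be a constant multiple of $dv_g$. The principal substantive ingredient is the El Soufi-Ilias conformal-volume identity; everything else is variational and Hölder bookkeeping together with Hersch's centering argument applied to the nonatomic measure $u_*\mu$ on $\Sph^n$.
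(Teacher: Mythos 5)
Your argument is essentially the paper's: Hersch/Li--Yau balancing by a Möbius transformation so the $\mu$-barycenter vanishes, feeding the balanced coordinates into the variational characterization of $\lambda_1(\alpha,\mu)$ (Proposition~\ref{prop:var-character}), summing the Rayleigh bounds, a Hölder estimate, and the El Soufi--Ilias conformal-volume bound for minimal immersions by first eigenfunctions. Where the paper applies Hölder twice (against $\alpha$, then against volume), you use a single three-exponent Hölder; this is a cosmetic difference.

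One caveat concerns the equality case. You assert that $\int_M\phi_\tau(u)^m\,dv_g\le\Vol_g(M)$ holds ``with equality only when $\tau$ is an isometry of $\Sph^n$.'' This rigidity fails when $p=m$ and $(M,g)=(\Sph^m,g_{\Sph^m})$ with $u=\mathrm{id}$: there $\phi_\tau^2g_{\Sph^m}=\tau^*g_{\Sph^m}$ is the pullback of the round metric by a diffeomorphism, so $\int\phi_\tau^m\,dv_{g_{\Sph^m}}=\Vol(\Sph^m)$ for \emph{every} Möbius $\tau$. The paper explicitly flags this exception (``$F$ must be an isometry unless $p=m$ and $M=\Sph^m$, in which case it is just a change of coordinates''). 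For $p<m$ this does not matter, since your three-exponent Hölder equality already forces $\phi\circ u$ and $\alpha$ to be constants independently, and then a Jensen-type argument shows the normalized $\overline\lambda_{1,p}$ is strictly smaller for nonconstant $\phi_\tau$; but at $p=m$ the two-factor Hölder equality only yields $\alpha\propto\phi(u)^{m-2}$, and pinning down the maximizer up to Möbius change of coordinates needs the separate argument your proposal omits.
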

Under the conditions of the theorem above, $\Vol_g(M)$ is the conformal volume of $(M,[g])$ (see~\cite{ElSoufi-Ilias:1986:hersch, Li-Yau:1982:conf-volume}).
\begin{corollary}\label{cor:1st-max-on-minimal}
  Let $M \in \brc{\mathbb{R}P^m, \mathbb{C}P^d, \mathbb{H}P^d, \mathbb{C}aP^2, \mathbb{S}^a(\sqrt{\tfrac{a}{a+b}})\times \mathbb{S}^b(\sqrt{\tfrac{b}{a+b}})}$
  with the corresponding canonical metric $g$ (see more examples in \cite[Section~3.6]{ElSoufi-Ilias:1986:hersch}). Then
  Theorem~\ref{thm:1st-max-on-minimal} holds for $(M,g)$.

  In particular, one can take $u = \id_{\Sph^m} \colon \Sph^m \to \Sph^m$ for $M = \Sph^m$.
\end{corollary}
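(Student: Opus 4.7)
The plan is to reduce the corollary to Theorem~\ref{thm:1st-max-on-minimal} by exhibiting, for each manifold $(M,g)$ in the list, an isometric minimal immersion $u\colon (M,g)\to \Sph^n$ whose components are $\lambda_1(g)$-eigenfunctions of $\Delta_g$. This is a classical verification, performed case by case; no new analytic input is required beyond Takahashi's criterion \cite{Takahashi:1966:minimal-immersions} and the standard harmonic analysis on compact rank-one symmetric spaces. A systematic reference is \cite[Section~3.6]{ElSoufi-Ilias:1986:hersch}.

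Explicitly, for $M=\Sph^m$ I take $u=\id_{\Sph^m}$: the restrictions $x_i|_{\Sph^m}$ are eigenfunctions of $\Delta_{g_{\Sph^m}}$ with eigenvalue $\lambda_1(\Sph^m)=m$, and the identity is tautologically an isometric minimal immersion. For $\mathbb{R}P^m$ (respectively $\mathbb{C}P^d$, $\mathbb{H}P^d$, $\mathbb{C}aP^2$) I use the Veronese-type projector embedding $[v]\mapsto vv^* - \tfrac{1}{n}I$ on unit-norm representatives, taking values in the unit sphere of trace-free symmetric (respectively Hermitian, quaternion-Hermitian, octonion-Hermitian) matrices, rescaled so as to be isometric; the coordinate functions of the embedding are then $\lambda_1$-eigenfunctions on each of these symmetric spaces. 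For the Clifford-type product $\Sph^a(\sqrt{a/(a+b)})\times \Sph^b(\sqrt{b/(a+b)})$, the inclusion into $\R^{a+1}\times\R^{b+1}$ lands in the unit sphere of $\R^{a+b+2}$, and the specific choice of radii is precisely what turns this inclusion into an isometric minimal immersion whose components are $\lambda_1$-eigenfunctions with eigenvalue $a+b$.

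Once each such immersion has been produced, Theorem~\ref{thm:1st-max-on-minimal} applies verbatim and delivers both the inequality $\nEigen{1}{p}(\alpha,\mu)\leq m\Vol_g(M)^{2/p}$ and its equality case. The only step that deserves genuine care—and which I expect to be the sole obstacle—is the verification that the exhibited maps are induced by \emph{first} eigenfunctions rather than higher ones; this relies on the explicit knowledge of the bottom of the spectrum for each compact rank-one symmetric space (and for the Clifford product, on a direct computation using the product structure $\Delta_{g} = \Delta_{g_a}\oplus\Delta_{g_b}$ together with the rescaling). These facts are standard and tabulated in the references above, so assembling them completes the proof.
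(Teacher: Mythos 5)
Your proposal is correct and follows essentially the same route the paper takes: the paper gives no proof at all for this corollary, merely pointing to \cite[Section~3.6]{ElSoufi-Ilias:1986:hersch} for the list of isometric minimal immersions by first eigenfunctions, which is precisely the classical verification (Takahashi's criterion plus the explicit Veronese-type and Clifford-type constructions) that you spell out. Your case-by-case details, including the observation that the specific radii $\sqrt{a/(a+b)}$ and $\sqrt{b/(a+b)}$ both normalize the Clifford product to lie in the unit sphere and make $a+b$ the common first eigenvalue of each factor, are all standard and correct, so once the immersions are produced Theorem~\ref{thm:1st-max-on-minimal} applies directly.
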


As in~\cite{Vinokurov:2025:higher-dim-harm-eigenval} for $p = 2$, we can look at the optimization problems of higher eigenvalues on $(\Sph^m, g_{\Sph^m})$.
From~\cite{Kim:2022:second-sphere-eigenval} we also know that
\begin{equation}
  \overline{\lambda}_k(g) \leq  2^{2/m} \Lambda_{1}(\Sph^m, [g_{\Sph^m}]) = \Lambda_{2}(\Sph^m, [g_{\Sph^m}]),
\end{equation}
where the supremum is achieved on the disjoint union of two spheres, and the inequality is always strict.
By using Proposition~\ref{prop:var-character}, one can check that the inequality also extends to $\nEigen{1}{m}(\alpha,\mu)$.

Now, consider the maps
$\SHM{k} \colon \Sph^m \to \Sph^{m-1-k}$ given by
\begin{equation}
  \SHM{k}\colon (x,y) \mapsto \frac{x}{\abs{x}},
  \quad\text{where } (x,y) \in \Sph^{m}\subset \R^{m-k}\times\R^{k+1}.
\end{equation}
The map $\SHM{k}$ is smooth away from $\brc{0}\times\Sph^k$, and one can check (see~\cite[(1.8)]{Vinokurov:2025:higher-dim-harm-eigenval}) that
$\SHM{k} \in H^{1,p}(\Sph^m)$ only when $k < m - p$.
\begin{theorem}\label{thm-max-on-sphere}
  Let $\SHM{k} \colon \Sph^m \to \Sph^{m-1-k}$ be as above and
  $(\alpha,\mu) \in L^{p/(p-2)}_+(\Sph^m) \times \mathcal{M}_+^c(\Sph^m)$, where $p \in [2, m)$. If $0 \leq k \leq m - 3 - \ceil{2p}$, then
  \begin{equation}
    \nEigen{k+2}{p}(\alpha,\mu) \leq \brr{\int \abs{d\SHM{k}}^p_{g_{\Sph^m}} dv_{g_{\Sph^m}}}^{2/p} = \NEigen{k+2}{p}(\Sph^m, g_{\Sph^m}),
  \end{equation}
  and, up to rescaling, the pair $(\abs{d\SHM{k}}^{p-2}, \abs{d\SHM{k}}^p)$ is the unique maximizer. Furthermore, we have the strict inequality
  $\NEigen{k+2}{p}(\Sph^m, g_{\Sph^m}) < (\int \abs{d\SHM{k}}^p_{g_{\Sph^m}} dv_{g_{\Sph^m}})^{2/p}$
  if $m - 2 - \ceil{2p} \leq k \leq m - 1 - \floor{p}$.
\end{theorem}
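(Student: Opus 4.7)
The approach combines the rigidity for maximizers from Theorem~\ref{thm:main-0} with a Hersch-type centering argument for the normalized eigenvalue $\nEigen{k+2}{p}$, extending to $p\in[2,m)$ the $p=2$ strategy of~\cite{Vinokurov:2025:higher-dim-harm-eigenval}. First I would verify that $\SHM{k}$ is $p$-harmonic on $\Sph^m\setminus(\brc{0}\times\Sph^k)$ and lies in $H^{1,p}(\Sph^m,\Sph^{m-k-1})$ whenever $k<m-p$. Writing $(x,y)=(r\omega,\sqrt{1-r^2}\psi)$ with $\omega\in\Sph^{m-k-1}$, $\psi\in\Sph^k$ gives $\abs{d\SHM{k}}_g^2=(m-k-1)/r^2$, and a direct computation shows that each coordinate $\SHM{k}^i=x_i/\abs{x}$ satisfies $\delta_g(\abs{d\SHM{k}}^{p-2}d\SHM{k}^i)=\abs{d\SHM{k}}^p\SHM{k}^i$. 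The $m-k$ coordinates are mutually $L^2(\abs{d\SHM{k}}^p dv_g)$-orthogonal and orthogonal to constants by the $x\mapsto-x$ symmetry, producing eigenvalue $1$ with multiplicity $m-k$. A separation-of-variables analysis for the remaining eigenfunctions, of the form $\phi(r,\omega,\psi)=\rho(r)Z_\ell(\psi)Y_j(\omega)$ with $\ell,j\geq 0$ not both zero, reduces to a radial Sturm--Liouville problem whose smallest eigenvalue I would estimate by a Hardy-type inequality. The threshold $\ceil{2p}$ arises as the precise value at which the smallest fiber mode crosses $1$: in the range $k\leq m-3-\ceil{2p}$ all non-coordinate eigenvalues exceed $1$, giving $\lambda_{k+2}(\abs{d\SHM{k}}^{p-2},\abs{d\SHM{k}}^p)=1$ and hence
\begin{equation}
\nEigen{k+2}{p}(\abs{d\SHM{k}}^{p-2},\abs{d\SHM{k}}^p) = \brr{\int\abs{d\SHM{k}}^p dv_g}^{2/p},
\end{equation}
whereas for $k\geq m-2-\ceil{2p}$ a fiber eigenvalue strictly below $1$ intrudes before $\lambda_{k+2}$ is reached, so that $\SHM{k}$ falls strictly short of the nominal target.

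For the upper bound, I would parametrize the Möbius group of $\Sph^{m-k-1}$ by the open unit ball $B^{m-k}$ through transformations $T_\xi$, and apply the classical degree-theoretic Hersch centering to produce $\xi^\star\in B^{m-k}$ with $\int_{\Sph^m}T_{\xi^\star}\circ\SHM{k}\,d\mu=0$. The $m-k$ components of $T_{\xi^\star}\circ\SHM{k}$ together with the constant span an $(m-k+1)$-dimensional test subspace, which suffices to bound $\lambda_{k+2}$ since $m-k+1\geq k+3$ throughout our range. A Rayleigh--Ritz analysis of the associated $(m-k)\times(m-k)$ matrix pencil, using the identity $\sum_i(T_{\xi^\star}\circ\SHM{k})_i^2\equiv 1$ and the Hölder inequality
\begin{equation}
\int\abs{d\phi}_g^2\,\alpha\,dv_g \leq \brr{\int\abs{d\phi}_g^p dv_g}^{2/p}\norm{\alpha}_{L^{p/(p-2)}}
\end{equation}
together with the conformality of $T_\xi$ on $\Sph^{m-k-1}$, then delivers $\nEigen{k+2}{p}(\alpha,\mu)\leq(\int\abs{d\SHM{k}}^p dv_g)^{2/p}$ valid in both ranges of $k$.

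For rigidity, equality throughout the Hersch chain forces the pushforward $(T_{\xi^\star}\circ\SHM{k})_*\mu$ to be isotropic on $\Sph^{m-k-1}$ and the Hölder inequality to be saturated; invoking Theorem~\ref{thm:main-0}, which identifies any maximizer as $(\abs{du}^{p-2},\abs{du}^p)$ for a $p$-harmonic $u\colon\Sph^m\to\Sph^\infty$, then pins $u$ down to $\SHM{k}$ up to an ambient isometry and rescaling. For the strict inequality in the range $m-2-\ceil{2p}\leq k\leq m-1-\floor{p}$ the same upper bound applies, but the only candidate for equality is $\SHM{k}$, which now gives a strictly smaller value by the spectral analysis of the first paragraph, whence $\NEigen{k+2}{p}(\Sph^m,g_{\Sph^m}) < (\int\abs{d\SHM{k}}^p dv_g)^{2/p}$.

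The main technical obstacle is making Hersch's centering argument work in the non-conformal regime $p<m$, where the $p$-Dirichlet energy is not invariant under conformal reparametrizations of the target; one must absorb the conformal factor of $T_\xi$ into the Hölder splitting against the weight $\alpha$ in a way that preserves the sharp constant $(\int\abs{d\SHM{k}}^p dv_g)^{2/p}$ and is compatible with the concentration of $\abs{d\SHM{k}}^p$ near the singular locus $\brc{0}\times\Sph^k$. The sharp radial Sturm--Liouville estimate yielding the threshold $\ceil{2p}$ is the other delicate ingredient, as it simultaneously governs attainment in the first range and strict inequality in the second.
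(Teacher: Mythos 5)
Your high-level outline (spectrum of the model map, Hersch-type centering for the upper bound, rigidity through Theorem~\ref{thm:main-0}) resembles what the paper does via its citation of the $p=2$ treatment, but several of the concrete claims in the middle are wrong, and the paper's actual workhorse is an exact index computation rather than the estimates you sketch.

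The statement "in the range $k\leq m-3-\ceil{2p}$ all non-coordinate eigenvalues exceed $1$" is not correct, and the discrepancy is structural, not technical. If the only eigenvalues $\le 1$ were $\lambda_0 = 0$ and the $(m-k)$-fold eigenvalue $1$ from the coordinates, you would have $\lambda_{k+2}=1$ precisely when $k+2\le m-k$, i.e.\ $k\le(m-2)/2$. That condition is strictly weaker than $k\le m-3-\ceil{2p}$ for small $m$, and strictly stronger for large $m$ (e.g.\ $p=2$, $m=13$, $k=6$), so your picture cannot reproduce the stated threshold. What the paper actually shows is that the index of $\mathfrak{q}[\phi] = \int\abs{d\phi}^2\abs{d\SHM{k}}^{p-2}-\int\phi^2\abs{d\SHM{k}}^p$ equals exactly $k+2$ in the first range: $\ind\mathfrak{q}_0=1$, $\ind\mathfrak{q}_1=1$ with multiplicity $k+1$ coming from the degree-$1$ spherical harmonics on the singular $\Sph^k$, and $\ind\mathfrak{q}_\ell=0$ for $\ell\ge 2$. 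So there are $k+1$ eigenvalues strictly between $0$ and $1$, and $\lambda_{k+2}=1$ because the $(k+2)$nd position is the first one to reach the coordinate eigenvalue. Your separation of variables $\rho(r)Z_\ell(\psi)Y_j(\omega)$ is reasonable, but "a Hardy-type estimate" is too blunt: the paper transforms the radial problem to a Jacobi form on $[-1,1]$ with parameter $\alpha$ the least root of $\alpha^2-(\tilde n-1)\alpha+n=0$ and shows $\ind\mathfrak{q}_\ell=\#\set{s\in\mathbb N}{s<(\alpha-\ell)/2}$, with the threshold $\alpha\le2\iff n\ge2(p+1)\iff k\le m-3-2p$. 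The sharp $\ceil{2p}$ comes out of this exact calculation, not from a Hardy inequality.

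The upper bound as you describe it also has a gap. Your assertion "which suffices to bound $\lambda_{k+2}$ since $m-k+1\geq k+3$ throughout our range" is false: $m-k+1\ge k+3$ is equivalent to $k\le(m-2)/2$, which fails for part of the range $k\le m-3-\ceil{2p}$ once $m>4+2\ceil{2p}$ (already at $p=2$, $m=13$, $k=6$ you have $m-k+1=8<9=k+3$). So the Rayleigh–Ritz bound from the centered coordinates plus the constant cannot reach $\lambda_{k+2}$ in general, and a plain Hersch centering (which produces orthogonality to constants only) is insufficient here. The paper defers the upper bound to the argument of \cite[Theorem~1.6]{Vinokurov:2025:higher-dim-harm-eigenval}, which must involve something beyond this naive test-function count; the part it does detail is precisely the index bound $\ind\mathfrak{q}\le k+2$, which is the ingredient your proposal replaces with the incorrect "all non-coordinate eigenvalues exceed $1$" picture.
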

\begin{remark}
  We see that, in general, maximizers for $\nEigen{k}{p}$ can be singular.
  However, unless $p=2$, the Hausdorff dimensions of the singularities of $\SHM{k}$ do not reach
  the maximal possible value $m - 3 - \floor{p + 2\sqrt{p-1}}$ from Theorem~\ref{thm:main-0}.
\end{remark}

\subsection{Ideas of the proofs of existence and regularity}

The ideas of the proof originate in~\cite{Vinokurov:2025:higher-dim-harm-eigenval}. That is, we employ the direct variational approach via maximizing sequences; however,
instead of working with eigenmaps $\F \in H^{1,p}(M,\R^\infty)$, we
instead work with the projective tensor product $H^{1,p}(M)\widehat{\otimes}_\pi H^{1,p}(M)$. The motivation for considering such a space
comes from viewing eigenvalues as Lipschitz functionals on the space of compact
operators or quadratic forms, and the projective tensor product
can be identified with the corresponding dual space (see Proposition~\ref{prop:tensor-duality}).
The advantage of the space $H^{1,p}\widehat{\otimes}_\pi H^{1,p}$ is that the embedding
$H^{1,p}\widehat{\otimes}_\pi H^{1,p} \to L^{p}\widehat{\otimes}_\pi L^{p}$ is compact, which is a key ingredient in proving the existence of maximizing
$p$-harmonic maps.

Another observation is that any vector-valued map $\F \in H^{1,p}(M,\R^\infty)$ induces
an absolutely $p$-summable operator $\F\colon H^{1,p}(M)^* \to \R^\infty := \ell^2$, defined by $\F\colon \phi \mapsto (\cdots, \int \F^i \phi d\mu,\cdots)$
(see~\cite[Proposition~25.10]{Defant-Floret:1993:tensor-norms}). Then the composition
\begin{equation}
  \F^* \F = \sum_i \F^i \otimes \F^i \in \mathfrak{L}[(H^{1,p})^*, H^{1,p}]
\end{equation}
belongs to the ideal of $(p,p)$-dominated operators for $p\geq 2$. On $L^p$ spaces (see~\cite[Proposition~26.1]{Defant-Floret:1993:tensor-norms}),
this operator ideal coincides with the ideal of nuclear operators $\mathfrak{N}[(H^{1,p})^*, H^{1,p}] \approx H^{1,p}\widehat{\otimes}_\pi H^{1,p}$,
with an equivalent norm so that $\norm{\F^* \F}_{\pi} \sim \norm{\F}_{H^{1,p}}^2$.

In the current paper we also observe that the convergence $\F^*_n \F_n \to \F^* \F$ in $H^{1,p}\widehat{\otimes}_\pi H^{1,p}$ is equivalent
(up to isometry) to convergence in $H^{1,p}(M,\ell^2)$ (see Lemma~\ref{lem:main}); that is, there exist (linear) isometric embeddings $I_n \colon \iml  \F_n \to \ell^2$, $I \colon \iml  \F \to \ell^2$
such that $I_n \F_n \to I\F$ in $H^{1,p}(M,\ell^2)$. Although one can still establish the existence working solely with tensors (cf.~\cite{Vinokurov:2025:higher-dim-harm-eigenval})
using Corollary~\ref{cor:seq-spaces-tensor-convergence} directly,
the equivalence of the two convergences allows us to work with $\ell^2$-valued maps instead, repeating nearly the same proof as in the two-dimensional case (cf.~\cite{Vinokurov:2025:sym-eigen-val-lms}).

It then remains to study the regularity of the limiting $p$-harmonic maps into $\Sph^\infty \subset \ell^2$.
It turns out that the Hilbert-valued theory is essentially identical to the finite-dimensional case.
Moreover, the local spectral stability of such maps (see Definition~\ref{def:stable-map})
permits the derivation of improved regularity results in the spirit of~\cite{Xin-Yang:1995:stable-p-harm-maps}.


\subsection{Acknowledgements}
This paper forms part of the author's PhD thesis under the supervision of Mikhail Karpukhin and Iosif Polterovich,
whom the author thanks for their guidance and many fruitful discussions.
The author is particularly grateful to Mikhail Karpukhin for pointing out the link between different eigenvalue
normalizations and $p$-harmonic maps into spheres, and to Mikhail Karpukhin and Daniel Stern for sharing their notes
on the spectral properties of $m$-harmonic maps, which inspired the compactness estimate~\eqref{ineq:m-harm-comp-estim}.
The author also thanks Romain Petrides for stimulating discussions and for kindly pointing out Corollary~\ref{cor:1st-eigenval-acheived}.
This work was partially supported by an ISM scholarship.

\section{Preliminaries}
Throughout the paper, we adopt the convention that $C, c, c_1, c_2, \dots$ denote
some positive constants; their dependencies will be specified if necessary.
By $\Omega$, we will denote a bounded open set in a Riemannian manifold $M$
unless specified otherwise (that is, $\Omega$ is a domain, possibly with boundary, while $\partial M = \varnothing$).
We write $\epsilon \in \brc{\epsilon_i}$ to denote a sequence of positive numbers.
We will work mostly with real vector spaces, so complex conjugation is redundant, except in Section~\ref{sec:tensor-prod}.

\subsection{Topological tensor products of Banach spaces}\label{sec:tensor-prod}

Let $E$ and $F$ be normed spaces over a field $\mathbb{K} \in \brc{\mathbb{R}, \mathbb{C}}$. Their continuous duals will be denoted by $E^*$ and $F^*$, respectively.
By $\mathfrak{Bil}[E,F]$, we will denote the space of bounded bilinear forms, and
$\mathfrak{L}[E,F]$ stands for the space of bounded linear operators. We write $\mathfrak{Bil}[E]$, $\mathfrak{L}[E]$, etc., as shorthand for
$\mathfrak{Bil}[E,E]$ and $\mathfrak{L}[E,E]$, etc. Note that by the standard correspondence between
bilinear forms and linear operators, $\mathfrak{L}[F,E^*] \cong  \mathfrak{Bil}[E,F] \cong  \mathfrak{L}[E,F^*]$
are isometrically isomorphic.

\paragraph{A zoo of convergences}
\begin{itemize}
  \item The convergence in norm: $x_n \to x \implies \norm{x_n - x}_E \to 0$ for $x_n, x \in E$;
  \item The weak convergence: $x_n \oset[0.7pt]{w}{\to} x \implies \brt{x_n,x^*} \to \brt{x,x^*}$ for all $x^* \in E^*$;
  \item The weak$^*$ convergence: $x_n^* \oset{w^*}{\to} x^* \implies \brt{x,x^*_n} \to \brt{x,x^*}$ for all $x \in E$;
  \item The strong/pointwise convergence in $\mathfrak{L}[E,F]$: $T_n \oset[0.5pt]{s}{\to} T \implies T_n x \to T x$
  for all $x \in E$;
  \item The weak operator topology convergence: $T_n \oset{wot}{\to} T \implies \brt{T_n x, y^*} \to \brt{T x, y^*}$
  for all $x \in E$ and $y^* \in F^*$, where $T_n,T \in \mathfrak{L}[E,F]$;
  \item The pointwise convergence of bilinear forms: $\mathfrak{q}_n \oset{pt}{\to} \mathfrak{q} \implies \mathfrak{q}_n[x,y] \to \mathfrak{q}[x,y]$
  for all $x \in E$ and $y \in F$, where $\mathfrak{q}_n, \mathfrak{q} \in \mathfrak{Bil}[E,F]$.
\end{itemize}

\subsubsection{Two main tensor products}
For a more detailed exposition, see, for example, \cite{Defant-Floret:1993:tensor-norms}.
Consider an element $\t \in E \otimes F$ of the algebraic tensor product of $E$ and $F$. Its projective norm is defined as
\begin{equation}
  \norm{\t}_\pi = \inf \set*{\sum \norm{x_i}\norm{y_i}}{\t = \sum x_i \otimes y_i}.
\end{equation}
\begin{definition}
  The completion of $E \otimes_\pi F := (E \otimes F, \norm{\cdot}_{\pi})$ is denoted by $E \widehat{\otimes}_\pi F$ and is called the \emph{projective tensor product} of $E$ and $F$.
  Note that $E \widehat{\otimes}_\pi F = \widehat{E} \widehat{\otimes}_\pi \widehat{F}$, where $\widehat{E}$ and $\widehat{F}$ are the corresponding
  completions.
\end{definition}
The projective tensor norm has the universal property of the tensor product in the category of normed spaces:
each bilinear map $E\times F \to G$ is continuous if and only if
its linearization $E \otimes_\pi F \to  G$ is continuous and has the same norm.
\begin{equation}
  \begin{tikzcd}
    E \times F & E \otimes_\pi F \\
    & G
    \arrow[from=1-1, to=1-2]
    \arrow[from=1-1, to=2-2]
    \arrow[dashed, from=1-2, to=2-2]
  \end{tikzcd}
\end{equation}
In particular,
\begin{equation}
  (E \widehat{\otimes}_\pi F)^* = \mathfrak{Bil}[E,F] \quad\text{isometrically}.
\end{equation}
The algebraic tensor product $E^* \otimes F$ can be considered as a space of \emph{finite-rank operators} $\mathfrak{F}[E,F] \subset \mathfrak{L}[E,F]$, where
$ y^* \otimes x \mapsto (v \mapsto \brt{v,y^*} x)$. This embedding then continuously extends to $E^* \widehat{\otimes}_\pi F \to \mathfrak{L}[E,F]$, but the latter
is not injective in general.
\begin{definition}
  The image of $E^* \widehat{\otimes}_\pi F  \to \mathfrak{L}[E,F]$ with the corresponding factor norm is denoted by $\mathfrak{N}[E,F]$,
  and the operators from $\mathfrak{N}[E,F]$ are
  called \emph{nuclear operators}.
\end{definition}

Analogously to $E^* \otimes F = \mathfrak{F}[E,F]$, one can identify $E \otimes F \subset \mathfrak{Bil}[E^*,F^*]$ as the space of finite-rank
bilinear forms. The induced norm will be denoted by $\norm{\cdot}_\epsilon$.
\begin{definition}
  The completion of $E \otimes_\epsilon F := (E \otimes F, \norm{\cdot}_{\epsilon})$ is denoted by $E \widehat{\otimes}_\epsilon F$ and is called the \emph{injective tensor product}
  of $E$ and $F$.
\end{definition}
The universal property of the projective norm yields a natural map $E \widehat{\otimes}_\pi F \to E \widehat{\otimes}_\epsilon F$.
Because of the isometric embedding $\mathfrak{L}[E,F] \hookrightarrow \mathfrak{L}[F^*,E^*] \cong \mathfrak{Bil}[E^{**},F^*]$, given by taking adjoints, and
the discussion above, it follows that $E^*\widehat{\otimes}_\epsilon F$ is simply the closure $\overline{\mathfrak{F}[E,F]}$ of finite-rank operators
in $\mathfrak{L}[E,F]$ when $E,F$ are Banach spaces.

Let $i \in \brc{1,2}$, $\alpha \in \brc{\epsilon, \pi}$ and $T_i \in \mathfrak{L}[E_i,F_i]$. We can then construct a linear map
$T_1 \widehat{\otimes}_\alpha T_2 \colon E_1 \widehat{\otimes}_\alpha E_2 \to F_1 \widehat{\otimes}_\alpha F_2$, which is continuous and
has the norm
\begin{equation}\label{ineq:tens-norm-operator}
  \norm{T_1 \widehat{\otimes}_\alpha T_2} \leq \norm{T_1} \norm{T_2}.
\end{equation}
The following diagram also commutes:
\begin{equation}
  \begin{tikzcd}
	E_1 \widehat{\otimes}_\alpha E_2 && F_1 \widehat{\otimes}_\alpha F_2 \\
	\mathfrak{L}[E_1^*, E_2] && \mathfrak{L}[F_1^*, F_2]
	\arrow["{T_1\widehat{\otimes}_\alpha T_2}", from=1-1, to=1-3]
	\arrow[from=1-1, to=2-1]
	\arrow[from=1-3, to=2-3]
	\arrow["{A \mapsto T_2AT_1^*}", from=2-1, to=2-3]
\end{tikzcd}.
\end{equation}
For this reason, we will use the operator notation $T_2 \t T_1^*$ as a shorthand for $(T_1 \widehat{\otimes}_\alpha T_2) (\t)$.
Also, note that $\norm{T_2 \t T_1^*} \leq \norm{T_1} \norm{T_2}\norm{\t}_\alpha $ regardless of whether we view $\t$ as an operator or as
a tensor.

\begin{definition}
  A normed space $E$ is said to have the
  \begin{itemize}
    \item \emph{approximation property} if for every absolutely convex compact $K \subset E$ and $\epsilon > 0$ there exists a $T \in \mathfrak{F}[E]$
    with $\sup_{x\in K} \norm{Tx - x} \leq \epsilon$;
    \item \emph{bounded approximation property} if there exists a constant $C$ such that $E$ has the approximation property with $\norm{T} \leq C$.
  \end{itemize}
\end{definition}
On bounded sets of operators, the pointwise convergence coincides with the convergence on absolutely convex compact sets.
Therefore, if $E$ is separable, the bounded approximation property is equivalent to the existence of a constant $C$ and a sequence
$T_n \in \mathfrak{F}[E]$ such that
$T_n \oset[0.5pt]{s}{\to} \id_E$ and $\norm{T_n}\leq C$; otherwise, one replaces the sequence by a net.

For example, it is clear that a Banach space with a (Schauder) basis has the bounded approximation property. In particular, Hilbert spaces have it.
Moreover, the (bounded) approximation property passes to complemented subspaces (via inclusion and projection).

The following proposition tells us that for spaces with the approximation property, we can identify tensors with operators if it is useful for our purposes.
\begin{proposition}[{\cite[Corollaries~5.3,~5.7.1]{Defant-Floret:1993:tensor-norms}}]
  Let $E,F$ be Banach spaces and $\mathfrak{K}[E,F]$ be the space of compact operators. If either $E^*$ or $F$ has the approximation property, then
  \begin{itemize}
    \item $\mathfrak{K}[E,F] =  E^*\widehat{\otimes}_\epsilon F$;
    \item $\mathfrak{N}[E,F] =  E^*\widehat{\otimes}_\pi F$.
  \end{itemize}
\end{proposition}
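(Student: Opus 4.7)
My plan is to treat the two identifications separately. For the injective side, the paper has already identified $E^*\widehat{\otimes}_\epsilon F$ isometrically with the norm closure $\overline{\mathfrak{F}[E,F]} \subset \mathfrak{L}[E,F]$ of the finite-rank operators. Since finite-rank operators are compact and $\mathfrak{K}[E,F]$ is closed, one inclusion is automatic. For the reverse, I would argue as follows: if $F$ has the approximation property, then given any compact $T \colon E \to F$ the absolutely convex hull of $T(B_E)$ has compact closure $K \subset F$, so the AP produces $S \in \mathfrak{F}[F]$ with $\sup_{y\in K}\|Sy - y\| < \epsilon$, and hence $\|ST - T\|_{\mathfrak{L}[E,F]} \leq \epsilon$ with $ST \in \mathfrak{F}[E,F]$. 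If instead $E^*$ has the AP, I would pass to the adjoint $T^* \in \mathfrak{K}[F^*, E^*]$, apply the previous argument to approximate it in norm by finite-rank operators, and transfer back via Schauder's theorem, noting that a bounded operator whose adjoint is a norm limit of finite-rank operators is itself such a limit.

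For the projective identification, the natural map $J \colon E^*\widehat{\otimes}_\pi F \to \mathfrak{L}[E,F]$ has image $\mathfrak{N}[E,F]$ equipped with the quotient norm by definition, so the content is the injectivity of $J$. I would prove this by duality: recall $(E^*\widehat{\otimes}_\pi F)^* \cong \mathfrak{Bil}[E^*, F] \cong \mathfrak{L}[F, E^{**}]$, with pairing $\brt{T, x^* \otimes y} = \brt{Ty, x^*}$. It therefore suffices to show that every such $T$ annihilates each $\tau = \sum_n x_n^* \otimes y_n$ (with $\sum \|x_n^*\|\|y_n\| < \infty$) satisfying $J(\tau) = 0$. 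Taking the adjoint of $J(\tau)$, the condition $J(\tau) = 0$ is equivalent to $\sum_n \brt{y_n, u^*} x_n^* = 0$ in $E^*$ for every $u^* \in F^*$. Assuming $F$ has the AP, I pick a net $(S_\alpha) \subset \mathfrak{F}[F]$ with $S_\alpha \to \id_F$ pointwise and write $S_\alpha = \sum_j f^*_{\alpha,j} \otimes z_{\alpha,j}$, so that
\begin{equation}
  \sum_n \brt{TS_\alpha y_n, x_n^*} = \sum_j \Big\langle Tz_{\alpha,j},\ \sum_n \brt{y_n, f^*_{\alpha,j}} x_n^*\Big\rangle = 0,
\end{equation}
each inner sum vanishing by the adjoint form of $J(\tau)=0$. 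Passing to the limit in $\alpha$ yields $\brt{T,\tau}=0$. The case where $E^*$ has the AP is symmetric: swap the roles of the two factors in the duality pairing and approximate $\id_{E^*}$ on the compact set built from the adjoint representation instead.

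The main obstacle I expect is the limit passage in the displayed equation when the AP on $F$ is not assumed to be bounded; the $S_\alpha$ need not be uniformly bounded in operator norm, so a naive dominated convergence argument on the absolutely summable series fails. The workaround is to first normalize the representation of $\tau$ so that $\|y_n\| \to 0$ while keeping $\sum \|x_n^*\|$ finite (a standard rescaling trick of Grothendieck), turning $\{y_n\}\cup\{0\}$ into a compact subset of $F$. The AP then guarantees $S_\alpha y_n \to y_n$ uniformly in $n$, and combined with the continuity of $T$ this upgrades to a uniform tail estimate on the series, legitimizing the interchange of sum and limit and completing the proof.
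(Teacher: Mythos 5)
The paper states this proposition as a citation to Defant--Floret (Corollaries 5.3 and 5.7.1) and does not supply a proof, so there is no in-paper argument to compare against; I will assess your proof on its own terms.

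Your treatment of the projective identification is correct. The duality argument via Hahn--Banach is the right strategy, the computation with $S_\alpha = \sum_j f_{\alpha,j}^* \otimes z_{\alpha,j}$ is valid, and you have correctly anticipated the only real obstruction: the approximating net $(S_\alpha)$ need not be uniformly bounded, so one cannot directly pass to the limit in the series. The Grothendieck rescaling trick you describe (choosing $d_n \to 0$ with $\sum\|x_n^*\|\|y_n\|/d_n < \infty$, then replacing $x_n^*\otimes y_n$ by $(d_n^{-1}\|y_n\|x_n^*)\otimes(\|y_n\|^{-1}d_n y_n)$ so that the second factors form a null sequence) is precisely the standard fix, and once $\{y_n\}\cup\{0\}$ is compact the AP gives $\sup_n\|S_\alpha y_n - y_n\|\to 0$, which closes the argument. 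For the symmetric case where $E^*$ has the AP, you should make explicit the small but necessary observation that $J(\tau)=0$ forces $J(\tau)^{**}=0$, i.e.\ $\sum_n\brt{x^{**},x_n^*}y_n=0$ for all $x^{**}\in E^{**}$ and not merely for $x\in E$; this is what makes the inner sums vanish when the finite-rank approximants of $\id_{E^*}$ have the form $\sum_j \phi_{\alpha,j}\otimes w_{\alpha,j}^*$ with $\phi_{\alpha,j}\in E^{**}$.

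The injective identification in the case where $F$ has the AP is fine. The gap is in the case where only $E^*$ has the AP. Your closing remark, ``noting that a bounded operator whose adjoint is a norm limit of finite-rank operators is itself such a limit,'' is presented as an elementary observation, but it is not: the finite-rank operators $R\colon F^*\to E^*$ approximating $T^*$ have the form $\sum_j \phi_j\otimes x_j^*$ with $\phi_j\in F^{**}$, and such an $R$ is the adjoint of an operator $E\to F$ only if the $\phi_j$ can be taken in $F$ rather than $F^{**}$. The correct way to close this is to use the \emph{specific} approximants produced by the AP: choose finite-rank $S=\sum_j \phi_j\otimes x_j^*$ on $E^*$ (with $\phi_j\in E^{**}$) nearly the identity on the compact set $\overline{T^*(B_{F^*})}$, and compute $ST^* = \sum_j (T^{**}\phi_j)\otimes x_j^*$. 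Since $T$ is compact, $T^{**}(E^{**})\subseteq F$ (the bidual of a compact operator lands in $F$, because $T(B_E)$ is relatively norm-compact and hence its weak${}^*$-closure in $F^{**}$ equals its norm-closure in $F$). Therefore $T^{**}\phi_j\in F$, so $ST^* = R^*$ for the finite-rank operator $R = \sum_j x_j^*\otimes T^{**}\phi_j \in \mathfrak{F}[E,F]$, and $\|R-T\| = \|R^*-T^*\| = \|ST^*-T^*\| < \epsilon$. Without isolating this use of $T^{**}(E^{**})\subseteq F$ (or, alternatively, invoking the principle of local reflexivity to push arbitrary finite-rank approximants of $T^*$ into adjoint form), the ``transfer back'' step is not justified.
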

\begin{proposition}\label{prop:compact-maps}
  Let $i \in \brc{1,2}$, $\alpha \in \brc{\epsilon, \pi}$ and $T_i \colon E_i \to F_i$ be compact linear maps of normed spaces.
  If for each $i\in\brc{1,2}$, either $E_i^*$ or $F_i$ has the approximation property, then
  the induced map $T_1 \widehat{\otimes}_\alpha T_2 \colon E_1 \widehat{\otimes}_\alpha E_2 \to F_1 \widehat{\otimes}_\alpha F_2$
  is compact.
\end{proposition}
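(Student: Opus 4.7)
The plan is to show that $T_1 \widehat{\otimes}_\alpha T_2$ is an operator-norm limit of finite-rank operators, which suffices because the compact operators form a closed subspace of $\mathfrak{L}[E_1 \widehat{\otimes}_\alpha E_2, F_1 \widehat{\otimes}_\alpha F_2]$. The estimate~\eqref{ineq:tens-norm-operator} provides the necessary quantitative control on tensor products of operators.

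The first step is to approximate each $T_i$ in operator norm by finite-rank operators. After passing to the completions (which does not affect the tensor products by the definition of $\widehat{\otimes}_\alpha$, and which preserves compactness of $T_i$ as well as the duals $E_i^* = \widehat{E}_i^*$), I would invoke the preceding proposition: under the approximation property hypothesis, $\mathfrak{K}[E_i, F_i] = E_i^* \widehat{\otimes}_\epsilon F_i$, and since $E_i^* \otimes F_i = \mathfrak{F}[E_i, F_i]$ is by definition dense in $E_i^* \widehat{\otimes}_\epsilon F_i$, there exist $T_i^{(n)} \in \mathfrak{F}[E_i, F_i]$ with $T_i^{(n)} \to T_i$ in operator norm.

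Next, observe that each $T_1^{(n)} \widehat{\otimes}_\alpha T_2^{(n)}$ is itself finite-rank: its image is contained in the algebraic tensor product $\iml(T_1^{(n)}) \otimes \iml(T_2^{(n)})$, which is a finite-dimensional (and hence closed) subspace of $F_1 \widehat{\otimes}_\alpha F_2$ regardless of the choice of tensor norm. In particular, these approximants are compact.

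Finally, a telescoping argument combined with~\eqref{ineq:tens-norm-operator} yields
\begin{equation*}
  \norm{T_1 \widehat{\otimes}_\alpha T_2 - T_1^{(n)} \widehat{\otimes}_\alpha T_2^{(n)}} \leq \norm{T_1}\,\norm{T_2 - T_2^{(n)}} + \norm{T_1 - T_1^{(n)}}\,\norm{T_2^{(n)}},
\end{equation*}
which tends to zero because $\norm{T_2^{(n)}}$ stays bounded. Hence $T_1 \widehat{\otimes}_\alpha T_2$ is a norm limit of compact operators and therefore compact. The only step that uses the structure of the problem in a nontrivial way is the finite-rank approximation of $T_i$, where the approximation property is genuinely invoked; the rest is a routine norm-approximation argument.
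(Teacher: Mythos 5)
Your proof is correct and follows essentially the same route as the paper's: both use the identification $\mathfrak{K}[E_i,F_i]=\overline{\mathfrak{F}[E_i,F_i]}$ from the preceding proposition, then push finite-rank approximation through the tensor product via the norm bound~\eqref{ineq:tens-norm-operator}. You are simply more explicit than the paper about the completion step, the telescoping estimate, and the fact that $T_1^{(n)}\widehat{\otimes}_\alpha T_2^{(n)}$ is finite-rank.
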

\begin{proof}
  By the previous proposition, we have $T_i \in \mathfrak{K}[E_i,F_i] = \overline{\mathfrak{F}[E_i,F_i]}$. Then the norm estimate~\eqref{ineq:tens-norm-operator}
  shows that $T_1 \widehat{\otimes}_\alpha T_2 \in \overline{\mathfrak{F}[E_1 \widehat{\otimes}_\alpha E_2,F_1 \widehat{\otimes}_\alpha F_2]}$,
  which is included in $\mathfrak{K}[E_1 \widehat{\otimes}_\alpha E_2,F_1 \widehat{\otimes}_\alpha F_2]$.
\end{proof}

\begin{proposition}\label{prop:injective-maps}
  Let $i \in \brc{1,2}$, $T_i \in \mathfrak{L}[E_i,F_i]$ be injective maps of Banach spaces. Then
  \begin{itemize}
    \item $T_1 \widehat{\otimes}_\epsilon T_2$ is injective;
    \item if there exist $i,i'\in\brc{1,2}$ such that both $E_i$ and $F_{i'}$ have the approximation property, $T_1 \widehat{\otimes}_\pi T_2$
    is injective as well.
  \end{itemize}
\end{proposition}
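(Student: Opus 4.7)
The plan is to reduce the injectivity of $T_1 \widehat{\otimes}_\alpha T_2$ (for $\alpha\in\{\epsilon,\pi\}$) to the injectivity of $T_1$ and $T_2$ separately by transferring the problem to an operator picture and running a Banach--Dieudonn\'{e}/Krein--Smulian argument. To each $\tau \in E_1 \widehat{\otimes}_\alpha E_2$ I would associate the operator $S_\tau \in \mathfrak{L}[E_1^*, E_2]$ determined by $\langle x_2^*, S_\tau x_1^* \rangle = \tau(x_1^*, x_2^*)$, where $\tau$ is viewed as a bilinear form via the canonical map $E_1 \widehat{\otimes}_\alpha E_2 \to \mathfrak{Bil}[E_1^*, E_2^*]$. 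A check on elementary tensors, extended by density, yields the commutation identity
\begin{equation*}
  S_{(T_1 \widehat{\otimes}_\alpha T_2)(\tau)} = T_2 \circ S_\tau \circ T_1^* \colon F_1^* \to F_2.
\end{equation*}

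Assume $(T_1 \widehat{\otimes}_\alpha T_2)(\tau) = 0$. Applying the continuous map $\tau' \mapsto S_{\tau'}$ and using the injectivity of $T_2$ gives $S_\tau \circ T_1^* = 0$, so $S_\tau$ vanishes on $T_1^*(F_1^*) \subset E_1^*$, which is weak$^*$-dense in $E_1^*$ (by injectivity of $T_1$). To upgrade this to $S_\tau = 0$, fix an arbitrary $x_2^* \in E_2^*$ and set $\phi(x_1^*) := \langle x_2^*, S_\tau x_1^* \rangle$. Approximating $\tau$ by finite-rank tensors in the $\alpha$-norm realizes $\phi$ as a uniform limit, on any bounded subset of $E_1^*$, of weak$^*$-continuous linear functionals; hence $\phi$ is weak$^*$-continuous on $B_{E_1^*}$. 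The Banach--Dieudonn\'{e} theorem then promotes this to weak$^*$-continuity on all of $E_1^*$, so $\phi$ is represented by some $\tilde\phi \in E_1$. The vanishing of $\phi$ on $T_1^*(F_1^*)$ reads $T_1 \tilde\phi = 0$, whence $\tilde\phi = 0$ by injectivity of $T_1$. Since $x_2^*$ was arbitrary, $S_\tau = 0$.

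It remains to deduce $\tau = 0$ from $S_\tau = 0$. For $\alpha = \epsilon$ this is automatic: the embedding $E_1 \widehat{\otimes}_\epsilon E_2 \hookrightarrow \mathfrak{Bil}[E_1^*, E_2^*]$ is injective by the very definition of the $\epsilon$-norm, so no approximation property is needed. For $\alpha = \pi$, the map $E_1 \widehat{\otimes}_\pi E_2 \to \mathfrak{N}[E_1^*, E_2]$ is a priori only a quotient map, but the approximation-property hypothesis makes it an isomorphism through the identification $\mathfrak{N}[E,F] = E^* \widehat{\otimes}_\pi F$ cited earlier, possibly after swapping $E_1 \leftrightarrow E_2$ and $F_1 \leftrightarrow F_2$ to match whichever orientation of $S_\tau$ is convenient (this is why the hypothesis allows the indices $i, i'$ to range freely). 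The main technical obstacle is the Krein--Smulian step: since $T_1^*(F_1^*)$ is only weak$^*$-dense and $S_\tau$ is only norm-continuous a priori, vanishing on this set would not force $S_\tau = 0$ without first extending the weak$^*$-continuity of $\phi$ from the unit ball to all of $E_1^*$. The approximation property enters only in the projective case, and only to render the tensor-to-operator assignment injective.
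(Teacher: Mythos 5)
Your proof is correct, but it takes a genuinely different route from the paper's. The paper's argument is a two-line diagram chase: it cites Defant--Floret for the injectivity of $T_1 \widehat{\otimes}_\epsilon T_2$ and for the injectivity of the canonical maps $E_1 \widehat{\otimes}_\pi E_2 \to E_1 \widehat{\otimes}_\epsilon E_2$ and $F_1 \widehat{\otimes}_\pi F_2 \to F_1 \widehat{\otimes}_\epsilon F_2$ under the AP hypotheses, then concludes via the commutative square you only sketch at the very end. You instead re-derive the $\epsilon$-case from scratch: pass to the operator picture $S_\tau \in \mathfrak{L}[E_1^*,E_2]$, reduce to the vanishing of $S_\tau$ on the weak$^*$-dense range $T_1^*(F_1^*)$, and bootstrap to $S_\tau = 0$ by showing each $\phi(\cdot)=\langle x_2^*, S_\tau(\cdot)\rangle$ is a uniform-on-balls limit of weak$^*$-continuous functionals and invoking Banach--Dieudonné. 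This is more self-contained and makes the mechanism visible, at the cost of more functional-analytic overhead than a diagram chase. Two minor points worth flagging: (1) the fact you need in the $\pi$-case is the injectivity of $E_1 \widehat{\otimes}_\pi E_2 \to E_1 \widehat{\otimes}_\epsilon E_2$, which holds when some $E_i$ has AP; this is not the same statement as $\mathfrak{N}[E,F] = E^*\widehat{\otimes}_\pi F$ (which concerns the first factor being a dual space), and the cleaner citation is the one the paper uses. (2) Neither your proof nor the paper's diagram chase, read carefully, ever uses the AP of any $F_{i'}$ --- in the square, only the injectivity of the left vertical arrow and of the bottom arrow is needed --- so the stated hypothesis is slightly stronger than required, and your parenthetical about ``why the indices $i,i'$ range freely'' does not quite diagnose why it is phrased that way.
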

\begin{proof}
  The injectivity of $T_1 \widehat{\otimes}_\epsilon T_2$ follows from \cite[Proposition~4.3.(2)]{Defant-Floret:1993:tensor-norms}.
  Under the second assumption and \cite[Theorem~5.6.(d)]{Defant-Floret:1993:tensor-norms}, we have that the maps
  $E_1 \widehat{\otimes}_\pi E_2 \to E_1 \widehat{\otimes}_\epsilon E_2$ and $F_1 \widehat{\otimes}_\pi F_2 \to F_1 \widehat{\otimes}_\epsilon F_2$
  are injective. Then the injectivity of $T_1 \widehat{\otimes}_\pi T_2$ follows from the commutative diagram:
  \begin{equation}
     \begin{tikzcd}
      E_1 \widehat{\otimes}_\pi E_2 && F_1 \widehat{\otimes}_\pi F_2 \\
      E_1 \widehat{\otimes}_\epsilon E_2 && F_1 \widehat{\otimes}_\epsilon F_2
      \arrow["{T_1\widehat{\otimes}_\pi T_2}", from=1-1, to=1-3]
      \arrow[from=1-1, to=2-1]
      \arrow[from=1-3, to=2-3]
      \arrow["{T_1\widehat{\otimes}_\epsilon T_2}", from=2-1, to=2-3]
    \end{tikzcd}.
  \end{equation}
\end{proof}

To pass to weakly$^*$ converging subsequences, it is important to know when $E^* \widehat{\otimes}_\pi F^*$ has a predual.
Let $L^p(\mu) = L^p(X,\mu)$ denote a Banach space of $p$-integrable (complex- or real-valued) functions over a measure space $(X,\mu)$.
\begin{definition}
  A Banach space $E$ is said to have the \emph{Radon-Nikodým property} if for each finite measure $\mu$, every operator $T \in \mathfrak{L}[L^1(X,\mu), E]$ is representable
  by a measurable function $f\colon X \to E$, i.e. $T\phi = \int f\phi d\mu$.
\end{definition}
\begin{proposition}[{\cite[Appendix~D3]{Defant-Floret:1993:tensor-norms}}]\label{prop:tensor-duality}
  Let $E$ be a Banach space. If $E^*$ is either separable or reflexive then $E^*$ has the Radon-Nikodým property.
\end{proposition}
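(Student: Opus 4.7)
The plan is to produce, for each $T \in \mathfrak{L}[L^1(X,\mu), E^*]$, a measurable function $f\colon X \to E^*$ with $T\phi = \int f\phi\, d\mu$. The underlying idea is to use the scalar Radon-Nikodým theorem to extract one-dimensional derivatives of $T$ in each direction of $E$, and then to paste them together into a single $E^*$-valued function. Concretely, I would first introduce the $E^*$-valued vector measure $m(A) := T(\chi_A)$, which is countably additive with $|m|(X)\le \|T\|\mu(X)$ and $m \ll \mu$; then for each fixed $x \in E$ the scalar measure $A \mapsto \langle m(A),x\rangle$ admits a density $g_x \in L^\infty(\mu)$ with $\|g_x\|_\infty \leq \|T\|\|x\|$, giving a bounded linear map $x \mapsto g_x$ from $E$ to $L^\infty(\mu)$.

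In the separable-dual case, separability of $E^*$ forces separability of $E$. I would pick a countable $\mathbb{Q}$-linear dense subset $D \subset E$ and Borel representatives of the corresponding $g_x$. Using countability of $D$, one finds a single conull set $X_0$ on which $x \mapsto g_x(\omega)$ is $\mathbb{Q}$-linear and bounded by $\|T\|\|x\|$; by continuity this extends uniquely to some $f(\omega) \in E^*$. Then $\omega \mapsto \langle f(\omega),x\rangle$ is measurable for every $x \in E$, and Pettis' measurability theorem upgrades this to Bochner measurability thanks to separability of $E^*$. The identity $T\phi = \int f\phi\, d\mu$ holds on indicator functions by construction and extends to all of $L^1(\mu)$ by linearity and density.

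For the reflexive case, the cleanest route is a direct martingale argument: on a refining sequence of finite $\mu$-measurable partitions $\pi_n$ define $f_n := \sum_{A \in \pi_n} \mu(A)^{-1} m(A)\,\chi_A$, uniformly bounded by $\|T\|$ in $E^*$. Weak compactness of bounded sets in the reflexive space $E^*$ together with a measurable-selection argument (Chatterji's theorem) yields an a.e.\ weak limit $f$, and the martingale identity gives $\int_A f\,d\mu = m(A)$. Alternatively, one can reduce to a separable reflexive subspace of $E^*$, which is automatically a separable dual of its own dual, and then invoke the previous step.

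The hard part will be the ``exchange of quantifiers'' in the separable step: promoting the family $\{g_x\}_{x\in E}$, each only defined up to a null set, to a single pointwise-in-$\omega$ bounded linear functional on all of $E$. This requires a countable dense $D \subset E$ to align the null sets, together with Pettis' theorem to pass from weak-$*$ measurability to strong measurability, and is precisely where the structural hypotheses on $E^*$ (separability, or reflexivity via reduction) must be invoked.
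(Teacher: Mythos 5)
The paper does not prove this proposition; it cites it to Defant--Floret (Appendix D3), where it is assembled from two classical facts: the Dunford--Pettis theorem (separable dual spaces have the Radon--Nikod\'ym property) and Phillips' theorem (reflexive spaces have it). Your sketch reconstructs exactly that classical route, and the ideas are sound. In the separable branch the key tool you need is the weak-$*$ form of Pettis' measurability theorem: for $E$ separable, a function $\Omega \to E^*$ that is weak-$*$ measurable and essentially separably valued is strongly (Bochner) measurable. Separability of $E^*$ supplies the essentially-separable-range hypothesis automatically, and your alignment of null sets over a countable $\mathbb{Q}$-linear dense $D\subset E$ is the standard construction. That part is fine.

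In the reflexive branch there is a gap you should close. The martingale $f_n$ built from a fixed refining sequence of finite partitions $\pi_n$ represents the vector measure $m$ only on the $\sigma$-algebra generated by $\bigcup_n \pi_n$; for a general finite measure space this countable filtration need not be the whole $\sigma$-algebra, so the resulting $f$ need not represent $T$ on all of $L^1(\mu)$. The standard remedy is your alternative route --- reduce to separable subspaces --- but that reduction in turn rests on the nontrivial (though standard) fact that the Radon--Nikod\'ym property is \emph{separably determined}: a Banach space has it iff every closed separable subspace does. You invoke this implicitly and should state it. Granting it, a separable closed subspace $Y$ of the reflexive $E^*$ is separable reflexive; then $Y^*$ has separable bidual $Y^{**}=Y$, hence is itself separable, so $Y=(Y^*)^*$ is a separable dual and falls under the first branch. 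A small further remark: Chatterji's theorem actually gives almost-everywhere \emph{norm} convergence of the $L^1$-bounded martingale in a reflexive space, not merely weak convergence, which is what delivers strong measurability of the limit directly.
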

\begin{proposition}[{\cite[Theorem~16.6]{Defant-Floret:1993:tensor-norms}}]
  Let $E$ and $F$ be Banach spaces.
  If either $E^*$ or $F^*$ has the approximation property, and either $E^*$ or $F^*$ has the Radon-Nikodým property, then
  \begin{equation}
    (E \widehat{\otimes}_\epsilon F)^* = E^* \widehat{\otimes}_\pi F^* \quad \text{isometrically.}
  \end{equation}
\end{proposition}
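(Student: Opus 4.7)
The plan is to construct the canonical pairing map $\Phi \colon E^* \widehat{\otimes}_\pi F^* \to (E \widehat{\otimes}_\epsilon F)^*$ and to show it is a surjective isometry. On elementary tensors it is defined by $\langle \sum_j a_j\otimes b_j,\ \Phi(\sum_i x_i^*\otimes y_i^*)\rangle := \sum_{i,j}\langle a_j, x_i^*\rangle \langle b_j, y_i^*\rangle$. Since every simple tensor $x^*\otimes y^*$ acts on $u \in E\otimes F$ through the canonical bilinear form with $\abs{\langle u, x^*\otimes y^*\rangle}\le \norm{u}_\epsilon\norm{x^*}\norm{y^*}$ by definition of the injective norm, the triangle inequality followed by passage to the infimum over representations yields $\norm{\Phi(\tau)}\le \norm{\tau}_\pi$, so $\Phi$ extends to a norm-$1$ map on completions. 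Injectivity follows from Proposition~\ref{prop:injective-maps}: the AP hypothesis makes the canonical arrow $E^*\widehat{\otimes}_\pi F^* \to E^*\widehat{\otimes}_\epsilon F^*\hookrightarrow \mathfrak{L}[E,F^*]$ injective, and $\Phi$ factors through it via the evident pairing with $E\widehat{\otimes}_\epsilon F$.

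For surjectivity and for the reverse norm bound I would use the classical identification of $E\widehat{\otimes}_\epsilon F$ as a closed subspace of $C(K)$, where $K := B_{E^*}\times B_{F^*}$ carries the product of the weak-$*$ topologies. Under this embedding the simple tensor $a\otimes b$ corresponds to the continuous function $(x^*,y^*)\mapsto \langle a, x^*\rangle \langle b, y^*\rangle$, and the injective norm of $u$ equals its sup norm on $K$. Hence, given any $\phi \in (E\widehat{\otimes}_\epsilon F)^*$ with $\norm{\phi}\le 1$, Hahn-Banach extends $\phi$ without increasing the norm to a functional on $C(K)$, and the Riesz representation theorem supplies a Radon measure $\mu$ on $K$ with $\norm{\mu}\le \norm{\phi}$ such that $\phi(u)=\int_K u\, d\mu$ for all $u$.

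The hard step is to convert this scalar representation into an element of $E^*\widehat{\otimes}_\pi F^*$, and this is precisely where the Radon-Nikod\'ym property enters. Writing $d\mu = f\, d|\mu|$ with $|f|=1$ and assuming, say, that $F^*$ has the RNP, I would introduce the operator $T\colon L^1(K,|\mu|)\to F^*$ given by $T(g) := \int_K g(x^*,y^*)\,f(x^*,y^*)\, y^*\, d|\mu|$, a weak-$*$ integral which is well-defined because $y^*$ ranges over the bounded set $B_{F^*}$. By the RNP, $T$ is representable by a Bochner-integrable function $h\colon K\to F^*$ with $\int_K\norm{h}\,d|\mu|\le\norm{\mu}$. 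The Bochner integral
\begin{equation}
\tau := \int_K x^*\otimes h(x^*,y^*)\, d|\mu|(x^*,y^*) \in E^*\widehat{\otimes}_\pi F^*
\end{equation}
converges absolutely because its integrand has projective norm at most $\norm{x^*}\cdot\norm{h}\le\norm{h}$, and one verifies on elementary tensors of $E\otimes F$ that $\Phi(\tau)=\phi$ and $\norm{\tau}_\pi\le\norm{\mu}\le\norm{\phi}$. Combined with the contractive estimate of the first paragraph this yields the isometric identification. The approximation property is used once more here to ensure that $\tau$ is uniquely determined by its action as a nuclear operator, so that the displayed Bochner integral is an unambiguously defined element of $E^*\widehat{\otimes}_\pi F^*$ rather than merely of $\mathfrak{N}[E,F^*]$.
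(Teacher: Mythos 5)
The overall strategy — the canonical contraction $\Phi$, injectivity via the approximation property, and surjectivity together with the reverse norm bound obtained by embedding $E\widehat{\otimes}_\epsilon F$ isometrically into $C(K)$ for $K = B_{E^*}\times B_{F^*}$, applying Hahn--Banach and Riesz representation, and then upgrading via the Radon--Nikod\'ym property — is a standard and correct route, but there is a genuine gap at the display $\tau := \int_K x^*\otimes h(x^*,y^*)\,d|\mu|$. Absolute integrability alone does not make a Bochner integral exist: one also needs the integrand to be strongly $|\mu|$-measurable into $E^*\widehat{\otimes}_\pi F^*$, and here this can fail. The RNP gives strong measurability of $h$, but the coordinate map $(x^*,y^*)\mapsto x^*$ is only weak-$*$ continuous, and $B_{E^*}$ need not be norm-separable; since $x^*\mapsto x^*\otimes c$ is a linear isometry for any fixed $c\neq 0$, the integrand is in general not essentially separably valued in $E^*\widehat{\otimes}_\pi F^*$, so Pettis measurability breaks down. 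Your closing remark, that the approximation property makes the Bochner integral ``an unambiguously defined element of $E^*\widehat{\otimes}_\pi F^*$ rather than merely of $\mathfrak{N}[E,F^*]$'', does not address this: a Bochner integral in a Banach space, once it exists, is automatically a unique element of that space, and the AP is not a measurability hypothesis; it is used exactly once, for the injectivity of $\Phi$, as in your second paragraph.

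The gap can be closed while keeping your structure by replacing the direct Bochner integral with a limit of finite tensors. Choose simple functions $h_n = \sum_j \chi_{A_j^n}\, c_j^n$ with $\|h_n\|_\infty\le 1$ and $\|h_n - h\|_{L^1(|\mu|,F^*)}\to 0$, and set $\xi_j^n := \int_{A_j^n} x^*\,d|\mu|(x^*,y^*)\in E^*$; this is a Gelfand (weak-$*$) integral, which always exists because $(x^*,y^*)\mapsto\langle a,x^*\rangle$ is weak-$*$ continuous for each $a\in E$, and it satisfies $\|\xi_j^n\|\le |\mu|(A_j^n)$. Then $\tau_n := \sum_j \xi_j^n\otimes c_j^n\in E^*\otimes F^*$ has $\|\tau_n\|_\pi\le|\mu|(K)$, and after passing to a common refinement of partitions one gets $\|\tau_n-\tau_m\|_\pi\le\|h_n-h_m\|_{L^1(|\mu|,F^*)}$, so $(\tau_n)$ is Cauchy in $E^*\widehat{\otimes}_\pi F^*$. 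Its limit $\tau$ satisfies $\Phi(\tau)(a\otimes b) = \lim_n\int_K\langle a,x^*\rangle\langle b,h_n\rangle\,d|\mu| = \int_K\langle a,x^*\rangle\langle b,y^*\rangle\,d\mu = \phi(a\otimes b)$ and $\|\tau\|_\pi\le\|\mu\|\le\|\phi\|$, which combined with the contractivity of $\Phi$ and its injectivity gives the isometric identification you want.
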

We conclude with the following elementary observation.
\begin{proposition}\label{prop:weak-star-conv}
  Let $i\in \brc{1,2}$, and let $T_i \in \mathfrak{L}[E_i,F_i]$ be bounded linear maps between reflexive Banach spaces
  with the approximation property. Then
  \begin{equation}
    T_1 \widehat{\otimes}_\pi T_2 = (T_1^* \widehat{\otimes}_\epsilon T_2^*)^*,
  \end{equation}
  and hence $T_1 \widehat{\otimes}_\pi T_2$ is weakly$^*$ continuous.
\end{proposition}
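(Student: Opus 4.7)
The plan is to use the duality $(E^*\widehat{\otimes}_\epsilon F^*)^* = E\widehat{\otimes}_\pi F$ supplied by the preceding proposition to reinterpret $T_1\widehat{\otimes}_\pi T_2$ as the Banach-space adjoint of $T_1^*\widehat{\otimes}_\epsilon T_2^*$, and then to read off weak$^*$ continuity for free. The first task is to verify that the hypotheses of that duality statement apply here. Plugging in $(E,F) \leftarrow (E_1^*, E_2^*)$, reflexivity yields $(E_i^*)^* = E_i$, both of which have the approximation property by assumption; and since $E_1$ is reflexive, it has the Radon--Nikodým property by Proposition~\ref{prop:tensor-duality}. Thus $E_1\widehat{\otimes}_\pi E_2 = (E_1^*\widehat{\otimes}_\epsilon E_2^*)^*$ isometrically, and the analogous identification holds on the $F$-side. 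In particular, the adjoint $(T_1^*\widehat{\otimes}_\epsilon T_2^*)^*$ is a bounded linear map $E_1\widehat{\otimes}_\pi E_2 \to F_1\widehat{\otimes}_\pi F_2$, with the same domain and codomain as $T_1\widehat{\otimes}_\pi T_2$.

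Next, I would verify that these two maps agree on elementary tensors, which suffices by density and continuity. Using the canonical pairing $\brt{x_1\otimes x_2, y_1^*\otimes y_2^*} = \brt{x_1,y_1^*}\brt{x_2, y_2^*}$ that implements the duality, a direct computation gives
\begin{align}
\brt{(T_1^*\widehat{\otimes}_\epsilon T_2^*)^*(x_1\otimes x_2),\, y_1^*\otimes y_2^*}
&= \brt{x_1\otimes x_2,\, T_1^*y_1^* \otimes T_2^*y_2^*} \\
&= \brt{T_1x_1, y_1^*}\brt{T_2x_2, y_2^*} \\
&= \brt{T_1x_1\otimes T_2x_2,\, y_1^*\otimes y_2^*}
\end{align}
for all $x_i \in E_i$ and $y_i^* \in F_i^*$. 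Since elementary tensors are dense in $F_1^*\widehat{\otimes}_\epsilon F_2^*$, this shows that $(T_1^*\widehat{\otimes}_\epsilon T_2^*)^*$ sends the elementary tensor $x_1\otimes x_2$ to $T_1x_1\otimes T_2 x_2$, which is exactly the action of $T_1\widehat{\otimes}_\pi T_2$. Both maps are continuous by~\eqref{ineq:tens-norm-operator}, and elementary tensors are dense in $E_1\widehat{\otimes}_\pi E_2$, so the two operators coincide everywhere.

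The weak$^*$ continuity assertion is then automatic: the Banach-space adjoint of any bounded linear operator is weak$^*$-to-weak$^*$ continuous. I do not anticipate any genuine obstacle here. The argument is essentially bookkeeping under the duality identifications of the preceding proposition, and the only genuinely technical point is checking that both the approximation property and the Radon--Nikodým property are available on the relevant double duals, which follows cleanly from the reflexivity assumption.
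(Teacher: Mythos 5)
Your proof is correct. The paper states this proposition without proof (it is prefaced by "We conclude with the following elementary observation"), and the argument you give—apply the preceding duality theorem with $E \leftarrow E_i^*$, $F \leftarrow F_i^*$, use reflexivity to collapse the biduals and to secure both the approximation property and the Radon--Nikodým property via Proposition~\ref{prop:tensor-duality}, then verify agreement on elementary tensors and invoke density—is precisely the intended one, and each step is carried out correctly.
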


\subsubsection{Tensor products of Hilbert spaces}
Recall that for a vector space $E$, $\overbar{E}$ denotes its complex conjugate; that is, scalar multiplication is defined by
$\alpha \cdot x := \overline{\alpha} x$ for $x \in \overbar{E}$. Note that $\overbar{E}$ is (anti-)linearly isometric
to $E$, depending on whether or not there exists an antilinear involution ($x \mapsto \overbar{x}$). Also,
$\mathfrak{L}[E,F] = \mathfrak{L}[\overbar{E},\overbar{F}]$ and $\mathfrak{L}[\overbar{E},F] = \mathfrak{L}[E,\overbar{F}]$ as sets,
and $x^* \in (\overbar{E})^* = \overbar{E^*}$ acts as $\brt{y,x^*}_{\overbar{E},\overbar{E^*}} := \overbar{\brt{y,x^*}}_{E,E^*}$.

Let $H$ be a Hilbert space over $\mathbb{K}$. We identify $H^* \cong \overbar{H}$, where $\overbar{H}$ is the complex conjugate; that is,
$x$ is identified with $x^*(y) = \brt{x,y}_H$ (of course, $\overbar{H} = H$ if $\mathbb{K} = \mathbb{R}$). Under this identification, the inner product
becomes a bilinear form $\brt{\cdot,\cdot}\colon \overbar{H} \times H \to \mathbb{K}$, and we have the following isometries:
\begin{itemize}
  \item $\mathfrak{Bil}[\overbar{H}, H] \cong \mathfrak{L}[H] \cong \mathfrak{Bil}[H, \overbar{H}]$;
  \item $\overbar{H} \widehat{\otimes}_{\epsilon} H \cong \mathfrak{K}[H]$;
  \item $\overbar{H}\widehat{\otimes}_{\pi} H \cong \mathfrak{N}[H]$.
\end{itemize}

Moreover, one checks that for
$\t \in \mathfrak{N}[H]$, the projective norm is actually the trace norm:
\begin{equation}
  \norm{\t}_\pi = \norm{\t}_{\mathfrak{N}[H]} = \Tr \sqrt{\t^* \t}.
\end{equation}
In particular, for a positive (self-adjoint) tensor $\t = \sum x_i \otimes x_i$,
the norm is simply the trace:
\begin{equation}\label{eq:positive-tens-norm}
  \norm{\t}_{\pi} = \Tr \t = \sum \norm{x_i}^2.
\end{equation}
\begin{lemma}\label{lem:tensor-decomp}
  Let $E$ be a normed space, $\boldsymbol{\iota}\colon E \to H$ and $\boldsymbol{\iota}_\epsilon\colon E \to H_\epsilon$
  be continuous injections into Hilbert spaces such that
  $\norm{\boldsymbol{\iota}_\epsilon} \leq C$ and $\boldsymbol{\iota}_\epsilon^*\boldsymbol{\iota}_\epsilon \oset[0.5pt]{s}{\to} \boldsymbol{\iota}^*\boldsymbol{\iota} \in \mathfrak{L}[E,\overbar{E}^*]$.
  If $\t_\epsilon, \t \in \overbar{E}\widehat{\otimes}_{\pi}E$ with $\t_\epsilon \oset{pt}{\to} \t $, then
  there exists a collection of finite-rank orthogonal projectors $\mathfrak{p}_\epsilon \in \mathfrak{L}[H_\epsilon]$
  such that
  $\norm{\mathfrak{p}_\epsilon (\boldsymbol{\iota}_\epsilon\t_\epsilon\boldsymbol{\iota}_\epsilon^*) \mathfrak{p}_\epsilon - \boldsymbol{\iota}_\epsilon\t\boldsymbol{\iota}_\epsilon^*}_{\pi} \to 0$.
\end{lemma}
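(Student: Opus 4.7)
My plan is to approximate $\t$ by a finite-rank truncation and use it to define $\mathfrak{p}_\epsilon$; a diagonal argument then handles the tail while a rank-times-operator-norm bound reduces the ``near'' part to uniform convergence on a precompact set.

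Fix a representation $\t = \sum_{i \geq 1}\bar{x}_i \otimes y_i$ with $\sum_i \norm{x_i}\norm{y_i} < \infty$, set $\t^N = \sum_{i \leq N}\bar{x}_i \otimes y_i$, and let $\mathfrak{p}_\epsilon^N$ be the orthogonal projection of $H_\epsilon$ onto $V_\epsilon^N := \boldsymbol{\iota}_\epsilon W_N$, where $W_N = \mathrm{span}(x_i, y_i : i \leq N) \subset E$ has dimension at most $2N$. Since $\mathfrak{p}_\epsilon^N$ fixes both the range and the co-range of $\boldsymbol{\iota}_\epsilon \t^N \boldsymbol{\iota}_\epsilon^*$, writing $A_\epsilon = \boldsymbol{\iota}_\epsilon \t_\epsilon \boldsymbol{\iota}_\epsilon^*$ and $B_\epsilon = \boldsymbol{\iota}_\epsilon \t \boldsymbol{\iota}_\epsilon^*$, we have
\begin{equation}
  \mathfrak{p}_\epsilon^N A_\epsilon \mathfrak{p}_\epsilon^N - B_\epsilon = \mathfrak{p}_\epsilon^N \boldsymbol{\iota}_\epsilon (\t_\epsilon - \t) \boldsymbol{\iota}_\epsilon^* \mathfrak{p}_\epsilon^N + \mathfrak{p}_\epsilon^N \boldsymbol{\iota}_\epsilon (\t - \t^N) \boldsymbol{\iota}_\epsilon^* \mathfrak{p}_\epsilon^N - \boldsymbol{\iota}_\epsilon (\t - \t^N) \boldsymbol{\iota}_\epsilon^*.
\end{equation}
The last two terms are bounded in $\pi$-norm by $2 C^2 \sum_{i > N} \norm{x_i}\norm{y_i}$ uniformly in $\epsilon$, which tends to $0$ as $N \to \infty$.

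For the first term, its rank is at most $2N$, so its $\pi$-norm is at most $2N$ times its operator norm, which equals $\sup \abs{(\t_\epsilon - \t)[\boldsymbol{\iota}_\epsilon^* h, \boldsymbol{\iota}_\epsilon^* h']}$ over unit vectors $h, h' \in V_\epsilon^N$. The uniform boundedness principle applied to $\t_\epsilon \oset{pt}{\to} \t$ gives $\brc{\t_\epsilon}$ uniformly bounded in $\mathfrak{Bil}[\overbar{E}^*, E^*]$, hence equicontinuous. The strong convergence $\boldsymbol{\iota}_\epsilon^*\boldsymbol{\iota}_\epsilon \oset[0.5pt]{s}{\to} \boldsymbol{\iota}^*\boldsymbol{\iota}$ restricted to the finite-dimensional $W_N$ is norm convergence, and since $\boldsymbol{\iota}^*\boldsymbol{\iota}$ is positive definite on $W_N$ (as $\boldsymbol{\iota}$ is injective), we obtain $\norm{\boldsymbol{\iota}_\epsilon u}_{H_\epsilon}^2 \geq c_N \norm{u}_E^2$ for $u \in W_N$ and $\epsilon$ small. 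Thus every unit $h \in V_\epsilon^N$ takes the form $\boldsymbol{\iota}_\epsilon u$ with $\norm{u}_E \leq R_N$, and the set $\brc{\boldsymbol{\iota}_\epsilon^* h : h \in V_\epsilon^N,\; \norm{h} = 1}$ lies in a single precompact subset of $E^*$ for $\epsilon$ small. Equicontinuity combined with pointwise convergence on a precompact set forces uniform convergence, and hence the first term tends to $0$ in $\pi$-norm as $\epsilon \to 0$ for each fixed $N$.

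Consequently $\limsup_{\epsilon \to 0} \norm{\mathfrak{p}_\epsilon^N A_\epsilon \mathfrak{p}_\epsilon^N - B_\epsilon}_\pi \leq 2 C^2 \sum_{i > N} \norm{x_i}\norm{y_i} \to 0$ as $N \to \infty$, and a standard diagonal argument produces $N(\epsilon) \to \infty$ with $\mathfrak{p}_\epsilon := \mathfrak{p}_\epsilon^{N(\epsilon)}$ satisfying the required estimate. The main difficulty lies in the operator-norm step: because the evaluation points $\boldsymbol{\iota}_\epsilon^* h$ depend on $\epsilon$, neither pointwise convergence nor equicontinuity alone suffices, and it is precisely the combination with the strong convergence of $\boldsymbol{\iota}_\epsilon^*\boldsymbol{\iota}_\epsilon$ that supplies the uniform-in-$\epsilon$ precompactness of the evaluation set needed to upgrade pointwise convergence to uniform convergence.
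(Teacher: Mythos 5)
Your proof is correct and follows the same scaffolding as the paper's: truncate $\t$ to a finite-rank $\t^N$, project onto $V^N_\epsilon = \boldsymbol{\iota}_\epsilon(W_N)$, bound the two tail terms uniformly in $\epsilon$ by $2C^2\sum_{i>N}\norm{x_i}\norm{y_i}$, and diagonalize. The genuine difference is in how the ``near'' term $\mathfrak{p}_\epsilon^N\,\boldsymbol{\iota}_\epsilon(\t_\epsilon-\t)\boldsymbol{\iota}_\epsilon^*\,\mathfrak{p}_\epsilon^N \to 0$ is established for fixed $N$. The paper picks $\{e_i\}\subset W_N$ with $\{\boldsymbol{\iota}(e_i)\}$ orthonormal, shows that $\{\boldsymbol{\iota}_\epsilon(e_i)\}$ is an almost orthonormal basis (Gram matrix $\to$ identity), and verifies that each matrix element $\brt{\t_\epsilon-\t,\,(\boldsymbol{\iota}_\epsilon^*\boldsymbol{\iota}_\epsilon)e_i\otimes(\boldsymbol{\iota}_\epsilon^*\boldsymbol{\iota}_\epsilon)e_j}$ vanishes — a finite-dimensional linear-algebra computation. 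You instead observe that the term has rank $\leq 2N$, so its trace norm is $\leq 2N$ times its operator norm, and control the operator norm by (a) deducing $\sup_\epsilon\norm{\t_\epsilon}_{\mathfrak{Bil}}<\infty$ from $\t_\epsilon\oset{pt}{\to}\t$ via uniform boundedness, and (b) showing, from the strong convergence $\boldsymbol{\iota}_\epsilon^*\boldsymbol{\iota}_\epsilon\oset[0.5pt]{s}{\to}\boldsymbol{\iota}^*\boldsymbol{\iota}$ restricted to the finite-dimensional $W_N$ and injectivity of $\boldsymbol{\iota}$, that the evaluation points $\boldsymbol{\iota}_\epsilon^*h$ (for $h$ in the unit sphere of $V_\epsilon^N$) eventually lie in a single norm-precompact subset of $\overbar{E}^*$; equicontinuity plus pointwise convergence on that precompact set then upgrades to uniform convergence. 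Both routes are sound. The paper's is a bit more elementary — no Banach–Steinhaus and no Arzelà–Ascoli-type lemma — while yours is arguably more conceptual, replacing the explicit matrix-entry bookkeeping with a single compactness principle at the cost of the extra rank factor $2N$, which is harmless since $N$ is held fixed before letting $\epsilon\to 0$. One small point worth making explicit in a write-up: UBP is needed precisely because the lemma does not assume $\sup_\epsilon\norm{\t_\epsilon}_\pi<\infty$, so the equicontinuity of the $\t_\epsilon$'s must be earned rather than assumed.
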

\begin{proof}
  We may write $\t = \sum x_i \otimes y_i$ with
  $\sum \norm{x_i}\norm{y_i} < \infty$. For each $n \in \mathbb{N}$, we decompose $\t$ as
  \begin{equation}
    \t = \sum_{i \leq N} x_i \otimes y_i + \mathfrak{r},
  \end{equation}
  so that $\norm{\mathfrak{r}}_{\overbar{E}\widehat{\otimes}_{\pi}E} < 1/n$. Let $\mathfrak{p}_{n,\epsilon} \in \mathfrak{L}[H_\epsilon]$ be the orthogonal
  projector onto $\boldsymbol{\iota}_\epsilon(V_n)$, where $V_{n} = Span \set{x_i, y_i}{i \leq N}$. On the one hand,
  \begin{equation}
    \norm{\mathfrak{p}_{n,\epsilon} (\boldsymbol{\iota}_\epsilon\t\boldsymbol{\iota}_\epsilon^*) \mathfrak{p}_{n,\epsilon} - (\boldsymbol{\iota}_\epsilon\t\boldsymbol{\iota}_\epsilon^*)}_{\pi} \leq
    \norm{\mathfrak{p}_{n,\epsilon} (\boldsymbol{\iota}_\epsilon\mathfrak{r}\boldsymbol{\iota}_\epsilon^*) \mathfrak{p}_{n,\epsilon}}_{\pi} +  \norm{\boldsymbol{\iota}_\epsilon\mathfrak{r}\boldsymbol{\iota}_\epsilon^*}_{\pi} \leq \frac{2C^2}{n}.
  \end{equation}
  Now, fix $n$ and choose $\brc{e_i} \subset V_n $ such that $\brc{\boldsymbol{\iota}(e_i)} \subset \boldsymbol{\iota}(V_n)$  forms an orthonormal basis. Then the convergences
  $\t_\epsilon \overset{pt}{\to} \t$ and $\boldsymbol{\iota}_\epsilon^*\boldsymbol{\iota}_\epsilon \oset[0.5pt]{s}{\to} \boldsymbol{\iota}^*\boldsymbol{\iota} \in \mathfrak{L}[E,\overbar{E}^*]$ yield
  \begin{equation}
    \brt{\mathfrak{p}_{n,\epsilon} (\boldsymbol{\iota}_\epsilon(\t_\epsilon - \t)\boldsymbol{\iota}_\epsilon^*) \mathfrak{p}_{n,\epsilon}\boldsymbol{\iota}_\epsilon(e_i),\boldsymbol{\iota}_\epsilon(e_j)}_{H_\epsilon}
    = \brt{\t_\epsilon - \t, (\boldsymbol{\iota}_\epsilon^*\boldsymbol{\iota}_\epsilon)e_i \otimes (\boldsymbol{\iota}_\epsilon^*\boldsymbol{\iota}_\epsilon)e_j}
    \to 0
  \end{equation}
  and
  \begin{equation}
    \brt{\boldsymbol{\iota}_\epsilon(e_i),\boldsymbol{\iota}_\epsilon(e_j)}_{H_\epsilon} = \brt{(\boldsymbol{\iota}_\epsilon^*\boldsymbol{\iota}_\epsilon)e_i,e_j}
    \to \brt{(\boldsymbol{\iota}^*\boldsymbol{\iota})e_i,e_j} = \brt{\boldsymbol{\iota}(e_i),\boldsymbol{\iota}(e_j)}_{H} = \delta_{ij}.
  \end{equation}
  Thus, $\brc{\boldsymbol{\iota}_\epsilon(e_i)} \subset \boldsymbol{\iota}_\epsilon(V_n)$ is an almost orthonormal basis, and for each fixed $n$, the matrix elements of
  $\mathfrak{p}_{n,\epsilon} (\boldsymbol{\iota}_\epsilon\t_\epsilon\boldsymbol{\iota}_\epsilon^*) \mathfrak{p}_{n,\epsilon} - \mathfrak{p}_{n,\epsilon} (\boldsymbol{\iota}_\epsilon\t\boldsymbol{\iota}_\epsilon^*) \mathfrak{p}_{n,\epsilon}$ converge to zero as
  $\epsilon \to 0$.
  Therefore, one can choose a decreasing sequence $\brc{\epsilon_n}$ so that for $\epsilon \leq \epsilon_n$, one has
  \begin{equation}
    \norm{\mathfrak{p}_{n,\epsilon} (\boldsymbol{\iota}_\epsilon\t_\epsilon\boldsymbol{\iota}_\epsilon^*) \mathfrak{p}_{n,\epsilon} -
    \mathfrak{p}_{n,\epsilon} (\boldsymbol{\iota}_\epsilon\t\boldsymbol{\iota}_\epsilon^*) \mathfrak{p}_{n,\epsilon} }_{\pi} \leq \frac{1}{n}.
  \end{equation}
  By setting $\mathfrak{p}_{\epsilon} := \mathfrak{p}_{n,\epsilon}$ for $\epsilon \in (\epsilon_{n+1}, \epsilon_n]$, we obtain
  \begin{equation}
    \norm{\mathfrak{p}_\epsilon (\boldsymbol{\iota}_\epsilon\t_\epsilon\boldsymbol{\iota}_\epsilon^*) \mathfrak{p}_\epsilon - \boldsymbol{\iota}_\epsilon\t\boldsymbol{\iota}_\epsilon^*}_{\pi}
    \leq \frac{2C^2 + 1}{n} \to 0.
  \end{equation}
\end{proof}

A proof of the next proposition can be found, for example, in the appendix of~\cite*{Arazy:1981:convergence--unitary-matrix-spaces}.

\begin{proposition}\label{prop:nuclear-norm-ineq}
  For a nuclear operator $ \t \in \mathfrak{N}[H]$ on a Hilbert space $H$ and a pair
  of orthogonal projections $\mathfrak{p} + \mathfrak{q} = \id$, one has
  \begin{equation}
    \norm{\mathfrak{p}\t\mathfrak{p}}_{\pi}^2+\norm{\mathfrak{p}\t\mathfrak{q}}_{\pi}^2 +
    \norm{\mathfrak{q}\t\mathfrak{p}}_{\pi}^2+\norm{\mathfrak{q}\t\mathfrak{q}}_{\pi}^2 \leq \norm{\t}_{\pi}^2.
  \end{equation}
\end{proposition}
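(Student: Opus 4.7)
The plan is to reduce the inequality to an elementary Cauchy--Schwarz estimate after factoring $\t$ through a Hilbert--Schmidt pair. Recall that on a Hilbert space the projective norm coincides with the trace norm $\|\t\|_{\pi} = \Tr\sqrt{\t^*\t}$, and for Hilbert--Schmidt operators $A,B \in \mathfrak{L}[H]$ one has the trace-class H\"older inequality $\|AB\|_{\pi} \le \|A\|_{2}\,\|B\|_{2}$, where $\|\cdot\|_{2}$ denotes the Hilbert--Schmidt norm.

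The first step is to produce a sharp such factorization of $\t$. From the polar decomposition $\t = U|\t|$ set
\begin{equation}
  A := U|\t|^{1/2}, \qquad B := |\t|^{1/2}.
\end{equation}
Then $\t = AB$ and, since $U$ is a partial isometry which is an isometry on $\iml |\t|^{1/2}$,
\begin{equation}
  \|A\|_{2}^{2} = \|B\|_{2}^{2} = \Tr |\t| = \|\t\|_{\pi},
\end{equation}
so in particular $\|A\|_{2}\,\|B\|_{2} = \|\t\|_{\pi}$.

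Next, for each pair $\sigma,\sigma'\in\{\mathfrak{p},\mathfrak{q}\}$ write $\sigma\t\sigma' = (\sigma A)(B\sigma')$ and apply the H\"older bound:
\begin{equation}
  \|\sigma\t\sigma'\|_{\pi} \le \|\sigma A\|_{2}\,\|B\sigma'\|_{2}.
\end{equation}
Squaring and summing over the four pairs, the right-hand side factors as
\begin{equation}
  \sum_{\sigma,\sigma'}\|\sigma A\|_{2}^{2}\|B\sigma'\|_{2}^{2} = \brr{\|\mathfrak{p}A\|_{2}^{2}+\|\mathfrak{q}A\|_{2}^{2}}\brr{\|B\mathfrak{p}\|_{2}^{2}+\|B\mathfrak{q}\|_{2}^{2}}.
\end{equation}

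The final step uses that $\mathfrak{p}$ and $\mathfrak{q}$ are complementary self-adjoint projections, so $\mathfrak{p}^{2}+\mathfrak{q}^{2} = \mathfrak{p}+\mathfrak{q} = \id$. Hence
\begin{equation}
  \|\mathfrak{p}A\|_{2}^{2}+\|\mathfrak{q}A\|_{2}^{2} = \Tr\brr{A^{*}(\mathfrak{p}+\mathfrak{q})A} = \|A\|_{2}^{2},
\end{equation}
and similarly $\|B\mathfrak{p}\|_{2}^{2}+\|B\mathfrak{q}\|_{2}^{2} = \|B\|_{2}^{2}$. Combining the previous displays,
\begin{equation}
  \sum_{\sigma,\sigma'}\|\sigma\t\sigma'\|_{\pi}^{2} \le \|A\|_{2}^{2}\|B\|_{2}^{2} = \|\t\|_{\pi}^{2},
\end{equation}
which is the required inequality. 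The only potentially delicate point is justifying the polar decomposition and the H\"older factorization inequality for arbitrary nuclear operators; both are standard facts of the Schatten-class calculus, but one may need to invoke them carefully if $H$ is not assumed separable (which can be handled by passing to the separable subspace $\overline{\iml|\t|}$ on which $\t$ is supported).
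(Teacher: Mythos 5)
Your proof is correct. Note that the paper does not actually supply an argument for this proposition; it only refers the reader to the appendix of Arazy's paper. Your derivation is a clean, self-contained proof: factor $\t = AB$ with $A = U|\t|^{1/2}$, $B = |\t|^{1/2}$ from the polar decomposition so that $\|A\|_{2}^{2}=\|B\|_{2}^{2}=\|\t\|_{\pi}$, apply the Schatten--H\"older bound $\|(\sigma A)(B\sigma')\|_{1}\le\|\sigma A\|_{2}\|B\sigma'\|_{2}$ to each block, and observe that summing the squares factors into $(\|\mathfrak{p}A\|_{2}^{2}+\|\mathfrak{q}A\|_{2}^{2})(\|B\mathfrak{p}\|_{2}^{2}+\|B\mathfrak{q}\|_{2}^{2}) = \|A\|_{2}^{2}\|B\|_{2}^{2}$ by $\mathfrak{p}+\mathfrak{q}=\id$. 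Every step checks out; the closing caveat about non-separable $H$ is unnecessary, since polar decomposition and the Schatten--H\"older inequality hold on arbitrary Hilbert spaces without modification (the support of a trace-class operator is automatically separable, as you note, but nothing in the argument actually relies on this).
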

\begin{corollary}\label{cor:seq-spaces-tensor-convergence}
  Under the conditions of Lemma~\ref{lem:tensor-decomp}, let us additionally assume that
  $\norm{\boldsymbol{\iota}_\epsilon\t_\epsilon\boldsymbol{\iota}_\epsilon^*}_{\pi} - \norm{\boldsymbol{\iota}_\epsilon\t\boldsymbol{\iota}_\epsilon^*}_{\pi} \to 0$. Then
  $\norm{\boldsymbol{\iota}_\epsilon(\t_\epsilon - \t)\boldsymbol{\iota}_\epsilon^*}_{\pi} \to 0$.
\end{corollary}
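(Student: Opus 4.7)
The plan is to combine Lemma~\ref{lem:tensor-decomp} with Proposition~\ref{prop:nuclear-norm-ineq}. Write $T_\epsilon := \boldsymbol{\iota}_\epsilon \t_\epsilon \boldsymbol{\iota}_\epsilon^*$ and $S_\epsilon := \boldsymbol{\iota}_\epsilon \t \boldsymbol{\iota}_\epsilon^*$, so the goal is $\norm{T_\epsilon - S_\epsilon}_\pi \to 0$. Lemma~\ref{lem:tensor-decomp} already produces finite-rank orthogonal projections $\mathfrak{p}_\epsilon$ for which the ``diagonal block'' $\mathfrak{p}_\epsilon T_\epsilon \mathfrak{p}_\epsilon$ is nuclear-norm close to $S_\epsilon$, so I only need to force the three ``off-diagonal blocks'' to vanish; the extra trace-norm hypothesis is tailor-made for exactly this via the Pythagorean-type estimate of Proposition~\ref{prop:nuclear-norm-ineq}.

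Concretely, fix the $\mathfrak{p}_\epsilon$ from Lemma~\ref{lem:tensor-decomp} and set $\mathfrak{q}_\epsilon := \id - \mathfrak{p}_\epsilon$, so that $\norm{\mathfrak{p}_\epsilon T_\epsilon \mathfrak{p}_\epsilon - S_\epsilon}_\pi \to 0$. Since $\norm{S_\epsilon}_\pi \leq C^2 \norm{\t}_\pi$ is uniformly bounded and $\norm{T_\epsilon}_\pi - \norm{S_\epsilon}_\pi \to 0$ by assumption, both $\norm{T_\epsilon}_\pi$ and $\norm{\mathfrak{p}_\epsilon T_\epsilon \mathfrak{p}_\epsilon}_\pi$ are bounded, and the triangle inequality yields $\norm{T_\epsilon}_\pi - \norm{\mathfrak{p}_\epsilon T_\epsilon \mathfrak{p}_\epsilon}_\pi \to 0$. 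Factoring $a^2 - b^2 = (a-b)(a+b)$ then gives $\norm{T_\epsilon}_\pi^2 - \norm{\mathfrak{p}_\epsilon T_\epsilon \mathfrak{p}_\epsilon}_\pi^2 \to 0$.

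Applying Proposition~\ref{prop:nuclear-norm-ineq} to $T_\epsilon$ with the split $\mathfrak{p}_\epsilon + \mathfrak{q}_\epsilon = \id$ gives
\begin{equation*}
\norm{\mathfrak{p}_\epsilon T_\epsilon \mathfrak{q}_\epsilon}_\pi^2 + \norm{\mathfrak{q}_\epsilon T_\epsilon \mathfrak{p}_\epsilon}_\pi^2 + \norm{\mathfrak{q}_\epsilon T_\epsilon \mathfrak{q}_\epsilon}_\pi^2 \leq \norm{T_\epsilon}_\pi^2 - \norm{\mathfrak{p}_\epsilon T_\epsilon \mathfrak{p}_\epsilon}_\pi^2 \longrightarrow 0,
\end{equation*}
so each off-diagonal block tends to zero in nuclear norm. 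Writing $T_\epsilon - S_\epsilon = (\mathfrak{p}_\epsilon T_\epsilon \mathfrak{p}_\epsilon - S_\epsilon) + \mathfrak{p}_\epsilon T_\epsilon \mathfrak{q}_\epsilon + \mathfrak{q}_\epsilon T_\epsilon \mathfrak{p}_\epsilon + \mathfrak{q}_\epsilon T_\epsilon \mathfrak{q}_\epsilon$ and applying the triangle inequality closes the argument. All the substantive content is packed into the two cited ingredients; the only real care needed is the boundedness bookkeeping that converts convergence of norms into convergence of squared norms, and I do not foresee any genuine obstacle beyond this.
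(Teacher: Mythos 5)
Your proof is correct and follows the same route as the paper: invoke Lemma~\ref{lem:tensor-decomp} to produce the projectors $\mathfrak{p}_\epsilon$, use Proposition~\ref{prop:nuclear-norm-ineq} to control the three off-diagonal blocks of $\boldsymbol{\iota}_\epsilon\t_\epsilon\boldsymbol{\iota}_\epsilon^*$ via the gap $\norm{T_\epsilon}_\pi^2 - \norm{\mathfrak{p}_\epsilon T_\epsilon\mathfrak{p}_\epsilon}_\pi^2$, and finish by the triangle inequality on the four-block decomposition. You have merely spelled out the boundedness bookkeeping that the paper compresses into a single displayed inequality.
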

\begin{proof}
  By Lemma~\ref{lem:tensor-decomp}, there exist orthogonal projectors $\mathfrak{p}_\epsilon \in \mathfrak{L}[H_\epsilon]$
  such that
  $\norm{\mathfrak{p}_\epsilon (\boldsymbol{\iota}_\epsilon\t_\epsilon\boldsymbol{\iota}_\epsilon^*) \mathfrak{p}_\epsilon - \boldsymbol{\iota}_\epsilon\t\boldsymbol{\iota}_\epsilon^*}_{\pi} \to 0$.
  Then by Proposition~\ref{prop:nuclear-norm-ineq} (with $\mathfrak{q}_\epsilon = \id - \mathfrak{p}_\epsilon$), one has
  \begin{multline}
    \norm{\boldsymbol{\iota}_\epsilon(\t_\epsilon - \t)\boldsymbol{\iota}_\epsilon^*}_{\pi}
    \leq \norm{\mathfrak{p}_\epsilon (\boldsymbol{\iota}_\epsilon\t_\epsilon\boldsymbol{\iota}_\epsilon^*) \mathfrak{p}_\epsilon - \boldsymbol{\iota}_\epsilon\t\boldsymbol{\iota}_\epsilon^*}_{\pi}
    \\+ \sqrt{3(\norm{\boldsymbol{\iota}_\epsilon\t_\epsilon\boldsymbol{\iota}_\epsilon^*}_{\pi}^2 - \norm{\mathfrak{p}_\epsilon(\boldsymbol{\iota}_\epsilon\t_\epsilon\boldsymbol{\iota}_\epsilon^*)\mathfrak{p}_\epsilon}_{\pi}^2)}
    \to 0.
  \end{multline}
\end{proof}
By taking $E = H_\epsilon = H$ in the previous corollary, we obtain:
\begin{corollary}\label{cor:tensor-convergence}
  Let $\t_\epsilon, \t \in \mathfrak{N}[H]$ with $\t_\epsilon \overset{\operatorname{wot}}{\to} \t$
  and $\norm{\t_\epsilon}_{\pi} \to \norm{\t}_{\pi}$. Then
  $\t_\epsilon \to \t$ in $\mathfrak{N}[H]$.
\end{corollary}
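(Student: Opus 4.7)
The proof is a direct specialization of Corollary~\ref{cor:seq-spaces-tensor-convergence} to the case $E = H_\epsilon = H$ with identity embeddings $\boldsymbol{\iota} = \boldsymbol{\iota}_\epsilon = \id_H$. The plan is simply to verify that each hypothesis of that corollary translates into one already granted in the statement we are proving.

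First, the conditions of Lemma~\ref{lem:tensor-decomp} become trivial under this choice: $\norm{\id_H} = 1$, and $\id_H^*\id_H = \id_H$ is constant in $\epsilon$, so $\boldsymbol{\iota}_\epsilon^*\boldsymbol{\iota}_\epsilon \oset[0.5pt]{s}{\to}\boldsymbol{\iota}^*\boldsymbol{\iota}$ is automatic. Next, I would note that under the isometric identification $\mathfrak{N}[H] \cong \overbar{H}\widehat{\otimes}_\pi H$ and the embedding $\overbar{H}\widehat{\otimes}_\pi H \subset \mathfrak{Bil}[\overbar{H}^*, H^*]$, the value of a tensor $\t$ on a pair of vectors reduces (up to the conventional conjugations) to the matrix coefficient $\brt{\t\phi,\psi}_H$ of the associated operator, as one checks directly on a rank-one tensor $\overbar{x}\otimes y$. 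Hence the pointwise convergence $\t_\epsilon \overset{pt}{\to} \t$ required by Lemma~\ref{lem:tensor-decomp} coincides with the weak-operator-topology convergence $\t_\epsilon \overset{\operatorname{wot}}{\to} \t$ that is given.

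Finally, because $\boldsymbol{\iota}_\epsilon = \id_H$, the sandwich expressions $\boldsymbol{\iota}_\epsilon\t_\epsilon\boldsymbol{\iota}_\epsilon^*$ and $\boldsymbol{\iota}_\epsilon\t\boldsymbol{\iota}_\epsilon^*$ reduce to $\t_\epsilon$ and $\t$ themselves, so the extra norm-convergence hypothesis in Corollary~\ref{cor:seq-spaces-tensor-convergence} is literally the given $\norm{\t_\epsilon}_\pi \to \norm{\t}_\pi$. Invoking Corollary~\ref{cor:seq-spaces-tensor-convergence} then yields $\norm{\boldsymbol{\iota}_\epsilon(\t_\epsilon - \t)\boldsymbol{\iota}_\epsilon^*}_\pi = \norm{\t_\epsilon - \t}_\pi \to 0$, i.e.\ convergence in $\mathfrak{N}[H]$. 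There is no genuine obstacle here; the only care needed is the bookkeeping that matches nuclear operators with tensors and with bilinear forms on a Hilbert space, after which the statement follows from the previous corollary as a one-line specialization.
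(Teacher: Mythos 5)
Your proposal is correct and is essentially the same argument as the paper's: the paper states Corollary~\ref{cor:tensor-convergence} as an immediate specialization of Corollary~\ref{cor:seq-spaces-tensor-convergence} to $E = H_\epsilon = H$ with $\boldsymbol{\iota}_\epsilon = \boldsymbol{\iota} = \id_H$, which is exactly what you do, just with the bookkeeping (wot $\leftrightarrow$ pointwise convergence of the associated bilinear forms, trivial hypotheses on the embeddings) spelled out.
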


Let $\ell^2$ be the space of square-summable sequences.
Denote by $\mathcal{HS}[H,\ell^2]$ the space of Hilbert-Schmidt operators between Hilbert spaces $H$ and $\ell^2$.
\begin{lemma}\label{lem:HS-isom}
  Let $A, A_n \in \mathcal{HS}[H,\ell^2]$. Then
  \begin{equation}
    \norm{(A^*A)^{1/2}-(A_n^*A_n)^{1/2}}_{\mathcal{HS}} \leq
    \norm{A^*A-A_n^*A_n}_{\pi}^{1/2},
  \end{equation}
  and there exist isometric embeddings $I \colon \operatorname{Im} A \hookrightarrow \ell^2$, $I_n \colon \operatorname{Im} A_n \hookrightarrow \ell^2$
  such that
  \begin{equation}
    \norm{I_nA_n-IA}_{\mathcal{HS}} = \norm{(A^*A)^{1/2}-(A_n^*A_n)^{1/2}}_{\mathcal{HS}}.
  \end{equation}
\end{lemma}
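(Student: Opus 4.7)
The plan is to prove the two assertions separately.

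For the inequality, I would apply the Powers--Størmer inequality: for positive trace-class operators $P,Q$ on a Hilbert space,
\begin{equation}
  \norm{P^{1/2}-Q^{1/2}}_{\mathcal{HS}}^{2} \leq \norm{P-Q}_{1},
\end{equation}
where $\norm{\cdot}_{1}$ denotes the trace norm. Since $A,A_{n}\in\mathcal{HS}[H,\ell^{2}]$, the operators $P:=A^{*}A$ and $Q:=A_{n}^{*}A_{n}$ are positive and trace-class on $H$, so their difference is nuclear and its trace norm coincides with the projective norm $\norm{\cdot}_{\pi}$ via the formula $\norm{\t}_{\pi}=\Tr\sqrt{\t^{*}\t}$ recorded in the excerpt. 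The desired estimate then follows at once. If a self-contained proof of Powers--Størmer is wanted, one sets $R:=P^{1/2}-Q^{1/2}=R_{+}-R_{-}$ (Jordan decomposition), uses the identity $(P^{1/2}+Q^{1/2})R+R(P^{1/2}+Q^{1/2})=2(P-Q)$ together with $R_{+}R_{-}=0$ to derive $\Tr(P^{1/2}R_{+})\geq\Tr(R_{+}^{2})$ and $\Tr(Q^{1/2}R_{-})\geq\Tr(R_{-}^{2})$, and concludes via $\Tr((P-Q)(\chi_{+}-\chi_{-}))\leq\Tr\abs{P-Q}$, where $\chi_{\pm}$ are the spectral projections of $R$.

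For the isometric identification, I would use polar decomposition. Write $A=V\abs{A}$ and $A_{n}=V_{n}\abs{A_{n}}$, where $\abs{A}:=(A^{*}A)^{1/2}$ and $\abs{A_{n}}:=(A_{n}^{*}A_{n})^{1/2}$ act on $H$, and $V,V_{n}\colon H\to\ell^{2}$ are partial isometries with initial spaces $\overline{\iml\abs{A}},\overline{\iml\abs{A_{n}}}\subset H$ and final spaces $\overline{\iml A},\overline{\iml A_{n}}\subset\ell^{2}$. Because $A,A_{n}$ are Hilbert--Schmidt (in particular compact), all four of these spaces are separable, so there exists a separable closed subspace $K\subset H$ containing both initial spaces, together with an isometric embedding $\iota\colon K\hookrightarrow\ell^{2}$. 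Define
\begin{equation}
  I:=\iota\circ V^{*}|_{\overline{\iml A}}, \qquad I_{n}:=\iota\circ V_{n}^{*}|_{\overline{\iml A_{n}}},
\end{equation}
which are isometric embeddings into $\ell^{2}$. Using $V^{*}V=P_{\overline{\iml\abs{A}}}$ one computes $IA=\iota V^{*}V\abs{A}=\iota\abs{A}$ and analogously $I_{n}A_{n}=\iota\abs{A_{n}}$; since $\iota$ preserves the Hilbert--Schmidt norm,
\begin{equation}
  \norm{I_{n}A_{n}-IA}_{\mathcal{HS}} = \norm{\iota(\abs{A_{n}}-\abs{A})}_{\mathcal{HS}} = \norm{(A_{n}^{*}A_{n})^{1/2}-(A^{*}A)^{1/2}}_{\mathcal{HS}},
\end{equation}
as required.

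The main obstacle is mild bookkeeping: when $H$ is non-separable one has to construct the common embedding $\iota$, which is possible because compactness of $A$ and $A_{n}$ forces the relevant ranges to be separable; and one must verify the initial/final spaces of $V,V_{n}$ correctly so that $V^{*}A=\abs{A}$ holds literally. Powers--Størmer is classical and may be cited or reproved as sketched; everything else is routine manipulation with polar decomposition and the definition of the Hilbert--Schmidt norm.
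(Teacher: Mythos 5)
Your proposal is correct and follows essentially the same route as the paper: cite Powers--Størmer for the inequality (noting $\norm{\cdot}_\pi$ is the trace norm on positive nuclear operators), and obtain the isometries from polar decomposition together with an embedding of the common separable subspace spanned by all the initial spaces into $\ell^2$. The only slip is the word ``both'' — the common subspace $K$ must contain the initial spaces of $V$ and of \emph{all} $V_n$, $n\in\mathbb{N}$ (still separable), so that $I$ is fixed while $n$ varies; your closing remark shows you had this in mind.
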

\begin{proof}
  For a proof of the inequality, see, for example, \cite[Lemma~4.1]{Powers-Stormer:1970:square-root-ineq}.
  By polar decomposition, one has $A = I (A^*A)^{1/2}$ for an isometry $I \colon \operatorname{Im} \sqrt{A^*A} \to \operatorname{Im} A$.
  Then the isometries $I^{-1}, I_n^{-1}$, together with a choice of an embedding $Span\, \set{\operatorname{Im}  A_n^*A_n}{n\in \mathbb{N}} \hookrightarrow \ell^2$ prove the last statement.
\end{proof}

\subsubsection{Vector-valued Sobolev spaces}
Let $E$ be a Banach space and $p \in [1, \infty]$. We denote by $L^p(\mu, E) := L^p(X,\mu, E)$ the space of $E$-valued
(Bochner) $p$-integrable functions on a measure space $(X,\mu)$. See, for example, \cite[Appendix~B]{Defant-Floret:1993:tensor-norms} and
\cite[Section~1]{Hytonen-Neerven-Veraar-Weis:2016:analysis-banach-1} for the definition and basic properties.
In particular (see~\cite[Corollary~1.3.13]{Hytonen-Neerven-Veraar-Weis:2016:analysis-banach-1}), if $E^*$ has the Radon-Nikodým property and
$p \in (1,\infty)$, then $L^p(\mu,E)^* = L^{p'}(\mu, E^*)$, where $1/p + 1/p' = 1$. For $p=1$, one needs $\mu$ to be $\sigma$-finite.

Let $\Omega \subset \R^m$ be a domain. One naturally defines
the $E$-valued Sobolev spaces $W^{k,p}(\Omega,E)$, where $k \in \mathbb{N}$.
Some properties of $W^{k,p}(\Omega,E)$ can be found, for example, in~\cite[Section~2.5.b]{Hytonen-Neerven-Veraar-Weis:2016:analysis-banach-1}. The usual difference quotient criterion applies:
\begin{proposition}[{\cite[Theorem~2.2]{Arendt-Kreuter:2018:Lipsh-vect-Soboelv}}]
  \label{prop:diff-quot}
  Let $E$ be a Banach space with the Radon-Nikodým property, $p \in (1,\infty]$, and $f\in L^p(\R^m,E)$. Then
  \begin{equation}
    \sup_{\substack{0<t\leq1 \\ 1\leq i \leq m}} \frac{1}{t}\norm{f(\cdot + te_i) - f(\cdot)}_{L^p} \leq C \implies
    f \in W^{1,p}(\R^m,E),\ \text{with} \ \norm{\partial_i f}_{L^p} \leq C.
  \end{equation}
\end{proposition}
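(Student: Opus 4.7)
The plan is to pass to the distributional derivative through test functions and then represent it as an $L^p(\R^m,E)$ function using the Radon--Nikod\'ym property. Set $D_i^t f(x) := (f(x+te_i)-f(x))/t$; the hypothesis reads $\|D_i^t f\|_{L^p(\R^m,E)} \le C$ uniformly for $t \in (0,1]$ and $i = 1,\dots,m$. For $\phi \in C_c^\infty(\R^m)$, a translation inside the Bochner integral yields
\[
\int D_i^t f(x)\,\phi(x)\,dx = \int f(x)\,\frac{\phi(x-te_i)-\phi(x)}{t}\,dx \longrightarrow -\int f\,\partial_i\phi\,dx \qquad \text{as } t \to 0,
\]
where the limit is justified by Bochner dominated convergence on the compact support of $\phi$. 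Combined with the uniform difference-quotient bound and H\"older's inequality, this gives $\|\int f\,\partial_i\phi\,dx\|_E \le C\,\|\phi\|_{L^{p'}}$, so the map $T_i \colon \phi \mapsto -\int f\,\partial_i\phi\,dx$ extends by density to a bounded operator $T_i \in \mathfrak{L}[L^{p'}(\R^m), E]$ of norm at most $C$.

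Next I would realize $T_i$ as integration against an $E$-valued function. In the case $p = \infty$ (so $p'=1$) the Radon--Nikod\'ym property of $E$ applies directly: there exists a Bochner measurable $\partial_i f \in L^\infty(\R^m, E)$ with $T_i(\phi) = \int \phi\,\partial_i f\,dx$ and $\|\partial_i f\|_{L^\infty(E)} \le C$. For $p \in (1,\infty)$, the $E$-valued set function $\nu_i(A) := T_i(\chi_A)$ on bounded Borel sets is countably additive (by the Orlicz--Pettis theorem, applied through functionals $\langle\,\cdot\,, x^*\rangle$ for $x^* \in E^*$) and absolutely continuous with respect to Lebesgue measure on each ball, with the quantitative estimate $\|\nu_i(A)\|_E \le C|A|^{1/p'}$. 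Invoking the RNP of $E$ for vector measures on $\sigma$-finite spaces produces a Bochner measurable $g_i \colon \R^m \to E$ with $\nu_i(A) = \int_A g_i\,dx$; density of simple functions in $L^{p'}$ then promotes this to $T_i(\phi) = \int \phi\,g_i\,dx$ for every $\phi \in L^{p'}(\R^m)$.

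To finish, I would convert the operator bound on $T_i$ into an $L^p(E)$-bound on $g_i$. For any $x^* \in E^*$ with $\|x^*\| \le 1$, the scalar $\langle g_i, x^*\rangle$ satisfies $|\int \langle g_i, x^*\rangle \phi\,dx| = |\langle T_i(\phi), x^*\rangle| \le C\,\|\phi\|_{L^{p'}}$, whence $\|\langle g_i, x^*\rangle\|_{L^p(\R^m)} \le C$. Since $g_i$ is Bochner measurable and hence essentially separably valued, $\|g_i(x)\|_E = \sup_{x^* \in D} |\langle g_i(x), x^*\rangle|$ almost everywhere for a countable norming set $D \subset B_{E^*}$, and monotone convergence gives $\|g_i\|_{L^p(\R^m, E)} \le C$. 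Setting $\partial_i f := g_i$ produces the desired element of $L^p(\R^m, E)$ with the stated bound, so $f \in W^{1,p}(\R^m, E)$.

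The main technical obstacle I anticipate lies in the case $p \in (1,\infty)$: the estimate $\|\nu_i(A)\|_E \le C|A|^{1/p'}$ does \emph{not} imply finite total variation of $\nu_i$, so the textbook Radon--Nikod\'ym theorem for vector measures of bounded variation does not apply verbatim. A robust workaround is to first mollify: $f_\epsilon := f * \eta_\epsilon \in C^\infty(\R^m, E)$ satisfies $\|\partial_i f_\epsilon\|_{L^p(E)} \le C$ via Young's inequality applied to $(D_i^t f) * \eta_\epsilon$, and one then extracts a weak$^*$ or almost-everywhere limit of $\partial_i f_\epsilon$ in $L^p(\R^m, E)$ by means of a compactness argument using the RNP of $E$.
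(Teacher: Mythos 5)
The paper does not prove this proposition; it is cited from Arendt--Kreuter, so there is no internal proof to compare against, and I evaluate your argument on its own terms.

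Your reduction to representability of $T_i\colon\phi\mapsto-\int f\,\partial_i\phi$ on $L^{p'}(\R^m)$ is sound, and the case $p=\infty$ (so $p'=1$) is a direct application of the Radon--Nikod\'ym property as defined in the paper. The difficulty you flag for $p\in(1,\infty)$ is, however, worse than a missing bounded-variation estimate: the step ``invoking the RNP of $E$ \ldots produces a Bochner measurable $g_i$'' is false as a general principle when $p'>1$. Indeed, if $g\in L^p(\mu,E)$ with $p<\infty$, then simple functions are dense in $L^p(\mu,E)$, so $\phi\mapsto\int\phi g\,d\mu$ is a norm limit of finite-rank operators and hence compact; but not every bounded operator $L^{p'}(\mu)\to E$ is compact even when $E$ has RNP. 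For instance, the identity on $L^2[0,1]$ is not compact, while $L^2[0,1]$, being reflexive, has RNP. So representability cannot follow from RNP alone, and one must exploit the specific structure of $T_i$. Your mollification workaround does not close the gap either: it leaves you with a bounded sequence $\partial_i f_\epsilon$ in $L^p(\R^m,E)$, and extracting a weak or weak$^*$ limit requires $L^p(\R^m,E)$ to be reflexive or a dual, neither of which is guaranteed by RNP of $E$ (and there is no reason for an a.e.\ limit to exist).

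The argument that actually works (and is essentially the one in Arendt--Kreuter) enters through a different equivalent form of RNP. For $1\le p<\infty$, $L^p(\R^m,E)$ inherits RNP from $E$, and RNP is equivalent to a.e.\ differentiability of Lipschitz curves into the space. Consider the translation curve $F_i(t):=f(\cdot+te_i)$, viewed as a map $\R\to L^p(\R^m,E)$. The hypothesis says exactly that $F_i$ is Lipschitz on $[0,1]$ with constant $C$, so by RNP it is differentiable at some $t_0$. Since translations are isometries of $L^p(\R^m,E)$, one has $\frac{1}{h}\bigl(F_i(t_0+h)-F_i(t_0)\bigr)=\tau_{t_0 e_i}\bigl(D_i^h f\bigr)$, whence $D_i^h f \to g_i:=\tau_{-t_0 e_i}F_i'(t_0)$ in $L^p(\R^m,E)$ as $h\to 0$, with $\norm{g_i}_{L^p}\le C$. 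Passing to the limit in $\int (D_i^h f)\phi \,dx$ against $\phi\in C_c^\infty(\R^m)$ then identifies $g_i$ as the weak derivative $\partial_i f$. For $p=\infty$ this route is unavailable because $L^\infty(\R^m,E)$ need not have RNP, but there your $L^1$-operator argument is exactly the right one.
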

Note that this is a local criterion.
Another useful property is that Sobolev regularity is preserved under composition with Lipschitz maps. Combining \cite[Theorems~3.1,~3.2,~4.2 and Corollary~4.6]{Arendt-Kreuter:2018:Lipsh-vect-Soboelv},
we obtain:
\begin{proposition}\label{prop:lip-sobolev-comp}
  Let $p \in [1,\infty]$, $T \colon E \to F$ be a Lipschitz mapping between Banach spaces with a Lipschitz constant $L$,
  $f \in W^{1,p}(\Omega,E)$,
  and assume that either $\Vol (\Omega) < \infty$ or $T(0) = 0$.
   \begin{itemize}
    \item If $F$ has the Radon-Nikodým property, then $T \circ f \in W^{1,p}(\Omega,F)$, and the weak derivatives satisfy
    \begin{equation}
      \abs{\partial_i(T \circ f)}_F \leq L\abs{\partial_i f}_E.
    \end{equation}
    \item If $T$ is one-sided Gâteaux differentiable, then $\partial_i(T \circ f) = D_{\partial_i f} T(f)$.
    In particular, $\abs{f}_E \in W^{1,p}(\Omega)$, with $\abs{\partial_i\abs{f}_E} \leq \abs{\partial_i f}_E$.
  \end{itemize}
\end{proposition}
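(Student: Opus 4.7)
The plan is to assemble the statement from three standard ingredients: (i) measurability and integrability of $T\circ f$, (ii) the difference-quotient criterion of Proposition~\ref{prop:diff-quot}, and (iii) an $L^p$-dominated convergence argument for the Gâteaux chain rule. I will first dispose of part~1 using difference quotients, then upgrade to the pointwise bound on the weak derivative via localization, and finally derive part~2 from an almost-everywhere pointwise computation.

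\textbf{Measurability and integrability.} Because $f \in W^{1,p}(\Omega,E)$ is strongly measurable, it is almost separably valued; since $T$ is continuous, $T \circ f$ is then almost separably valued as well, hence strongly measurable by the Pettis theorem. The Lipschitz estimate $\abs{T(f(x))}_F \leq L\abs{f(x)}_E + \abs{T(0)}_F$ shows $T\circ f \in L^p(\Omega,F)$: the first term is controlled by $f \in L^p(\Omega,E)$, and the second is integrable either because $\Vol(\Omega)<\infty$ or because $T(0)=0$.

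\textbf{Sobolev regularity under RNP.} Assuming $F$ has the Radon--Nikodým property, apply Proposition~\ref{prop:diff-quot} to $T\circ f$. The Lipschitz bound gives, for every $t \in (0,1]$ and every coordinate direction $e_i$,
\begin{equation}
\frac{1}{t}\norm{T\circ f(\cdot + te_i) - T\circ f}_{L^p(\Omega,F)}
\leq \frac{L}{t}\norm{f(\cdot + te_i) - f}_{L^p(\Omega,E)}
\leq L\norm{\partial_i f}_{L^p(\Omega,E)},
\end{equation}
after first extending $f$ by a locally finite argument inside $\Omega$ (the criterion is local, so one applies it on a compactly contained subdomain and then exhausts $\Omega$). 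This yields $T\circ f \in W^{1,p}(\Omega,F)$ with a global $L^p$-bound on $\partial_i(T\circ f)$. To upgrade to the pointwise inequality $\abs{\partial_i(T\circ f)}_F \leq L\abs{\partial_i f}_E$ almost everywhere, I would apply the same estimate to every ball $B \subset \Omega$, divide by $\Vol(B)$, and let $B$ shrink to a Lebesgue point of both $\partial_i(T\circ f)$ and $\abs{\partial_i f}_E$; Lebesgue differentiation in both $F$ and $\R$ then gives the pointwise bound.

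\textbf{Chain rule for Gâteaux differentiable $T$.} Fix a precise representative of $f$ so that, for almost every $x \in \Omega$ and almost every direction $e_i$, the one-dimensional slice $t \mapsto f(x+te_i)$ is absolutely continuous with derivative $\partial_i f(x)$ at $t=0$ (this is the standard ACL characterization adapted to vector-valued Sobolev maps; it follows from Fubini and the $L^p$-integrability of $\partial_i f$). For such $x$ write $f(x+te_i) = f(x) + t\partial_i f(x) + r(t)$ with $\abs{r(t)}_E = o(t)$, and use the Lipschitz property of $T$ and the one-sided Gâteaux differentiability at $f(x)$ to compute
\begin{equation}
\frac{T(f(x+te_i)) - T(f(x))}{t}
= \frac{T(f(x)+t\partial_i f(x)) - T(f(x))}{t} + O\!\brr{\tfrac{L\abs{r(t)}_E}{t}} \longrightarrow D_{\partial_i f(x)}T(f(x))
\end{equation}
as $t\to 0^+$. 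The difference quotients are uniformly dominated by $L\abs{\partial_i f}_E \in L^p$, so dominated convergence identifies the pointwise limit with the weak derivative $\partial_i(T\circ f)$ already produced in part~1. Applying this to the Lipschitz, one-sided Gâteaux differentiable map $T = \abs{\cdot}_E$ yields $\abs{f}_E \in W^{1,p}(\Omega)$ with $\abs{\partial_i \abs{f}_E} \leq \abs{\partial_i f}_E$.

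\textbf{Main obstacle.} The delicate point is the passage from the global $L^p$-bound on difference quotients to the pointwise inequality $\abs{\partial_i(T\circ f)}_F \leq L\abs{\partial_i f}_E$ a.e.; this needs Lebesgue differentiation of $F$-valued $L^p$-maps, which relies precisely on the Radon--Nikodým hypothesis on $F$. The chain rule step is then essentially a Rademacher-type argument on slices, and the main care is in choosing a good representative of $f$ so that slice-differentiability holds on a set of full measure.
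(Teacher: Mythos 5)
The paper proves this proposition only by citing~\cite[Theorems~3.1, 3.2, 4.2 and Corollary~4.6]{Arendt-Kreuter:2018:Lipsh-vect-Soboelv}, so you are supplying an argument for a statement the paper treats as a black box; your difference-quotient-plus-slices strategy is very much in the spirit of that reference.

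There is, however, a genuine gap at $p=1$. Your step 2 rests on Proposition~\ref{prop:diff-quot}, which is stated only for $p\in(1,\infty]$, and it must be: a bounded family of $L^1$ difference quotients need not converge weakly, and even in the scalar case bounded $L^1$ difference quotients only yield $BV$, not $W^{1,1}$. Since the proposition you are proving covers $p\in[1,\infty]$, your argument does not reach the endpoint. The fix is to bypass the difference-quotient criterion and argue by slices from the start (which is essentially what your step~4 already does): for a.e.\ line $L$ parallel to $e_i$, the slice $f|_L$ is an indefinite Bochner integral of $\partial_i f|_L\in L^1(L,E)$; hence $T\circ f|_L$ is metrically absolutely continuous, and because $F$ has the Radon--Nikodým property it is itself an indefinite Bochner integral whose derivative satisfies $\abs{(T\circ f|_L)'}_F\le L\abs{\partial_i f|_L}_E$ a.e.\ on $L$. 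Fubini then produces the weak derivative of $T\circ f$, integrability follows from the pointwise bound, and the slice derivative is identified with $D_{\partial_i f}T(f)$ at Lebesgue points whenever $T$ is one-sided Gâteaux differentiable. This runs uniformly over $p\in[1,\infty]$ and absorbs your steps~2--4. Two smaller remarks: for $p=\infty$ the convergence of difference quotients to the derivative is only weak-$*$, so your step~3 should be localized to $L^q_{loc}$ with $q<\infty$; and the slice differentiability of $f$ at $t=0$ used in your step~4 does not require RNP of $E$ --- it follows from the Lebesgue differentiation theorem for Bochner integrals (valid in every Banach space) applied to the slices of $\partial_i f$, together with Fubini --- but since the naive metric notion of a.e.\ differentiability of an absolutely continuous curve would require RNP, this point deserves an explicit word.
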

Because these results are local, they extend naturally to $W^{k,p}(\Omega,E)$ when $\Omega$ is a compact manifold (possibly with boundary);
in that case, one replaces partial derivatives by derivatives along vector fields.

We begin with the following extension lemma, whose proof can be found, for example,
in \cite[Theorem~2.1.9]{Hytonen-Neerven-Veraar-Weis:2016:analysis-banach-1}.
\begin{lemma}
  \label{lem:scalar-to-vector}
  Let $p, q \in [1,\infty)$, and let $T\colon L^q(\mu) \to L^p(\nu)$ be a continuous linear operator.
  Consider
  $L^q(\mu) \otimes \ell^2 \cong Span \set{f(\cdot)v}{f\in L^q(\mu),\ v \in \ell^2}$,
  which is a dense subset of $L^q(\mu,\ell^2)$. The operator $T\otimes \id_{\ell^2} \colon
  L^q(\mu) \otimes \ell^2 \to L^p(\nu)\otimes \ell^2$ then extends continuously to
  \begin{equation}
    T\otimes \id_{\ell^2} \colon L^q(\mu, \ell^2) \to L^p(\nu,\ell^2).
  \end{equation}
\end{lemma}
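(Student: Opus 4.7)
The plan is to establish the classical Marcinkiewicz–Zygmund estimate by a randomization argument based on Khintchine's inequality. First I would fix a finite linear combination $f=\sum_{i=1}^N f_i\otimes e_i$ with $(e_i)$ an orthonormal system in $\ell^2$, and aim at the uniform bound $\norm{(T\otimes\id_{\ell^2})f}_{L^p(\nu,\ell^2)}\leq C_{p,q}\norm{T}\norm{f}_{L^q(\mu,\ell^2)}$. Once this is shown, extension by density is automatic, since simple functions valued in finite-dimensional subspaces of $\ell^2$ are dense in $L^q(\mu,\ell^2)$ for $q<\infty$.

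The main ingredient is Khintchine's inequality: for any scalars $(a_i)_{i=1}^N$ and any $r\in(0,\infty)$,
\begin{equation}
  c_r\brr{\sum_{i=1}^N\abs{a_i}^2}^{1/2}\leq\brr{\mathbb{E}\abs{\sum_{i=1}^N r_ia_i}^r}^{1/r}\leq C_r\brr{\sum_{i=1}^N\abs{a_i}^2}^{1/2},
\end{equation}
where $(r_i)$ are independent Rademacher variables. Applying this pointwise with $a_i=f_i(x)$ and integrating via Fubini yields the randomization identity
\begin{equation}
  \norm{f}_{L^q(\mu,\ell^2)}^q\sim\mathbb{E}\norm{\textstyle\sum_{i=1}^N r_if_i}_{L^q(\mu)}^q,
\end{equation}
and similarly for $(T\otimes\id_{\ell^2})f=\sum(Tf_i)\otimes e_i$ in $L^p(\nu,\ell^2)$.

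Next I would combine the two: by linearity and boundedness of $T$,
\begin{equation}
  \mathbb{E}\norm{\textstyle\sum r_iTf_i}_{L^p(\nu)}^p=\mathbb{E}\norm{T(\textstyle\sum r_if_i)}_{L^p(\nu)}^p\leq\norm{T}^p\,\mathbb{E}\norm{\textstyle\sum r_if_i}_{L^q(\mu)}^p.
\end{equation}
To convert the right-hand side back into the $L^q(\mu,\ell^2)$-norm of $f$, I would invoke Kahane's inequality — the equivalence of all $L^r$-moments of Rademacher sums in an arbitrary Banach space (applied to $L^q(\mu)$) — which gives $\mathbb{E}\norm{\sum r_if_i}_{L^q(\mu)}^p\lesssim(\mathbb{E}\norm{\sum r_if_i}_{L^q(\mu)}^q)^{p/q}\sim\norm{f}_{L^q(\mu,\ell^2)}^p$. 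Chaining the estimates closes the bound.

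I do not expect a serious obstacle. The case $p=q$ needs no Kahane step and is particularly clean; the only delicate point for $p\neq q$ is that the randomization naturally produces a $p$-th moment on the $\nu$-side and a $q$-th moment on the $\mu$-side, and bridging them is precisely the content of Kahane's equivalence. Finally, one must observe that all constants depend only on $p$ and $q$ and not on $N$, $\mu$, $\nu$, $T$, or the particular orthonormal system, so the inequality descends to the completion and defines the desired bounded extension $T\otimes\id_{\ell^2}\colon L^q(\mu,\ell^2)\to L^p(\nu,\ell^2)$.
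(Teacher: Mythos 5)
Correct. The paper does not supply a proof of its own but simply cites \cite[Theorem~2.1.9]{Hytonen-Neerven-Veraar-Weis:2016:analysis-banach-1}; your Khintchine–Fubini–Kahane randomization argument is the standard proof of this Marcinkiewicz–Zygmund $\ell^2$-valued extension theorem and matches the approach of the cited reference.
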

\begin{remark}
  In fact, $L^p(\mu,\ell^2)$ coincides with the completion of the algebraic tensor product $L^p(\mu) \otimes \ell^2$
  with a certain tensor norm (see~\cite[Proposition~25.10]{Defant-Floret:1993:tensor-norms}).
\end{remark}

Let $\mathcal{S}'(\R^m, E) = \mathfrak{L}[\mathcal{S}(\R^m), E]$ be the space of $E$-valued Schwartz distributions, and let
$f \in \mathcal{S}'(\R^m, E)$. Define the Bessel potential operator $J_s$ by $J_sf := \mathcal{F}^{-1}[(1+\abs{\xi}^2)^{s/2}\mathcal{F}f]$, where $\mathcal{F}f$ is
the Fourier transform of $f$.
Let $p \in [1,\infty]$ and $s\in \R$. Define
\begin{equation}
  \begin{split}
    H^{s,p}(\R^m,E) &:= \set*{f \in \mathcal{S}'(\R^m, E) }{ J_{s}f \in L^p(\R^m, E)},
    \\ H^{s}(\R^m,E) &:= H^{s,2}(\R^m,E).
  \end{split}
\end{equation}
As in the scalar case, the inverse of $J_{s}$ on $\mathcal{S}'(\R^m, E)$ is $J_{-s}$, so that $H^{s,p}(\R^m,E)$ becomes isomorphic to $L^p(\R^m,E)$.
For $p \in (1,\infty)$ and $k \in \mathbb{N}$, we also have
$W^{k,p}(\R^m) = H^{k,p}(\R^m)$. This equivalence can be partially extended to Sobolev spaces
with values in UMD spaces (see~\cite[Theorem~5.6.11]{Hytonen-Neerven-Veraar-Weis:2016:analysis-banach-1}). In particular, if $p \in (1,\infty)$, then
\begin{equation}\label{eq:Sobolev-is-Bessel}
  W^{k,p}(\R^m,\ell^2) = H^{k,p}(\R^m,\ell^2)\quad \text{(with equivalent norms)}.
\end{equation}
For $\ell^2$-valued functions, one also has corresponding complex interpolation results and so on (see~\cite[Section~5]{Hytonen-Neerven-Veraar-Weis:2016:analysis-banach-1}).
In the present paper, we require only the spaces $H^{1,p}(\Omega,\ell^2)$, but one can also define $H^{s,p}(\Omega,\ell^2)$ for any $s\in\R$ when $\Omega$
is a compact manifold (possibly with boundary), exactly as in the scalar case (see, for example,
\cite[Section~4]{Taylor:2023:pde-1} and \cite[Section~13.6]{Taylor:2023:pde-3}).
\begin{remark}
  Since $\ell^2$ has an inner product, the quantity
  \begin{equation}
    \abs{df}^2 := \sum_i \abs{df(e_i)}^2_{\ell^2},
  \end{equation}
  where $f \in W^{1,p}(\Omega,\ell^2)$, is well-defined and independent of the choice
  of (local) orthonormal basis $\brc{e_i} \subset \Gamma(T\Omega)$.
\end{remark}
From now on, we will be working only with $\ell^2$-valued functions and will use the advantages of the equality~\eqref{eq:Sobolev-is-Bessel}.
Therefore, we will retain the notation $H^{k,p}$ even for nonsmooth domains $\Omega \subset M$, defining $H^{k,p} := W^{k,p}$ in that case.

\subsection{Applications of tensor products to Sobolev spaces}\label{sec:applic-tensor-prod}
The following lemma allows us to transfer the topological properties of $L^p(\mu,\ell^2)$ spaces to the Sobolev spaces $H^{s,p}(\Omega,\ell^2)$.
\begin{lemma}\label{lem:sobolev-lp-isom}
  Let $p \in (1,\infty)$ and $s \in \R$, and let $\Omega$ be a compact Riemannian manifold (possibly with boundary). Then
  there exists a (Banach-space) isomorphism $J \colon H^{s,p}(\Omega) \to L^p(I)$, where $I = (0,1)$.
  Furthermore, $J \otimes \id_{\ell^2}$ extends to a (Banach-space) isomorphism $J \otimes \id_{\ell^2} \colon H^{s,p}(\Omega,\ell^2) \to L^p(I,\ell^2)$.

  As a consequence, $H^{s,p}(\Omega)$ is reflexive, has the bounded approximation property, and hence
  $H^{s,p}(\Omega)\widehat{\otimes}_{\pi}H^{s,p}(\Omega)$ is a dual space.
\end{lemma}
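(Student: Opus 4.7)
The strategy is to factor the map through $L^p(\Omega)$ using elliptic theory and then transport to $L^p(I)$ via a measure-space identification. First, by standard elliptic pseudodifferential calculus on compact manifolds (as in \cite{Taylor:2023:pde-3}), one produces an invertible operator $A$ of order $s$ on $\Omega$ giving a topological isomorphism $A\colon H^{s,p}(\Omega)\to L^p(\Omega)$; for instance, a suitable realization of $(1+\Delta_g)^{s/2}$, with the appropriate boundary behaviour built in when $\partial\Omega\neq\varnothing$ (essentially already encoded in the way $H^{s,p}(\Omega)$ is defined in the excerpt via local charts and Bessel potentials). Second, since $(\Omega,v_g)$ is a non-atomic, separable, finite measure space, the classical measure-isomorphism theorem furnishes an isometric identification $\sigma^*\colon L^p(I)\to L^p(\Omega)$. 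Setting $J:=(\sigma^*)^{-1}\circ A$ then yields the required scalar isomorphism.

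For the $\ell^2$-valued version, I would tensor the two steps separately with $\id_{\ell^2}$. Lemma~\ref{lem:scalar-to-vector}, applied to both $\sigma^*$ and $(\sigma^*)^{-1}$, extends $\sigma^*\otimes\id_{\ell^2}$ to an isomorphism $L^p(I,\ell^2)\to L^p(\Omega,\ell^2)$. For $A$, the key ingredient is the UMD property of $\ell^2$: via the equality~\eqref{eq:Sobolev-is-Bessel} and its manifold analogue obtained through partitions of unity and local charts, combined with the vector-valued Mikhlin multiplier theorem (see \cite[Section~5]{Hytonen-Neerven-Veraar-Weis:2016:analysis-banach-1}), the operator $A$ extends to an isomorphism $A\otimes\id_{\ell^2}\colon H^{s,p}(\Omega,\ell^2)\to L^p(\Omega,\ell^2)$. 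Composing gives the desired extension of $J\otimes\id_{\ell^2}$.

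The consequences are then routine. For $1<p<\infty$ the space $L^p(I)$ is reflexive, and the sequence of conditional expectations onto dyadic subalgebras provides contractive finite-rank projectors converging strongly to the identity, so $L^p(I)$ has the bounded approximation property; both properties are invariants of Banach isomorphism and hence pass to $H^{s,p}(\Omega)$. The final claim is then immediate: $H^{s,p}(\Omega)$ has the Radon-Nikod\'ym property by Proposition~\ref{prop:tensor-duality}, and the approximation property since BAP implies AP; applying the identity $(E\widehat{\otimes}_\epsilon F)^*=E^*\widehat{\otimes}_\pi F^*$ to $E=F=H^{s,p}(\Omega)^*$ exhibits $H^{s,p}(\Omega)\widehat{\otimes}_\pi H^{s,p}(\Omega)$ as a dual space. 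The main obstacle I anticipate is the clean assembly of $A$ together with its $\ell^2$-valued lift when $\partial\Omega\neq\varnothing$, but this is standard pseudodifferential calculus and is implicitly already present in the excerpt's definition of $H^{s,p}(\Omega)$.
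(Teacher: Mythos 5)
The consequences at the end (reflexivity, BAP, the duality argument) are handled correctly and match the paper's reasoning, and your treatment of the closed case via $(1+\Delta_g)^{s/2}$ plus a measure isomorphism is essentially what the paper does. The gap is the case $\partial\Omega\neq\varnothing$, which you dismiss with ``a suitable realization of $(1+\Delta_g)^{s/2}$, with the appropriate boundary behaviour built in.'' No such realization exists in the generality needed here. The operator $(1+\Delta_g)^{s/2}$ is nonlocal, so there is no canonical way to ``put in boundary conditions''; and even for integer $s>0$ and a genuine differential operator, a well-posed elliptic boundary value problem produces an isomorphism from a \emph{proper subspace} of $H^{s,p}(\Omega)$ (the one cut out by the chosen boundary conditions) onto $L^p(\Omega)$, not from all of $H^{s,p}(\Omega)$. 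Since $H^{s,p}(\Omega)$ for a manifold with boundary is defined by restriction — with no boundary conditions imposed — the elliptic-operator route does not give you what you need, and for $s<0$ the claim is even more delicate.

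This is precisely where the paper's proof takes a different path. For $\partial\Omega\neq\varnothing$ it does not attempt to build a direct elliptic isomorphism $H^{s,p}(\Omega)\to L^p(\Omega)$. Instead it extends functions to the double $M$ of $\Omega$ via a (scalar) extension operator, which exhibits $H^{s,p}(\Omega,\ell^2)$ as a complemented subspace of $H^{s,p}(M,\ell^2)\approx L^p(I,\ell^2)$; it then notes that $H^{s,p}(\Omega,\ell^2)$ in turn contains a complemented copy of $L^p(I,\ell^2)$, and invokes Pełczyński's decomposition method (following \cite{Pelczynski-Wojciechowski:2003:sobolev-spaces}) to upgrade mutual complementation to an isomorphism. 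That Banach-space-theoretic step is the nontrivial content of the boundary case and is missing from your argument; without it, the isomorphism $J\colon H^{s,p}(\Omega)\to L^p(I)$ is not established when $\Omega$ has boundary, and neither are the downstream claims of reflexivity, BAP, and existence of a predual for $H^{s,p}(\Omega)\widehat{\otimes}_\pi H^{s,p}(\Omega)$ in that case.
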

\begin{proof}
  First, $L^p(\mu)$ spaces have the bounded approximation property, as one can see by considering
  the net of conditional expectations projections with respect to finite partitions.

  If $\partial \Omega = \varnothing$ (so $\Omega$ is a closed manifold, which we denote by $M$), then
  \begin{equation}
    (1+\Delta)^{s/2} \otimes \id_{\ell^2} \colon H^{s,p}(M,\ell^2) \to L^p(M,\ell^2)
  \end{equation}
  is an isomorphism by elliptic regularity and Lemma~\ref{lem:scalar-to-vector}
  (alternatively, by partition of unity and Bessel potentials).
  Then $L^p(M) \approx L^p(I)$, since $L^p(M)$ is separable, and the volume measure is continuous (that is, nonatomic). Again,
  by Lemma~\ref{lem:scalar-to-vector}, we obtain $L^p(M,\ell^2) \approx L^p(I,\ell^2)$.

  Finally, if $\partial \Omega \neq \varnothing$ and $M$ denotes its double, then there is a scalar
  extension operator $H^{s,p}(\Omega,\ell^2) \to H^{s,p}(M,\ell^2)$, that is, of the form $\operatorname{Ext}\otimes \id_{\ell^2}$. As a consequence, $H^{s,p}(\Omega,\ell^2)$ is a complemented subspace of $H^{s,p}(M,\ell^2) \approx L^p(I,\ell^2)$.
  It remains to show that $H^{s,p}(\Omega,\ell^2)$ contains a complemented subspace isomorphic to $L^p(I,\ell^2)$. One may then apply Pełczyński's decomposition method;
  see~\cite[Theorem~11]{Pelczynski-Wojciechowski:2003:sobolev-spaces}
  and~\cite[Section~4]{Johnson-Lindenstrauss:2001:basic-concepts} for details. These identifications are scalar in nature, so the resulting isomorphism is scalar as well.
\end{proof}

\begin{lemma}\label{lem:vect-lp-proj-bound}
  Let $H^{s,p}(\Omega, \ell^2)$ be as in Lemma~\ref{lem:sobolev-lp-isom}, and let $f,g \in H^{s,p}(\Omega, \ell^2)$, with coordinate functions $f^i$ and $g^i$. Then there exists a constant $C > 0$
  such that
  \begin{equation}
    \norm{\sum\nolimits_{i} f^i \otimes g^i}_{H^{s,p}(\Omega)\widehat{\otimes}_\pi H^{s,p}(\Omega)} \leq C \norm{f}_{H^{s,p}(\Omega, \ell^2)}\norm{g}_{H^{s,p}(\Omega, \ell^2)}.
  \end{equation}
\end{lemma}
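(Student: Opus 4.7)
The plan is to reduce to the flat case on $L^p(I)$ via Lemma~\ref{lem:sobolev-lp-isom} and then deduce the estimate from duality combined with Lemma~\ref{lem:scalar-to-vector}. Let $J\colon H^{s,p}(\Omega) \to L^p(I)$ be the isomorphism from Lemma~\ref{lem:sobolev-lp-isom}, and set $\tilde{f} := (J \otimes \id_{\ell^2})f$, $\tilde{g} := (J \otimes \id_{\ell^2})g \in L^p(I,\ell^2)$. Since $J^{-1}\widehat{\otimes}_\pi J^{-1}$ sends $\sum_{i\leq N} \tilde{f}^i \otimes \tilde{g}^i$ to $\sum_{i\leq N} f^i \otimes g^i$, the continuity estimate~\eqref{ineq:tens-norm-operator} reduces matters to establishing the uniform bound
\begin{equation*}
  \norm{\sum\nolimits_{i\leq N} \tilde{f}^i \otimes \tilde{g}^i}_{L^p(I)\widehat{\otimes}_\pi L^p(I)} \leq C_p \norm{\tilde{f}}_{L^p(I,\ell^2)} \norm{\tilde{g}}_{L^p(I,\ell^2)}.
\end{equation*}
The full statement then follows from completeness: applying the same bound to the tails of $\tilde{f}$ and $\tilde{g}$, together with dominated convergence in $\ell^2$, shows that the partial sums form a Cauchy sequence in $L^p(I)\widehat{\otimes}_\pi L^p(I)$.

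To obtain the uniform bound, I would pass to duality. Since $L^p(I)$ is reflexive with the approximation property, the universal property of $\widehat{\otimes}_\pi$ supplies the isometric identification $(L^p(I)\widehat{\otimes}_\pi L^p(I))^* \cong \mathfrak{Bil}[L^p(I), L^p(I)] \cong \mathfrak{L}[L^p(I), L^{p'}(I)]$, with $1/p + 1/p' = 1$. Hence it suffices to show
\begin{equation*}
  \abs{\sum\nolimits_{i\leq N} \brt{T\tilde{f}^i, \tilde{g}^i}_{L^{p'},L^p}} \leq C_p \norm{T} \norm{\tilde{f}}_{L^p(I,\ell^2)} \norm{\tilde{g}}_{L^p(I,\ell^2)}
\end{equation*}
for every $T \in \mathfrak{L}[L^p(I), L^{p'}(I)]$, with $C_p$ independent of $T$ and $N$.

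By Lemma~\ref{lem:scalar-to-vector}, $T\otimes \id_{\ell^2}$ extends continuously to $L^p(I,\ell^2) \to L^{p'}(I,\ell^2)$, and a direct computation on simple tensors $\sum_k \phi_k v_k \in L^p(I)\otimes \ell^2$ gives the coordinate identity $((T\otimes\id_{\ell^2})\tilde{f})^i = T\tilde{f}^i$. Consequently, the partial sum in the display above can be rewritten as a Bochner duality pairing
\begin{equation*}
  \sum\nolimits_{i\leq N} \brt{T\tilde{f}^i, \tilde{g}^i} = \brt{P_N(T\otimes \id_{\ell^2})\tilde{f}, \tilde{g}}_{L^{p'}(I,\ell^2), L^p(I,\ell^2)},
\end{equation*}
where $P_N$ is the coordinate projection of $\ell^2$ onto its first $N$ basis vectors, of operator norm $1$. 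Since $\ell^2$ is reflexive and hence has the Radon-Nikod\'ym property (Proposition~\ref{prop:tensor-duality}), H\"older's inequality for Bochner integrals bounds the pairing by $\norm{T\otimes \id_{\ell^2}} \cdot \norm{\tilde{f}}_{L^p(I,\ell^2)} \norm{\tilde{g}}_{L^p(I,\ell^2)}$.

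The main obstacle is to verify $\norm{T\otimes \id_{\ell^2}}_{L^p(I,\ell^2) \to L^{p'}(I,\ell^2)} \leq C_p \norm{T}$ with $C_p$ independent of $T$. Since Lemma~\ref{lem:scalar-to-vector} provides a continuous extension for each fixed $T$, this uniform control follows from the closed-graph theorem applied to the linear map $T \mapsto T\otimes \id_{\ell^2}$ between the corresponding operator spaces, whose graph is closed because convergence on the dense subset $L^p(I)\otimes \ell^2 \subset L^p(I,\ell^2)$ determines the extension uniquely.
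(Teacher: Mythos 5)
Your argument is correct, and it differs from the paper's proof in its key ingredient. Both proofs first invoke Lemma~\ref{lem:sobolev-lp-isom} to transfer the problem to $L^p$, and both then test the tensor against a bounded bilinear form via the duality $(L^p\widehat\otimes_\pi L^p)^*=\mathfrak{Bil}[L^p,L^p]$ (for which, incidentally, neither reflexivity nor the approximation property is needed; it is the universal property alone). At that point the paper decomposes $\tilde f$ and $\tilde g$ into step functions and bounds the pairing directly by Grothendieck's inequality in matrix form, whereas you identify the bilinear form with an operator $T\in\mathfrak L[L^p(I),L^{p'}(I)]$, rewrite the pairing as $\brt{P_N(T\otimes\id_{\ell^2})\tilde f,\tilde g}$, and appeal to the Marcinkiewicz--Zygmund-type extension of Lemma~\ref{lem:scalar-to-vector} (together with the closed graph theorem to extract the uniform constant, since the paper's statement of that lemma is merely qualitative). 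Your coordinate identity $((T\otimes\id)\tilde f)^i=T\tilde f^i$, the duality $L^p(I,\ell^2)^*=L^{p'}(I,\ell^2)$ via the Radon--Nikod\'ym property of $\ell^2$, and the Cauchy argument for passing from partial sums to the full series are all sound. What each route buys: the paper's Grothendieck argument is self-contained (it does not depend on Lemma~\ref{lem:scalar-to-vector} at all and works with bare-hands step functions), while your route is cleaner in that it avoids the discretization and shows that the statement is really just the operator-theoretic extension result in thin disguise; on the other hand you inherit a dependence on the external Theorem~2.1.9 in Hyt\"onen et al.\ that the paper's proof of this lemma does not require.
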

\begin{proof}
  By Lemma~\ref{lem:sobolev-lp-isom}, it is enough to prove the statement for $L^p(\mu,\ell^2)$ spaces.
  It suffices to prove the inequality for the partial sums $\sum_{i \leq N} f^i \otimes g^i$, so only finitely many of $f^i$ and $g^i$ are nonzero, and each
  $f^i = \sum_j f^i_j \chi_{S_j}$, $g^i = \sum_j g^i_j \chi_{S_j}$ is a sum of step functions with $S_{j'} \cap S_j = \varnothing$. If $\t := \sum_i f^i \otimes g^i$,
  it suffices to show that
  $\brt{\t, \mathfrak{q}} \leq C\norm{f}_{L^p(\mu; \ell^2)}\norm{g}_{L^p(\mu; \ell^2)}$ for any $\mathfrak{q} \in \mathfrak{Bil}[L^p(\mu)]$ with $\norm{\mathfrak{q}} \leq 1$.
  Let $B_E$ denote the unit ball of the normed space $E$.
  We will use Grothendieck's inequality in matrix form (see~\cite[Section~14.5]{Defant-Floret:1993:tensor-norms}):
  there exists a constant $C$ such that for any $n\times n$ matrix $(a_{ij}) \in \mathfrak{L}[\mathbb{K}^n]$ and each Hilbert space $H$,
  \begin{equation}\label{ineq:grothendieck}
    \begin{multlined}
      0 \leq \sup \set*{\sum\nolimits_{i,j} a_{ij} \brt{x_i,y_j}}{x_i, y_j \in B_H}
      \\ \leq C \sup \set*{\sum\nolimits_{i,j} a_{ij}s_i t_j}{ s_i, t_j \in B_{\mathbb{K}}}.
    \end{multlined}
  \end{equation}
  Let $\abs{f}_j = (\sum_i |f^i_j|^2)^{1/2}$, i.e. $\abs{f} = \sum_j \abs{f}_j \chi_{S_j}$. Then
  \begin{equation}
    \brt{\t, \mathfrak{q}} = \sum_{j,k} \mathfrak{q}\brs{\abs{f}_j \chi_{S_j}, \abs{g}_k \chi_{S_k}} \brr{\sum_i \frac{f^i_j}{\abs{f}_j}
    \frac{g^i_k}{\abs{g}_k}}.
  \end{equation}
  We apply~\eqref{ineq:grothendieck} with $a_{jk} = \mathfrak{q}\brs{\abs{f}_j \chi_{S_j}, \abs{g}_k \chi_{S_k}}$ to obtain
  \begin{multline}
    \brt{\t, \mathfrak{q}} \leq C \sup_{s_j, t_k \in B_{\mathbb{K}}}\sum\nolimits_{j,k}\mathfrak{q}\brs{\abs{f}_j \chi_{S_j}, \abs{g}_k \chi_{S_k}}s_j t_k
     \\ = C \sup_{s_i, t_j \in B_{\mathbb{K}}}\mathfrak{q}\brs{f_s, g_t} \leq C  \norm{f}_{L^p(\mu; \ell^2)}\norm{g}_{L^p(\mu; \ell^2)},
  \end{multline}
  where $f_s := \sum_j s_j\abs{f}_j \chi_{S_j}$ and $\norm{\mathfrak{q}} \leq 1$.
\end{proof}
\begin{remark}
  One can show that the series $\sum_i f^i \otimes g^i$ converges absolutely (that is, $\sum_i \norm{f^i}\norm{g^i} < \infty$) if $p \leq 2$,
  but this need not hold when $p >2$.
\end{remark}

Let $p \in [2,\infty)$, and denote by
\begin{equation}\label{eq:nucler-top-H1}
  \m{\cdot}\colon \overbar{L}^p(\Omega)\widehat{\otimes}_\pi L^p(\Omega) \to L^{p/2}(\Omega)
\end{equation}
the linear map induced by pointwise multiplication; that is,
$\m{f \otimes g }= \overbar{f}g$. Regarding $d$ as the exterior derivative $d \colon H^{1,p}(\Omega) \to L^p(\Omega, T^*\Omega)$, we also define
$\brt{d(f \otimes g)d^*} := \brt{dg,df}$ for $f,g \in H^{1,p}(\Omega)$.
We keep the same notation $\m{\cdot}$ for the map $\m{\cdot}\colon \overbar{H}^{1,p}\hat{\otimes}_\pi H^{1,p} \to L^{p/2}$ and so on.
Then equation~\eqref{eq:positive-tens-norm} reads as
\begin{equation}\label{eq:tens-norm-decomp}
  \norm{\t}_{\tensor{H^1(\Omega)}} =  \norm{\t}_{\overbar{H}^1(\Omega)\widehat{\otimes}_\pi H^1(\Omega)} = \int_\Omega \m{\t}dv_g + \int_\Omega \m{d\t d^*}dv_g,
\end{equation}
where $\t = \sum \phi^i \otimes \phi^i$ and $H^1 = H^{1,2}$.
\begin{remark}\label{rem:weak-star-conv}
  If $\Omega' \subset \Omega$ and  $\t_\epsilon \overset{w^*}{\to}
  \t \in \overbar{H}^{s,p}(\Omega) \widehat{\otimes}_\pi H^{s,p}(\Omega)$, it follows from Proposition~\ref{prop:weak-star-conv}
  that $\t_\epsilon|_{\Omega'} \oset{w^*}{\to} \t|_{\Omega'} \in \overbar{H}^{s,p}(\Omega') \widehat{\otimes}_\pi H^{s,p}(\Omega')$.
\end{remark}
As a corollary of Proposition~\ref{prop:compact-maps}, we obtain a generalization
of the weak$^*$-to-strong convergence under the compact embedding $H^{1,p} \hookrightarrow L^p$.
\begin{corollary}\label{cor:l-inf-convergence}
  Let $s > 0$ and $\t_\epsilon \overset{w^*}{\to}
  \t$ in $\overbar{H}^{s,p}(\Omega) \widehat{\otimes}_\pi H^{s,p}(\Omega)$. Then
  $\t_\epsilon \to \t$ in $\overbar{L}^{p}(\Omega) \widehat{\otimes}_\pi L^{p}(\Omega)$. In particular,
  $\m{\t_\epsilon} \to \m{\t}$ in $L^{p/2}(\Omega)$.
\end{corollary}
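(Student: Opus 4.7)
The plan is to deduce norm convergence in the $L^p$-tensor product from the compactness of the tensorized inclusion, and then simply apply the continuous multiplication map.

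First, I would observe that for $s > 0$, the inclusion $\boldsymbol{\iota}\colon H^{s,p}(\Omega)\to L^p(\Omega)$ is compact (this is Rellich--Kondrachov on the compact manifold $\Omega$, extended to Bessel potential spaces via partition of unity and the isomorphism provided by Lemma~\ref{lem:sobolev-lp-isom}). Both $H^{s,p}(\Omega)$ and $L^p(\Omega)$ are reflexive and have the bounded approximation property (again by Lemma~\ref{lem:sobolev-lp-isom}, together with the standard fact that $L^p$-spaces have the BAP). Consequently, Proposition~\ref{prop:compact-maps} applied to $\overbar{\boldsymbol{\iota}}$ and $\boldsymbol{\iota}$ yields that the induced map
\begin{equation}
T := \overbar{\boldsymbol{\iota}}\,\widehat{\otimes}_\pi\,\boldsymbol{\iota}\colon \overbar{H}^{s,p}(\Omega)\widehat{\otimes}_\pi H^{s,p}(\Omega)\to \overbar{L}^p(\Omega)\widehat{\otimes}_\pi L^p(\Omega)
\end{equation}
is compact, and Proposition~\ref{prop:weak-star-conv} further gives that $T$ is weakly$^*$ continuous (both domain and codomain being duals of injective tensor products of reflexive BAP spaces).

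Next, since $\t_\epsilon \overset{w^*}{\to} \t$, the sequence $\{\t_\epsilon\}$ is bounded by the uniform boundedness principle. Compactness of $T$ implies that $\{T\t_\epsilon\}$ is relatively norm-compact in $\overbar{L}^p(\Omega)\widehat{\otimes}_\pi L^p(\Omega)$. Any norm-convergent subsequence $T\t_{\epsilon_k} \to y$ also converges to $y$ weakly$^*$; on the other hand, weak$^*$ continuity of $T$ forces $T\t_{\epsilon_k}\overset{w^*}{\to} T\t$. Thus every norm-cluster point of $\{T\t_\epsilon\}$ equals $T\t$, and a standard subsequence-of-subsequence argument yields $T\t_\epsilon \to T\t$ in norm, which is precisely the statement $\t_\epsilon \to \t$ in $\overbar{L}^p(\Omega)\widehat{\otimes}_\pi L^p(\Omega)$.

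Finally, by Hölder's inequality, pointwise multiplication $(f,g) \mapsto \overbar{f}g$ is a bounded bilinear map $\overbar{L}^p(\Omega)\times L^p(\Omega)\to L^{p/2}(\Omega)$, so the universal property of the projective tensor product extends it to a continuous linear map $\m{\cdot}\colon \overbar{L}^p(\Omega)\widehat{\otimes}_\pi L^p(\Omega)\to L^{p/2}(\Omega)$ (as already noted in~\eqref{eq:nucler-top-H1}). Applying this to the convergence just established gives $\m{\t_\epsilon}\to \m{\t}$ in $L^{p/2}(\Omega)$. The only mildly subtle point is the identification of the norm limit as $T\t$, which is where weak$^*$ continuity of $T$ is essential; everything else is a direct assembly of results already proved earlier in the section.
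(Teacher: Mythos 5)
Your proposal is correct and follows essentially the same route as the paper's proof: weak$^*$ continuity of the tensorized inclusion (Proposition~\ref{prop:weak-star-conv}), compactness of that map (Proposition~\ref{prop:compact-maps}), and the observation that $\t$ is the only possible accumulation point of the relatively compact image sequence. You have simply spelled out the uniform-boundedness and subsequence-of-subsequence steps that the paper leaves implicit in "since the map \ldots is compact, and $\t$ can be the only accumulation point."
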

\begin{proof}
  Proposition~\ref{prop:weak-star-conv} implies that $\t_\epsilon \overset{w^*}{\to}
  \t$ in $\overbar{L}^{p}(\Omega) \widehat{\otimes}_\pi L^{p}(\Omega)$ as well.
  Since the map $\overbar{H}^{s,p} \widehat{\otimes}_\pi H^{s,p} \to \overbar{L}^{p} \widehat{\otimes}_\pi L^{p}$ is compact, and
  $\t$ can be the only accumulation point of the sequence $\brc{\t_\epsilon}$, it follows
  that in fact, $\t_\epsilon \to \t$ in $\overbar{L}^{p}(\Omega) \widehat{\otimes}_\pi L^{p}(\Omega)$.
\end{proof}

Given $f \in H^{s,p}(\Omega,\ell^2)$ with coordinate functions $f^i$, we view $f$ as an operator
$f \in \mathfrak{L}[(H^{s,p})^*, \ell^2] = \mathfrak{L}[(\overbar{H}^{s,p})^*, \overbar{\ell^2}]$, acting by
\begin{equation}\label{def:lp-as-operator}
  \phi \mapsto (\cdots, \brt{f^i, \phi}_{H^{s,p},(H^{s,p})^*},\cdots).
\end{equation}
Under such identification, note that $L^2(\mu,\ell^2) = \mathcal{HS}[L^2(\mu), \overbar{\ell^2}]$ isometrically (it is enough to compare the norms).
If $f^* \colon \overbar{\ell^2} \to H^{s,p}$ denotes its dual operator, by Lemma~\ref{lem:vect-lp-proj-bound}, we obtain
\begin{equation}
  \tf{f} = \sum f^i \otimes f^i  \in \overbar{H}^{s,p}\widehat{\otimes}_\pi H^{s,p}  \approx \mathfrak{N}[(\overbar{H}^{s,p})^*,H^{s,p}].
\end{equation}
By $\iml  f$, we denote the range (image) of the operator $f \in \mathfrak{L}[(H^{s,p})^*,\ell^2]$.
The following lemma tells us that the projective convergence on the tensors $\tf{f}$ is equivalent to the convergence up to isometry of the corresponding maps.
\begin{lemma}[Tensor-product convergence toolkit]\label{lem:main}
  Let $H^{s,p}(\Omega, \ell^2)$ be as in Lemma~\ref{lem:sobolev-lp-isom}, and let $p \in [2,\infty)$.
  \begin{enumerate}[label={(\roman*)}]
    \item\label{lem:main:it:0} There exists a constant $C>0$ such that for all $f \in H^{s,p}(\Omega,\ell^2)$, we have
    \begin{equation}
      C^{-1}\norm{f}_{H^{s,p}(\Omega,\ell^2)}^2 \leq \norm{f^*f}_{\overbar{H}^{s,p}(\Omega)\widehat{\otimes}_\pi H^{s,p}(\Omega)} \leq C\norm{f}_{H^{s,p}(\Omega,\ell^2)}^2.
    \end{equation}
    \item\label{lem:main:it:1} The image of $H^{s,p}(\Omega, \ell^2) \ni f \mapsto \tf{f} \in \overbar{H}^{s,p}\widehat{\otimes}_\pi H^{s,p}$ is weakly$^*$ closed
    and consists of all the positive self-adjoint tensors (in the sense of bilinear forms on the duals).
    \item\label{lem:main:it:2} $f_1^*f_1 = f_2^*f_2$ if and only if $f_2 = I f_1$ for some isometry $I \colon \iml  f_1 \to \iml  f_2$.
    \item\label{lem:main:it:3} $f_n^*f_n \to \tf{f}$ in $\overbar{H}^{s,p}\widehat{\otimes}_\pi H^{s,p}$ if and only if there exist isometric embeddings
    $I_n \colon \iml  f_n \hookrightarrow \ell^2$ and $I \colon \iml  f \hookrightarrow \ell^2$ such that
    $I_n f_n \to If$ in $H^{s,p}(\Omega, \ell^2)$.
    \item\label{lem:main:it:4} If a functional $\mathcal{R} \colon \overbar{H}^{s,p}\widehat{\otimes}_\pi H^{s,p} \to \R$ has the form
    $\mathcal{R}(\t) = \sup_{\alpha} \norm{\boldsymbol{\iota}_\alpha\t \boldsymbol{\iota}_\alpha^*}_{\pi}$ for some family of maps $\boldsymbol{\iota}_\alpha \colon H^{s,p} \to H_\alpha$
    into Hilbert spaces, then such a functional is weakly$^*$ lower semicontinuous.

    For example, take $f,f_n \in H^{1,p}(\Omega, \ell^2)$ with
    $f_n^*f_n \oset{w^*}{\to} \tf{f}$ and
    \begin{equation}
      \boldsymbol{\iota}_{\tilde{f}} \in \set{d \colon H^{1,p}(\Omega) \to L^2(\Omega, |d\tilde{f}|^{p-2}, T^*\Omega)}{||d\tilde{f}||_{L^{p}(\Omega, \ell^2 \otimes T^*\Omega)} \leq 1}.
    \end{equation}
    This yields
    \begin{equation}
      \int \abs{df}^p \leq \liminf_{n \to \infty} \int \abs{df_n}^p.
    \end{equation}

    \item\label{lem:main:it:5} If $f,f_n \in H^{1,p}(\Omega, \ell^2)$ with $f_n^*f_n \oset{w^*}{\to} \tf{f}$ and
    $\abs{df_n}^q \to \abs{df}^q$ in $L^{p/q}(\Omega)$ for some $q \in (0,\infty)$, then $f_n^*f_n \to \tf{f}$ in $\overbar{H}^{1,p}\widehat{\otimes}_\pi H^{1,p}$.
  \end{enumerate}
\end{lemma}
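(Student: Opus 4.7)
The plan is to prove the six items in an order that exploits the norm equivalence \ref{lem:main:it:0} as a bridge between the tensor side and the vector-valued side, so that \ref{lem:main:it:2} and \ref{lem:main:it:3} control the isometry ambiguity needed for the strong convergence statement \ref{lem:main:it:5}. Item \ref{lem:main:it:4} can be treated independently.

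For \ref{lem:main:it:0} the upper bound is exactly Lemma \ref{lem:vect-lp-proj-bound}. For the lower bound I would observe that $\m{f^*f} = \abs{f}_{\ell^2}^2$ and $\m{d(f^*f)d^*} = \abs{df}_{\ell^2}^2$ pointwise; since $\m{\cdot}$ is the linearization of the Hölder product $(x,y)\mapsto \overline{x}y$, it has norm at most $1$ as a map $\overbar{L}^p \widehat{\otimes}_\pi L^p \to L^{p/2}$, which together with the Sobolev-to-$L^p$ identification of Lemma \ref{lem:sobolev-lp-isom} bounds $\norm{f}_{H^{s,p}(\Omega,\ell^2)}^2$ by $\norm{f^*f}_\pi$. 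For \ref{lem:main:it:1} the forward inclusion follows from $\brt{f^*f,\phi\otimes\psi}=\brt{f\phi,f\psi}_{\ell^2}$; conversely, given a positive self-adjoint $\t$ I would form the Hilbert completion $H_\t$ of $(H^{s,p})^*$ modulo $\brc{\phi:\t[\phi,\phi]=0}$ with the inner product induced by $\t$, isometrically embed the separable space $H_\t$ into $\ell^2$, and use nuclearity together with \ref{lem:main:it:0} to identify the adjoint of $(H^{s,p})^*\to H_\t\hookrightarrow \ell^2$ with some $f\in H^{s,p}(\Omega,\ell^2)$ satisfying $f^*f=\t$; weak$^*$-closedness then follows from lower semicontinuity of $\norm{\cdot}_\pi$ and passage to the limit. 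For \ref{lem:main:it:2}, if $f_1^*f_1=f_2^*f_2$ then the map $I\colon f_1\phi\mapsto f_2\phi$ is isometric because $\norm{f_2\phi}^2=\brt{f_2^*f_2\phi,\phi}=\brt{f_1^*f_1\phi,\phi}=\norm{f_1\phi}^2$, and the converse is direct.

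The main technical step is \ref{lem:main:it:3}. The $(\Leftarrow)$ direction follows from \ref{lem:main:it:0} and the polarization
\begin{equation}
f_n^*f_n - f^*f = (f_n-f)^*f_n + f^*(f_n-f),
\end{equation}
which after absorbing the isometries via \ref{lem:main:it:2} gives $\norm{f_n^*f_n - f^*f}_\pi \leq C(\norm{f_n}+\norm{f})\norm{I_nf_n - If}_{H^{s,p}(\Omega,\ell^2)}$. For $(\Rightarrow)$ I would use the polar decomposition $f_n = U_n(f_n^*f_n)^{1/2}$ together with Lemma \ref{lem:HS-isom}, reducing to the Hilbert realization furnished by Lemma \ref{lem:sobolev-lp-isom}, to obtain $\norm{(f_n^*f_n)^{1/2} - (f^*f)^{1/2}}_{\mathcal{HS}}\to 0$; composing with suitably aligned partial isometries $U_n^{-1}$ then yields the embeddings $I_n$. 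The hardest bookkeeping is ensuring that the Hilbert--Schmidt-level control transfers back to the $H^{s,p}(\Omega,\ell^2)$-norm, which is precisely what \ref{lem:main:it:0} provides.

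For \ref{lem:main:it:4}, weak$^*$-to-weak$^*$ continuity of $\t\mapsto \boldsymbol{\iota}_\alpha\t\boldsymbol{\iota}_\alpha^*$ is Proposition \ref{prop:weak-star-conv}, and the nuclear norm on a Hilbert-space dual is weak$^*$-lower semicontinuous, so each summand and hence the supremum is weak$^*$-lsc. For the Dirichlet application I would use that the nuclear norm of a positive tensor on a Hilbert space is its trace to compute
\begin{equation}
\norm{\boldsymbol{\iota}_{\tilde f}f^*f\boldsymbol{\iota}_{\tilde f}^*}_\pi = \int \abs{df}_{\ell^2}^2 \abs{d\tilde f}^{p-2}\, dv_g,
\end{equation}
and optimize via Hölder, saturated at $\tilde f = f/\norm{df}_{L^p(\Omega,\ell^2\otimes T^*\Omega)}$, to recover $\mathcal{R}(f^*f) = (\int \abs{df}^p\, dv_g)^{2/p}$. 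Finally, for \ref{lem:main:it:5}, after applying \ref{lem:main:it:2} to replace $f_n$ by $I_n f_n$ I would arrange $f_n\rightharpoonup f$ weakly in $H^{1,p}(\Omega,\ell^2)$; Rellich--Kondrachov gives $f_n\to f$ strongly in $L^p$, the hypothesis yields $\norm{df_n}_{L^p(\Omega,\ell^2\otimes T^*\Omega)}\to\norm{df}_{L^p(\Omega,\ell^2\otimes T^*\Omega)}$, and uniform convexity of $L^p(\Omega,\ell^2\otimes T^*\Omega)$ promotes the weak convergence $df_n\rightharpoonup df$ to strong. Hence $f_n\to f$ in $H^{1,p}(\Omega,\ell^2)$, and \ref{lem:main:it:3} closes the argument.
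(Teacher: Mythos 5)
Your treatments of items \ref{lem:main:it:0}, \ref{lem:main:it:2}, and \ref{lem:main:it:4} are correct and essentially match the paper (your lower bound in \ref{lem:main:it:0} via $\norm{\m{\cdot}}\le 1$ is a slightly cleaner packaging of the paper's explicit bilinear form $\mathfrak{q}$). The remaining items have real gaps.

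The proof of \ref{lem:main:it:5} contains an outright error: the embedding $H^{1,p}(\Omega,\ell^2)\to L^p(\Omega,\ell^2)$ is \emph{not} compact, as the paper warns in Section~\ref{sec:harm-maps}. Concretely, if $g\in H^{1,p}(\Omega)$ is nonzero and $\brc{e_n}\subset\ell^2$ is orthonormal, then $f_n:=g\,e_n$ is bounded in $H^{1,p}(\Omega,\ell^2)$ but $\norm{f_n-f_m}_{L^p(\Omega,\ell^2)}=\sqrt{2}\,\norm{g}_{L^p}$ for $n\ne m$. (Compactness would require the identity on the fiber $\ell^2$ to be compact; cf.\ Proposition~\ref{prop:compact-maps}.) Relatedly, the hypothesis $f_n^*f_n\oset{w^*}{\to}f^*f$ fixes each $f_n$ only up to a post-composed isometry of $\ell^2$ that may rotate with $n$, so citing \ref{lem:main:it:2} does not let you ``arrange $f_n\rightharpoonup f$'' in $H^{1,p}(\Omega,\ell^2)$. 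The paper avoids both problems by staying at the tensor level: Corollary~\ref{cor:l-inf-convergence} supplies the strong $L^p\widehat{\otimes}_\pi L^p$ convergence your Rellich--Kondrachov step was meant to produce, and Corollary~\ref{cor:seq-spaces-tensor-convergence} (weak$^*$ convergence plus norm convergence of projections) does the upgrade, applied to the weighted Hilbert space $H_n=L^2(\Omega,\omega_n)\oplus L^2(\Omega,\omega_n',T^*\Omega)$ with $\omega_n=1+\abs{f_n}^{p-2}+\abs{f}^{p-2}$, $\omega_n'=1+\abs{df_n}^{p-2}+\abs{df}^{p-2}$.

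The $(\Rightarrow)$ direction of \ref{lem:main:it:3} and the existence half of \ref{lem:main:it:1} have the same structural gap: you never identify the Hilbert space on which the Hilbert--Schmidt and polar-decomposition machinery should act. ``The Hilbert realization furnished by Lemma~\ref{lem:sobolev-lp-isom}'' is an $L^p$ model, not an $L^2$ one, and $(f_n^*f_n)^{1/2}$ has no meaning for operators $L^{p'}\to L^p$. The paper's device, used in all three items, is the reweighting injection $\boldsymbol{\iota}\colon L^p(\mu)\to L^2(\nu)$ where $d\nu$ carries a weight of the form $\abs{f}^{p-2}$ (for \ref{lem:main:it:1}, $d\nu=(1+\m{\t}^{(p-2)/2})d\mu$; for \ref{lem:main:it:3}, $d\nu_n=(\abs{f_n}^{p-2}+\abs{f}^{p-2})d\mu$). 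It is only after conjugating by $\boldsymbol{\iota}$ that $\t$ becomes a trace-class Hilbert space operator (so one can take eigenvectors, square roots, and invoke Lemma~\ref{lem:HS-isom}), and it is precisely this weight that converts the $L^2(\nu_n)$ output into the wanted $L^p(\mu)$ estimate: pointwise, $\abs{a}^2(\abs{a}^{p-2}+\abs{b}^{p-2})\ge\abs{a}^p$. Without this step, neither the construction of $f$ from an abstract positive tensor $\t$ (your $H_\t$ factorization tells you nothing about membership in $H^{s,p}(\Omega,\ell^2)$), nor the transfer from Hilbert--Schmidt control back to the $H^{s,p}(\Omega,\ell^2)$-norm, goes through.
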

\begin{remark}
    Actually, the condition $\abs{df_n}^q \to \abs{df}^q$ in $L^{p/q}(\Omega)$ from property~\ref{lem:main:it:5} is too strong when $p=2$. A more uniform assumption
    (valid for all $p\geq2$) would be
    \begin{equation}
      \abs{du_n}^{p-2} \oset{w^*}{\to} \abs{du}^{p-2} \text{ in } L^{{p}/({p-2})}(\Omega) \quad\text{and}\quad \norm{du_n}_{L^p} \to \norm{du}_{L^p}
    \end{equation}
    However,
    one immediately sees that when $p > 2$, this is equivalent to $\abs{df_n}^q \to \abs{df}^q$ in $L^{p/q}(\Omega)$ (first for $q = p-2$, and hence for all $q\in (0,\infty)$).
\end{remark}
\begin{proof}
  By Lemma~\ref{lem:sobolev-lp-isom}, it is enough to prove the statements for $L^p(\mu,\ell^2)$ spaces with finite measure. Let
  $f_n \in L^p(\mu,\ell^2)$.

  \ref{lem:main:it:0}: We already have the second inequality by Lemma~\ref{lem:vect-lp-proj-bound}. For the first one,
  consider a bilinear form $\mathfrak{q} \in \mathfrak{Bil}[\overbar{L}^p(\mu),L^p(\mu)]$ defined by $\brt{\phi \otimes \phi, \mathfrak{q}} =  \int \abs{\phi}^2\abs{f}^{p-2}d\mu$.
  Then
  \begin{equation}
    \int \abs{f}^pd\mu = \brt{f^*f,\mathfrak{q}} \leq \norm{f^*f}_{\pi} \norm{\mathfrak{q}} = \norm{f^*f}_{\pi} \norm{f}^{p-2}_{L^p(\mu,\ell^2)}.
  \end{equation}

  \ref{lem:main:it:1}: Suppose $f_n^*f_n \oset{w^*}{\to} \t \in \overbar{L}^p(\mu)\widehat{\otimes}_\pi L^p(\mu)$. Then
  any such $\t$ is a positive (self-adjoint) tensor since $f_n^*f_n$ are. Define a measure $\nu$ as
  $d\nu = (1+\m{\t}^{(p-2)/2}) d\mu$ so that there is a continuous injection
  \begin{equation}
    \boldsymbol{\iota}\colon L^p(\mu) \to L^2(\nu),
  \end{equation}
  since $\m{\t}^{\frac{p-2}{2}} \in L^{\frac{p}{p-2}}(\mu)$.
  Then $\boldsymbol{\iota}\t\boldsymbol{\iota}^*$ is itself a positive (self-adjoint) operator, and can be decomposed by its eigenvectors, say,
  \begin{equation}
    \boldsymbol{\iota}\t\boldsymbol{\iota}^* = \sum f^i \otimes f^i.
  \end{equation}
  Note that multiplication $\m\cdot \colon \overbar{L}^p(\mu)\widehat{\otimes}_\pi L^p(\mu) \to L^1(\mu)$, see~\eqref{eq:nucler-top-H1}, factors through $\overbar{L}^2(\nu)\widehat{\otimes}_\pi L^2(\nu)$, hence we have
  \begin{equation}
    \m{\t} = \m{\boldsymbol{\iota}\t\boldsymbol{\iota}^*} = \sum_i \abs{f^i}^2 =: \abs{f}^2,
  \end{equation}
  Since $\boldsymbol{\iota}$ is injective and each $f^i$ lies in $\iml  (\boldsymbol{\iota}\t\boldsymbol{\iota}^*)$, we have $f^i \in \iml \t \subset L^p(\mu)$.
  Viewing $\t$ as a nuclear operator $\mathfrak{N}[\overbar{L}^{p}(\mu)^*,L^p(\mu)]$, we obtain from~\eqref{eq:positive-tens-norm} that
  \begin{equation}
    \sum_i \int \abs{f^i}^2 (1+\abs{f}^{p-2}) d\mu = \norm{\boldsymbol{\iota}\t\boldsymbol{\iota}^*}_\pi < \infty.
  \end{equation}
  Therefore, $f$ defines an element of $L^p(\mu,\ell^2)$, and $ \boldsymbol{\iota}(\tf{f})\boldsymbol{\iota}^* = \boldsymbol{\iota}\t\boldsymbol{\iota}^*$.
  By Proposition~\ref{prop:injective-maps}, we conclude that $\tf{f} = \t$.

  \ref{lem:main:it:2}: Let $\t = f_1^*f_1 = f_2^*f_2$. Then $\boldsymbol{\iota}(f_1^*f_1)\boldsymbol{\iota}^* = \boldsymbol{\iota}(f_2^*f_2)\boldsymbol{\iota}^*
  \in \mathfrak{N}[L^2(\nu)]$ and $f_2|_{L^2(\nu)} := f_2\boldsymbol{\iota}^*, f_1|_{L^2(\nu)} := f_1\boldsymbol{\iota}^* \in \mathcal{HS}[L^2(\nu), \overbar{\ell^2}]$. Hence, by polar decomposition,
  $If_1 = f_2$ for some isometry $I$.

  \ref{lem:main:it:3}: Note that
  \begin{equation}
    f_n^*f_n - f^*f = \frac{1}{2}\brs{(f_n - f)^*(f_n + f)+ (f_n + f)^*(f_n - f)},
  \end{equation}
  so
  \begin{equation}
    \norm{f_n^*f_n - f^*f}_{\pi} \leq C \norm{f_n - f}_{L^p(\mu,\ell^2)}\norm{f_n + f}_{L^p(\mu,\ell^2)}
  \end{equation}
  by Lemma~\ref{lem:vect-lp-proj-bound}. To prove the reverse implication, we consider measures
  $d\nu_n = (\abs{f}^{p-2} + \abs{f_n}^{p-2}) d\mu$ and the maps
  $\boldsymbol{\iota}_n\colon L^p(\mu) \to L^2(\nu_n)$, with $\norm{\boldsymbol{\iota}_n}^2 \leq C(\norm{f}_{L^p(\mu,\ell^2)}^{p-2}+ \norm{f_n}_{L^p(\mu,\ell^2)}^{p-2})$.
  Lemma~\ref{lem:HS-isom} then yields the existence of isometries $I_n, I$ such that
  \begin{equation}
    \begin{multlined}
        \int \abs{I_n f_n - I f}^2(\abs{f_n}^{p-2}+\abs{f}^{p-2}) d\mu = \norm{I_n f_n \boldsymbol{\iota}_n^* - I f \boldsymbol{\iota}_n^*}_{L^2(\nu_n, \ell^2)}^2
        \\ \leq \norm{\boldsymbol{\iota}_n(f^*_nf_n - f^*f)\boldsymbol{\iota}_n^*}_\pi
        \leq C \norm{f^*_nf_n - f^*f}_{\pi}(\norm{f^*_nf_n}_{\pi}^{(p-2)/2} + \norm{f^*f}_{\pi}^{(p-2)/2}),
    \end{multlined}
  \end{equation}
  where we used the estimates on $\norm{\boldsymbol{\iota}_n}$ and \ref{lem:main:it:0} for the last inequality.
  Thus, we obtain
  \begin{equation}
    \int \abs{I_n f_n - I f}^pd\mu \leq C\norm{f_n^*f_n - f^*f}_{\pi}(\norm{f^*_nf_n}_{\pi}^{(p-2)/2}+\norm{f^*f}_{\pi}^{(p-2)/2}).
  \end{equation}

  \ref{lem:main:it:4}: Let $\t_n \oset{w^*}{\to} \t$, $\epsilon > 0$, and $\boldsymbol{\iota}_\alpha$ be such that
  $\mathcal{R}(\t) \leq \norm{\boldsymbol{\iota}_\alpha \t \boldsymbol{\iota}_\alpha^*}_{\pi} + \epsilon$.
  Because the projective norm is a dual norm, we have
  \begin{equation}
    \mathcal{R}(\t) \leq \epsilon + \norm{\boldsymbol{\iota}_\alpha \t \boldsymbol{\iota}_\alpha^*}_{\pi}
    \leq \epsilon + \liminf_n \norm{\boldsymbol{\iota}_\alpha \t_n \boldsymbol{\iota}_\alpha^*}_{\pi}
    \leq \epsilon + \liminf_n \mathcal{R}(\t_n).
  \end{equation}

  \ref{lem:main:it:5}: If we have the convergence $\abs{df_n}^{q} \to \abs{df}^{q}$ in $L^{p/q}$ for some $q \in (0,\infty)$,
  we actually have it for any such $q$ (the map $\phi \mapsto \abs{\phi}^\gamma$ is continuous from $L^r$ to $L^{r/\gamma}$).
  Define $\omega_n = (1+\abs{f_n}^{p-2} + \abs{f}^{p-2})$, $\omega_n' = (1+\abs{df_n}^{p-2} + \abs{df}^{p-2})$, together with
  the Hilbert spaces
  \begin{equation}
    H_n = L^2(\Omega, \omega_n) \oplus L^2(\Omega, \omega_n', T^*\Omega)
  \end{equation}
  and injections $\boldsymbol{j}_n \colon H^{1,p}(\Omega) \to H_n$, $\phi \mapsto (\phi, d\phi)$. Then
  $\omega_n$ (by Corollary~\ref{cor:l-inf-convergence}) and $\omega_n'$ converge in $L^{p/(p-2)}$, and we can apply Corollary~\ref{cor:seq-spaces-tensor-convergence} to obtain
  $\norm{\boldsymbol{j}_n(f_n^*f_n-f^*f)\boldsymbol{j}_n^*}_{\pi} \to 0$. By Lemma~\ref{lem:HS-isom}, we can find some isometries
  $I_n$, $I$ such that
  \begin{multline}
    \int \abs{I_nf_n - If}^2\omega_n + \int \abs{I_n(df_n) - I(df)}^2\omega_n'
    \\ =
    \norm{(\boldsymbol{j}_n(f_n^*f_n)\boldsymbol{j}_n^*)^{1/2}-(\boldsymbol{j}_n(f^*f)\boldsymbol{j}_n^*)^{1/2}}_{\mathcal{HS}}^2 \leq \norm{\boldsymbol{j}_n(f_n^*f_n-f^*f)\boldsymbol{j}_n^*}_{\pi} \to 0.
  \end{multline}
  Thus, we obtain $\norm{I_nf_n - If}_{H^{1,p}(\Omega,\ell^2)}\to 0$, which is equivalent to the stated convergence.
\end{proof}

\subsection{Clarke subdifferential}\label{subsec:clarke-subdif}
Let $E$ be a Banach space, $\overline{\R}:=\R\cup\brc{+\infty}$,
and let $f\colon E \to \overline{\R}$ be Lipschitz on a neighborhood of $x \in E$.
The \emph{Clarke directional derivative} of $f$ at $x$ in the direction $v \in E$ by
\begin{equation}
 f^{\circ}(x; v) = \limsup_{\tilde{x} \to x, \tau\downarrow 0}
 \frac{1}{\tau} \brs{f(\tilde{x} + \tau v) - f(\tilde{x})}.
\end{equation}
Since $v \mapsto f^{\circ}(x; v)$ is a sublinear and Lipschitz,
it is a support functional of a nonempty, convex, weakly$^*$ compact subset of $E^*$.
The Clarke subdifferential $\partial_C f(x)$ is defined to be
precisely this subset. Namely,
\begin{equation}
  \partial_C f(x) = \set*{\xi \in E^*}{\brt{\xi, v}
  \leq f^{\circ}(x; v) \ \forall v \in E}.
\end{equation}
The following properties of
$\partial_C f(x)$ can be found, for example, in \cite[Section~7.3]{Schirotzek:2007:nonsmooth-analysis}:
\begin{itemize}
  \item $\partial_C f(x) \neq \varnothing$;
  \item $\partial_C (\alpha f)(x) = \alpha\partial_C f(x)$ for $\alpha \in \R$;
  \item $\partial_C (f+g)(x) \subset \partial_C f(x) + \partial_C g(x)$;
  \item $\partial_C f(x)$ is a closed, convex, and bounded set, and hence weakly$^*$ compact;
  \item $0 \in \partial_C f(x)$ if $x$ is a local maximum/minimum.
\end{itemize}
The Clarke subdifferential
generalizes the notion
of the derivative in the case of locally Lipschitz functions
and captures many of the useful properties of the derivative relevant to optimization problems.
For applications of the Clarke subdifferential to critical metrics and eigenvalue optimization, see~\cite{Petrides-Tewodrose:2024:eigenvalue-via-clarke}.

\begin{proposition}[{\cite[Proposition~4.6.2]{Schirotzek:2007:nonsmooth-analysis}}]
  \label{prop:subdiff-of-norm}
  Let $E$ be a Banach space, and $\omega_z(x) := \norm{x-z}$. Then
  \begin{equation}
    \begin{split}
      \partial_C\omega_z(x) &= \set*{x^* \in E^*}{\norm{x^*} = 1,\ \brt{x - z,x^*} = \norm{x - z}}\quad \text{if } x \neq z,
      \\ \partial_C\omega_z(z) &= B_{E^*},
    \end{split}
  \end{equation}
  where $B_{E^*}$ is the unit ball of $E^*$.
\end{proposition}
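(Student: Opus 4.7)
The plan is to exploit the fact that $\omega_z(x)=\|x-z\|$ is a convex, 1-Lipschitz function and reduce the computation of the Clarke subdifferential to the classical convex subdifferential, where the answer is standard.

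First, I would record that for a convex function $f$ on a Banach space $E$ which is continuous on a neighbourhood of $x$, the Clarke directional derivative $f^{\circ}(x;v)$ coincides with the one-sided directional derivative
\[
f'(x;v)=\lim_{\tau\downarrow 0}\tfrac{1}{\tau}\bigl[f(x+\tau v)-f(x)\bigr],
\]
which exists by convexity. This follows because for convex $f$ the difference quotient $\tau\mapsto\tfrac{1}{\tau}[f(\tilde x+\tau v)-f(\tilde x)]$ is nondecreasing in $\tau$ and jointly upper semicontinuous in $(\tilde x,\tau)$, so passing to the $\limsup$ as $\tilde x\to x$, $\tau\downarrow 0$ recovers the one-sided derivative. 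Consequently $\partial_C f(x)$ coincides with the convex subdifferential
\[
\partial f(x)=\set*{x^{*}\in E^{*}}{f(y)\geq f(x)+\brt{y-x,x^{*}}\ \forall y\in E}.
\]
Applying this to $\omega_z$, which is convex (triangle inequality) and 1-Lipschitz, reduces the proposition to computing the convex subdifferential of $\omega_z$.

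Next, I would translate: setting $y:=x-z$, the condition $x^{*}\in\partial\omega_z(x)$ reads $\|v\|\geq\|y\|+\brt{v-y,x^{*}}$ for all $v\in E$, which is exactly $x^{*}\in\partial\|\cdot\|(y)$. Here I split into two cases.
\begin{itemize}
  \item If $y=0$, the condition becomes $\brt{v,x^{*}}\leq\|v\|$ for every $v\in E$, i.e.\ $\|x^{*}\|\leq 1$, giving $\partial\|\cdot\|(0)=B_{E^{*}}$.
  \item If $y\neq 0$, test the inequality $\|v\|\geq\|y\|+\brt{v-y,x^{*}}$ at $v=0$ to get $\brt{y,x^{*}}\geq\|y\|$, and at $v=2y$ to get $\brt{y,x^{*}}\leq\|y\|$, forcing $\brt{y,x^{*}}=\|y\|$. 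With this equality, the defining inequality simplifies to $\|v\|\geq\brt{v,x^{*}}$ for all $v$, i.e.\ $\|x^{*}\|\leq 1$; combined with $\brt{y,x^{*}}=\|y\|$, duality forces $\|x^{*}\|=1$.
\end{itemize}
Translating back via $y=x-z$ gives the two cases of the proposition.

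The only nontrivial step is the reduction from Clarke to convex subdifferential, which is an entirely classical observation; after that the computation is a direct application of the definition of the convex subdifferential together with a duality argument. Since the whole argument amounts to elementary convex analysis, I would simply cite Schirotzek's monograph for the details rather than reproduce them in the paper.
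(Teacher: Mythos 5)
The paper gives no proof of this proposition, simply citing Schirotzek's monograph, so there is nothing to compare line by line. Your argument is correct and is the standard one: reduce $\partial_C$ to the convex subdifferential (using monotonicity of convex difference quotients in $\tau$ together with continuity in $\tilde x$ to show $f^{\circ}(x;v)=f'(x;v)$), then compute $\partial\norm{\cdot}(y)$ by testing the subgradient inequality at $v=0$ and $v=2y$ and invoking duality to pin down $\norm{x^*}=1$ when $y\neq 0$.
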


For the following property, see, for example, \cite[Proposition~2.2]{Vinokurov:2025:sym-eigen-val-lms}.
\begin{proposition}[Chain rule]\label{prop:clarke-compose}
  Let $G \colon F \to E$ be a continuously Fréchet differentiable map of Banach spaces defined
  on a neighborhood of $x\in F$. If
  $f\colon E \to \overline{\R}$ is Lipschitz
  on a neighborhood of $y = G(x) \in E$, one has
  \begin{equation}
    \partial_C (f \circ G)(x) \subset d_x G^*[\partial_C f (y)].
  \end{equation}
  Thus, the subdifferential of a pullback is contained in the pullback of the subdifferential.
\end{proposition}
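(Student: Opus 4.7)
The plan is a two-step argument, standard for chain rules of Clarke subdifferentials. First I would establish the directional inequality
\begin{equation}
  (f \circ G)^{\circ}(x; v) \;\leq\; f^{\circ}\!\bigl(y;\, d_x G(v)\bigr) \qquad \forall\, v \in F,
\end{equation}
and then use Hahn--Banach to convert this inequality into the desired inclusion of subdifferentials.

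For the directional inequality, fix $v \in F$, write $\tilde y = G(\tilde x)$, and use the continuous Fréchet differentiability of $G$ to expand
\begin{equation}
  G(\tilde x + \tau v) \;=\; \tilde y + \tau\, d_x G(v) + r(\tilde x, \tau),
\end{equation}
where $\|r(\tilde x, \tau)\|_E / \tau \to 0$ as $(\tilde x, \tau) \to (x, 0^+)$; the $C^1$ assumption on $G$ is what makes this error term $o(\tau)$ uniformly in $\tilde x$ near $x$ (this is the only delicate point of the argument). Since $f$ is Lipschitz with constant $L$ on a neighborhood of $y$, for $(\tilde x, \tau)$ sufficiently close to $(x, 0^+)$ we may write
\begin{equation}
  \frac{f(G(\tilde x + \tau v)) - f(\tilde y)}{\tau}
  \;\leq\;
  \frac{f(\tilde y + \tau\, d_x G(v)) - f(\tilde y)}{\tau}
  \;+\; L \,\frac{\|r(\tilde x,\tau)\|_E}{\tau}.
\end{equation}
Taking $\limsup$ as $\tilde x \to x$ and $\tau \downarrow 0$, noting that $\tilde y \to y$, the second term vanishes, and the first is bounded by $f^{\circ}(y;\, d_x G(v))$ by definition.

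For the second step, fix $\xi \in \partial_C(f \circ G)(x)$. By the directional inequality just proved,
$\brt{\xi, v} \leq f^{\circ}(y; d_x G(v))$ for all $v \in F$. Applying this to $\pm v$ with $d_x G(v) = 0$ and using the sublinearity of $f^{\circ}(y; \cdot)$ (so $f^{\circ}(y; 0) = 0$), we obtain $\brt{\xi, v} = 0$ whenever $v \in \ker d_x G$. Hence the map
\begin{equation}
  \tilde\xi \colon \operatorname{Im}(d_x G) \to \R, \qquad \tilde\xi\bigl(d_x G(v)\bigr) := \brt{\xi, v}
\end{equation}
is well-defined and linear on the subspace $\operatorname{Im}(d_x G) \subset E$, and it is dominated there by the sublinear, Lipschitz functional $w \mapsto f^{\circ}(y; w)$. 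By the Hahn--Banach theorem there exists a linear extension $\eta \colon E \to \R$ with $\eta(w) \leq f^{\circ}(y; w)$ for all $w \in E$; since $f^{\circ}(y; \cdot)$ is Lipschitz, $\eta$ is continuous, so $\eta \in E^*$. The domination shows $\eta \in \partial_C f(y)$, and by construction $\brt{\xi, v} = \eta(d_x G(v)) = \brt{d_x G^*(\eta), v}$ for every $v \in F$. Therefore $\xi = d_x G^*(\eta) \in d_x G^*[\partial_C f(y)]$, completing the proof.
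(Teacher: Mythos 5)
Your proof is correct. The argument — first establishing the directional inequality $(f\circ G)^{\circ}(x;v)\le f^{\circ}(y;d_xG(v))$ by expanding $G$ about $\tilde x$ with a remainder that is uniformly $o(\tau)$ thanks to the $C^1$ hypothesis, then passing from support-function inequality to set inclusion via a Hahn–Banach extension — is precisely the standard proof of Clarke's chain rule and matches the cited reference.
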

The following proposition is a simplified version of~\cite[Theorem~12.4.1]{Schirotzek:2007:nonsmooth-analysis}
sufficient for our purposes.
\begin{proposition}[Clarke’s Multiplier Rule]\label{prop:multip-rule}
  Let $E$ be a Banach space and $f \colon E \to \overline{\R}$ be a function
  that is Lipschitz on a neighborhood of $x \in S$, where $S \subset E$ is a closed
  convex set. If $x$ is a local minimum of $f$ on $S$, then
  \begin{equation}
    \exists \xi \in \partial_C f(x) \colon \
    \brt{\tilde{x}-x, \xi} \geq 0 \quad\forall \tilde{x} \in S.
  \end{equation}
\end{proposition}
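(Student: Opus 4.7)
The plan is to reduce the constrained minimization to an unconstrained one by means of an exact penalty function, and then to invoke the Clarke calculus summarized in the bullet list preceding the statement. I would introduce the distance function $d_S(y) = \inf_{s\in S}\norm{y-s}$, which is convex (because $S$ is convex) and $1$-Lipschitz, and work with $g := f + \lambda d_S$ for a sufficiently large penalty parameter $\lambda$.

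First, I would show that $x$ is an unconstrained local minimum of $g$ whenever $\lambda$ exceeds a Lipschitz constant $L$ of $f$ on some ball $B_r(x)$. Given $y$ close to $x$, choose $s_\epsilon \in S$ with $\norm{y-s_\epsilon} \leq d_S(y)+\epsilon$; since $x\in S$ one has $\norm{s_\epsilon - x} \leq 2\norm{y-x} + \epsilon$, so $s_\epsilon$ remains in $B_r(x)$ for $y$ near $x$ and $\epsilon$ small. Then $f(s_\epsilon) \geq f(x)$ and $\abs{f(y)-f(s_\epsilon)} \leq L\norm{y-s_\epsilon}$ combine to give
\begin{equation}
  g(y) = f(y) + \lambda d_S(y) \geq f(x) + (\lambda - L)\, d_S(y) - L\epsilon,
\end{equation}
and sending $\epsilon\downarrow 0$ yields $g(y) \geq g(x)$.

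Applying Fermat's rule and the sum rule from the bullet list, $0 \in \partial_C g(x) \subset \partial_C f(x) + \lambda\, \partial_C d_S(x)$, so there exist $\xi \in \partial_C f(x)$ and $\eta \in \partial_C d_S(x)$ with $\xi = -\lambda\eta$. Since $d_S$ is convex and Lipschitz, its Clarke subdifferential coincides with the convex (Fenchel) subdifferential, and a direct verification from the subgradient inequality $d_S(y) \geq \brt{y-x,\eta}$ gives
\begin{equation}
  \partial d_S(x) = \set*{\eta \in E^*}{\norm{\eta}\leq 1,\ \brt{\tilde{x}-x,\eta} \leq 0\ \forall\,\tilde{x}\in S}.
\end{equation}
Indeed, the bound $\norm{\eta}\leq 1$ comes from testing $d_S(x+tv)\leq t$ along unit rays, while plugging $y = \tilde{x}\in S$ into the subgradient inequality forces $\brt{\tilde{x}-x,\eta}\leq 0$. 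Consequently, for every $\tilde{x}\in S$,
\begin{equation}
  \brt{\tilde{x}-x,\xi} = -\lambda\brt{\tilde{x}-x,\eta} \geq 0,
\end{equation}
which is the desired multiplier inequality.

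The only delicate point is the exact penalty step: verifying that any $\lambda > L$ suffices depends crucially on the local Lipschitz hypothesis on $f$ (needed to control $f(y)$ by $f(s_\epsilon)$) and on having a point of $S$ genuinely close to $y$ (which uses $x\in S$). Beyond that, every ingredient -- nonemptiness of $\partial_C f(x)$, the sum rule for locally Lipschitz functions, positive homogeneity, and Fermat's rule $0\in \partial_C g(x)$ at an unconstrained local minimum -- is already recorded in the properties listed just above the statement.
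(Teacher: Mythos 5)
The paper does not actually prove this proposition: it is stated as a simplified special case of the Clarke multiplier rule from Schirotzek (Theorem~12.4.1), and the reader is referred there. Your exact-penalty argument is correct and is the standard self-contained route to this special case; it is arguably more elementary than unwinding Schirotzek's general multiplier rule, since the convexity of $S$ lets you work directly with the distance function $d_S$ and avoid any normal-cone calculus beyond what you compute by hand. The penalty lemma (that $x$ is an unconstrained local minimum of $f + \lambda d_S$ once $\lambda$ exceeds a local Lipschitz constant $L$ of $f$), the application of Fermat's rule and the sum rule, and the description of $\partial d_S(x)$ as the intersection of the normal cone to $S$ at $x$ with the unit ball of $E^*$ all check out; note that only the inclusion $\partial d_S(x) \subset \set*{\eta}{\norm{\eta}\leq 1,\ \brt{\tilde{x}-x,\eta}\leq 0\ \forall\tilde{x}\in S}$ is actually used, and that is exactly the direction you verify. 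The one ingredient that goes beyond the Clarke-calculus facts recorded in Section~\ref{subsec:clarke-subdif} is the identification of $\partial_C d_S(x)$ with the convex (Fenchel) subdifferential of the convex Lipschitz function $d_S$; this is a classical fact but is not among the bullet points there, so it should be cited (e.g.\ \cite[Proposition~7.3.8]{Schirotzek:2007:nonsmooth-analysis} or Clarke's original text) as an additional input.
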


Note that for unbounded self-adjoint operators, eigenvalues are indexed in
increasing order (from bottom to top). In contrast, for
bounded operators, we will index the eigenvalues in decreasing order
(from top to bottom), starting with $k=1$.  The next proposition follows from the proofs of
\cite[Proposition~2.18]{Vinokurov:2025:higher-dim-harm-eigenval} and \cite[Lemma~2.5]{Vinokurov:2025:sym-eigen-val-lms}. The proofs rely only on the fact
that $\lambda_k(T)$ is an isolated eigenvalue of finite multiplicity.
\begin{proposition}
  \label{prop:subdiff-calc}
  Let $H$ be a Hilbert space and
  $\mathfrak{L}_{sa}[H]$ be the space of self-adjoint operators on $H$.
  If $T \in \mathfrak{L}_{sa}[H]$ is a positive operator with an isolated eigenvalue $\lambda_k(T) > 0$ of finite multiplicity, then the functional
  $\lambda_k\colon \mathfrak{L}_{sa}[H] \to \R$ is Lipschitz on a neighborhood
  of $T$. Its Clarke subdifferential $\partial_C \lambda_k(T) \subset \mathfrak{N}[H]$
  is given by
  \begin{equation}\label{abst-clark-subdif}
    \partial_C \lambda_k(T) = \operatorname{co}\,
    \set*{\phi \otimes \phi }{
      \phi \in V_{\lambda = \lambda_k},\ \norm{\phi} = 1
    },
  \end{equation}
  where $\operatorname{co} K$ denotes the convex hull of the set $K$.
\end{proposition}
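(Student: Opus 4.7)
My plan is to reduce the problem to the finite-dimensional case via the Riesz spectral projector for the isolated eigenvalue $\lambda_k(T)$, and then to invoke the classical formula for the Clarke subdifferential of the top eigenvalue of a symmetric matrix.

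First, I would choose a small contour $\Gamma \subset \C$ around $\lambda_k(T)$ containing no other point of the spectrum of $T$, and introduce the Riesz projector
\begin{equation}
  P(\tilde T) := \frac{1}{2\pi i}\oint_\Gamma (z - \tilde T)^{-1} dz,
\end{equation}
defined for $\tilde T$ in a neighborhood $U \subset \mathfrak{L}_{sa}[H]$ of $T$. Standard spectral theory gives that $P(\tilde T)$ is self-adjoint, of constant rank $r := \mult \lambda_k(T)$, depends analytically on $\tilde T$, and that $\iml P(\tilde T)$ is $\tilde T$-invariant. The map $G(\tilde T) := P(\tilde T)\tilde T P(\tilde T)$ is then an analytic map $U \to \mathfrak{L}_{sa}[H]$ of rank at most $r$, and after shrinking $U$ we have $\lambda_k(\tilde T) = \lambda_{\max}(G(\tilde T))$ (positive, isolated, multiplicity $\leq r$). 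Combining analyticity of $G$ with the $1$-Lipschitz property of $\lambda_{\max}$ on bounded finite-rank self-adjoint operators, the Lipschitz continuity of $\lambda_k$ on $U$ follows.

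Next, for the subdifferential formula, I would apply the chain rule of Proposition~\ref{prop:clarke-compose} to $\lambda_k = \lambda_{\max} \circ G$. In finite dimensions one has
\begin{equation}
  \partial_C \lambda_{\max}(G(T)) = \operatorname{co}\set*{\phi \otimes \phi}{\phi \in V_{\lambda = \lambda_k},\ \norm{\phi}=1},
\end{equation}
since $G(T) = \lambda_k(T) P(T)$, so every unit vector of $V_{\lambda = \lambda_k} = \iml P(T)$ is a top eigenvector. The key elementary computation is that, differentiating the projector identity $P(\tilde T)^2 = P(\tilde T)$ at $T$, the derivative $dP_T(S)$ maps $V_{\lambda = \lambda_k}$ into $V_{\lambda = \lambda_k}^\perp$; this forces $\brt{dG_T(S)\phi,\phi} = \brt{S\phi,\phi}$ for unit $\phi \in V_{\lambda = \lambda_k}$, so that the pullback $d_T G^*(\phi \otimes \phi) = \phi \otimes \phi$ remains a rank-one tensor. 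Chasing this through Proposition~\ref{prop:clarke-compose} yields the inclusion $\partial_C \lambda_k(T) \subset \operatorname{co}\set{\phi \otimes \phi}{\phi \in V_{\lambda = \lambda_k},\ \norm{\phi}=1}$.

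For the reverse inclusion, I would verify directly that each $\phi \otimes \phi$ with unit $\phi \in V_{\lambda = \lambda_k}$ belongs to $\partial_C \lambda_k(T)$, which by definition amounts to checking $\brt{S\phi,\phi} \leq \lambda_k^\circ(T;S)$ for every $S \in \mathfrak{L}_{sa}[H]$. This would follow from first-order perturbation theory for $\lambda_k$ along the family $T + \tau S$: one obtains $\lambda_k(T + \tau S) = \lambda_k(T) + \tau\,\lambda_{\max}(P(T) S P(T)|_{V_{\lambda = \lambda_k}}) + o(\tau)$, whence $\lambda_k^\circ(T;S) \geq \brt{S\phi,\phi}$ for every unit $\phi \in V_{\lambda = \lambda_k}$. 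I expect the main delicacy to be purely bookkeeping: uniform control of $P(\tilde T)$ and its derivatives on $U$, and verifying that the subdifferential -- computed a priori as a subset of $\mathfrak{L}_{sa}[H]^*$ -- actually lies in $\mathfrak{N}[H]$, as the statement asserts. These are the infinite-dimensional refinements of the standard finite-dimensional computations carried out in the cited works \cite{Vinokurov:2025:higher-dim-harm-eigenval, Vinokurov:2025:sym-eigen-val-lms}.
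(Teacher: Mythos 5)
Your overall strategy — reduce to the finite-rank operator $G(\tilde T)=P(\tilde T)\tilde TP(\tilde T)$ via the Riesz projector, apply the chain rule of Proposition~\ref{prop:clarke-compose}, and verify that $d_TG^*(\phi\otimes\phi)=\phi\otimes\phi$ — is sound, and the paper's own treatment (which is not given in full here but delegated to the cited proofs) is compatible with this skeleton. The computation showing $P(T)\,dP_T(S)\,P(T)=0$, hence $\langle dG_T(S)\phi,\phi\rangle=\langle S\phi,\phi\rangle$ for $\phi\in V_{\lambda=\lambda_k}$, is correct and is the heart of the reduction.

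However, there is a genuine gap in the identification $\lambda_k=\lambda_{\max}\circ G$ on a neighborhood of $T$. With the paper's convention ($\lambda_1\geq\lambda_2\geq\cdots$), the hypothesis allows $\lambda_k(T)$ to sit anywhere inside its cluster $\lambda_{j_0}(T)=\cdots=\lambda_{j_0+r-1}(T)$. For $\tilde T$ near $T$ the $r$ eigenvalues of $G(\tilde T)$ on $\operatorname{Im}P(\tilde T)$ split, and $\lambda_k(\tilde T)$ equals the $(k-j_0+1)$-th largest of them, \emph{not} the largest unless $k=j_0$, i.e.\ unless $\lambda_{k-1}(T)>\lambda_k(T)$. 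The same error recurs in the reverse-inclusion step: the one-sided expansion at $T$ is $\lambda_k(T+\tau S)=\lambda_k(T)+\tau\,\lambda_{k-j_0+1}(PSP|_{V_{\lambda_k}})+o(\tau)$, which is generically strictly smaller than the $\lambda_{\max}(PSP)$ term you write, so the inequality $\lambda_k^\circ(T;S)\geq\langle S\phi,\phi\rangle$ does not follow from the directional derivative at $T$ alone. Your argument as written proves the proposition only when $\lambda_{k-1}(T)>\lambda_k(T)$. (Since this proposition is later applied through Lemma~\ref{lem:lambda-alph-mu-subdiff} to $\lambda_{k+1}(T_\mu)$ where the position of $k+1$ in the cluster is uncontrolled, the general case is genuinely needed.)

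Two standard fixes. For the reverse inclusion, use the full $\limsup$ in the definition of $\lambda_k^\circ$: take $\tilde T\to T$ along directions for which the spectrum near $\lambda_k(T)$ is simple; then $\lambda_k$ is smooth near $\tilde T$ with derivative $\langle S\tilde\phi_k,\tilde\phi_k\rangle$, and since the $k$-th eigenvector $\tilde\phi_k$ can be made to converge to any unit $\phi\in V_{\lambda_k}$, one gets $\lambda_k^\circ(T;S)\geq\langle S\phi,\phi\rangle$ for every such $\phi$; this shows in particular that $\lambda_k^\circ(T;S)=\sup_{\|\phi\|=1,\,\phi\in V_{\lambda_k}}\langle S\phi,\phi\rangle$, independent of the position of $k$ in the cluster. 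For the forward inclusion, either (i) observe that this last identity already determines $\partial_C\lambda_k(T)$ as the set supported by that sublinear functional, which is precisely $\operatorname{co}\{\phi\otimes\phi:\phi\in V_{\lambda_k},\|\phi\|=1\}$; or (ii) keep the chain-rule route but replace $\lambda_{\max}$ by $\lambda_{k-j_0+1}$ on the $r$-dimensional target and invoke the known finite-dimensional formula for $\partial_C\lambda_j$ at a scalar multiple of a rank-$r$ projection — which is $\operatorname{co}\{\phi\otimes\phi\}$ for all $j\leq r$, not only $j=1$, but this is precisely the nontrivial fact that the bare convexity of $\lambda_{\max}$ cannot give you. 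Your remaining observations (Lipschitz continuity via analyticity of $P$, the inclusion $\partial_C\lambda_k(T)\subset\mathfrak{N}[H]$ since the extremal tensors have rank $\leq r$) are correct.
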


\subsection{Weighted eigenvalues and weighted Sobolev spaces}\label{sec:weighted-stuff}

Let $\Omega$ be a compact Riemannian manifold possibly with boundary. For a pair $(\alpha, \mu) \in L^1_+(\Omega) \times \mathcal{M}_+^c(\overbar{\Omega})$,
let us define
\begin{equation}
  H^1(\Omega,\mu,\alpha) := \overline{\set*{(\phi,d\phi)}{\phi \in C^\infty(\overbar{\Omega})}}^{L^2(\Omega,\mu)\oplus L^2(\Omega,\alpha,T^*\Omega)}.
\end{equation}
As usual, we identify functions $\beta \in L^1_+(\Omega)$ with absolutely continuous measures $\beta dv_g$, so
$H^1(\Omega,\beta,\alpha) := H^1(\Omega,\beta dv_g,\alpha)$.

For general $\alpha$, the functions from $H^1(\alpha):=H^1(\Omega,\alpha,\alpha)$ can exhibit a very strange behavior.
For example (see~\cite{Zhikov:1998:weighted-sobolev}), such functions need not be locally integrable; moreover, the differential component need not coincide with distributional
(weak) differential. There may exist pairs $(\phi,0) \in \wH{\alpha}$ with nonconstant $\phi$, as well as pairs $(0,\omega) \in \wH{\alpha}$; hence
a single function may admit several distinct differentials.

We work with general $\alpha$ mainly because it remains open in dimensions $m \geq 3$ whether the differential of a nonconstant
$p$-harmonic map can vanish on a set of positive (Lebesgue) measure. In dimension two it is not possible thanks to \cite{Manfredi:1988:plane-p-harm-func}.
\begin{remark}
  The situation becomes much better when we already know that $1/\tilde{\alpha} \in L^1_{loc}(\Omega\setminus Z)$, where
  $\tilde{\alpha} \in L^1(\Omega)$ and $Z$ is a closed set of zero measure. This is due to the following simple estimate
  $(\int \brt{d\phi_n, X})^2 \leq (\int \abs{d\phi_n}^2\tilde{\alpha})(\int \abs{X}^2/\tilde{\alpha})$, with
  a vector field $X \in C^\infty_0(\Omega\setminus Z, T\Omega)$, which implies that the differential of a function from $\wH[\Omega]{\tilde{\alpha}}$ is unique
  and coincides with the distributional one at least on $\Omega\setminus Z$.
  In such a case,
  \begin{multline}
    \wH[\Omega]{\tilde{\alpha}} \\\subset \set*{\phi \in L^1_{loc}(\Omega\setminus Z)\cap L^2(\Omega, \tilde{\alpha})}
    {d\phi \in L^1_{loc}(\Omega\setminus Z, T^*\Omega)\cap L^2(\Omega, \tilde{\alpha},T^*\Omega)}.
  \end{multline}
\end{remark}

\begin{definition}\label{def:stable-points}
  For a sequence $(\alpha_\epsilon, \mu_\epsilon) \in L^1_+(\Omega) \times \mathcal{M}_+^c(\overbar{\Omega})$, we call a point $x \in \overbar{\Omega}$ \emph{stable}
  if there exists a neighborhood $U \ni x$ such that up to a subsequence,
  \begin{equation}\label{ineq:stable-point}
    \int \alpha_\epsilon\abs{d\phi}^2 - \int \phi^2 d\mu_\epsilon \geq 0 \quad \forall \phi \in C^\infty_0(U),
  \end{equation}
  that is, the associated quadratic forms are nonnegative definite on $C^\infty_0(U)$.
\end{definition}
If $\lambda_k(\alpha_\epsilon,\mu_\epsilon) = 1$ for all $\epsilon$,
there exist at most $k$ unstable points (see~\cite[Remark~2.3]{Vinokurov:2025:higher-dim-harm-eigenval}). On the other hand,
even if a point $y \in \overbar{\Omega}$ is unstable (see~\cite[Lemma~2.5]{Vinokurov:2025:higher-dim-harm-eigenval}, cf. also \cite[Claim~2.5]{Petrides:2024:conf-class-opt-lms}),
one can find a neighborhood $V \ni y$ and a sequence
of radii $r_\epsilon \to 0$ such that
\begin{equation}\label{ineq:unstable-point}
    \int \alpha_\epsilon\abs{d\phi}^2 - \int \phi^2 d\mu_\epsilon \geq 0 \quad \forall \phi \in C^\infty_0(V\setminus \overbar{B_{r_\epsilon}(y)}).
\end{equation}

If we take a constant sequence $(\alpha_\epsilon, \mu_\epsilon) \equiv (\alpha, \lambda_k \mu)$, where
$\lambda_k := \lambda_k(\alpha, \mu) \in (0,\infty)$, an immediate consequence is that every point $x \in \overbar{\Omega}$ is stable.
So, for a (smooth) partition of unity $\sum_i \psi^2_i = 1$ and a function $\phi \in C^\infty(\overbar{\Omega})$, we have
\begin{equation}\label{ineq:mu-is-bounded}
  \begin{split}
    \lambda_k \int \phi^2 d\mu &\leq \int \abs{d\phi}^2 \alpha + \frac{1}{2}\sum_i \int \brt{d\phi^2, d\psi^2_i}\alpha  + \sum_i \int \abs{d\psi_i}^2 \phi^2  \alpha
    \\                       &\leq \int \abs{d\phi}^2 \alpha + C \int \phi^2  \alpha.
  \end{split}
\end{equation}
Thus, $\mu$ defines a bounded bilinear form $\mu \in \mathfrak{Bil}[\wH[\Omega]{\alpha}]$.
In other words, the identity map $\wH[\Omega]{\alpha} \to L^2(\Omega,\mu)$ is continuous. The integration with respect to $\mu$ will be understood
via this continuous extension. Summarizing the discussion above and generalizing the classical identity $H^1 = H^1(\mu,1)$, valid when $\mu \in \mathfrak{Bil}[H^1]$ (cf. also \cite{Petrides:2024:conf-class-opt-lms}),
we obtain the following
\begin{proposition}\label{prop:weighted-sobolev-equiv}
  Let $\lambda_k(\alpha, \mu) > 0$ for some $k \geq 1$. Then
  $\mu \in \mathfrak{Bil}[H^1(\alpha)]$. Furthermore, if $\alpha \geq c > 0$ and  $\mu \in \mathfrak{Bil}[H^1(1,\alpha)]$, then
  \begin{equation}
    H^1(\mu,\alpha) = H^1(1,\alpha)
  \end{equation}
  with equivalent norms.
\end{proposition}
\begin{proof}
  To prove the second part, let us assume that $\alpha \geq c > 0$ and
  $\mu \in \mathfrak{Bil}[H^1(1,\alpha)]$. Then $H^1(1,\alpha) \hookrightarrow H^1 \hookrightarrow L^2$ is compact, and
  therefore, we can apply a generalized Poincaré inequality (see \cite[Lemma~4.1.3]{Ziemer:1989:weakly-diff-func}) to
  the projection $P\colon H^1(1,\alpha) \to \R \subset H^1(1,\alpha)$ onto constant functions given by $P\colon\phi \to \dashint \phi d\mu$. Hence
  \begin{equation}
    \norm{\phi}_{L^2} \leq \norm{P\phi}_{L^2} + \norm{\phi - P\phi}_{L^2}
    \leq C\brr{\norm{\phi}_{L^2(\mu)} + \norm{d\phi}_{L^2(\alpha)}},
  \end{equation}
  so the norms $\norm{\phi}_{L^2} + \norm{d\phi}_{L^2(\alpha)}$ and $\norm{\phi}_{L^2(\mu)} + \norm{d\phi}_{L^2(\alpha)}$
  are equivalent.
\end{proof}

Define $\lambda_{ess}(\alpha,\mu) := \sup_k \lambda_{k}(\alpha,\mu)$. We will see in the proof of the next result that the case $\lambda_{ess}(\alpha,\mu) = \infty$ corresponds
to compact embeddings $H^1(\mu,\alpha) \to L^2(\mu)$. Set also $\mu L^\infty = \set{\tilde{\mu} \in \mathcal{M}_+}{d\tilde{\mu}/d\mu \in L^\infty(\mu)}$ with
the corresponding $L^\infty$-norm.
\begin{lemma}\label{lem:lambda-alph-mu-subdiff}
  Let $\Omega$ be a compact Riemannian manifold with $m = \dim \Omega \geq 2$, $p \in (2,\infty)$, $q={p}/(p-2)$,
  $(\alpha, \mu) \in L^q_+(\Omega) \times \mathcal{M}_+^c(\overbar{\Omega})$.
  If $\lambda_{k}(\alpha,\mu) < \lambda_{ess}(\alpha,\mu)$,
  then the map $\nEigen{k}{p}\colon \alpha L^\infty \times \mu L^\infty \to \R$ is Lipschitz
  on a neighborhood of $(\alpha,\mu)$,  and
  \begin{multline}
    \partial_C (-\nEigen{k}{p})(\alpha,\mu) \\\subset \operatorname{co}_{(\phi,\omega)} \tfrac{1}{\norm{\alpha}_{L^{q}}}
    \brc{\brr{\nEigen{k}{p}\cdot\brr{\tfrac{\alpha}{\norm{\alpha}_{L^{q}}}}^{q-1} - \abs{\omega}^2,\
    \lambda_k \cdot \brs{\phi^2 - 1}}},
  \end{multline}
  where $\lambda_k := \lambda_k(\alpha,\mu)$ and pairs $(\phi,\omega)\in H^{1}(\mu,\alpha)$ run over $k$th eigenfunctions (that is, $\delta(\alpha \omega) = \lambda_k \phi \mu$) normalized by $\int \phi^2 d\mu = \mu(\overbar{\Omega})$.

  If $\mu \in \mathfrak{Bil}[H^1(1,\alpha)]$ and $\alpha\geq c > 0$, then one can take $\tilde{\mu}$ from a neighborhood of $\mu \in \mathcal{M}_+^c \cap \mathfrak{Bil}[H^1(1,\alpha)]$, with the same
  Clarke subdifferential.
\end{lemma}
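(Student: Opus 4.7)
The plan is to realize $\lambda_{k}(\tilde\alpha, \tilde\mu)$ as an isolated eigenvalue of a self-adjoint operator on a fixed Hilbert space that depends Fréchet-smoothly on the perturbation parameters, so that Proposition~\ref{prop:subdiff-calc}, the Clarke chain rule (Proposition~\ref{prop:clarke-compose}), and the Leibniz rule for the smooth factor $\mu(\Omega)/\norm{\alpha}_{L^{q}}$ can be applied in sequence.

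First I would fix the Hilbert space $V = H^{1}(\mu,\alpha)$ equipped with the graph inner product $(\phi,\psi)_{V} = \int \langle d\phi, d\psi\rangle\, \alpha\, dv_{g} + \int \phi\psi\, d\mu$. For $(\tilde\alpha,\tilde\mu)$ in a small $L^{\infty}$-neighborhood of $(\alpha,\mu)$ inside $\alpha L^{\infty}\times \mu L^{\infty}$, the bilinear forms $\int \langle d\phi, d\psi\rangle\,\tilde\alpha\, dv_{g}$ and $\int \phi\psi\, d\tilde\mu$ are still continuous on $V$, so they define bounded self-adjoint operators $A_{\tilde\alpha}, B_{\tilde\mu} \in \mathfrak{L}_{sa}[V]$ depending linearly (hence analytically) on their arguments. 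Since $A_{\alpha}+B_{\mu} = \operatorname{id}_{V}$, the sum $T_{\tilde\alpha,\tilde\mu} := A_{\tilde\alpha}+B_{\tilde\mu}$ remains strictly positive on that neighborhood, and
\[
  S_{\tilde\alpha,\tilde\mu} := T_{\tilde\alpha,\tilde\mu}^{-1/2}\, B_{\tilde\mu}\, T_{\tilde\alpha,\tilde\mu}^{-1/2}
\]
is a smooth family of bounded self-adjoint operators on $V$. A short integration by parts shows that the positive eigenvalues of $S_{\tilde\alpha,\tilde\mu}$ are $(1+\lambda_{j}(\tilde\alpha,\tilde\mu))^{-1}$ for $j \geq 0$, and the hypothesis $\lambda_{k}(\alpha,\mu) < \lambda_{ess}(\alpha,\mu)$ makes $(1+\lambda_{k})^{-1}$ an isolated eigenvalue of finite multiplicity above the essential spectrum of $S_{\alpha,\mu}$.

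With this setup in place, Proposition~\ref{prop:subdiff-calc} applies to the corresponding eigenvalue of $S_{\alpha,\mu}$, and Proposition~\ref{prop:clarke-compose} pulls the subdifferential back along the composition $(\tilde\alpha,\tilde\mu)\mapsto S_{\tilde\alpha,\tilde\mu}\mapsto (1+\lambda_{k})^{-1}\mapsto \lambda_{k}$. This yields local Lipschitz continuity of $\lambda_{k}(\cdot,\cdot)$, and computing the differential of $S_{\tilde\alpha,\tilde\mu}$ against a rank-one tensor $\phi\otimes\phi$ for a normalized eigenfunction $(\phi,\omega)\in H^{1}(\mu,\alpha)$ of $\delta(\alpha\omega) = \lambda_{k}\phi\mu$ reproduces the standard first-order variation
\[
  \dot\lambda_{k} = \int |\omega|^{2}\,\delta\alpha\, dv_{g} - \lambda_{k}\int \phi^{2}\, d(\delta\mu).
\]
Since $\tilde\mu\mapsto\tilde\mu(\Omega)$ is linear and $\tilde\alpha\mapsto \norm{\tilde\alpha}_{L^{q}}$ is Fréchet-$C^{\infty}$ away from $0$ with derivative $\delta\alpha \mapsto \norm{\alpha}_{L^{q}}^{1-q}\int \alpha^{q-1}\,\delta\alpha\, dv_{g}$, the Leibniz rule for Clarke subdifferentials applied to the product $\nEigen{k}{p} = \lambda_{k}\cdot\tilde\mu(\Omega)/\norm{\tilde\alpha}_{L^{q}}$ and the collection of coefficients along each $(\phi,\omega)$ produce the displayed inclusion for $\partial_{C}(-\nEigen{k}{p})(\alpha,\mu)$.

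For a regular pair $(\alpha,\mu)$, Proposition~\ref{prop:weighted-sobolev-equiv} gives $H^{1}(\mu,\alpha) = H^{1}(\alpha)$ with equivalent norms and $\mu$ extends to a bounded bilinear form on $H^{1}(\alpha)$; the definition of $B_{\tilde\mu}$ therefore makes sense for any $\tilde\mu$ in a $\mathfrak{Bil}[H^{1}(\alpha)]$-neighborhood of $\mu$, and the entire construction goes through verbatim on this larger neighborhood. The main obstacle is the first step: ensuring that the spaces $H^{1}(\tilde\mu,\tilde\alpha)$ can all be identified with the single Hilbert space $V$ with uniformly equivalent norms, so that $S_{\tilde\alpha,\tilde\mu}$ lives on a fixed ambient space and depends smoothly on $(\tilde\alpha,\tilde\mu)$. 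This is exactly what the $L^{\infty}$ topology (respectively the $\mathfrak{Bil}$ topology in the regular case) on the parameter space buys us; everything afterwards is a routine bookkeeping of Clarke subdifferentials of smooth products and pullbacks.
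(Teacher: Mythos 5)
Your proposal reconstructs the paper's own argument essentially verbatim: fix the Hilbert space $V = H^{1}(\mu,\alpha)$ with the graph norm, define the bounded self-adjoint operators $A_{\tilde\alpha}$ and $B_{\tilde\mu}$ corresponding to the two quadratic forms, observe that $A_{\alpha}+B_{\mu}=\operatorname{id}_V$ (the crucial identity that makes the whole construction live on a fixed space), rewrite $\lambda_k(\tilde\alpha,\tilde\mu)$ as a top-ordered eigenvalue of $S_{\tilde\alpha,\tilde\mu}=(A_{\tilde\alpha}+B_{\tilde\mu})^{-1/2}B_{\tilde\mu}(A_{\tilde\alpha}+B_{\tilde\mu})^{-1/2}$, invoke Proposition~\ref{prop:subdiff-calc} for this isolated finite-multiplicity eigenvalue, and push through Proposition~\ref{prop:clarke-compose}. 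Your first-variation formula $\dot\lambda_k=\int|\omega|^2\,\delta\alpha-\lambda_k\int\phi^2\,d(\delta\mu)$ (in the $\int\phi^2\,d\mu=1$ normalization) is correct, and you even write the conjugating powers as $T^{-1/2}$, which is the intended exponent. The only cosmetic deviation from the paper is at the final normalization step: the paper pulls the smooth rescaling inside the argument, writing $-\overline\lambda_{k,p}(\tilde\alpha,\tilde\mu)=-\lambda_k(\tilde\alpha/\norm{\tilde\alpha}_{L^q},\tilde\mu/\tilde\mu(\Omega))$ and applying the chain rule once more, whereas you keep $\lambda_k$ in the original variables and apply a Leibniz rule for a Lipschitz times $C^1$ factor. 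Both are legitimate; the paper's route has the minor advantage of only ever using the already-stated chain rule (Proposition~\ref{prop:clarke-compose}) rather than appealing to a product rule for Clarke subdifferentials, which, while valid for a strictly differentiable factor, is not stated in the paper's toolbox and would deserve a one-line justification if you go that way. The treatment of the regular case via Proposition~\ref{prop:weighted-sobolev-equiv} matches the paper's closing remark.
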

\begin{proof}
  Let $T_{\tilde{\alpha}}$ and $T_{\tilde{\mu}}$ be the self-adjoint operators on $H^1(\mu,\alpha)$ corresponding to the quadratic forms
  $\phi \mapsto \int \abs{d\phi}^2\tilde{\alpha}$ and $\phi \mapsto \int \phi^2d\tilde{\mu}$. Then
  \begin{multline}
    \frac{1}{\lambda_k(\tilde{\alpha},\tilde{\mu}) + 1}
    = \sup_{V_{k} \subset H^1(\mu,\alpha)} \inf_{v \in V_{k}} \frac{\brt{T_{\tilde{\mu}}v,v}}{\brt{(T_{\tilde{\mu}} + T_{\tilde{\alpha}})v,v}}
    \\ =  \lambda_{k+1}\brr{(T_{\tilde{\mu}} + T_{\tilde{\alpha}})^{-1/2}T_{\tilde{\mu}}(T_{\tilde{\mu}} + T_{\tilde{\alpha}})^{-1/2}}
  \end{multline}
  The condition $\lambda_{k}(\alpha,\mu) < \lambda_{ess}(\alpha,\mu)$ means that $1/(\lambda_k + 1) > 0$ is an isolated eigenvalue of finite multiplicity for the operator
  $T_{\mu}$ (since $T_{\mu} + T_{\alpha} = \id$).
  By Proposition~\ref{prop:subdiff-calc}, we can find a neighborhood of $(\alpha, \mu) \in \alpha L^\infty \times \mu L^\infty$ in which it remains isolated, and is
  a Lipschitz function of $(\tilde{\alpha}, \tilde{\mu})$.
  To compute the Clarke subdifferential, one applies the chain rule (Proposition~\ref{prop:clarke-compose}) and Proposition~\ref{prop:subdiff-calc}, obtaining
  \begin{equation}
      \frac{1}{(\lambda_k + 1)^2}
      \partial_C (-\lambda_k)\brr{\alpha, \mu} \subset
      \operatorname{co}_{(\phi,\omega)}\brc{\brr{\frac{\lambda_k \abs{\omega}^2}{\lambda_k + 1}, -\frac{\phi^2}{\lambda_k + 1}  }}
      \subset L^1(\alpha)\times L^1(\mu),
  \end{equation}
  where $(\phi,\omega) \in V_{\lambda = \lambda_k}$ and $\int \phi^2 d\mu =  \frac{1}{\lambda_k + 1}$.
  It remains to multiply both sides by $(\lambda_k + 1)^2$ and apply the chain rule to
  $-\nEigen{k}{p}(\tilde{\alpha},\tilde{\mu}) = -\lambda_k(\frac{\tilde{\alpha}}{\norm{\tilde{\alpha}}_{L^q}}, \frac{\tilde{\mu}}{\tilde{\mu}(\overbar{\Omega})})$.

  The last statement follows by the same proof. Recall that $H^1(1,\alpha) = H^1(\mu,\alpha)$ by Proposition~\ref{prop:weighted-sobolev-equiv}.
\end{proof}
\begin{remark}\label{rem:gkl-sobolev-duals}
  It follows, for example, from~\cite[Lemmas~4.9,~4.10]{Girouard-Karpukhin-Lagace:2021:continuity-of-eigenval} (and the density of $C^\infty(\overbar{\Omega})$ in $H^{1,\frac{m}{m-1}}(\Omega)^*$)
that when $\mu \in H^{1,r}(\Omega)^*\cap \mathcal{M}_{+}(\Omega)$, where $r = \frac{m}{m-1}$ if $m \geq 3$ and $1\leq r < 2$ if $m=2$, then
the quadratic form $\phi \mapsto \int \phi^2d\mu$ is compact on $H^1(\Omega)$, and hence on
$H^1(1,\alpha) \hookrightarrow H^1$ for $\alpha \geq c > 0$.
\end{remark}
Note that if $p \in [2,\infty]$ and $\alpha \in L^{p/(p-2)}_+(\Omega)$, then clearly, the identity map extends to $H^{1,p}(\Omega) \to \wH[\Omega]{\alpha}$.
\begin{proposition}[Variational characterization]\label{prop:var-character}
  Let $(\alpha,\mu) \in L^1_+(\Omega)\times \mathcal{M}_+^c(\overbar{\Omega})$, and $\lambda_k(\alpha,\mu) > 0$. Then there exists
  a subspace $V \subset H^1(\mu,\alpha)$ of $\dim V \leq k$, including constant functions such that
  \begin{equation}
    \lambda_k(\alpha,\mu) \int \phi^2d\mu \leq \int \abs{d\phi}^2\alpha
    \quad \forall \phi \in H^{1,p}(\Omega)\colon\  \phi \bot_\mu V.
  \end{equation}
  Equality holds iff $\phi$ is an eigenfunction corresponding to $\lambda_k(\alpha,\mu)$. One can also view
  $V$ as a subspace $V \subset  H^{1,p}(\Omega)^*\cap \mathcal{M}^c(\overbar{\Omega})$.
\end{proposition}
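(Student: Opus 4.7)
The plan is to recast the eigenvalue problem as a spectral problem for a bounded self-adjoint operator on a single Hilbert space and then read off $V$ from the spectral theorem.

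First, I would equip $H := H^1(\mu,\alpha)$ with the inner product $\brt{\phi,\psi}_H := a(\phi,\psi) + b(\phi,\psi)$, where $a(\phi,\psi) := \int \brt{d\phi,d\psi}\alpha\,dv_g$ and $b(\phi,\psi) := \int \phi\psi\,d\mu$. By~\eqref{ineq:mu-is-bounded} (or directly by the definition of $H^1(\mu,\alpha)$), the form $b$ is bounded symmetric nonnegative on $H$, so there is a bounded self-adjoint operator $T \in \mathfrak{L}_{sa}[H]$ with $\brt{T\phi,\psi}_H = b(\phi,\psi)$, $0 \leq T \leq \id$, and $T\1 = \1$. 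The original eigenvalue problem $a(\phi,\cdot) = \lambda\,b(\phi,\cdot)$ is equivalent to $T\phi = \tau\phi$ with $\tau = 1/(1+\lambda)$, and comparing~\eqref{eq:eigen-var-char} to the Courant--Fischer characterization of $T$ (exactly as in the proof of Lemma~\ref{lem:lambda-alph-mu-subdiff}) yields $\tau_k := \lambda_{k+1}(T) = 1/(1+\lambda_k(\alpha,\mu))$. The hypothesis $\lambda_k(\alpha,\mu) > 0$ gives $\tau_k < 1$.

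Second, I would set
\begin{equation}
  V := \iml\brr{\mathbf{1}_{(\tau_k,1]}(T)},
\end{equation}
the spectral subspace of $T$ above $\tau_k$. Any point of $\sigma(T)\cap(\tau_k,1]$ is an isolated eigenvalue of finite multiplicity (any accumulation point of the spectrum in this interval could only equal $\tau_k$ itself), and the Courant--Fischer characterization of $\tau_k$ bounds the total multiplicity above $\tau_k$ by $k$, so $\dim V \leq k$; the constant $\1$ lies in $V$ because $1 > \tau_k$ is an eigenvalue of $T$. Now, if $\phi \in H^{1,p}(\Omega) \subset H$ satisfies $b(\phi,v)=0$ for every $v\in V$, then for each $T$-eigenvector $v\in V$ with eigenvalue $\tau > \tau_k \geq 0$ we have $0 = b(\phi,v) = \tau\brt{\phi,v}_H$, so $\phi \in V^{\perp_H} = \iml(\mathbf{1}_{[0,\tau_k]}(T))$. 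On this subspace $T \leq \tau_k\id$, and unfolding $\brt{T\phi,\phi}_H \leq \tau_k\norm{\phi}_H^2$ rearranges to $a(\phi,\phi) \geq \lambda_k b(\phi,\phi)$. Equality forces $\brt{(\tau_k - T)\phi,\phi}_H = 0$ on $V^\perp$, so generalized Cauchy--Schwarz for the nonnegative form $\tau_k\id - T$ on $V^\perp$ (combined with $\phi \perp_H V$) yields $T\phi = \tau_k\phi$, i.e.\ $\phi$ is a $\lambda_k(\alpha,\mu)$-eigenfunction.

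For the last assertion, each $v \in V \subset L^2(\mu)$ is identified with the continuous signed measure $v\,d\mu \in \mathcal{M}^c(\overbar{\Omega})$, and the functional $\psi \mapsto \int \psi v\,d\mu$ is bounded on $H^{1,p}(\Omega)$ by Cauchy--Schwarz together with the composition of continuous embeddings $H^{1,p}(\Omega) \hookrightarrow H \hookrightarrow L^2(\mu)$. The main obstacle I anticipate is not in the abstract spectral argument but in the low regularity of the data $(\alpha,\mu) \in L^1_+ \times \mathcal{M}^c_+$: one must verify that $b$ genuinely extends to a bounded form on all of $H^1(\mu,\alpha)$ (so that $T$ is well defined) and that $H^{1,p}(\Omega)$ embeds continuously into $H^1(\mu,\alpha)$ (so that the Poincaré inequality is meaningful for an arbitrary Sobolev test function). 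Both points follow from~\eqref{ineq:mu-is-bounded} and the preliminary material of Section~\ref{sec:weighted-stuff}, after which the spectral argument above goes through directly.
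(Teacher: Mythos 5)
Your proposal is correct and follows essentially the same route as the paper: both pass to the bounded self-adjoint operator $T$ (the paper's $T_\mu$) on $H^1(\mu,\alpha)$ associated with the form $\phi\mapsto\int\phi^2\,d\mu$, identify $1/(1+\lambda_k)$ as its $(k+1)$th eigenvalue via Courant--Fischer, take $V$ to be the spectral subspace above that value, and use that $\mu$-orthogonality to $V$ coincides with $H^1(\mu,\alpha)$-orthogonality. Your write-up simply makes explicit the spectral-theoretic details that the paper defers to the proof of Lemma~\ref{lem:lambda-alph-mu-subdiff}.
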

\begin{proof}
  Since $\lambda_k > 0$, the identity map extends even further:
  \begin{equation}
    H^{1,p} \to \wH{\alpha} \to H^1(\mu,\alpha).
  \end{equation}
  From the proof of Lemma~\ref{lem:lambda-alph-mu-subdiff}, we see that $1/(\lambda_k + 1)$ is the $(k+1)$th eigenvalue of the operator
  $T_\mu \in \mathfrak{L}[H^1(\mu,\alpha)]$ associated with the quadratic form $(\phi,\omega) \mapsto \int \phi^2d\mu$.
  Moreover, $\set{1/(\lambda_i + 1)}{\lambda_i < \lambda_k}$ is the top part of its spectrum. So one can take $V = \bigoplus_{\lambda_i < \lambda_k}V_{\lambda_i}$,
  and invoke the variational characterization for the eigenvalues of $T_\mu$.
  Finally, for the eigenfunctions, $\mu$-orthogonality is equivalent to the orthogonality
  with respect to the inner product on $H^1(\mu,\alpha)$. Thus, the variational characterization follows.

  Note that the density $H^{1,p} \to L^2(\mu)$ implies that there is an injection $L^2(\mu) \oset{\cdot\mu}{\to} (H^{1,p})^*$.
\end{proof}

\section{Proofs of the examples from Section~\ref{sec:examples}}
\subsection{Proof of Theorem~\ref{thm:1st-max-on-minimal}}
Since $g = u^* g_{\Sph^n}$, we have $\abs{du}^2_{g} = m$, and hence
$(\int \abs{du}^{p}_{g} dv_g)^{2/p} = m \Vol_g (M)^{2/p}$.
Without loss of generality, we can assume that $\lambda_1(\alpha,\mu) > 0$
and $\mu(M) = 1 = \norm{\alpha}_{L^{p/(p-2)}}$.
By Proposition~\ref{prop:var-character}, we then have
\begin{equation}
  \lambda_1(\alpha,\mu) \int \phi^2d\mu \leq \int \abs{d\phi}^2\alpha dv_g \quad \forall {\phi \in H^{1,p}(M)\colon\ \int \phi d\mu = 0}.
\end{equation}
Therefore, Hersch's trick, Hölder's inequality and \cite[Theorem~1.1]{ElSoufi-Ilias:1986:hersch} yield
\begin{equation}
  \begin{split}
    \lambda_1(\alpha,\mu) &\leq \int \abs{du_F}_g^2\alpha dv_g \leq \brr{\int \abs{du_F}_{g}^p dv_g}^{2/p}
    \\                    &\leq \brr{\int \abs{du_F}^m_{g} dv_g}^{2/m} \Vol_g(M)^{2(1/p-1/m)}
    \\                    &= m\Vol_{u_F^* g_{\Sph^n}}(M)^{2/m}\Vol_g(M)^{2(1/p-1/m)}
    \\                    &\leq m \Vol_g (M)^{2/p},
  \end{split}
\end{equation}
where $u_F := F \circ u$ and $F$ is a conformal automorphism of $\Sph^n$ (see also the proof of \cite[Proposition~3.1]{ElSoufi-Ilias:1986:hersch}). If all the inequalities are equalities, we conclude that
the coordinate functions of $u_F$ are $\lambda_1(\alpha,\mu)$-eigenfunctions and $\alpha = \operatorname{const}$;
moreover, $F$ must be an isometry unless
$p = m$ and $M=\Sph^m$ (with $n=m$), in which case it is just a change of coordinates.
Then $\delta(\alpha d u_F) = \lambda_1 u_F \mu$ and $\abs{u_F} = 1$ imply (see Lemma~\ref{lem:regularity-in-good-pt})
$\lambda_1 \mu = \alpha \abs{d u_F}^2 = \operatorname{const}$, since $\abs{d u_F}^2$ is constant as well.

\subsection{Proof of Theorem~\ref{thm-max-on-sphere}}
The case $p = 2$ was treated in \cite{Vinokurov:2025:higher-dim-harm-eigenval}, and the case of a general $p$ is essentially the same.
The upper bound $\NEigen{k+2}{p}(\Sph^m, g_{\Sph^m}) \leq (\int \abs{d\SHM{k}}^p_{g_{\Sph^m}} dv_{g_{\Sph^m}})^{2/p}$, case of equality,
and uniqueness are established
by repeating the same argument as in \cite[Theorem~1.6]{Vinokurov:2025:higher-dim-harm-eigenval}. Then it remains to calculate the index
of $\SHM{k}$, that is, the number of negative eigenvalues of the quadratic form
$\mathfrak{q}[\phi] = \int \abs{d\phi}^2\abs{d\SHM{k}}^{p-2} - \int \phi^2 \abs{d\SHM{k}}^{p}$.
To show that $\SHM{k}$ is a maximizer of $\nEigen{k+2}{p}$, we need to prove that  $\ind \mathfrak{q}\leq k + 2$.

Let us set $n = m - k - 1$, $\tilde{n} = n + 2 - p$, and $\tilde{m} = \tilde{n} + k + 1$. By repeating the calculation in the proof of \cite[Theorem~1.8]{Vinokurov:2025:higher-dim-harm-eigenval},
we obtain that $\ind  \mathfrak{q} = \sum_{\ell \in \mathbb{N}} m_{\ell} \ind \mathfrak{q}_\ell$, where $m_\ell$ is the multiplicity
of the space of eigenfunctions on $\Sph^k$ with the eigenvalue $\ell(k - 1 + \ell)$,
\begin{equation}
  \mathfrak{q}_{\ell}[\psi] =
  \int_{-1}^{1} \dot{\psi}^2 \brt{t}_{\frac{\tilde{n}+1}{2}-\alpha}^{\frac{k+1}{2}+\ell} dt
  +L \int_{-1}^{1} \psi^2 \brt{t}_{\frac{\tilde{n}-1}{2}-\alpha}^{\frac{k-1}{2}+\ell} dt,
\end{equation}
$\brt{t}^x_y = (1+t)^x(1-t)^y$, $4L = \ell(\tilde{m}-1+\ell) - n-\alpha\brr{k+1+2\ell}$, and
$\alpha$ is the least root of
\begin{equation}\label{eq:jacobi-root}
  \alpha^2 - (\tilde{n}-1)\alpha + n = 0.
\end{equation}
Equation~\eqref{eq:jacobi-root} has real solutions if and only if $m - k - 1 = n \geq (1+\sqrt{p})^2$. When this condition is not satisfied, we actually have
$\ind \mathfrak{q} = \infty$; see \cite[Remark~1.7]{Vinokurov:2025:higher-dim-harm-eigenval}
Further computations show (see \cite[Section~3.2]{Vinokurov:2025:higher-dim-harm-eigenval}) that
\begin{equation}
  \ind \mathfrak{q}_{\ell} = \#\set*{s \in \mathbb{N}}{ s < \tfrac{\alpha - \ell}{2}},
\end{equation}
so $\ind \mathfrak{q} = \ind \mathfrak{q}_0 + (k+1) \ind \mathfrak{q}_1 \leq 1 + (k + 1)$ provided $\alpha \leq 2$.
Thus, we have
\begin{equation}
  \frac{1}{2}\brr{\tilde{n} - 1 - \sqrt{(\tilde{n}-1)^2 - 4n}} = \frac{2n}{\tilde{n} - 1 + \sqrt{(\tilde{n}-1)^2 - 4n}} \leq 2,
\end{equation}
which holds exactly when $m - k - 1 = n \geq 2(p+1)$. Moreover, one always has $2(p+1) \geq (1+\sqrt{p})^2$.

\section{Proof of the existence and regularity}\label{sec:exis-and-reg}
\begin{proposition}\label{prop:upper-cont-and-bound}
  Let $(\Omega, g)$ be a closed Riemannian manifold of dimension $m \geq 3$ and $p \in (2,m]$.
  The functional $\mathcal{M}_+^c(\Omega)\times \mathcal{M}_+^c(\Omega)\ni (\alpha, \mu) \mapsto \lambda_k(\alpha,\mu)$ is weakly$^*$ upper semicontinuous
  (that is, $\alpha$ may even be a measure),
  and
  \begin{equation}
    \begin{split}
      \NEigen{k}{p}(g) :&= \sup \set*{\nEigen{k}{p}(\alpha,\mu)}{\alpha, \mu \in C^\infty(\Omega),\ \alpha,\mu > 0}.
      \\                 &= \sup \set*{\nEigen{k}{p}(\alpha,\mu)}{\alpha \in L^{\frac{p}{p-2}}_+(\Omega),\ \mu \in L^{1}_+(\Omega)}.
    \end{split}
  \end{equation}
  Moreover, for $(\alpha,\mu) \in L^{1}_+ \times L^{1}_+$, we have $\lambda_k(\alpha_n,\mu_n) \to \lambda_k(\alpha,\mu)$,
  where $\alpha_n = \max \brc{\alpha, \frac{1}{n}}$ and $\mu_n = \min \brc{\mu, n}$.
\end{proposition}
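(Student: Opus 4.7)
The plan is to prove weak$^*$ upper semicontinuity first, then derive the truncation-convergence statement from it together with monotonicity, and finally combine both with a smooth approximation argument to establish the equality of suprema.

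\textbf{Weak$^*$ upper semicontinuity.} Fix $(\alpha,\mu)$ and $\epsilon>0$. By~\eqref{eq:eigen-var-char}, pick a finite-dimensional subspace $F_{k+1}=\mathrm{span}\{\phi_0,\ldots,\phi_k\}\subset C^\infty(\Omega)$ of $\mu$-dimension $k+1$ whose Rayleigh supremum is bounded by $\lambda_k(\alpha,\mu)+\epsilon$. For $(\alpha_n,\mu_n)\oset{w^*}{\to}(\alpha,\mu)$, testing the weak$^*$ convergence against the smooth functions $\phi_i\phi_j$ and $\brt{d\phi_i,d\phi_j}$ gives entrywise convergence of the two Gram matrices on $F_{k+1}$; since the $\mu$-Gram matrix is positive definite, so is the $\mu_n$-Gram matrix for $n$ large, and $F_{k+1}$ remains a valid $(k+1)$-dimensional test subspace for $\lambda_k(\alpha_n,\mu_n)$. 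The Rayleigh supremum on $F_{k+1}$ depends continuously on the Gram matrices, yielding $\limsup_n\lambda_k(\alpha_n,\mu_n)\leq\lambda_k(\alpha,\mu)+\epsilon$; since $\epsilon$ is arbitrary, upper semicontinuity follows.

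\textbf{Convergence under truncation.} Since $\alpha_n=\alpha\vee 1/n\geq\alpha$ and $\mu_n=\mu\wedge n\leq\mu$ pointwise, each Rayleigh quotient with $(\alpha_n,\mu_n)$ dominates the one with $(\alpha,\mu)$, giving $\lambda_k(\alpha_n,\mu_n)\geq\lambda_k(\alpha,\mu)$ by~\eqref{eq:eigen-var-char}. Conversely, dominated and monotone convergence give $\alpha_n\to\alpha$ and $\mu_n\to\mu$ in $L^1$, hence weakly$^*$ as Radon measures, and the upper semicontinuity just established yields the matching $\limsup$.

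\textbf{Equality of the suprema.} The inequality $\leq$ is immediate from the inclusion of smooth positive pairs. For $\geq$, given $(\alpha,\mu)\in L^q_+\times L^1_+$ with $q=p/(p-2)$, the truncation statement combined with $\|\alpha_n\|_{L^q}\to\|\alpha\|_{L^q}$ and $\mu_n(\Omega)\to\mu(\Omega)$ (dominated and monotone convergence) gives $\nEigen{k}{p}(\alpha_n,\mu_n)\to\nEigen{k}{p}(\alpha,\mu)$. This reduces matters to approximating a bounded pair $1/M\leq\alpha\leq M$, $0\leq\mu\leq M$ by smooth positive pairs. I would mollify $\tilde\alpha_\delta=\alpha*\rho_\delta$, $\tilde\mu_\delta=\mu*\rho_\delta$, which preserves the pointwise bounds; the normalization factors converge. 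The upper bound on $\lambda_k(\tilde\alpha_\delta,\tilde\mu_\delta)$ is handled by weak$^*$ upper semicontinuity, while the matching lower bound follows by working on the fixed Hilbert space $H^1(\Omega)$, where the uniform bounds on $\tilde\alpha_\delta,\tilde\mu_\delta$ yield uniformly equivalent weighted norms, pointwise convergence of the bilinear forms on smooth functions, and strong resolvent convergence of the associated compact operators via Rellich compactness of $H^1(\Omega)\hookrightarrow L^2(\Omega)$.

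\textbf{Main obstacle.} The subtlest step is the lower semicontinuity in the bounded regime: mollifications of $L^\infty$ functions do not converge in $L^\infty$, so Lemma~\ref{lem:lambda-alph-mu-subdiff}'s Lipschitz property does not apply directly, and one must track the weak limits of the full batch of $(k+1)$ normalized eigenfunctions and rule out dimensional collapse. I expect this is cleanest via a Mosco-type convergence argument for the bilinear forms on $H^1(\Omega)$, using Rellich compactness to upgrade strong convergence of the forms to convergence of the eigenvalues.
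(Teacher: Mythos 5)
Your argument for the weak$^*$ upper semicontinuity and the truncation convergence is correct and matches the paper's route (the finite-dimensional test-subspace argument is exactly the one cited from Kokarev, and the truncation claim is monotonicity combined with upper semicontinuity).

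The gap is in the reduction step of the equality of suprema. You write that the truncation claim ``reduces matters to approximating a bounded pair $1/M\leq\alpha\leq M$, $0\leq\mu\leq M$,'' but the truncation $\alpha_n=\max\{\alpha,1/n\}$, $\mu_n=\min\{\mu,n\}$ only yields the \emph{lower} bound $\alpha_n\geq 1/n$ and the \emph{upper} bound $\mu_n\leq n$; it does \emph{not} give an upper bound on $\alpha_n$ nor a positive lower bound on $\mu_n$. Two further reductions are required, and they are in fact the substantive part of the paper's proof: (a) cutting $\alpha$ from above by $\alpha_n'=\min\{\alpha,n\}$ and proving $\lambda_k(\alpha_n',\mu)\to\lambda_k(\alpha,\mu)$ — this goes against the monotonicity direction, so it cannot be obtained by upper semicontinuity and is instead proved via the identity $\lambda_k(\alpha,\mu)=\lambda_{k+1}(\sqrt{II^*}\,T_\mu\sqrt{II^*})$ and a Kato-type theorem for monotone sequences of forms (strong resolvent convergence of the nonincreasing compact operators $I_nI_n^*$ upgrades to norm convergence); and (b) pushing $\mu$ away from zero via $\mu\mapsto\mu+t$, using the Lipschitz property of Lemma~\ref{lem:lambda-alph-mu-subdiff}. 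Without (a), $\alpha$ is only in $L^{p/(p-2)}$, the weighted space $H^1(\alpha)$ is not a fixed Hilbert space with an equivalent norm, and your resolvent/Mosco argument on $H^1(\Omega)$ does not apply. Your final step (mollification plus Rellich-based strong resolvent convergence) is the right idea for the last, genuinely bounded, regime — it is the part the paper leaves implicit — but you cannot reach that regime with the truncation you invoke. In short: you have identified the last step as subtle, but the upper truncation of $\alpha$ is where the actual difficulty lies, and it is missing.
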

\begin{proof}
  The upper semicontinuity directly follows from the variational characterization; see \cite[Proposition~1.1]{Kokarev:2014:measure-eigenval}.
  Let us now take a pair $(\alpha,\mu) \in L^{1}_+(\Omega) \times L^{1}_+(\Omega)$ and approximate it by cut-off functions
  $\alpha_{\tilde{n}} = \max \brc{\alpha, \frac{1}{\tilde{n}}}$ and $\mu_{\tilde{n}} = \min \brc{\mu, \tilde{n}}$
  so that the variational characterization yields
  \begin{equation}
    \lambda_k(\alpha,\mu) \leq \lambda_k(\alpha_{\tilde{n}},\mu_{\tilde{n}}).
  \end{equation}
  Together with the upper semicontinuity, the desired convergence $\lambda_k(\alpha_n,\mu_n) \to \lambda_k(\alpha,\mu)$ follows.

  When $\mu \in L^\infty_{+}$ and $\alpha \in L^1_{\geq c}$, Proposition~\ref{prop:weighted-sobolev-equiv}
  yields $H^{1}(\mu,\alpha) = H^1(1,\alpha)$ and $\lambda_k(\alpha, \mu_s) \to \lambda_k(\alpha, \mu)$ for smooth
  $0< \mu_s \to \mu$ in $L^{m/2}$  (see Remark~\ref{rem:gkl-sobolev-duals}).
  Thus, to establish the equality of the suprema, it suffices to assume that $\alpha \in L^{q}_{\geq c}$ and $\mu \in C^{\infty}_{\geq c}$
  with $\norm{\alpha}_{L^q} = 1 =\norm{\mu}_{L^1}$,
  and then prove that $\lambda_k(\alpha,\mu) \leq \liminf_{\epsilon\to 0}\lambda_k(\alpha_\epsilon,\mu)$ as $\epsilon \to 0$ for some
  $\alpha_\epsilon \in  C^\infty_{\geq c}$ with $\norm{\alpha_\epsilon}_{L^q} \to \norm{\alpha}_{L^q}$, where $q = p/(p-2)$.

  Let $J_\epsilon \colon C^\infty(\Omega)^* \to C^\infty(\Omega)$ be mollifiers such that $J_\epsilon(1) = 1$, $\norm{J_\epsilon \phi - \phi}_{L^2} \leq o_\epsilon(1)\norm{\phi}_{H^1}$,
  $\abs{d(J_\epsilon \phi)}^2\leq (1+ o_\epsilon(1))J_\epsilon(\abs{d\phi}^2)$, and $\norm{J_\epsilon^*\alpha}_{L^q} \to \norm{\alpha}_{L^q}$, where
  $o_\epsilon(1)$ denotes arbitrary quantities tending to zero as $\epsilon \to 0$. For example, the heat kernel on $\Omega$ provides such mollifiers.

  Set $\alpha_\epsilon := J_\epsilon^*\alpha \in C^\infty_{\geq c}$, and let $E_\epsilon \subset C^\infty$ be the subspace generated by the first $k+1$
  eigenfunctions of $\delta(\alpha_\epsilon d\phi) = \lambda \phi \mu$. Observe that for $\phi \in E_\epsilon$, we have
  $\norm{J_\epsilon \phi - \phi}_{L^2(\mu)}^2 \leq o_\epsilon(1)\norm{\phi}_{H^1}^2 \leq o_\epsilon(1)c^{-1}(1+\NEigen{k}{p}(g))\norm{\phi}_{L^2(\mu)}^2$,
  and $\NEigen{k}{p}(g) < \infty$ if $p \in (2,m]$.
  Therefore, $\dim J_\epsilon E_\epsilon = \dim E_\epsilon$ fro small $\epsilon$, and
  \begin{multline}
    \lambda_k(\alpha,\mu) \leq \sup_{\phi \in E_\epsilon\setminus\brc{0}} \frac{\int \alpha\abs{d(J_\epsilon\phi)}^2}{\norm{J_\epsilon\phi}^2_{L^2(\mu)}}
    \leq \sup_{\phi \in E_\epsilon\setminus\brc{0}}\frac{1+o_\epsilon(1)}{1-o_\epsilon(1)} \frac{\int (J_\epsilon^*\alpha)\abs{d\phi}^2}{\norm{\phi}^2_{L^2(\mu)}}
    \\= (1+o_\epsilon(1)) \lambda_k(\alpha_\epsilon,\mu).
  \end{multline}
  So, $\lambda_k(\alpha,\mu) \leq \liminf_{\epsilon\to 0}\lambda_k(\alpha_\epsilon,\mu)$. In fact, the upper semicontinuity of $\lambda_k$ then implies
  $\lambda_k(\alpha_\epsilon,\mu) \to \lambda_k(\alpha,\mu)$.
\end{proof}

\subsection{Constructing a maximizing sequence}
The following result is due to~\cite[Theorem~1]{Ekeland:1979:var-principle} (with the rescaled metric $\gamma d$).
\begin{proposition}[Ekeland's variational principle]
  Let $(X,d)$ be a complete metric space and $f \colon X \to \R\cup\brc{+\infty}$ be a lower semicontinuous function bounded from below.
  Let $\epsilon > 0$, and let $\tilde{x} \in X$ be such that $f(\tilde{x}) < \inf_X f + \epsilon$. Then, for any $\gamma > 0$, there exists $z \in X$
  such that $f(z) \leq f(\tilde{x})$, $d(z,\tilde{x}) < \gamma$, and $z$ is a strong minimizer of $x\mapsto f(x) + \frac{\epsilon}{\gamma}d(z,x)$, that is,
  \begin{equation}
    f(z) < f(x) + \frac{\epsilon}{\gamma}d(z,x),\quad \forall x \in X\setminus\brc{z}.
  \end{equation}
\end{proposition}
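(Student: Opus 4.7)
The plan is to prove this by the classical recursive construction using a partial order induced by $f$ and the metric. Set $\lambda = \epsilon/\gamma$ and define on $\{f < \infty\}$ the binary relation
\begin{equation}
  x \preceq y \iff f(x) + \lambda d(x,y) \leq f(y).
\end{equation}
Reflexivity is immediate, transitivity follows from the triangle inequality, and antisymmetry from the nonnegativity of $d$. Note that for fixed $y$, the set $\{x : x \preceq y\}$ is closed, since $f$ is lower semicontinuous and $d(\cdot, y)$ is continuous.

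First I would construct a $\preceq$-decreasing sequence $\tilde{x} = x_0 \succeq x_1 \succeq x_2 \succeq \cdots$ as follows. Given $x_n$, let $S_n := \{x \in X : x \preceq x_n\}$, which is nonempty (contains $x_n$) and closed. Since $f$ is bounded below, $m_n := \inf_{S_n} f$ is finite, and I pick $x_{n+1} \in S_n$ with $f(x_{n+1}) \leq m_n + 2^{-n}$. From $x_{n+1} \preceq x_n$ and telescoping,
\begin{equation}
  \lambda \sum_{n=0}^{N} d(x_{n+1}, x_n) \leq f(\tilde{x}) - f(x_{N+1}) \leq f(\tilde{x}) - \inf_X f < \infty,
\end{equation}
so $(x_n)$ is Cauchy, and by completeness it converges to some $z \in X$. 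Passing to the limit in $x_m \preceq x_n$ for $m \geq n$ and using lower semicontinuity of $f$ yields $z \preceq x_n$ for every $n$; in particular $z \preceq \tilde{x}$.

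The last step is to verify the three conclusions. The inequality $z \preceq \tilde{x}$ gives $f(z) \leq f(\tilde{x})$ directly, and
\begin{equation}
  \lambda d(z, \tilde{x}) \leq f(\tilde{x}) - f(z) \leq f(\tilde{x}) - \inf_X f < \epsilon,
\end{equation}
so $d(z, \tilde{x}) < \gamma$; this is the only place where the strict hypothesis $f(\tilde{x}) < \inf f + \epsilon$ is used. For the strong minimization, suppose $y \preceq z$, i.e.\ $f(y) + \lambda d(y,z) \leq f(z)$. By transitivity $y \preceq x_n$, so $f(y) \geq m_n \geq f(x_{n+1}) - 2^{-n}$; sending $n \to \infty$ and using $f(z) \leq \liminf_n f(x_{n+1})$ gives $f(y) \geq f(z)$, which combined with $f(y) + \lambda d(y,z) \leq f(z)$ forces $d(y,z)=0$, i.e.\ $y=z$. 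Equivalently, $f(z) < f(x) + \lambda d(z,x)$ for all $x \neq z$, which is the claim.

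The main delicate point is not any single step but the interplay at the end: one simultaneously needs the lower semicontinuity of $f$ (to transfer $x_n \to z$ through the $\preceq$-relation and to conclude $f(z) \leq \liminf f(x_{n+1})$), the almost-minimality $f(x_{n+1}) \leq m_n + 2^{-n}$ (so that $m_n \to f(z)$ rather than remaining strictly below), and the transitivity of $\preceq$. The summability $\sum 2^{-n} < \infty$ could be replaced by any null sequence whose partial sums control $f(\tilde{x}) - f(x_n)$, so no optimization of constants is needed.
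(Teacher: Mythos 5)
Your proof is correct and follows the standard route via the Bishop--Phelps-type partial order $x \preceq y \iff f(x) + \lambda d(x,y) \leq f(y)$ together with a recursive near-minimizer construction and a completeness argument; all steps check out, including the use of closedness of $S_n$ to get $z \preceq x_n$ and the lower semicontinuity of $f$ to force $d(y,z)=0$ at the end. The paper itself gives no proof, citing only \cite{Ekeland:1979:var-principle}, and your argument is essentially the one in that reference (up to the cosmetic rescaling $d \mapsto \gamma d$ the paper mentions), so there is nothing to reconcile.
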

  For $\epsilon > 0$, let us define
  \begin{equation}
    X_\epsilon := \set*{(\alpha,\mu) \in L^{q}_+\times L^1_+}{\alpha \geq \epsilon,\ \mu \leq \epsilon^{-1}}
  \end{equation}
  equipped with the metric induced by the sum of the $L^q$- and $L^1$-norms.
\begin{proposition}\label{prop:max-sequence}
  Let $(M,g)$ be a closed connected Riemannian manifold of $\dim M = m \geq 3$,
  $p \in (2, m]$, $q = \frac{p}{p-2}$, and $(\tilde{\alpha}_\epsilon, \tilde{\mu}_\epsilon) \in X_\epsilon$ be a sequence ($\epsilon \searrow 0$)
  with $\norm{\tilde{\alpha}_\epsilon}_{L^q} = 1 = \tilde{\mu}_\epsilon(M)$ and
  $\lambda_k(\tilde{\alpha}_\epsilon, \tilde{\mu}_\epsilon) \to \NEigen{k}{p}(g)$.
  Then there exist a sequence $(\alpha_\epsilon, \mu_\epsilon) \in X_\epsilon$
  and a sequence of maps $\F_\epsilon \in H^{1,p}(M, \R^{n_\epsilon})$
  such that $\norm{\alpha_\epsilon - \tilde{\alpha}_\epsilon}_{L^q} \to 0$,
  $\norm{\mu_\epsilon - \tilde{\mu}_\epsilon}_{L^1} \to 0$,
  $\lambda_\epsilon := \lambda_k(\alpha_\epsilon, \mu_\epsilon) \to \NEigen{k}{p}(g)$, and
  \begin{enumerate}[label={(\roman*)}]
    \item $\delta (\alpha_\epsilon d\F_\epsilon) = \lambda_\epsilon \F_\epsilon \mu_\epsilon$ weakly;
    \item $\abs{\F_\epsilon}\leq 1$, and the norms $\norm{\F_\epsilon}_{H^{1,p}}$ are uniformly bounded;
    \item $\norm{\lambda_\epsilon\alpha_\epsilon^{\frac{2}{p-2}} - \abs{d\F_\epsilon}^2}_{L^{p/2}} \to 0$;
    \item\label{prop:max-sequence:it:almost-sphere} $\norm{1 - |\F_\epsilon|^2}_{L^1} \to 0$ and for any $\varrho > 0$,
     $v_g\brr{\brc{|\F_\epsilon|^2 < 1-\varrho}} \leq C/\norm{\mu_\epsilon}_{L^\infty}$.
  \end{enumerate}
\end{proposition}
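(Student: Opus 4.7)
My approach is to apply Ekeland's variational principle to the functional $-\nEigen{k}{p}$ on the complete metric space $(X_\epsilon,d)$, where $d((\alpha_1,\mu_1),(\alpha_2,\mu_2)) = \|\alpha_1-\alpha_2\|_{L^q}+\|\mu_1-\mu_2\|_{L^1}$. The set $X_\epsilon$ is closed in this norm topology, and $-\nEigen{k}{p}$ is lower semicontinuous and bounded below on $X_\epsilon$ (using the weak$^*$ upper semicontinuity from Proposition~\ref{prop:upper-cont-and-bound} together with the continuous dependence of the normalization $\mu(M)/\|\alpha\|_{L^q}$). Since $\lambda_k(\tilde\alpha_\epsilon,\tilde\mu_\epsilon) \to \NEigen{k}{p}(g)$, I apply Ekeland's principle with $\epsilon' := \NEigen{k}{p}(g)-\nEigen{k}{p}(\tilde\alpha_\epsilon,\tilde\mu_\epsilon)\searrow 0$ and $\gamma := \sqrt{\epsilon'}$, which produces $(\alpha_\epsilon,\mu_\epsilon)\in X_\epsilon$ at $d$-distance at most $\sqrt{\epsilon'}$ from $(\tilde\alpha_\epsilon,\tilde\mu_\epsilon)$, with $\lambda_\epsilon:=\lambda_k(\alpha_\epsilon,\mu_\epsilon)\to\NEigen{k}{p}(g)$, such that $(\alpha_\epsilon,\mu_\epsilon)$ is a strict minimizer of $-\nEigen{k}{p}(\cdot) + \sqrt{\epsilon'}\,d(\cdot,(\alpha_\epsilon,\mu_\epsilon))$ on $X_\epsilon$.

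Clarke's Multiplier Rule (Proposition~\ref{prop:multip-rule}), combined with Proposition~\ref{prop:subdiff-of-norm} for the distance term and the subdifferential formula of Lemma~\ref{lem:lambda-alph-mu-subdiff}, then yields weights $c_j\ge 0$ with $\sum_j c_j = 1$ and unit $L^2(\mu_\epsilon)$-eigenfunctions $\phi_j$ for $\lambda_\epsilon$ such that, defining $\F_\epsilon := (\sqrt{c_j}\,\phi_j)_j \in H^{1,p}(M,\R^{n_\epsilon})$, the Lagrange pair
\begin{equation}
\xi_\epsilon = \bigl(\lambda_\epsilon\alpha_\epsilon^{q-1}-|d\F_\epsilon|^2,\ \lambda_\epsilon(|\F_\epsilon|^2-1)\bigr)\in L^{p/2}(M)\times L^\infty(M)
\end{equation}
satisfies $\int(\tilde\alpha-\alpha_\epsilon)\xi_{\epsilon,1}\,dv_g + \int(\tilde\mu-\mu_\epsilon)\xi_{\epsilon,2} \ge -\sqrt{\epsilon'}\,d((\tilde\alpha,\tilde\mu),(\alpha_\epsilon,\mu_\epsilon))$ for every $(\tilde\alpha,\tilde\mu)\in X_\epsilon$. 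Linearity of the eigenvalue equation then gives (i), and the construction yields $\int|\F_\epsilon|^2\,d\mu_\epsilon = 1$ and $\int\alpha_\epsilon|d\F_\epsilon|^2\,dv_g = \lambda_\epsilon$.

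Testing the multiplier inequality against admissible perturbations supplies pointwise estimates. On $\{\alpha_\epsilon>\epsilon\}$ both signs of $\tilde\alpha-\alpha_\epsilon$ are allowed, giving $\|\xi_{\epsilon,1}\|_{L^{p/2}(\{\alpha_\epsilon>\epsilon\})}\le \sqrt{\epsilon'}$; on $\{\alpha_\epsilon=\epsilon\}$ only nonnegative variations are admissible, yielding $\|(\xi_{\epsilon,1})_-\|_{L^{p/2}(\{\alpha_\epsilon=\epsilon\})}\le\sqrt{\epsilon'}$, while $(\xi_{\epsilon,1})_+\le\lambda_\epsilon\epsilon^{q-1}$ on this set contributes at most $\lambda_\epsilon^{p/2}\epsilon^q v_g(M)\to 0$ in $L^{p/2}$ (using $(q-1)p/2=q$). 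This proves (iii) and, combined with the resulting estimate $|d\F_\epsilon|^2\le\lambda_\epsilon\alpha_\epsilon^{q-1}+o(1)$ in $L^{p/2}$, bounds $\|\F_\epsilon\|_{H^{1,p}}$ uniformly. The analogous argument for $\mu$-variations gives $\lambda_\epsilon\bigl||\F_\epsilon|^2-1\bigr|\le\sqrt{\epsilon'}$ on $\{0<\mu_\epsilon<\epsilon^{-1}\}$, the one-sided bound $|\F_\epsilon|^2\ge 1-\sqrt{\epsilon'}/\lambda_\epsilon$ on $\{\mu_\epsilon=0\}$, and $|\F_\epsilon|^2\le 1+\sqrt{\epsilon'}/\lambda_\epsilon$ on $\{\mu_\epsilon=\epsilon^{-1}\}$.

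The main obstacle is obtaining the pointwise upper bound $|\F_\epsilon|\leq 1$, which the Ekeland inequality alone fails to supply on $\{\mu_\epsilon=0\}$. To close this gap I use the Bochner-type identity
\begin{equation}
\delta(\alpha_\epsilon d|\F_\epsilon|^2) = 2\lambda_\epsilon|\F_\epsilon|^2\mu_\epsilon - 2\alpha_\epsilon|d\F_\epsilon|^2,
\end{equation}
obtained by pairing each component of the eigenvalue equation with itself. On $\{\mu_\epsilon=0\}$ this forces $\delta(\alpha_\epsilon d|\F_\epsilon|^2)\le 0$, so $|\F_\epsilon|^2$ is a weak subsolution of the uniformly elliptic divergence-form operator $\delta(\alpha_\epsilon d\cdot)$; here I additionally impose the upper bound $\alpha\le\epsilon^{-1}$ in $X_\epsilon$, which is permitted by Proposition~\ref{prop:upper-cont-and-bound}. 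De~Giorgi--Nash--Moser regularity makes $\F_\epsilon$ continuous, and the weak maximum principle propagates the bound $|\F_\epsilon|^2\le 1+\sqrt{\epsilon'}/\lambda_\epsilon$ from $\{\mu_\epsilon>0\}$ onto all of $M$. A final rescaling of $\F_\epsilon$ by the scalar $(1+\sqrt{\epsilon'}/\lambda_\epsilon)^{-1/2}$ preserves (i) by linearity and produces $|\F_\epsilon|\le 1$; the remaining part of (iv) follows because $|\F_\epsilon|^2\ge 1-o(1)$ holds everywhere off $\{\mu_\epsilon=\epsilon^{-1}\}$, and $v_g(\{\mu_\epsilon=\epsilon^{-1}\})\le\epsilon = 1/\|\mu_\epsilon\|_{L^\infty}$.
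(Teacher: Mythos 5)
Your overall strategy matches the paper's: Ekeland's variational principle, Clarke's Multiplier Rule applied to $-\nEigen{k}{p}$ on $Y\cap X_\epsilon$, and extraction of one-sided pointwise estimates from the admissible variations. The only substantive divergence is in how you obtain the global bound $\abs{\F_\epsilon}\leq 1$ (equivalently, extending $\abs{\F_\epsilon}^2\lesssim 1$ from $\brc{\mu_\epsilon>0}$ to all of $M$).

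There the paper uses a more elementary mechanism. Set $c_{2,\epsilon}^2 = 1+\norm{\sigma_\epsilon}_{L^\infty}$ and $h := (\abs{\F_\epsilon}-c_{2,\epsilon})_+\in H^{1,p}(M)$. Testing the (Kato-type) differential inequality $\delta(\alpha_\epsilon d\abs{\F_\epsilon}) \leq \lambda_\epsilon\abs{\F_\epsilon}\mu_\epsilon$ against $h$, one uses that $h$ vanishes a.e.\ on $\brc{\mu_\epsilon>0}$ to kill the right-hand side, so $\int_{\brc{\abs{\F_\epsilon}>c_{2,\epsilon}}}\alpha_\epsilon\,\abs{d\abs{\F_\epsilon}}^2\leq 0$; since $\alpha_\epsilon\geq\epsilon$, $dh\equiv 0$, and connectedness of $M$ forces $h\equiv 0$. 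This needs no continuity of $\F_\epsilon$ and no ellipticity beyond $\alpha_\epsilon\geq\epsilon$.

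Your route instead passes through the Bochner identity $\delta(\alpha_\epsilon d\abs{\F_\epsilon}^2)=2\lambda_\epsilon\abs{\F_\epsilon}^2\mu_\epsilon - 2\alpha_\epsilon\abs{d\F_\epsilon}^2$, invokes De~Giorgi--Nash--Moser to get continuity, and then applies a maximum principle. This can be made to work, but as written it has two costs: (a) you must also impose $\alpha\leq\epsilon^{-1}$ in $X_\epsilon$ to secure uniform ellipticity for De~Giorgi theory, a modification the paper's argument does not require (and which is not needed for the statement as given); (b) the ``propagation from $\brc{\mu_\epsilon>0}$'' step is nontrivial since $\brc{\mu_\epsilon>0}$ is only a measurable set --- one has to observe that the superlevel set $\brc{\abs{\F_\epsilon}^2 > c_{2,\epsilon}^2 + \delta}$ is open, lies (up to null sets) inside $\brc{\mu_\epsilon = 0}$, and then apply the weak maximum principle there. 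So your approach is correct in spirit but carries an unnecessary regularity layer and an unnecessary side constraint; the direct test-function argument is the cleaner path. Everything else in your proposal (the $L^{p/2}$ estimates on the $\alpha$-multiplier, the identification of the eigenmap via the Clarke subdifferential formula, the final rescaling, and the bound $v_g(\{\mu_\epsilon=\epsilon^{-1}\})\le\epsilon$) tracks the paper's proof.
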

\begin{proof}
  Let $f \colon X_\epsilon \to \R\cup\brc{+\infty}$ be the function that equals $-\nEigen{k}{p}$ on a closed neighborhood of
  $(\tilde{\alpha}_\epsilon, \tilde{\mu}_\epsilon) \in X_\epsilon$, disjoint from the set $\brc{(\alpha, 0) \in X_\epsilon}$, and $+\infty$ otherwise.
  Set $\gamma_\epsilon^2 := \epsilon + \NEigen{k}{p}(g) - \lambda_k(\tilde{\alpha}_\epsilon, \tilde{\mu}_\epsilon) \to 0$.
  By Ekeland's variational principle, we can find a pair $(\alpha_\epsilon, \mu_\epsilon) \in X_\epsilon$ such that
  $\norm{\tilde{\alpha}_\epsilon - \alpha_\epsilon}_{L^q} + \norm{\tilde{\mu}_\epsilon - \mu_\epsilon}_{L^1} < \gamma_\epsilon$,
  the pair $(\alpha_\epsilon, \mu_\epsilon)$ minimizes
  \begin{equation}\label{eq:ekeland-min}
    (\alpha, \mu) \mapsto -\nEigen{k}{p}(\alpha, \mu) + \gamma_\epsilon \norm{\alpha - \alpha_\epsilon}_{L^q} +
    \gamma_\epsilon\norm{\mu - \mu_\epsilon}_{L^1}
  \end{equation}
  on a neighborhood of $(\tilde{\alpha}_\epsilon, \tilde{\mu}_\epsilon) \in X_\epsilon$, and $\nEigen{k}{p}(\alpha_\epsilon, \mu_\epsilon) \to \NEigen{k}{p}(g)$.
  Now, consider the space $Y = (\alpha_\epsilon + L^\infty)\times L^\infty$.
  Proposition~\ref{prop:subdiff-of-norm} and Lemma~\ref{lem:lambda-alph-mu-subdiff} allow us to calculate the Clarke subdifferential of \eqref{eq:ekeland-min} restricted to $Y$
  at $(\alpha_\epsilon, \mu_\epsilon)$.
  We then apply Proposition~\ref{prop:multip-rule} to~\eqref{eq:ekeland-min} with $S = Y\cap X_\epsilon$,
  obtaining an eigenmap $\F_\epsilon \in H^1(1, \alpha_\epsilon,\R^{n_\epsilon})$ such that
  \begin{equation}\label{ineq:eigen-clark-multiplier-alpha}
    \int \brr{\nEigen{k}{p}\cdot\brr{\tfrac{\alpha_\epsilon}{\norm{\alpha_\epsilon}_{L^{q}}}}^{q-1} - \abs{d\F_\epsilon}^2 + \eta_\epsilon}\alpha  \geq 0
  \end{equation}
  and
  \begin{equation}\label{ineq:eigen-clark-multiplier-mu}
    \int \brr{\abs{\F_\epsilon}^2 - 1 - \sigma_\epsilon} d(\mu-\mu_\epsilon) \geq 0,
  \end{equation}
  where $\norm{\eta_\epsilon}_{L^{p/2}},\norm{\sigma_\epsilon}_{L^\infty} \to 0$, and $\alpha, \mu \in L^\infty$: $\alpha+\alpha_\epsilon \geq \epsilon$,
  $0\leq \mu \leq \epsilon^{-1}$.
  By taking arbitrary $\alpha \in L_+^\infty$ and $\mu = \mu_\epsilon|_{Z}$ for arbitrary sets $Z \subset M$, we obtain
  \begin{equation}\label{ineq:alpha-mu-upper-bounds}
    c_{1,\epsilon}\alpha_\epsilon^{q-1} + \eta_\epsilon -  \abs{d\F_\epsilon}^2 \geq 0
    \quad\text{and}\quad
    (1 + \sigma_\epsilon  - \abs{\F_\epsilon}^2)\big|_{\brc{\mu_\epsilon > 0}} \geq 0,
  \end{equation}
  where we set $c_{1,\epsilon}:= \nEigen{k}{p}(\alpha_\epsilon,\mu_\epsilon)\norm{\alpha_\epsilon}_{L^{q}}^{1-q}$. Note that $\abs{c_{1,\epsilon} - \lambda_\epsilon} \to 0$.
  To obtain an analogue of the last inequality to the whole $M$, one applies a weak maximum principle.
  If $c_{2,\epsilon}^2:= 1 + \norm{\sigma_\epsilon}_{L^\infty} \to 1$ and
  $0 \leq h:= \max \brc{0, \abs{\F_\epsilon} - c_{2,\epsilon}} \in H^{1,p}(M)$, we have
  \begin{equation}
    \delta(\alpha_\epsilon d\abs{\F_\epsilon}) - \lambda_\epsilon \abs{\F_\epsilon}\mu_\epsilon \leq 0,
  \end{equation}
  and since $h|_{\brc{\mu_\epsilon > 0}} = 0$,
  \begin{equation}
    0 \leq \int_{\brc{\abs{\F_\epsilon} \geq c_{2,\epsilon}}} \alpha_\epsilon\abs{d\abs{\F_\epsilon}}^2 = \int \alpha_\epsilon \brt{d\abs{\F_\epsilon}, dh}
    \leq \lambda_\epsilon\int \abs{\F_\epsilon} h d\mu_\epsilon = 0,
  \end{equation}
  resulting in
  \begin{equation}\label{ineq:mu-upper-bound-ext}
    \abs{\F_\epsilon} \leq c_{2,\epsilon}
  \end{equation}
  since $M$ is connected.

  On the other hand, we have the reverse inequalities
  \begin{equation}\label{ineq:alpha-mu-lower-bounds}
    \brr{c_{1,\epsilon}\alpha_\epsilon^{q-1} + \eta_\epsilon - \abs{d\F_\epsilon}^2}\Big|_{\brc{\alpha_\epsilon > \epsilon}} \leq 0
    \quad\text{and}\quad
    (1 + \sigma_\epsilon  - \abs{\F_\epsilon}^2)\big|_{\brc{\mu_\epsilon < \epsilon^{-1}}} \leq 0,
  \end{equation}
  since one can always take $\mu = \epsilon^{-1}$ in~\eqref{ineq:eigen-clark-multiplier-mu} and $\alpha \in \max\brc{\epsilon - \alpha_\epsilon, -n}$, $n \to \infty$
  in~\eqref{ineq:eigen-clark-multiplier-alpha}. Putting~\eqref{ineq:alpha-mu-upper-bounds}, \eqref{ineq:mu-upper-bound-ext}, and \eqref{ineq:alpha-mu-lower-bounds}
  together, one has
  \begin{equation}\label{ineq:alpha-almost}
    -\eta_\epsilon \leq c_{1,\epsilon}\alpha_\epsilon^{q-1} - \abs{d\F_\epsilon}^2
    \leq -\eta_\epsilon + c_{1,\epsilon}\epsilon^{q-1}\1_{\brc{\alpha_\epsilon = \epsilon}},
  \end{equation}
  \begin{equation}\label{ineq:mu-almost}
    \abs{{\F}_\epsilon/ c_{2,\epsilon}}^2 \leq 1,
    \quad\text{and}\quad
    1-2\tfrac{\norm{\sigma_\epsilon}_{L^\infty}}{c_{2,\epsilon}^2} - \abs{{\F}_\epsilon / c_{2,\epsilon}}^2  \leq \1_{\brc{\mu_\epsilon = \epsilon^{-1}}}.
  \end{equation}
  One also has $\norm{\1_{\brc{\mu_\epsilon = \epsilon^{-1}}}}_{L^1} \leq \epsilon\int d\mu_\epsilon \leq \epsilon(1+\gamma_\epsilon)$.
  Finally, \eqref{ineq:alpha-almost} and \eqref{ineq:mu-almost} with $\tilde{\F}_\epsilon := \F_\epsilon / c_{2,\epsilon}$ yield the desired properties.
\end{proof}

By rescaling $(\alpha_\epsilon', \mu'_\epsilon) := (\lambda_\epsilon^{\frac{p-2}{2}}\alpha_\epsilon, \lambda_\epsilon^{\frac{p}{2}}\mu_\epsilon)$,
we have $(\alpha'_\epsilon)^{\frac{2}{p-2}} = \lambda_\epsilon\alpha_\epsilon^{\frac{2}{p-2}}$ and the following:
\begin{corollary}\label{cor:max-sequence}
  Under the assumptions of Proposition~\ref{prop:max-sequence}, there
  exist sequences $(\alpha_\epsilon, \mu_\epsilon) \in L^{q}_+(M)\times L^\infty_+(M)$ and $\F_\epsilon \in H^{1,p}(M, \R^{n_\epsilon})$
  such that
  $\lambda_k(\alpha_\epsilon,\mu_\epsilon) = 1$, both $\norm{\alpha_\epsilon}_{L^q}^{q-1}$ and $\mu_\epsilon(M)^{2/p}$ converge to $\NEigen{k}{p}(g)$, and
  \begin{enumerate}[label={(\roman*)}]
    \item $\delta (\alpha_\epsilon d\F_\epsilon) = \F_\epsilon \mu_\epsilon$ weakly;
    \item $\abs{\F_\epsilon}\leq 1$, and the norms $\norm{\F_\epsilon}_{H^{1,p}}$ are uniformly bounded;
    \item $\norm{\alpha_\epsilon - \abs{d\F_\epsilon}^{p-2}}_{L^{\frac{p}{p-2}}} \to 0$;
    \item\label{cor:max-sequence:it:almost-sphere} $\norm{1 - |\F_\epsilon|^2}_{L^1} \to 0$ and for any $\varrho > 0$,
     $v_g\brr{\brc{|\F_\epsilon|^2 < 1-\varrho}} \leq C \norm{\mu_\epsilon}_{L^\infty}^{-1}$.
  \end{enumerate}
\end{corollary}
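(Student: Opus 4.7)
The plan is to show that the corollary follows from Proposition~\ref{prop:max-sequence} by a direct algebraic rescaling, together with one soft continuity fact about power functions between $L^r$ spaces. First, I would set $(\alpha_\epsilon',\mu_\epsilon') := (\lambda_\epsilon^{(p-2)/2}\alpha_\epsilon,\lambda_\epsilon^{p/2}\mu_\epsilon)$ and keep the same $\F_\epsilon$. Homogeneity of the Rayleigh quotient~\eqref{eq:eigen-var-char} gives $\lambda_k(t\alpha,s\mu) = (t/s)\lambda_k(\alpha,\mu)$, so $\lambda_k(\alpha_\epsilon',\mu_\epsilon') = \lambda_\epsilon^{-1}\lambda_\epsilon = 1$. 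Substituting into the eigenequation $\delta(\alpha_\epsilon d\F_\epsilon) = \lambda_\epsilon \F_\epsilon\mu_\epsilon$ and collecting powers of $\lambda_\epsilon$ (exponents $(p-2)/2-p/2+1 = 0$) yields $\delta(\alpha_\epsilon' d\F_\epsilon) = \F_\epsilon\mu_\epsilon'$ weakly, proving (i). Items (ii) and~\ref{cor:max-sequence:it:almost-sphere} transfer verbatim since $\F_\epsilon$ is unchanged (and since $\|\mu_\epsilon'\|_{L^\infty}$ differs from $\|\mu_\epsilon\|_{L^\infty}$ by the bounded factor $\lambda_\epsilon^{p/2}\to \NEigen{k}{p}(g)^{p/2}$).

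For the normalization constants, $\|\alpha_\epsilon'\|_{L^q} = \lambda_\epsilon^{(p-2)/2}\|\alpha_\epsilon\|_{L^q}$. Because $\|\alpha_\epsilon-\tilde\alpha_\epsilon\|_{L^q}\to 0$ and $\|\tilde\alpha_\epsilon\|_{L^q}=1$, we obtain $\|\alpha_\epsilon'\|_{L^q}^{q-1} = \|\alpha_\epsilon'\|_{L^q}^{2/(p-2)} \to \NEigen{k}{p}(g)$, and similarly $\mu_\epsilon'(M)^{2/p} = \lambda_\epsilon\,\mu_\epsilon(M)^{2/p}\to \NEigen{k}{p}(g)$.

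The only mildly nontrivial step is (iii): deducing $\|\alpha_\epsilon' - |d\F_\epsilon|^{p-2}\|_{L^{p/(p-2)}}\to 0$ from property (iii) of the proposition, which by the identity $(\alpha_\epsilon')^{2/(p-2)} = \lambda_\epsilon\alpha_\epsilon^{2/(p-2)}$ reads $\|(\alpha_\epsilon')^{2/(p-2)} - |d\F_\epsilon|^2\|_{L^{p/2}}\to 0$. Here I would simply invoke the standard fact that for any $\gamma>0$ the map $\phi\mapsto |\phi|^\gamma$ is continuous from $L^r$ to $L^{r/\gamma}$ on nonnegative functions (already used in the proof of Lemma~\ref{lem:main}\ref{lem:main:it:5}), applied with $\gamma=(p-2)/2$ and $r=p/2$ so that $r/\gamma = p/(p-2)$. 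For $p\in(2,4]$ one has $\gamma\le 1$ and this is the elementary inequality $||a|^\gamma-|b|^\gamma|\le |a-b|^\gamma$; for $p>4$ one combines the mean value bound $||a|^\gamma-|b|^\gamma|\le \gamma(\max(a,b))^{\gamma-1}|a-b|$ with Hölder's inequality, using that $\alpha_\epsilon'$ and $|d\F_\epsilon|^2$ are uniformly bounded in the appropriate Lebesgue norms by (ii) and the convergence of $\|\alpha_\epsilon'\|_{L^q}$. No part of this argument presents a real obstacle; the entire corollary is essentially a bookkeeping of the rescaling, with the power-function continuity playing the role of the only analytic input.
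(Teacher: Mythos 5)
Your proposal is correct and matches the paper's approach exactly: the paper proves this corollary by the same rescaling $(\alpha_\epsilon',\mu_\epsilon'):=(\lambda_\epsilon^{(p-2)/2}\alpha_\epsilon,\lambda_\epsilon^{p/2}\mu_\epsilon)$, noting $(\alpha_\epsilon')^{2/(p-2)}=\lambda_\epsilon\alpha_\epsilon^{2/(p-2)}$, and leaving the remaining bookkeeping implicit. You have supplied the same details the paper leaves to the reader, including the correct invocation of the continuity of $\phi\mapsto|\phi|^{(p-2)/2}$ from $L^{p/2}$ to $L^{p/(p-2)}$ for item~(iii).
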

\begin{remark}\label{rem:p-m-2}
  For $p = 2$, one could replace $\lambda_k(\alpha, \mu)$ by $\lambda_k(1, \mu)$ and obtain similar maximizing sequences with $\alpha_\epsilon \equiv 1$
  (cf.~\cite[Proposition~4.1]{Vinokurov:2025:higher-dim-harm-eigenval}).
  See also~\cite{Petrides:2024:conf-class-opt-lms} for another construction of a maximizing sequence using Ekeland's variational principle, in the case $p=2$.
\end{remark}

\subsection{Convergence and Regularity near stable points}

Consider a sequence of tensors $\F_\epsilon^*\F_\epsilon$
generated by Corollary~\ref{cor:max-sequence}. By Lemma~\ref{lem:main}, it is bounded
in $H^{1,p}\widehat{\otimes}_\pi H^{1,p}$, so
passing to a subsequence, we have a tensor
$\F^*\F \in H^{1,p}\widehat{\otimes}_\pi H^{1,p}$, a function
$\alpha \in L^{p/(p-2)}_+$ and a Radon measure
$\mu$ such that $\mu_{\epsilon} \oset{w^*}{\to} \mu$
in $\mathcal{M}_+$, $\alpha_{\epsilon} \oset{w^*}{\to} \alpha$
in $L^{{p}/(p-2)}_+$, and $\F_\epsilon^*\F_\epsilon \oset{w^*}{\to} \F^*\F$
in $H^{1,p}\widehat{\otimes}_\pi H^{1,p}$.
By Corollary~\ref{cor:l-inf-convergence} and Lemma~\ref{lem:main}, up to isometry,
we can arrange $\F_\epsilon \to \F$ in $L^p$, so Corollary~\ref{cor:max-sequence}\ref{cor:max-sequence:it:almost-sphere} then implies
\begin{equation}\label{eq:limit-map-to-sphere}
  \abs{\F} \equiv 1 \quad\text{ on } M.
\end{equation}
\begin{lemma}\label{lem:strong-conv}
  Let $\tilde{\Omega} \subset M$ be a bounded open set in a Riemannian manifold such that
  ${\mu_\epsilon \oset{w^*}{\to} \mu}$ in
  $\mathcal{M}_+(\tilde{\Omega})$ and $\alpha_{\epsilon} \oset{w^*}{\to} \alpha$
  in $L^{p/(p-2)}_+(\tilde{\Omega})$, where $p \in (2,\infty)$.
  Assume also that $\F_\epsilon \in H^{1,p}(\tilde{\Omega},\ell^2)$,
  $\F_\epsilon^*\F_\epsilon \oset{w^*}{\to} \F^*\F$ in
  $H^{1,p}(\tilde{\Omega})\widehat{\otimes}_\pi H^{1,p}(\tilde{\Omega})$
  such that
  \begin{enumerate}[label=(\roman*)]
    \item $\delta (\alpha_\epsilon d\F_\epsilon) = \F_\epsilon
    \mu_\epsilon$ weakly;
    \item \label{it:num-cond} $\norm{\alpha_\epsilon - \abs{d\F_\epsilon}^{p-2}}_{L^{\frac{p}{p-2}}(\tilde{\Omega})} \to 0$;
    \item \label{it:lower-than}$\limsup_{\epsilon\to0}\int (\abs{\F_\epsilon}^2 -\abs{\F}^2)\phi^2d\mu_\epsilon \leq 0
      \quad\forall \phi \in C^\infty_{0}(\tilde{\Omega}).$
  \end{enumerate}
  If a point $x \in \tilde{\Omega}$ is stable (see Definition~\ref{def:stable-points})
  for the sequence $(\alpha_\epsilon,\mu_\epsilon)$, then there exists a neighborhood $\Omega \subset \tilde{\Omega}$ such that,
  up to a subsequence, the following hold:
  \begin{enumerate}
    \item $\int \phi^2 d\mu \leq \int \alpha\abs{d\phi}^2 $ for all $\phi \in C^\infty_0(\Omega)$;
    \item $\alpha_{\epsilon} \to \abs{d\F}^{p-2}$ in $L^{\frac{p}{p-2}}(\Omega)$, that is, $\alpha = \abs{d\F}^{p-2}$;
    \item $\tf{\F_\epsilon} \to \tf{\F}$ in $H^{1,p}(\Omega)\widehat{\otimes}_\pi H^{1,p}(\Omega)$;
    \item $\delta(\alpha d\tilde{\F}) = \tilde{\F} \mu$ for any $\tilde{\F} \in H^{1,p}(\Omega,\ell^2)$ such that $\tf{\F} = \tf{\tilde{\F}}$.
  \end{enumerate}
\end{lemma}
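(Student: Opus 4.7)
The plan is as follows. First apply the stability hypothesis at $x$ to extract, along a subsequence, a neighborhood $U\ni x$ with $\int \alpha_\epsilon|d\psi|^2 \geq \int \psi^2\,d\mu_\epsilon$ for all $\psi\in C_0^\infty(U)$, and fix $\Omega$ with $x\in\Omega\Subset U$. Conclusion~(1) is then immediate by passing to the limit in this inequality on $\psi\in C_0^\infty(\Omega)$: the weak$^*$ convergences $\alpha_\epsilon\oset{w^*}{\to}\alpha$ in $L^{p/(p-2)}$ and $\mu_\epsilon\oset{w^*}{\to}\mu$ in $\mathcal{M}_+$ pair respectively against $|d\psi|^2\in L^{p/2}$ and $\psi^2\in C_0(\tilde\Omega)$. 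As a consequence the form $\psi\mapsto\int\psi^2\,d\mu$ is bounded on $H^{1,p}_0(\Omega)$; in particular $\mu|_\Omega\in H^{1,p}(\Omega)^*$, a fact that will be used repeatedly below.

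For conclusions~(2) and~(3), observe first that by Remark~\ref{rem:weak-star-conv} the restrictions still satisfy $\F_\epsilon^*\F_\epsilon|_\Omega\oset{w^*}{\to}\F^*\F|_\Omega$, so Lemma~\ref{lem:main}\ref{lem:main:it:5} reduces the task to proving $|d\F_\epsilon|^p\to|d\F|^p$ in $L^1(\Omega)$; conclusion~(2) is then a consequence of~(ii) and the strong $H^{1,p}$-convergence it produces. The lower bound $\int_\Omega \phi|d\F|^p\leq\liminf_\epsilon\int_\Omega\phi|d\F_\epsilon|^p$ for nonnegative $\phi\in C_0^\infty(\Omega)$ is exactly Lemma~\ref{lem:main}\ref{lem:main:it:4}. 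For the matching $\limsup$-bound I would test the eigenmap equation~(i) against the vector test function $\phi^2\F_\epsilon$, obtaining the energy identity
\begin{equation*}
 \int\phi^2\alpha_\epsilon|d\F_\epsilon|^2 + \tfrac12\int\alpha_\epsilon\langle d|\F_\epsilon|^2,d\phi^2\rangle = \int\phi^2|\F_\epsilon|^2\,d\mu_\epsilon.
\end{equation*}
Hypothesis~(ii) and Hölder replace the first term by $\int\phi^2|d\F_\epsilon|^p + o(1)$, while on the right-hand side, hypothesis~(iii) combined with Corollary~\ref{cor:l-inf-convergence} (which yields $|\F_\epsilon|^2\to|\F|^2$ in $L^{p/2}$) and the continuity $\mu\in H^{1,p}(\Omega)^*$ from Step~1 (allowing the approximation of $\phi^2|\F|^2$ by continuous compactly supported functions in the pairing with $\mu_\epsilon$) yield $\limsup\int\phi^2|\F_\epsilon|^2\,d\mu_\epsilon \leq \int\phi^2|\F|^2\,d\mu$.

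The main obstacle is the commutator $\int\alpha_\epsilon\langle d|\F_\epsilon|^2,d\phi^2\rangle$, which is a pairing of two factors that only converge weakly. I plan to treat it by rewriting it as $2\int\langle\alpha_\epsilon d\F_\epsilon,\F_\epsilon\otimes d\phi^2\rangle$ and exploiting the tensor toolkit: Corollary~\ref{cor:l-inf-convergence} upgrades the tensor convergence to $\overline L^p\widehat\otimes_\pi L^p$, and then Lemma~\ref{lem:main}\ref{lem:main:it:3} applied at $s=0$ produces isometries $I_\epsilon$ such that $I_\epsilon\F_\epsilon\to I\F$ in $L^p(\Omega,\ell^2)$. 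Since $\alpha_\epsilon d\F_\epsilon$ is uniformly bounded in $L^{p/(p-1)}(\Omega,\ell^2\otimes T^*\Omega)$ (by~(ii) and Hölder), the weak--strong pairing $\int\langle\alpha_\epsilon d\F_\epsilon,\F_\epsilon\otimes d\phi^2\rangle$ passes to the limit, and matching the resulting expression against the formal energy identity for the limiting triple $(\alpha,\F,\mu)$ gives $\limsup\int\phi^2|d\F_\epsilon|^p\leq\int\phi^2|d\F|^p$. The role of the stability of $x$ (via Step~1) is essential here: it gives the nonnegativity and the $H^{1,p}$-continuity of $\mu$ needed to validate the limit-side identity and to rule out any singular defect concentrating at $x$.

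Finally, for conclusion~(4), once the strong convergences are established, (2) and the isometric strong convergence $I_\epsilon\F_\epsilon\to I\F$ in $H^{1,p}(\Omega,\ell^2)$ together give $\alpha_\epsilon d\F_\epsilon\to\alpha d\F$ strongly in $L^{p/(p-1)}(\Omega,\ell^2\otimes T^*\Omega)$, while $\F_\epsilon\,d\mu_\epsilon$ converges in the weak sense against test functions in $H^{1,p}(\Omega)$ (using $\mu\in H^{1,p}(\Omega)^*$). Passing to the limit in~(i) yields $\delta(\alpha d\F)=\F\mu$ for this particular isometric representative; the statement for arbitrary $\tilde\F$ with $\tf{\tilde\F}=\tf{\F}$ is then immediate from Lemma~\ref{lem:main}\ref{lem:main:it:2}, since any such $\tilde\F$ differs from $\F$ by a fixed linear isometry of $\ell^2$, which commutes with both $\delta$ and $\alpha\,d$.
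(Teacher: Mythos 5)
Your overall strategy agrees with the paper's: extract the neighborhood and subsequence from stability, pass conclusion~(1) to the limit, obtain a $\liminf$-bound on $\int\phi^2|d\F_\epsilon|^p$ from Lemma~\ref{lem:main}\ref{lem:main:it:4}, and a matching $\limsup$-bound by testing the eigenmap equation against $\phi^2\F_\epsilon$, then close via Lemma~\ref{lem:main}\ref{lem:main:it:5}; steps~(1), (4) and the endgame are essentially correct. However, your treatment of the commutator $\int\alpha_\epsilon\langle d|\F_\epsilon|^2,d\phi^2\rangle$ contains a genuine gap. After inserting the isometries $I_\epsilon$ so that $I_\epsilon\F_\epsilon\to I\F$ strongly in $L^p$, the pairing you write becomes $\int\langle\alpha_\epsilon\,d(I_\epsilon\F_\epsilon),(I_\epsilon\F_\epsilon)\otimes d\phi^2\rangle$, and while the second factor converges strongly in $L^p$, the first factor $\alpha_\epsilon\,d(I_\epsilon\F_\epsilon)$ is only \emph{bounded} in $L^{p'}$. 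A weak--strong pairing therefore only shows that the integral converges to $\int\langle\mathcal{W},(I\F)\otimes d\phi^2\rangle$ along a subsequence where $\alpha_\epsilon\,d(I_\epsilon\F_\epsilon)\rightharpoonup\mathcal{W}$ in $L^{p'}$. Identifying $\mathcal{W}$ with $\alpha\,d(I\F)$ is precisely the nonlinear-flux convergence one is trying to establish (it is not a consequence of $\alpha_\epsilon\rightharpoonup\alpha$ and $d(I_\epsilon\F_\epsilon)\rightharpoonup d(I\F)$), and your fallback — ``matching the resulting expression against the formal energy identity for the limiting triple $(\alpha,\F,\mu)$'' — is circular, because that identity is derived from $\delta(\alpha d\F)=\F\mu$, which is conclusion~(4) and has not yet been proved. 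The paper sidesteps exactly this difficulty by invoking a separate lemma for the inequality~\eqref{ineq:limsup}.

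A secondary issue: even granting the commutator, the energy identity can at best yield $\limsup\int\phi^2|d\F_\epsilon|^p\leq\int\phi^2\alpha|d\F|^2$ (after using~\ref{it:num-cond} to replace $\alpha_\epsilon$ by $|d\F_\epsilon|^{p-2}$ in the principal term), \emph{not} the inequality $\leq\int\phi^2|d\F|^p$ you aim for, since at this stage one does not know $\alpha=|d\F|^{p-2}$. Closing the loop requires the Hölder chain~\eqref{ineq:liminf-sup} — sandwiching $\int\phi^2\alpha|d\F|^2$ between the two Fatou bounds via Hölder with density $\phi^2$ — which simultaneously produces $\alpha=|d\F|^{p-2}$ (from the equality case in Hölder) and the norm convergence $\int\phi^2|d\F_\epsilon|^p\to\int\phi^2|d\F|^p$. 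This step is missing from your outline and is not merely cosmetic: without it, strong convergence of $\alpha_\epsilon$ and hence conclusion~(2) does not follow.
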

\begin{proof}
  Let us choose a neighborhood $\Omega \subset \tilde{\Omega}$ and a subsequence $\brc{\mu_\epsilon}$
  from the definition of a stable point. Then \eqref{ineq:stable-point} implies
  that $\brc{\mu_\epsilon}$ is bounded in $\mathfrak{Bil}[H^{1}_{0}(\Omega,\alpha_\epsilon)]$
  and hence in $\mathfrak{Bil}[H^{1,p}_{0}(\Omega)]$ as well. It is then precompact in the weak$^*$
  topology and $\mathfrak{Bil}[H^{1,p}_{0}(\Omega)] = (H^{1,p}_0(\Omega)\widehat{\otimes}_\pi H^{1,p}_0(\Omega))^*$. By looking at the integrals
  $\int \phi^2 d\mu_\epsilon$ of smooth functions
  $\phi \in C^\infty_{0}(\Omega)$, one sees that $\mu$ is the only
  accumulation point of $\brc{\mu_\epsilon} \subset\mathfrak{Bil}[H^{1,p}_{0}]$.

  Further, choose a function $\phi \in C^\infty_0(\Omega)$, set $q = \frac{p}{p-2} \in (1,\infty)$ and $q' = \frac{p}{2}$.
  Note that one also has
  \begin{equation}\label{ineq:fatou}
    \begin{split}
      \int \alpha^q \phi^2 &\leq \liminf_{\epsilon\to0} \int \alpha^q_\epsilon \phi^2
      = \liminf_{\epsilon\to0} \int \abs{d\F_\epsilon}^p\phi^2,
      \\
      \int \abs{d\F}^p\phi^2 &\leq \liminf_{\epsilon\to0} \int \abs{d\F_\epsilon}^p\phi^2
    \end{split}
  \end{equation}
  because of the weak$^*$ convergence of $\alpha_\epsilon$ and Lemma~\ref{lem:main}. Exactly as in
  \cite[Lemma~3.6]{Vinokurov:2025:sym-eigen-val-lms}, one proves that
  \begin{equation}\label{ineq:limsup}
    \limsup_{\epsilon\to0}\int \alpha_\epsilon\abs{d\F_\epsilon}^2\phi^2\leq \int \alpha\abs{d\F}^2\phi^2
    \quad \forall \phi \in C^\infty_0(\Omega).
  \end{equation}
  By~\eqref{ineq:fatou}, Hölder's inequality (with the density $\phi^2$) and \ref{it:num-cond}, we obtain
  \begin{equation}\label{ineq:liminf-sup}
    \begin{split}
      \limsup_{\epsilon\to0}\int \abs{d\F_\epsilon}^{p}\phi^2 &\leq \int \alpha\abs{d\F}^2\phi^2
      \leq \brr{\int \alpha^q \phi^2}^{1/q}\brr{\int \abs{d\F}^p \phi^2}^{1/q'}
      \\ &\leq \liminf_{\epsilon\to0} \int \abs{d\F_\epsilon}^p\phi^2.
    \end{split}
  \end{equation}
  Therefore, all the inequalities are equalities. In particular, $\int \abs{d\F_\epsilon}^{p}\phi^2 \to \int \abs{d\F}^{p}\phi^2$.
  Since it is true for all $\phi \in C^\infty_0(\Omega)$, after shrinking $\Omega$ slightly,
  we conclude that
  \begin{equation}\label{eq:norm-conv}
    \norm{\abs{d\F_\epsilon}^{p-2}}_{L^{q}(\Omega)} \to \norm{\abs{d\F}^{p-2}}_{L^{q}(\Omega)}.
  \end{equation}
  Hölder's inequality in the middle of~\eqref{ineq:liminf-sup} yields $\alpha = \abs{d\F}^{p-2}$.
  Hence, both $\alpha_\epsilon$ and $\abs{d\F_\epsilon}^{p-2}$ converge weakly$^*$ to $\alpha = \abs{d\F}^{p-2}$, which together with~\eqref{eq:norm-conv}
  yields
  \begin{equation}\label{alpha-strong-conv}
    \alpha_\epsilon, \abs{d\F_\epsilon}^{p-2} \to \abs{d\F}^{p-2} \quad \text{in } L^{q}(\Omega).
  \end{equation}
   Then Lemma~\ref{lem:main}\ref{lem:main:it:5} tells us that $\tf{\F_\epsilon} \to \tf{\F}$ in $H^{1,p}(\Omega)\widehat{\otimes}_\pi H^{1,p}(\Omega)$,
   and, up to isometry, $u_\epsilon \to u$ in $H^{1,p}(\Omega,\ell^2)$. As a consequence, for any $\phi \in C^\infty_0(\Omega)$,
  \begin{multline}
    \int \alpha\brt{d\F,d\phi} = \lim_{\epsilon\to 0} \int \alpha_\epsilon\brt{d\F_\epsilon,d\phi}
    =\lim_{\epsilon\to 0} \int \phi \F_{\epsilon} d\mu_{\epsilon}
    \\=\lim_{\epsilon\to 0}
    \int \phi \brr{\F_{\epsilon} - \F}d\mu_\epsilon
    +\lim_{\epsilon\to 0}
    \int \phi \F d\mu_\epsilon = \int \phi \F d\mu,
  \end{multline}
  where $\mu_{\epsilon} \overset{w^*}{\to} \mu$ as bilinear forms on $H^{1,p}_{0}$. The limiting equation holds for any
  $\tilde{\F}$ such that $\tf{\F}=\tf{\tilde{\F}}$, since any two such maps differ by an isometry (see Lemma~\ref{lem:main}).
\end{proof}

\begin{lemma}\label{lem:regularity-in-good-pt}
  Let $\mu_\epsilon \oset{w^*}{\to} \mu \in \mathcal{M}_+(\Omega)$,
  $\F_\epsilon^*\F_\epsilon \oset{w^*}{\to} \F^*\F$ in
  $H^{1,p}(\Omega)\widehat{\otimes}_\pi H^{1,p}(\Omega)$ as in Lemma~\ref{lem:strong-conv} (with $\Omega = \tilde{\Omega}$),
  and let $p \in (2,m]$. If
  $\abs{\F} \equiv 1$ and $\Omega$ contains only a discrete set of unstable points, then
  \begin{equation}
    \delta (\abs{d\F}^{p-2}d\F) = \abs{d\F}^p\F,
  \end{equation}
  that is, $\F$ is a (weakly) $p$-harmonic map into $\mathbb{S}^\infty \subset \ell^2$.
  Moreover, $\mu^c = \alpha\abs{d\F}^2$, where $\mu^c$ denotes the continuous part of $\mu$, and
  $\F$ is locally spectrally stable on $\Omega$ (see Definition~\ref{def:stable-map}).
\end{lemma}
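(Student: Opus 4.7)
The plan is to localize on stable neighborhoods of $\Omega\setminus\Sigma$ via Lemma~\ref{lem:strong-conv}, where $\Sigma\subset\Omega$ denotes the discrete set of unstable points for $(\alpha_\epsilon,\mu_\epsilon)$, and then extend the resulting identifications and Euler--Lagrange equation across $\Sigma$. At each $x\in\Omega\setminus\Sigma$, all hypotheses of Lemma~\ref{lem:strong-conv} are available on a stable neighborhood $U_x$: conditions (i)--(ii) are inherited from the standing assumption, while (iii) reduces to $\abs{\F_\epsilon}^2-\abs{\F}^2\le 0$ by $\abs{\F_\epsilon}\le 1$ (from the Corollary~\ref{cor:max-sequence} construction) and $\abs{\F}\equiv 1$. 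Consequently $\alpha=\abs{d\F}^{p-2}$ on $U_x$, $\tf{\F_\epsilon}\to\tf{\F}$ in $H^{1,p}(U_x)\widehat{\otimes}_\pi H^{1,p}(U_x)$, the weak equation $\delta(\alpha d\F)=\F\mu$ holds on $U_x$, and $\int \phi^2d\mu\le \int \alpha\abs{d\phi}^2$ for $\phi\in C^\infty_0(U_x)$.

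Next I would identify $\mu$ on each $U_x$. Testing the $i$th component of $\delta(\alpha d\F)=\F\mu$ with $\phi \F^i$ and summing over $i$, the cross terms $\sum_i \int \alpha \F^i\brt{d\F^i,d\phi}$ vanish because $\sum_i \F^i d\F^i=\tfrac{1}{2}d\abs{\F}^2=0$, while $\sum_i\abs{d\F^i}^2=\abs{d\F}^2$ together with $\alpha=\abs{d\F}^{p-2}$ yield $\int_{U_x}\phi\abs{d\F}^p dv_g=\int_{U_x}\phi\,d\mu$ for every $\phi\in C^\infty_0(U_x)$. Covering $\Omega\setminus\Sigma$ and using that $\Sigma$ is $v_g$-null and, being discrete (hence countable), also $\mu^c$-null, this promotes to $\mu^c=\abs{d\F}^p dv_g$ on $\Omega$ and gives the weak $p$-harmonic equation $\delta(\abs{d\F}^{p-2}d\F)=\abs{d\F}^p\F$ on $\Omega\setminus\Sigma$.

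The main obstacle is to extend this equation across $\Sigma$. Since $p\le m$, each point of $\Omega$ has zero $p$-capacity, so I would choose cutoffs $\chi_\delta\in C^\infty_0(\Omega\setminus\Sigma)$ with $0\le\chi_\delta\le 1$, $\chi_\delta\to 1$ a.e., and $\norm{d\chi_\delta}_{L^p(\Omega)}\to 0$ as $\delta\to 0$ (explicit logarithmic cutoffs around the locally finitely many points of $\Sigma$ suffice). Testing the valid equation on $\Omega\setminus\Sigma$ with $\phi\chi_\delta$, for arbitrary $\phi\in C^\infty_0(\Omega)$, gives
\begin{equation}
  \int \chi_\delta\abs{d\F}^{p-2}\brt{d\F^i,d\phi} + \int \phi\abs{d\F}^{p-2}\brt{d\F^i,d\chi_\delta} = \int \abs{d\F}^p \F^i \phi\chi_\delta,
\end{equation}
and sending $\delta\to 0$ yields the global weak equation: the first and right-hand integrals converge by dominated convergence, and the middle term is bounded by $\norm{\phi}_\infty \norm{d\F}_{L^p}^{p-1}\norm{d\chi_\delta}_{L^p}\to 0$ via H\"older. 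This capacity step is the delicate one; it is clean only because $\Sigma$ is discrete and $p\le m$.

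For local spectral stability, at a stable $x_0\in\Omega\setminus\Sigma$ I would pass the inequality $\int\alpha_\epsilon\abs{d\phi}^2\ge\int\phi^2d\mu_\epsilon$ on $C^\infty_0(U)$ to the limit using the strong $L^{p/(p-2)}(U)$-convergence $\alpha_\epsilon\to\abs{d\F}^{p-2}$ furnished by Lemma~\ref{lem:strong-conv} and the weak$^*$ convergence $\mu_\epsilon\oset{w^*}{\to}\abs{d\F}^p dv_g$ on $U$. At an unstable $x_0\in\Sigma$, the analogue~\eqref{ineq:unstable-point} on $C^\infty_0(V\setminus\overline{B_{r_\epsilon}(x_0)})$ first passes to the limit on $C^\infty_0(V\setminus\brc{x_0})$, and then extends to $C^\infty_0(V)$ by the same zero $p$-capacity argument.
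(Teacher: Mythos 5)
Your proof is correct and follows essentially the paper's own route: localize via Lemma~\ref{lem:strong-conv} on stable neighborhoods to obtain $\alpha = \abs{d\F}^{p-2}$ and the eigenmap equation $\delta(\alpha d\F)=\F\mu$, test against $\phi\F$ (using $\F\cdot d\F = 0$) to identify $\mu^c = \alpha\abs{d\F}^2 = \abs{d\F}^p dv_g$, and cross the discrete set of unstable points by a zero $p$-capacity cutoff. The small reordering (you identify $\mu^c$ locally before extending the Euler--Lagrange equation, while the paper first extends $\delta(\alpha d\F)=\F\mu^c$ across the discrete set and then tests globally) and your optional appeal to $\abs{\F_\epsilon}\le 1$ to verify hypothesis (iii) of Lemma~\ref{lem:strong-conv} --- which is already assumed wholesale by the ``as in Lemma~\ref{lem:strong-conv}'' clause --- do not change the argument.
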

\begin{proof}
  Let us decompose $\mu$ into its continuous and atomic parts
  \begin{equation}\label{eq:meas-decomp}
    \mu=\mu^c+\sum_{x \in A(\mu)}w_x\delta_x,
  \end{equation}
  where $w_x > 0$ and $A(\mu)$ is the set of atoms of $\mu$.
  If the set of unstable points is denoted by $D$, the stability inequality \eqref{ineq:stable-point} implies that
  $A(\mu) \subset D$. By Lemmas~\ref{lem:strong-conv} and~\ref{lem:main}, $\delta (\alpha du) = u \mu = u \mu^c$ on $\Omega\setminus D$. Since
  discrete sets have $p$-capacity zero, it extends on the whole $\Omega$, and, up to a representative of $\F^*\F$, we have
  \begin{equation}\label{eigen.map}
    \delta (\alpha d\F) = \F \mu^c \quad\text{on } \Omega.
  \end{equation}

  The condition $\abs{\F}^2 = 1$ implies $\F \cdot d\F = 0$, and
  after testing equation~\eqref{eigen.map} against $\phi \F \in H^{1,p}$,
  we conclude that
  \begin{equation}\label{eq:mu-alpha}
    \mu^c=\alpha|d\F|^2.
  \end{equation}
  Now, let us recall that $\abs{d\F}^{p-2} = \alpha$. Given the equality above,
  equation~\eqref{eigen.map} transforms into
  \begin{equation}
    \delta (\abs{d\F}^{p-2} d\F) = \abs{d\F}^{p}u \quad\text{on } \Omega.
  \end{equation}
  Thus, $\F\colon \Omega \to \mathbb{S}^{\infty}$ is a weakly $p$-harmonic map.

  Inequality~\eqref{ineq:stable-map} obviously holds in a neighborhood of a
  stable point. On a neighborhood $U$ of an unstable point $y$ we have \eqref{ineq:unstable-point}, and
  after taking the limit, one has
  \begin{equation}
    \int \phi^2 \abs{d\F}^{p} \leq \int \abs{d\F}^{p-2} \abs{d\phi}^2
    \quad \forall \phi \in C^\infty_0\brr{U\setminus\brc{y}}
  \end{equation}
  Again, the inequality continues to hold over a discrete set.
\end{proof}

\subsection{Bubbling analysis}\label{app:blow-up}

Let $\brr{\Omega,g}$ be a compact Riemannian manifold (possibly with
a nonempty boundary $\partial \Omega$) and $\dim \Omega = m \geq 2$.
Denote by $\mathcal{M}^c_+(\Omega)$ the space of all nonnegative, nonzero, continuous (that is, nonatomic) Radon measures
on $\Omega$. Consider a sequence $\brc{\mu_\varepsilon} \subset \mathcal{M}^c_+(\Omega)$
such that $\mu_\varepsilon \oset{w^*}{\to}\mu$. Note that $\mu$ is decomposable into its continuous and atomic parts,
\begin{equation}
  \mu = \mu^c + \sum_{x \in A(\mu)} w_x \delta_{x},
\end{equation}
where $A(\mu) = \brc{x \in \Omega \colon \mu(\brc{x}) > 0}$ and $w_x  = \mu(\brc{x})$.

\begin{definition}\label{def:bubble-tree}
  Fix $\delta > 0$. We say that a converging sequence
  $\mathcal{M}^c_+(\Omega) \ni \mu_\varepsilon \oset{w^*}{\to}\mu$
  has the $\delta$-\emph{bubbling property} if, up to a subsequence,
  there exists
  a finite collection of maps
  $\brc{T_{i,\epsilon} \colon U_i \to \R^m}_{1\leq i \leq N}$, where
  $U_i \subset \Omega$ is a coordinate chart centered at $x_i \in A(\mu)$
  with the following properties.
  \begin{enumerate}[label=(\roman*)]
    \item The maps $T_{i,\epsilon}$ have the form $T_{i,\epsilon}(x) = (x - x_{i,\epsilon})/{\sigma_{i,\epsilon}}$, where
      $x_{i,\epsilon} \to x_i$, $\sigma_{i,\epsilon} \to 0$.
    \item For any $R > 0$ and small enough $\epsilon$, either
      $B_{\sigma_{i,\epsilon} R}(x_{i,\epsilon}) \cap B_{\sigma_{j,\epsilon} R}(x_{j,\epsilon}) = \varnothing$,
      or one of them is contained in the other, say
      $B_{\sigma_{i,\epsilon} R}(x_{i,\epsilon}) \subset B_{\sigma_{j,\epsilon} R}(x_{j,\epsilon})$ and the images
      $T_{j,\epsilon}[B_{\sigma_{i,\epsilon} R}(x_{i,\epsilon})]$ converge to a point in $\R^m$,
      in other words,
      \begin{equation}\label{eq:dif-rate-condition}
        \frac{\operatorname{dist}(x_{i,\epsilon},x_{j,\epsilon})}{\sigma_{i,\epsilon} + \sigma_{j,\epsilon}}
        +\frac{\sigma_{i,\epsilon}}{\sigma_{j,\epsilon}}
        +\frac{\sigma_{j,\epsilon}}{\sigma_{i,\epsilon}} \to \infty.
      \end{equation}
    \item There exist $w^\infty_{i} \geq 0$ (weights at infinity) such that $(T_{i,\epsilon})_* [\mu_\epsilon] \overset{w^*}{\to} \mu_i \in \mathcal{M}_+(\R^m)$,
      \begin{equation}\label{ineq:alomst-no-loss}
        \mu(\Omega) = \mu^c(\Omega) + \sum_i \mu_{i}^c(\R^m) + \sum_i w^\infty_{i},
        \quad\text{and}\quad
        \sum_i w^\infty_{i} < \delta \mu(\Omega).
      \end{equation}
    \item\label{it:infinity-functions} There exist functions $\phi_{i,\epsilon} \in H^{1,m}\cap C_0(\R^m)$ such that
    $\supp T_{i,\epsilon}^*\phi_{i,\epsilon} \cap \supp T_{j,\epsilon}^*\phi_{j,\epsilon} = \varnothing$, $\operatorname{dist}(0, \supp \phi_{i,\epsilon}) \to \infty$,
    $\int T_{i,\epsilon}^*\phi_{i,\epsilon} d\mu_\epsilon \to w^\infty_i$, and $\int \abs{d\phi_{i,\epsilon}}^m \to 0$.
  \end{enumerate}
  Here the convergence $(T_\epsilon)_* [\mu_\epsilon] \overset{w^*}{\to}
  \mu$ means
  \begin{equation}
    \int_\Omega \phi\brr{\frac{x - x_\epsilon}{\sigma_\epsilon}}d\mu_\epsilon(x) \to \int_{\R^m} \phi(y) d\mu(y)\quad \forall \phi \in C_0(\R^m).
  \end{equation}
\end{definition}

\begin{proposition}\label{prop:no-atoms}
  Let $p \in [2,m]$, $\mathcal{M}^c_+(\Omega) \ni \mu_\varepsilon \overset{w^*}{\to} \mu \neq 0$
  and $\nEigen{k}{p}(\alpha_\epsilon,\mu_\epsilon) \geq c > 0$.
  \begin{itemize}
    \item Then for any $\delta > 0$,
    $\brc{\mu_\varepsilon}$ has the $\delta$-bubbling property
    with $\#\set{i}{\mu^c_i \neq 0\ \text{or}\ w^\infty_i \neq 0}\leq k$. If
    $\supp \mu_\epsilon \subset \partial \Omega$, one has
    $T_{i,\epsilon} \colon U_i \to \R^m_+$.

    \item Moreover, if $p < m$, then $\mu$ has no atoms, that is, $\mu \in \mathcal{M}^c_+(\Omega)$.
  \end{itemize}
\end{proposition}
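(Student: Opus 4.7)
My plan is to prove Proposition~\ref{prop:no-atoms} in two stages, first building the bubble tree via concentration-compactness (bounding the count by $k$), then using the scaling behavior of $\nEigen{k}{p}$ to forbid atoms when $p<m$. Before either step, I normalize so that $\norm{\alpha_\epsilon}_{L^q}=\mu_\epsilon(\Omega)=1$ with $q=p/(p-2)$; under the scaling relations for $\lambda_k$ this is equivalent to $\lambda_k(\alpha_\epsilon,\mu_\epsilon)=\nEigen{k}{p}(\alpha_\epsilon,\mu_\epsilon)\geq c>0$, so I may work directly with the unnormalized $\lambda_k$.

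For the bubble tree, for each atom $x_i\in A(\mu)$ of the weak-$*$ limit $\mu$, a Lions-type concentration-compactness argument produces centers $x_{i,\epsilon}\to x_i$ and scales $\sigma_{i,\epsilon}\to 0$ such that $(T_{i,\epsilon})_*\mu_\epsilon \overset{w^*}{\to}\mu_i\in\mathcal{M}_+(\R^m)$ with $\mu_i\neq 0$; iterating on residual atoms of the $\mu_i$ builds the tree, and a comparison-of-scales argument enforces~\eqref{eq:dif-rate-condition}. The escape weights $w_i^\infty$ come from mass trapped in annular transitions between consecutive scales, and the cutoffs $\phi_{i,\epsilon}$ in Definition~\ref{def:bubble-tree}(iv) are supported on these shells; since the $m$-capacity of a point in $\R^m$ vanishes, one can arrange $\int|d\phi_{i,\epsilon}|^m\to 0$, and by taking the annuli thick enough force $\sum w_i^\infty<\delta\mu(\Omega)$. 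To cap the number of nontrivial bubbles, construct for each a localized test function $\psi_{i,\epsilon}\in H^{1,p}(M)$ with $\int\psi_{i,\epsilon}^2\,d\mu_\epsilon\geq c_1>0$ and $\norm{d\psi_{i,\epsilon}}_{L^p(M)}\to 0$ (again using the vanishing $p$-capacity of a point for $p\leq m$); Hölder's inequality then yields $\int|d\psi_{i,\epsilon}|^2\alpha_\epsilon\leq\norm{d\psi_{i,\epsilon}}_{L^p}^2\norm{\alpha_\epsilon}_{L^q}\to 0$. If $k+1$ such bubbles coexisted, the disjointly supported family $\{\psi_{i,\epsilon}\}$ would span a $(k+1)$-dimensional subspace of $L^2(\mu_\epsilon)$ with Rayleigh quotient eventually $<c$, forcing $\lambda_k<c$ via~\eqref{eq:eigen-var-char} and contradicting the hypothesis.

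For the second part, suppose $\mu$ has an atom of weight $w>0$ when $p<m$. By the first part I obtain a bubble at scale $\sigma_\epsilon\to 0$ capturing a nontrivial rescaled limit on some ball $B_R\subset\R^m$. A direct change-of-variables computation yields the scaling identity
\begin{equation}
  \nEigen{k}{p}(\hat{\alpha}_\epsilon,\hat{\mu}_\epsilon)=\sigma_\epsilon^{2(p-m)/p}\,\nEigen{k}{p}(\alpha_\epsilon|_{B_{R\sigma_\epsilon}(x_\epsilon)},\mu_\epsilon|_{B_{R\sigma_\epsilon}(x_\epsilon)}),
\end{equation}
and the right-hand side remains bounded below by a positive constant $c'$ because the bubble captures a nontrivial proportion of the atom's mass. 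For $p<m$ the exponent $2(p-m)/p$ is strictly negative, so the left-hand side diverges to $+\infty$. On the other hand, a Korevaar-type upper bound applied on a fixed ball of $\R^m$ containing the bubble support (cf.\ inequality~\eqref{ineq:for-dif-p} and~\cite[Theorem~2.1]{Colbois-ElSoufi-Savo:2015:-laplace-with-density}) gives a uniform ceiling on $\nEigen{k}{p}(\hat{\alpha}_\epsilon,\hat{\mu}_\epsilon)$, which is the sought contradiction.

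The principal difficulty is the bookkeeping for the bubble tree in the first stage: simultaneously establishing~\eqref{eq:dif-rate-condition}, controlling the total escape so it is genuinely smaller than $\delta\mu(\Omega)$, and ensuring that the localized test functions $\psi_{i,\epsilon}$ have genuinely disjoint supports across all bubble scales. Once the tree is in place, the counting and scaling arguments are essentially soft, reducing to capacity estimates and the variational characterization.
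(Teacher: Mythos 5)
Your first stage (bubble tree construction and bounding the count by $k$) is in line with the paper's approach, which builds a bubble tree following~\cite[Appendix~A.1]{Vinokurov:2025:sym-eigen-val-lms} and caps the number of nontrivial bubbles/atoms-at-infinity by exhibiting disjointly supported test functions $\psi_{i,\epsilon}$ with $\int\psi_{i,\epsilon}^2\,d\mu_\epsilon\geq c_1$ and $\int|d\psi_{i,\epsilon}|^m<t$, then invoking the variational characterization. (One small imprecision: you write $\norm{d\psi_{i,\epsilon}}_{L^p}\to 0$, but because the $L^m$-norm of the gradient is scale-invariant and you must capture a fixed fraction of the bubble's mass, what one actually gets is $\norm{d\psi_{i,\epsilon}}_{L^m}<t$ for any prescribed $t>0$ after passing to small enough $\epsilon$; this is enough because you only need $t^{2/m}/c_1<c$, not a vanishing limit.)

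The second stage has a genuine gap. You assert that $\nEigen{k}{p}(\alpha_\epsilon|_{B_{R\sigma_\epsilon}(x_\epsilon)},\mu_\epsilon|_{B_{R\sigma_\epsilon}(x_\epsilon)})$ is bounded below by a positive constant ``because the bubble captures a nontrivial proportion of the atom's mass.'' That controls the factor $\mu_\epsilon(B)/\norm{\alpha_\epsilon|_B}_{L^q}$, but it says nothing about $\lambda_k(\alpha_\epsilon|_B,\mu_\epsilon|_B)$, which is the $k$th eigenvalue of the Neumann-type problem \emph{restricted to $B$}. There is no monotonicity relating $\lambda_k$ on $B$ to $\lambda_k$ on $\Omega$: restricting a function to $B$ changes both the Dirichlet integral and the measure mass, so the hypothesis $\lambda_k(\alpha_\epsilon,\mu_\epsilon)\geq c$ gives no lower bound on the restricted eigenvalue. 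In fact it can easily vanish in the limit (e.g.\ if the rescaled limit measure $\tilde\mu$ spreads over $k+1$ separated regions). Moreover, even if one could get a lower bound, your argument additionally needs a uniform Korevaar-type upper bound on the blown-up configuration $(\hat\alpha_\epsilon,\hat\mu_\epsilon)$, which is an extra analytic ingredient to be checked (and the bubble support need not stay in a fixed $B_R$ since mass can escape to infinity).

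The paper instead bypasses both issues by a direct Rayleigh quotient argument on $\Omega$. Using that $(T_\epsilon)_*\mu_\epsilon\oset{w^*}{\to}\tilde\mu\in\mathcal{M}^c_+(\R^m\setminus D)$ with $D$ finite and $\tilde\mu\neq 0$, pick $k+1$ disjointly compactly supported $\psi_i\in C^\infty_0(\R^m\setminus D)$ with $\int\psi_i^2\,d\tilde\mu\geq c_2$, pull back by $T_\epsilon$, and estimate directly via H\"older (using $\norm{\alpha_\epsilon}_{L^q}=1$) and the change of variables:
\begin{equation}
  \lambda_k(\alpha_\epsilon,\mu_\epsilon)\leq \max_i\frac{\brr{\int_\Omega|d(T_\epsilon^*\psi_i)|^p\,dv_g}^{2/p}}{\int_\Omega(T_\epsilon^*\psi_i)^2\,d\mu_\epsilon}
  \leq \sigma_\epsilon^{2(m-p)/p}\,\max_i\frac{c_3\brr{\int_{\R^m}|d\psi_i|^p}^{2/p}}{\int_{\R^m}\psi_i^2\,d\tilde\mu_\epsilon}\to 0,
\end{equation}
since $p<m$ forces $\sigma_\epsilon^{2(m-p)/p}\to 0$. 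This contradicts $\lambda_k\geq c$ without ever needing a lower bound on a restricted eigenvalue or an upper bound on the rescaled configuration. I would replace your second-stage argument by this one.
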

\begin{proof}
  It is enough to assume that $\norm{\alpha_\epsilon}_{L^{\frac{p}{p-2}}} = 1$ and $\lambda_k(\alpha_\epsilon,\mu_\epsilon) \geq c > 0$.

  The first part of the proposition was proved in~\cite[Appendix~A.1]{Vinokurov:2025:sym-eigen-val-lms} and follows the argument of~\cite[Proposition~A.6]{Vinokurov:2025:sym-eigen-val-lms}.
  Briefly, one builds a bubble tree and shows that for any $t > 0$, for each atom $y_i$ (including atoms at infinity) and each nonzero $\mu_i^c$, there exists a sequence of test functions $\phi_{i,\epsilon}$
  such that $\int \phi_{i,\epsilon}^2 d\mu_\epsilon \geq c_1$, $\int \abs{d\phi_{i,\epsilon}}^m < t$, and the supports are pairwise disjoint
  for different $i$. Consequently, the construction terminates after at most $k$ such sequences.

  To prove the second part, take any atom $x_0$ yielding a map $T_\epsilon \colon B_r(x_0) \to \R^m$ such that
  $\tilde{\mu}_\epsilon := (T_\epsilon)_* [\mu_\epsilon] \oset{w^*}{\to} \tilde{\mu} \in
  \mathcal{M}^c_+(\R^m\setminus D)$, where $\tilde{\mu}\neq 0$ and $D$ is a finite set. Then we choose any
  $k+1$ separately supported functions
  $\psi_i \in C^\infty_0(\R^m\setminus D)$ with
  \begin{equation}
    \int_{\mathbb{R}^m} \psi_i^2 d\tilde{\mu} \ge c_2.
  \end{equation}
  Thus,
  \begin{align}
    \lambda_k(\alpha_\epsilon,\mu_\varepsilon) &\le \max_{0\le i \le k}
    \frac{\int_{\Omega} \abs{d(T_\epsilon^*\psi_i)}^2_g \alpha_\epsilon dv_g}
    {\int_{\Omega} (T_\epsilon^*\psi_i)^2 d\mu_\varepsilon}
    \le  \max_{0\le i \le k}
    \frac{\brr{\int_{\Omega} \abs{d(T_\epsilon^*\psi_i)}^p_g dv_g}^{2/p}}
    {\int_{\Omega} (T_\epsilon^*\psi_i)^2 d\mu_\varepsilon}
    \\ &\le  \sigma_\varepsilon^{2(\frac{m}{p}-1)}\max_{0\le i \le k}
    \frac{c_3\brr{\int_{\mathbb{R}^m} \abs{d\psi_i}^p}^{2/p}}
    {\int_{\mathbb{R}^m} \psi_i^2 d\tilde{\mu}_\epsilon} \to 0,
  \end{align}
  as $\sigma_\varepsilon \to 0$.
\end{proof}

Let us use the convention
$\nEigen{k}{m}(\alpha,\mu) =: \overline{\lambda}_k(\alpha,\mu) = 0$ if either $\alpha = 0$ or $\mu = 0$.
Also, define $\#w^\infty := \#\set{i}{w^\infty_i \neq 0}$ and $\abs{w^\infty} := \sum_i w^\infty_i$.
\begin{proposition}\label{prop:no-atoms-m}
  Let $\mathcal{M}^c_+(\Omega) \ni \mu_\varepsilon \overset{w^*}{\to} \mu$
  and $L_{+}^{\frac{m}{m-2}}(\Omega) \ni \alpha_\epsilon \overset{w^*}{\to} \alpha$
  with $\mu_\epsilon(\Omega) = 1 = \norm{\alpha_\epsilon}_{L^\frac{m}{m-2}}$.
  If $\lambda_k(\alpha_\epsilon,\mu_\varepsilon) \ge c > 0$, then
  for any $\delta > 0$,
  there exist (possibly zero) $\brc{\mu_i}_{1\leq i \leq k}
  \subset \mathcal{M}_+^c(\mathbb{S}^m)$, $\brc{\alpha_i}_{1\leq i \leq k}
  \subset L^{m/(m-2)}_{+}(\mathbb{S}^m)$, weights at infinity $w^\infty_i \geq 0$,
  and numbers $k_i \in \mathbb{N}$ such that $\abs{w^\infty}< \delta$ and
  \begin{equation}
    (1 - \abs{w^\infty})^{m/2}\brs{\limsup_{\epsilon\to 0}\overbar{\lambda}_{k}^{m/2}(\alpha_\epsilon,\mu_\epsilon)}
    \leq \overbar{\lambda}_{k_0}^{m/2}\brr{\alpha,\mu^c} +
    \sum_{i=1}^k \overbar{\lambda}_{k_i}^{m/2}\brr{\alpha_i,\mu_i},
  \end{equation}
  where $\sum_{i = 0}^k k_i = k - \#w^\infty$.
\end{proposition}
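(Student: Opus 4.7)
The plan is to refine the bubble tree from Proposition~\ref{prop:no-atoms} so that it simultaneously tracks the concentration behavior of $\alpha_\epsilon$. With $p=m$, the exponent $q := m/(m-2)$ is exactly the conformally critical one, so the rescalings
\begin{equation}
\tilde\alpha_{i,\epsilon}(y) := \sigma_{i,\epsilon}^{m-2}\alpha_\epsilon\brr{x_{i,\epsilon} + \sigma_{i,\epsilon}y}
\end{equation}
satisfy $\norm{\tilde\alpha_{i,\epsilon}}_{L^q(\R^m)} \le \norm{\alpha_\epsilon}_{L^q(\Omega)}=1$. Passing to a subsequence, extract weak-$*$ limits $\tilde\alpha_i \in L^q_+(\R^m)$; via stereographic projection $\R^m \cong \Sph^m \setminus \{N\}$, which preserves the $L^q$-norm in dimension $m$, transfer each $\tilde\alpha_i$ (extended by zero at $N$) to $\alpha_i$ on $\Sph^m$. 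The rescaled $\mu_\epsilon$-measures give $\mu_i$ on $\Sph^m$ by the same procedure, after excising at most a finite set of atoms absorbed into the $w_i^\infty$.

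The core of the argument is a variational upper bound via a $(k+1)$-dimensional test space on $\Omega$. Fix $\eta > 0$ and choose near-optimal subspaces $V_0 \subset H^{1,m}(\Omega)$ of dimension $k_0+1$ for $\overbar{\lambda}_{k_0}(\alpha,\mu^c)+\eta$ and $V_i \subset H^{1,m}(\Sph^m)$ of dimension $k_i+1$ for $\overbar{\lambda}_{k_i}(\alpha_i,\mu_i)+\eta$. Localize $V_0$ away from the concentration points, pull back each $V_i$ by $T_{i,\epsilon}$ and truncate so that all the resulting supports are pairwise disjoint (possible by Definition~\ref{def:bubble-tree}(ii)); then augment with the $\#w^\infty$ functions $T_{i,\epsilon}^*\phi_{i,\epsilon}$ of Definition~\ref{def:bubble-tree}\ref{it:infinity-functions}, which have vanishing Dirichlet $m$-energy but capture the mass $w_i^\infty$. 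Allocate the integers $k_i$ by a greedy balancing so that the total test dimension equals $k+1$, yielding the identity $\sum_{i=0}^k k_i = k - \#w^\infty$, and so that the piecewise rescaled Rayleigh quotients are nearly equal across pieces.

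By Proposition~\ref{prop:var-character}, the combined test space gives an upper bound on $\lambda_k(\alpha_\epsilon,\mu_\epsilon)$ by the supremum of the Rayleigh quotient, which (by disjointness of supports) factors as a max over pieces. The conformal invariance at $p=m$ shows that this max converges to $\max_i(\overbar{\lambda}_{k_i}(\alpha_i,\mu_i)\norm{\alpha_i}_{L^q}/\mu_i(\cdot))+\eta$ as $\epsilon \to 0$. Writing $a_i = \mu_i(\cdot)$ and $b_i = \norm{\alpha_i}_{L^q}^q$, balancing gives a uniform value across pieces, and Hölder's inequality
\begin{equation}
\brr{\sum a_i}^{m/2} \leq \sum \frac{a_i^{m/2}}{b_i^{(m-2)/2}}, \quad \text{valid whenever } \sum b_i = 1,
\end{equation}
(applied to the $L^q$-decomposition $\norm{\alpha}_{L^q}^q + \sum_i \norm{\alpha_i}_{L^q}^q = 1$ of Brezis--Lieb type) then converts the $\max_i$ into the required $\ell^{m/2}$-sum on the right. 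Since the mass decomposition gives $\mu^c(\Omega) + \sum \mu_i^c(\R^m) + \sum w_i^\infty = 1$, the factor $(\sum a_i)^{m/2}$ is precisely $(1 - \abs{w^\infty})^{m/2}$, producing the stated prefactor on the left.

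The main obstacle is the joint combinatorial-analytic bookkeeping: one must allocate the $k_i$ so that the test space genuinely has dimension $k+1$ while simultaneously making the Hölder-balancing step tight, and this requires a careful inductive argument since some bubble slots carry only atoms at infinity (contributing via $\phi_{i,\epsilon}$ and consuming one slot each) rather than full eigenvalue problems. The $L^q$-Brezis--Lieb decomposition with no mass loss (which holds because $q \in (1,\infty)$ is reflexive and the bubbles fully account for concentration) is what makes the normalization add up and the Hölder step tight.
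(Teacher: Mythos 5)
Your proposal follows the same overall strategy as the paper: rescale $\alpha_\epsilon$ by the conformally critical factor $\sigma_{i,\epsilon}^{m-2}$, extract weak $L^{m/(m-2)}$-limits and transfer to $\Sph^m$, build a $(k+1)$-dimensional test space from near-eigenfunctions on each bubble piece together with the infinity-mass functions of Definition~\ref{def:bubble-tree}\ref{it:infinity-functions}, and finish with a Hölder inequality of exponents $\tfrac{m}{2}$ and $\tfrac{m}{m-2}$. The Hölder step you write is correct and equivalent to the paper's final inequality.

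The genuine gap is the intermediate ``weighted-average'' inequality, which the paper writes (up to normalizations) as
\begin{equation}
  (1-\abs{w^\infty})\,\limsup_{\epsilon\to0}\lambda_k(\alpha_\epsilon,\mu_\epsilon)
  \;\leq\; \sum_i \lambda_{k_i}(\alpha_i,\mu_i)\,\mu_i(\cdot),
\end{equation}
and which the paper outsources to \cite[Corollary~A.7]{Vinokurov:2025:sym-eigen-val-lms}. You argue for this via a ``greedy balancing'' making the per-piece Rayleigh quotients nearly equal, but a discrete allocation $\sum_i k_i = k - \#w^\infty$ cannot in general make the $\lambda_{k_i}(\alpha_i,\mu_i)$ close to one another, and the disjoint-support test space only yields $\limsup\lambda_k \leq \max_i \lambda_{k_i}$, which is strictly weaker than the weighted-average form needed before applying Hölder. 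Passing from the max to the weighted average is precisely the nontrivial combinatorial allocation lemma: one must exhibit a choice of $(k_i)$ for which the mass-weighted sum of per-piece eigenvalues dominates $\lambda_k\cdot(1-\abs{w^\infty})$, and ``near-equality'' is neither a consequence of greediness nor the right mechanism. You already identify this as ``the main obstacle,'' which is the correct diagnosis, but the proposal does not resolve it, so the argument as written does not close.

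One further minor point: for the upper bound on $\lambda_k(\alpha_\epsilon,\mu_\epsilon)$ via a $(k+1)$-dimensional test space you should invoke the min-max definition~\eqref{eq:eigen-var-char} rather than Proposition~\ref{prop:var-character}; the latter furnishes the complementary lower-bound characterization.
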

\begin{proof}
  The proof is analogous to that of~\cite[Corollary A.7]{Vinokurov:2025:sym-eigen-val-lms}, so we point
  out only the differences to be made. By $\rho_\epsilon$, $\rho$, we will denote the corresponding powers of
  $\alpha$, $\alpha_\epsilon$ so that $\rho_\epsilon^{m-2} = \alpha_\epsilon$, etc.

  Fix $\delta > 0$ to be chosen in the end.
  Then by Proposition~\ref{prop:no-atoms},
  the sequence has the $\delta$-bubbling property with $(1-\abs{w^\infty}) \leq \mu^c(\Omega) + \sum_{i = 1}^k \mu_{i}^c(\R^m) \leq 1$.
  Let $\tilde{\mu}_i = \mu^c_{i}$ be the continuous
  part of the bubble measure $\mu_i$, where
  $(T_{i,\epsilon})_* [\mu_\epsilon] \oset{w^*}{\to} \mu_i \in \mathcal{M}_+(\R^m)$, and
  let $g_{i,\epsilon}$ be the pullbacks of the metrics $\rho_\epsilon^2 g$ under the maps
  $(T_{i,\epsilon})^{-1}$. Note that for any fixed $B_R \subset \R^m$,
  \begin{equation}
    \Vol_{\rho^2g}(\Omega) + \limsup_{\epsilon\to0}\sum_i\Vol_{g_{i,\epsilon}}(B_R) \leq  \limsup_{\epsilon\to0}\Vol_{\rho^2_\epsilon g}(\Omega) = 1,
  \end{equation}
  and, up to a subsequence, after taking
  weak $L^{\frac{m}{m-2}}$-limits, we have
  \begin{equation}
    \int_{\mathbb{S}^m}\abs{d\phi}_{g_{i,\epsilon}}^2 dv_{g_{i,\epsilon}} \to
    \int_{\mathbb{S}^m}\abs{d\phi}_{g_{\Sph^m}}^2 \tilde{\rho}^{m-2}_i dv_{g_{\Sph^m}}
  \end{equation}
  for any $\phi \in C^\infty_0(\R^m)$
  (where $\mathbb{S}^m \approx \R^m \cup \brc{\infty}$) and some (possibly zero) $\tilde{\rho}_i \in L^{m}_{+}(\mathbb{S}^m)$ with
  \begin{equation}
    \int_\Omega \rho^m dv_g +  \sum_i\int_{\Sph^m} \tilde{\rho}_i^m dv_{g_{\Sph^m}} \leq 1.
  \end{equation}
  Also define
  $\tilde{\mu}_0 := \mu^c$, $\tilde{\rho}_0 = \rho$ and denote
  \begin{equation}
    \tilde{\mu} = \tilde{\mu}_0 \sqcup \cdots \sqcup \tilde{\mu}_k,
    \quad \tilde{\rho} = \tilde{\rho}_0 \sqcup \cdots \sqcup \tilde{\rho}_k.
  \end{equation}

  In \cite[Corollary A.7]{Vinokurov:2025:sym-eigen-val-lms}, we pull back the corresponding
  test functions from
  the disjoint union $(\tilde{\Omega}, \tilde{\mu}) := (\Omega,\mu^c)
  \sqcup (\Sph^m,\tilde{\mu}_1) \sqcup \cdots \sqcup (\Sph^m,\tilde{\mu}_k)$ to $\Omega$. If one also adds
  the functions from Definition~\ref{def:bubble-tree}\ref{it:infinity-functions}, then for $\#w^\infty + \sum_{i = 0}^k k_i = k$, we obtain
  \begin{equation}
   \begin{aligned}
    (1-\abs{w^\infty})\brs{\limsup_{\epsilon\to 0}\overline{\lambda}_k(\rho_\epsilon^{m-2},\mu_\epsilon)}
    &\leq \sum_i \norm{\tilde{\rho}_i}_{L^m}^{m-2} \overline{\lambda}_{k_i}(\tilde{\rho}^{m-2}_i,\tilde{\mu}_i).
    \\ &\leq \brr{\sum_i \overline{\lambda}_{k_i}^{m/2}(\tilde{\rho}^{m-2}_i,\tilde{\mu}_i)}^{2/m}.
   \end{aligned}
  \end{equation}
\end{proof}

\subsection{Proof of Theorem~\ref{thm:main-0}}
By Proposition~\ref{prop:upper-cont-and-bound}, the supremum remains unchanged when extended to arbitrary
$(\alpha,\mu) \in L^{p/(p-2)}_+\times L^1_+$. Now,
let $(\alpha,\mu) \in L^{p/(p-2)}_+\times L^1_+$ be a maximizer for $\nEigen{k}{p}$. Define
$\tilde{\alpha}_\epsilon := \max\brc{\alpha, \epsilon}$, $\tilde{\mu}_\epsilon := \min\brc{\mu, 1/\epsilon}$.
By the same proposition, $\nEigen{k}{p}(\tilde{\alpha}_\epsilon,\tilde{\mu}_\epsilon) \to \nEigen{k}{p}(\alpha,\mu)$.
We then find a maximizing sequence $(\alpha_\epsilon,\mu_\epsilon)$ of Corollary~\ref{cor:max-sequence}, applied to $(\tilde{\alpha}_\epsilon,\tilde{\mu}_\epsilon)$,
and we have that $(\alpha_\epsilon,\mu_\epsilon) \to (c_1\alpha,c_2\mu)$ in $L^{p/(p-2)}_+\times L^1_+$ for some nonzero constants.
It remains to invoke Lemmas~\ref{lem:strong-conv},~\ref{lem:regularity-in-good-pt}, with $\Omega = M$ for the latter.
Note that condition~\ref{it:lower-than} in Lemma~\ref{lem:strong-conv} is trivially satisfied
since $\abs{\F_\epsilon} \leq 1 = \abs{\F}$ by~\eqref{eq:limit-map-to-sphere}.
Thus, we have a $p$-harmonic map $u\in H^{1,p}(M, \Sph^\infty)$ with the
required properties, where the regularity of $u$ follows from Theorem~\ref{thm:reg-harm-map}. Moreover,
$\alpha = \abs{d\F}^{p-2}$, and $\mu = \mu^c = \abs{d\F}^{p}$. The same arguments work with $\alpha \equiv 1$ when $p = 2$; see Remark~\ref{rem:p-m-2}
and \cite[Proposition~2.9]{Vinokurov:2025:higher-dim-harm-eigenval}.

\subsection{Proof of Theorems~\ref{thm:main-2},~\ref{thm:main-1}}

We start from an arbitrary maximizing sequence and apply Corollary~\ref{cor:max-sequence} and Lemma~\ref{lem:regularity-in-good-pt}
(with $\Omega = M$).
Thus, we have a $p$-harmonic map $u\in H^{1,p}(M, \Sph^\infty)$ with $\alpha = \abs{d\F}^{p-2}$,
the limiting measure $\mu$ has the form
\begin{equation}\label{eq:sup-decom}
  \mu=\abs{d\F}^p +\sum_{x \in A(\mu)}w_x\delta_x,
\end{equation}
$\Lambda_{k,p}(g) = \mu(M)^{2/p}$, and $\norm{\alpha}_{L^\frac{p}{p-2}} \leq \mu(M)^{1-\frac{2}{p}}$

\paragraph{The case $\mathbf{p < m}$.}
Since $\lambda_k(\alpha_\epsilon,\mu_\epsilon) = 1$,
we conclude from Proposition~\ref{prop:no-atoms} that there are no atoms, so $\mu = \mu^c= \abs{d\F}^p$.
Finally, Proposition~\ref{prop:upper-cont-and-bound} implies $1 \leq \lambda_k(\abs{d\F}^{p-2},\abs{d\F}^p)$, so
\begin{equation}
  \Lambda_{k,p}(g) = \mu(M)^{2/p} \leq \frac{\mu(M)}{\norm{\alpha}_{L^\frac{p}{p-2}}} \leq \nEigen{k}{p}(\abs{d\F}^{p-2},\abs{d\F}^p) \leq \Lambda_{k,p}(g).
\end{equation}
Thus, the inequalities are, in fact, equalities.

\paragraph{The case $\mathbf{p = m}$.}
First, prove the following upper bound:
\begin{equation}
  \Lambda_{k,m}([g])^{m/2} \leq \overline{\lambda}_{k_0}(\abs{du}^2 g)^{m/2} + \sum_i\Lambda_{k_i,m}(\Sph^m,[g_{\Sph^m}])^{m/2}.
\end{equation}
for some $m$-harmonic map $u \in C^{1,\gamma}(M,\Sph^n)$ and $\sum k_i = k$.

Recall the decomposition~\eqref{eq:sup-decom}.
If $A(\mu) \neq \varnothing$, we handle the bubbles
as described in Definition~\ref{def:bubble-tree}.
By Proposition~\ref{prop:no-atoms-m}, for any $\delta > 0$, we have
\begin{equation}\label{ineq:k-m-upper-bound}
  (1-\abs{w^\infty})^{m/2}\Lambda_{k,m}([g])^{m/2} \leq \overline{\lambda}_{k_0}(\abs{du}^2 g)^{m/2}
  + \sum_{i=1}^k \overline{\lambda}_{k_i,m}\brr{\alpha_i,\mu_i}^{m/2},
\end{equation}
where $\alpha_i \in L_+^{\frac{m}{m-2}}(\Sph^m)$, $\sum_{i=0}^k k_i = k - \#w^\infty$, and $\abs{w^\infty} < \delta$.
It remains to show that $\mu_i \in L^1_+(\Sph^m)$.
\begin{claim}
  If one starts from a maximizing sequence given
  by Corollary~\ref{cor:max-sequence}, one obtains
  $L^1$-bubble measures $\mu_i$.
\end{claim}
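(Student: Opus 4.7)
The plan is to reduce the claim to Lemma~\ref{lem:regularity-in-good-pt} applied on each bubble after a conformal rescaling. The key observation is that at $p=m$, both the Dirichlet energy $\int|d\F|^m$ and $\|\alpha\|_{L^{m/(m-2)}}$ are conformally invariant --- these are precisely the quantities controlled by Corollary~\ref{cor:max-sequence}, so the structural properties of the maximizing sequence should survive the rescaling intact.

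For a fixed bubble index $i$, I would set $v_\epsilon := \F_\epsilon\circ T_{i,\epsilon}^{-1}$, $\tilde\alpha_\epsilon(y) := \sigma_{i,\epsilon}^{m-2}\alpha_\epsilon(T_{i,\epsilon}^{-1}(y))$, and $\tilde\mu_\epsilon := (T_{i,\epsilon})_*\mu_\epsilon$. A direct change of variables shows that on every fixed ball $B_R\subset\R^m$ the rescaled triple satisfies the analogues of the properties from Corollary~\ref{cor:max-sequence}: the weak equation $\delta(\tilde\alpha_\epsilon\,dv_\epsilon)=v_\epsilon\,\tilde\mu_\epsilon$, the bound $|v_\epsilon|\leq 1$ together with a uniform $H^{1,m}(B_R)$-bound, and the crucial $L^{m/(m-2)}(B_R)$-convergence $\tilde\alpha_\epsilon - |dv_\epsilon|^{m-2}\to 0$. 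The almost-sphere property~\ref{cor:max-sequence:it:almost-sphere} should also pass to the rescaled sequence because the bubble scale $\sigma_{i,\epsilon}$ is comparable to the concentration scale of $\mu_\epsilon$, so the deficit set $\{|\F_\epsilon|^2<1-\varrho\}$ occupies only a vanishing proportion of $B_{R\sigma_{i,\epsilon}}(x_{i,\epsilon})$ in the limit.

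With these hypotheses in hand, I would apply Lemma~\ref{lem:strong-conv} and then Lemma~\ref{lem:regularity-in-good-pt} on compact pieces of $\Sph^m\approx\R^m\cup\{\infty\}$ away from the (discrete) set of unstable points of the rescaled sequence, producing an $m$-harmonic map $u_i\in H^{1,m}(\Sph^m,\Sph^\infty)$ with the identity $\mu_i^c=|du_i|^m$ holding off the unstable set. Since $\mu_i\in\mathcal{M}_+^c(\Sph^m)$ is already nonatomic by Proposition~\ref{prop:no-atoms-m} and the unstable set is discrete (hence of $\mu_i$-measure zero), the identity extends across it, yielding $\mu_i=|du_i|^m\in L^1(\Sph^m)$ as claimed.

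The main obstacle will be the second step --- verifying that the almost-sphere condition genuinely survives the rescaling. The Lebesgue measure of $\{|v_\epsilon|<1-\varrho\}\cap B_R$ is bounded by $C\sigma_{i,\epsilon}^{-m}\|\mu_\epsilon\|_{L^\infty}^{-1}$, and one must carefully exploit the relation between the bubble scale $\sigma_{i,\epsilon}$ and $\|\mu_\epsilon\|_{L^\infty}$ (dictated by the mass-concentration pattern of $\mu_\epsilon\approx|d\F_\epsilon|^m$ from Corollary~\ref{cor:max-sequence}) to conclude $|u_i|\equiv 1$ in the limit. Without it, one would only obtain that $u_i$ maps into the closed unit ball rather than into $\Sph^\infty$, which would block the application of Lemma~\ref{lem:regularity-in-good-pt}.
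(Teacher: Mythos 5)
Your overall strategy — conformal rescaling by $T_{i,\epsilon}$ and then re-running Lemma~\ref{lem:strong-conv} and Lemma~\ref{lem:regularity-in-good-pt} on fixed balls $B_R\subset\R^m$ — is the same as the paper's, and the observation that conformal invariance of $\int|d\F|^m$ and of $\|\alpha_\epsilon-|d\F_\epsilon|^{m-2}\|_{L^{m/(m-2)}}$ carries the key structural properties through the rescaling is exactly right (modulo a missing conformal factor $(1+|y|^2)^2$ in your definition of $\tilde\alpha_\epsilon$, needed so that $\tilde g_\epsilon=\tilde\alpha_\epsilon^{2/(m-2)}g_\epsilon$ with $g_\epsilon\to g_{\Sph^m}$, but this is cosmetic).

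The genuine gap is precisely where you flag it: the almost-sphere property~\ref{cor:max-sequence:it:almost-sphere}. Your proposed fix — ``the bubble scale $\sigma_{i,\epsilon}$ is comparable to the concentration scale of $\mu_\epsilon$, so the deficit set occupies a vanishing proportion of $B_{R\sigma_{i,\epsilon}}(x_{i,\epsilon})$'' — is \emph{not} established and is false in general. The bubble tree can contain nested bubbles at separated scales (condition~\eqref{eq:dif-rate-condition} in Definition~\ref{def:bubble-tree} explicitly allows $\sigma_{i,\epsilon}/\sigma_{j,\epsilon}\to\infty$), so for a coarser bubble the product $\sigma_{i,\epsilon}^m\,\|\mu_\epsilon\|_{L^\infty}$ can stay bounded and the rescaled deficit set need not shrink. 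The paper avoids this entirely by a dichotomy rather than a comparability claim. The estimate from Corollary~\ref{cor:max-sequence}\ref{cor:max-sequence:it:almost-sphere} rescales as
\begin{equation}
  \norm{\tilde\mu_\epsilon}_{L^\infty(B_R,g_\epsilon)}\, v_{g_\epsilon}\brr{\brc{|\tilde\F_\epsilon|^2<1-\varrho}}
  \leq c_R\, \norm{\mu_\epsilon}_{L^\infty(M,g)}\, v_g\brr{\brc{|\F_\epsilon|^2<1-\varrho}} \leq c_R C,
\end{equation}
because the product $\norm{\mu}_{L^\infty}\cdot v(\cdot)$ is scale-invariant up to the bounded conformal factor on $B_R$. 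Now either $\norm{\tilde\mu_\epsilon}_{L^\infty(B_R,g_\epsilon)}\to\infty$ along a subsequence, in which case $v_{g_\epsilon}(\{|\tilde\F_\epsilon|^2<1-\varrho\})\to 0$ and the almost-sphere property \emph{does} pass, so Lemma~\ref{lem:regularity-in-good-pt} applies and gives $\mu_i=|du_i|^m$; or $\norm{\tilde\mu_\epsilon}_{L^\infty(B_R,g_\epsilon)}$ stays bounded, in which case the weak$^*$ limit $\tilde\mu$ is in $L^\infty(B_R)$ for every $R$, and since the total mass is finite one concludes $\tilde\mu\in L^1(\Sph^m)$ directly — without needing $|u_i|\equiv 1$ or any appeal to Lemma~\ref{lem:regularity-in-good-pt}. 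Your proposal collapses the second case into the first by asserting comparability; the correct proof keeps them separate and handles each in its own elementary way.
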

\begin{proof}
  Recall from Section~\ref{app:blow-up} that $\mu_i$
  are the continuous parts of the
  weak$^*$ limits
  of $\tilde{\mu}_\epsilon := (T_\epsilon)_*[\mu_\varepsilon]$, $T_\epsilon(x) = (x - x_\epsilon)/{\sigma_\epsilon}$,
  and $\alpha_i$ are the weak $L^{\frac{m}{m-2}}$-limits of $\tilde{\alpha}_\epsilon$, where
  \begin{equation}
    \tilde{\alpha}^{\frac{2}{m-2}}_\epsilon(y) = \sigma^2_\epsilon(1 + \abs{y}^2)^2{\alpha}^{\frac{2}{m-2}}_\epsilon(\sigma_\epsilon y + x_\epsilon),
  \end{equation}
  $g_\epsilon(y) = \frac{1}{(1 + |y|^2)^2}g(\sigma_\epsilon y + x_\epsilon)$,
  and $\tilde{g}_\epsilon = \tilde{\alpha}^{\frac{2}{m-2}}_\epsilon g_\epsilon$ is the pullback of ${\alpha}^{\frac{2}{m-2}}_\epsilon g$. Note that $g_\epsilon \to g_{\Sph^m}$ in
  $C^\infty(\R^m)$, that is, on every bounded set.

  The idea of the proof follows that of \cite[Proposition~5.5]{Karpukhin-Nadirashvili-Penskoi-Polterovich:2022:existence},
  and it is to show that the pairs $(\tilde{\alpha}_\epsilon,\tilde{\mu}_\epsilon)$
  have properties of Corollary~\ref{cor:max-sequence} on
  any bounded ball of $\R^m$. Let us fix a ball $B_R = B_R(0)$ and consider the push-forward equation
  \begin{equation}
    \Delta_{\tilde{g}_\epsilon}\tilde{\F}_\epsilon := \delta_{g_\epsilon}(\tilde{\alpha}_\epsilon d\tilde{\F}_\epsilon) =
    \tilde{\F}_\epsilon \tilde{\mu}_\epsilon,
    \text{  where  }
    \tilde{\F}_\epsilon(y) = \F_\epsilon(\sigma_\epsilon y + x_\epsilon),
    \ y \in B_R.
  \end{equation}
  Then the norms $\norm{d\tilde{\F}_\epsilon}_{L^m(B_R)}$ are conformally
  invariant, hence uniformly bounded for the metrics $g_\epsilon \to g_{\Sph^m}$.
  We also have
  \begin{equation}
    \begin{split}
        \int_{B_R}\abs{\tilde{\alpha}_\epsilon - \abs{d\tilde{\F}_\epsilon}^{m-2}_{g_\epsilon}}^{\frac{m}{m-2}} dv_{g_\epsilon}
        &= \int_{B_R}\abs{(T_\epsilon^{-1})^*\alpha_\epsilon - \abs{d\tilde{\F}_\epsilon}^{m-2}_{\tilde{g}_\epsilon}}^{\frac{m}{m-2}} dv_{\tilde{g}_\epsilon}
    \\ &\leq \int_{M}\abs{\alpha_\epsilon - \abs{d\F_\epsilon}^{m-2}_{g}}^{\frac{m}{m-2}} dv_{g} \to 0.
    \end{split}
  \end{equation}
  So one can extract
  weakly convergent subsequences
  $\tf{\tilde{\F}}_\epsilon \oset{w^*}{\to} \tf{\tilde{\F}}$ and
  $\tilde{\mu}_\epsilon \oset{w^*}{\to} \tilde{\mu}$ and
  apply Lemma~\ref{lem:strong-conv} and Lemma~\ref{lem:regularity-in-good-pt} (they still hold under the convergence
  $g_\epsilon \to g_{\Sph^m}$)
  with $\Omega = B_R$ to show the regularity of $\tilde{\mu}$.
  To complete the argument, it remains to verify property~\ref{cor:max-sequence:it:almost-sphere} of Corollary~\ref{cor:max-sequence}, which can be done
  exactly as in \cite[Claim~3.9]{Vinokurov:2025:sym-eigen-val-lms}: for any $\varrho > 0$,
  \begin{align}
    \norm{\tilde{\mu}_\epsilon}_{L^\infty(B_R, g_\epsilon)}v_{g_\epsilon}\brr{\brc{|\tilde{\F}_\epsilon|^2 < 1-\varrho}}
    &\leq c_R \norm{\mu_\epsilon}_{L^\infty(M, g)}v_g\brr{\brc{|\F_\epsilon|^2 < 1-\varrho}}
    \\ &\leq c_R C.
  \end{align}
  Thus, either a subsequence of $\norm{\tilde{\mu}_\epsilon}_{L^\infty(B_R, g_\epsilon)}$ is unbounded, in which case
  $v_{g_\epsilon}\brr{\brc{|\tilde{\F}_\epsilon|^2 < 1-\varrho}} \to 0$ implying $\norm{1-|\tilde{\F}_\epsilon|^2}_{L^1(B_R, g_\epsilon)} \to 0$,
  and we proceed with Lemma~\ref{lem:regularity-in-good-pt}; or the norms $\norm{\tilde{\mu}_\epsilon}_{L^\infty(B_R, g_\epsilon)}$ are bounded, and
  we still have $\forall R > 0,\ \tilde{\mu} \in L^\infty(B_R)$, implying $\tilde{\mu} \in L^1(\Sph^m)$.
\end{proof}

Therefore, \eqref{ineq:k-m-upper-bound} is proved. On the other hand, the lower bounds on $\Lambda_k([g])$
were established in~\cite{Colbois-ElSoufi:2003:extremal} by using gluing techniques so that the spectrum
converges to the spectrum of the disjoint union of the manifold and up to $k$ spheres.
\begin{equation}\label{ineq:lower-bound}
  \Lambda_{k_0}([g])^{m/2} + \sum_i\Lambda_{k_i}(\Sph^m,[g_{\Sph^m}])^{m/2} \leq \Lambda_{k}([g])^{m/2} \leq \Lambda_{k,m}([g])^{m/2}.
\end{equation}
Then one inductively proves that
\begin{equation}
  \Lambda_{k,m}(\Sph^m,[g_{\Sph^m}])=\Lambda_{k}(\Sph^m,[g_{\Sph^m}]).
\end{equation}
Indeed, if this is true for all $k_i < k$, and the last inequality in~\eqref{ineq:lower-bound} is strict, then
the upper bounds~\eqref{ineq:k-m-upper-bound} (for $M = \Sph^m$ and small enough $\delta$) imply that $\#w^\infty = 0$,
so $\abs{w^\infty} = 0$ and
each $(\alpha_i,\mu_i)$ realizes the corresponding $\Lambda_{k_i,m} = \Lambda_{k_i}$ by the induction assumption
(if some $k_{i_0} = k$, this means the remaining $k_i$ are zeros and $(\alpha_{i_0},\mu_{i_0})$ is still a maximizer). Then by Theorem~\ref{thm:main-0}, each pair $(\alpha_i,\mu_i)$ corresponds to $(\abs{du_i}^{m-2}, \abs{du_i}^m)$
for some $m$-harmonic maps.

\section{\texorpdfstring{$p$}{Lg}-Harmonic maps to \texorpdfstring{$\Sph^{\infty} = \Sph(\ell^2)$}{Lg}}
\label{sec:harm-maps}

Throughout this section, let $\Omega$ (if not specified) be a bounded domain of a Riemannian manifold $(M,g)$. We
suppose that geodesic balls are smooth, for example, $r < \operatorname{inj} \Omega$ as long as
$B_r(p) \subset \Omega$, and that the dependency $C = C(g)$ on the metric $g$
is understood as dependence on the norms $\norm{g}_{C^k(\Omega)}$, $\norm{g^{-1}}_{C^k(\Omega)}$ for some $k$.

The goal of the current section is to extend the well-known partial regularity
results for $p$-harmonic maps when the target is the Hilbert unit sphere $\Sph^\infty$.
Fortunately, Hilbert-valued function spaces and PDE theory exhibit
minimal differences compared to their standard counterparts.
For example, see \cite{Hytonen-Neerven-Veraar-Weis:2016:analysis-banach-1,
Hytonen-Neerven-Veraar-Weis:2023:analysis-banach-3} and the references therein
for a detailed treatment of Banach-valued analysis.

\begin{remark}\label{rem:scalar-to-vector}
  Since the Sobolev spaces $H^{k,p}(\R^m,\ell^2)$ are isomorphic to
  $L^p(\R^m,\ell^2)$ via Bessel potentials (see Lemma~\ref{lem:sobolev-lp-isom}),
  Lemma~\ref{lem:scalar-to-vector} tells us, for example, that the Poincaré inequalities, Sobolev embeddings, trace embeddings, etc.,
  still hold (at least, for $p \in (1, \infty)$).

  For $L^{\infty}$- and $C^{0,\gamma}$-estimates, the situation is even simpler. Consider a Banach-valued function $u\colon \Omega \to E$, and set $u_{b}(\cdot) = \brt{u(\cdot), b}$
  for $b \in E^*$. By Hahn–Banach theorem, for fixed $x,y \in \Omega$, one has
  \begin{equation}
    \frac{\abs{u(x)-u(y)}}{\abs{x-y}^\gamma} = \sup \set*{\frac{\abs{u_{b}(x)-u_{b}(y)}}{\abs{x-y}^\gamma}}{b \in E^*,\ \norm{b}\leq 1},
  \end{equation}
  and similarly for $L^{\infty}$ (see~\cite[Proposition 1.2.17]{Hytonen-Neerven-Veraar-Weis:2016:analysis-banach-1}).
  Thus, $L^{\infty}$- and $C^{0,\gamma}$-bounds follow from uniform bounds for the corresponding scalar functions.
\end{remark}

We therefore see that the standard partial $C^{1,\gamma}$-regularity results for $p$-harmonic maps should still be valid
for $\Sph^\infty$ (another reason is that these bounds actually depend on the curvature of the target manifold rather than its dimension).
Unfortunately, the embeddings $H^{1,p}(\Omega,\ell^2) \to L^p(\Omega,\ell^2)$ are no longer compact, so one should avoid using this
fact in the argument.

For $p > 1$, we
call a map $u\colon \Omega \to \Sph^\infty \subset \ell^2$ (weakly) $p$-harmonic if $u \in H^{1,p}(\Omega, \Sph^\infty)$
and it satisfies the following PDE in the weak sense:
\begin{equation}
  \delta(\abs{du}^{p-2}du) = \abs{du}^{p} u,
\end{equation}
where $\delta = \delta_g$ denotes the formal adjoint to $d$. We will work mostly with the case $p \geq 2$. Therefore,
most results are stated only for this case, although some of them (for example, $\epsilon$-regularity) might actually hold for $1 < p < 2$ as well.

Let us set
\begin{equation}
  E_p[u] = E_p[u; \Omega] = \int_{\Omega} \abs{du}^p.
\end{equation}
\begin{definition}
  A (weakly) $p$-harmonic map $u \in H^{1,p}(\Omega, \Sph^\infty)$ is called \emph{stationary}
  if one has
  \begin{equation}
    \frac{d}{dt}\bigg|_{t=0} E_p[u \circ \phi_t] = 0
  \end{equation}
  for all differentiable 1-parameter families of diffeomorphisms
  $\phi_t\colon \Omega \to \Omega$ satisfying $\phi_0 = \id$ and
  $\phi_t|_{\Omega \setminus K} = \id|_{\Omega \setminus K}$ for some
  compact subset $K \subset \Omega$.
\end{definition}
The classical energy monotonicity formula does not really depend on the dimension
of the target space and the same proof holds in $\ell^2$-setting.
\begin{proposition}[Monotonicity formula, {\cite*[Proposition~10]{Takeuchi:1994:p-harm-regularity-to-spheres}}]
  Let $\Omega \subset M$ be a bounded domain, $2\leq p$.
  For a stationary $p$-harmonic map $u \in H^{1,p}(\Omega, \Sph^\infty)$, there exists
  a constant $C = C(m, \kappa)$, where
  $\kappa \geq \operatorname{sec}_g|_\Omega$ is an upper bound on the sectional curvature, such that
  for any $B_r(x) \subset \Omega$, one has
  \begin{equation}\label{eq:monoton-formula}
    \frac{d}{dr}  \brr{e^{Cr}r^{p-m}E_p[u; B_r(x)]} \geq 0.
  \end{equation}
\end{proposition}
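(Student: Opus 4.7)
The plan is to derive a stress-energy identity from stationarity, specialize it to a radial vector field centered at $x$, and absorb the resulting curvature remainder into the exponential factor $e^{Cr}$. The Hilbert-valued target enters only through the scalar density $\abs{du}^2 = \sum_i \abs{du(e_i)}_{\ell^2}^2$, so the argument follows the finite-dimensional one essentially verbatim. \textbf{Step 1 (stress-energy).} For $X \in C^\infty_c(\Omega, T\Omega)$ with flow $\phi_t$, the chain rule $d(u\circ\phi_t) = du \circ d\phi_t$ together with $\frac{d}{dt}\big|_{t=0}\phi_t^{*} g = \mathcal{L}_X g$ yields, after differentiating $E_p[u\circ\phi_t]$ at $t=0$,
\begin{equation}
  \int_\Omega \brr{\tfrac{1}{p}\abs{du}^p \operatorname{div}_g X - \abs{du}^{p-2}\, g^{ik} g^{jl}\brt{\partial_k u,\partial_l u}_{\ell^2}\,\nabla_i X_j}\,dv_g = 0,
\end{equation}
whose integrand lies in $L^1$ by Hölder's inequality since $\abs{du} \in L^p$.

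\textbf{Step 2 (radial test field).} Fix $R < \operatorname{inj}_g(x)$ and, for $\rho\in(0,R)$ and a Lipschitz cutoff $\eta\colon[0,\infty)\to[0,1]$ supported in $[0,\rho]$, set $r = \dist_g(\cdot,x)$ and $X = \eta(r)\, r\,\partial_r$, which is Lipschitz on $\Omega$ and supported in $B_\rho(x)$. By Hessian comparison for $r^2/2$ under the sectional-curvature upper bound $\kappa$, $\abs{\operatorname{Hess}_g(r^2/2) - g}_g \leq C\kappa r^2$ on $B_R(x)\setminus\{x\}$, and a direct computation then gives
\begin{equation}
  \operatorname{div}_g X = m\eta + r\eta' + O(\kappa r^2)\eta,
  \qquad
  g^{ik}g^{jl}\brt{\partial_k u,\partial_l u}\nabla_i X_j = \eta\abs{du}^2 + r\eta'\abs{\partial_r u}^2 + O(\kappa r^2)\abs{du}^2.
\end{equation}
Substituting into Step 1 and letting $\eta \nearrow \chi_{[0,\rho)}$ (so that $\eta'\,dr$ concentrates as $-\delta_\rho$ in the coarea sense), one obtains
\begin{equation}
  \tfrac{m-p}{p}E_p[u;B_\rho(x)] - \rho\!\int_{\partial B_\rho(x)}\!\brr{\tfrac{1}{p}\abs{du}^p - \abs{du}^{p-2}\abs{\partial_r u}^2}\,dA = R(\rho),
\end{equation}
with a remainder satisfying $\abs{R(\rho)} \leq C\kappa\rho\, E_p[u;B_\rho(x)]$.

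\textbf{Step 3 (monotonicity and main obstacle).} Since $\rho\mapsto E_p[u;B_\rho(x)]$ is absolutely continuous with derivative $\int_{\partial B_\rho(x)}\abs{du}^p\,dA$ almost everywhere (coarea), one has $\frac{d}{d\rho}\brs{\rho^{p-m}E_p[u;B_\rho(x)]} = (p-m)\rho^{p-m-1}E_p[u;B_\rho(x)] + \rho^{p-m}\int_{\partial B_\rho(x)}\abs{du}^p\,dA$, and plugging in Step 2 yields
\begin{equation}
  \frac{d}{d\rho}\brs{\rho^{p-m} E_p[u;B_\rho(x)]} \geq p\rho^{p-m}\!\int_{\partial B_\rho(x)}\!\abs{du}^{p-2}\abs{\partial_r u}^2\,dA - C\rho^{p-m} E_p[u;B_\rho(x)],
\end{equation}
so that multiplying by $e^{C\rho}$ produces~\eqref{eq:monoton-formula}. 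The delicate points are (i) justifying the passage $\eta \nearrow \chi_{[0,\rho)}$ for a mere $H^{1,p}$ map, which is handled by first proving the identity for smooth $\eta$ and passing to the limit via dominated convergence and the coarea representation, and (ii) controlling the curvature remainder uniformly in $\rho$, handled in normal coordinates around $x$ via Hessian comparison. The $\ell^2$-valued setting is otherwise inessential: all tensorial quantities entering the computation are scalar in nature, expressed through inner products of $\ell^2$-vectors, and no compactness or finite-dimensionality of the target is used.
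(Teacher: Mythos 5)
The paper supplies no proof of this proposition: it simply cites Takeuchi's Proposition~10 and remarks that ``the classical energy monotonicity formula does not really depend on the dimension of the target space and the same proof holds in the $\ell^2$-setting.'' Your reconstruction via the stress-energy identity, the radial test field $X=\eta(r)r\partial_r$, the coarea limit $\eta\nearrow\chi_{[0,\rho)}$, and absorption of the curvature remainder into $e^{Cr}$ is exactly the standard argument the paper is deferring to, and you correctly identify the key point: every quantity entering the derivation ($\abs{du}^2$, $\abs{du}^{p-2}\brt{\partial_i u,\partial_j u}$, $\operatorname{div}X$, $\nabla X$) is scalar or finite-rank tensorial in the domain variables, so finite-dimensionality of the target is never used. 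Your justification of the Lipschitz-test-field limit by smooth approximation plus dominated convergence is also the right way to handle the mere $H^{1,p}$ regularity.

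One caveat, though it is inherited from the paper's own formulation rather than introduced by you: the two-sided estimate $\abs{\operatorname{Hess}_g(r^2/2)-g}_g\leq C\kappa r^2$ does not follow from an upper sectional-curvature bound alone. Hessian comparison under $\operatorname{sec}\leq\kappa$ gives only the lower bound $\operatorname{Hess}_g(r^2/2)\geq(1-C\kappa r^2)g$; to control the term $\abs{du}^{p-2}\brt{\partial_i u,\partial_j u}(\operatorname{Hess}_g(r^2/2)-g)^{ij}$ from above, which is what the proof actually needs, one must also bound $\operatorname{Hess}_g(r^2/2)$ from above, and that requires a lower curvature bound (or, equivalently, the two-sided $C^2$-control on $g$ that the paper's Section~5 conventions already assume on a compact $\Omega$). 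Stating the dependence as $C=C(g)$ in the sense of the paper's standing convention, rather than $C=C(m,\kappa)$ with a one-sided $\kappa$, would make the estimate airtight; as written, both the paper's statement and your Step~2 slightly overclaim what a one-sided bound yields.
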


\begin{definition}\label{def:stable-map}
  A $p$-harmonic map $u \in H^{1,p}(\Omega, \Sph^\infty)$ is called \emph{spectrally stable} if
  \begin{equation}\label{ineq:stable-map}
    \int \phi^2 \abs{du}^p \leq \int \abs{d\phi}^2 \abs{du}^{p-2}
    \quad \forall \phi \in C^\infty_0\brr{\Omega}
  \end{equation}
\end{definition}
\begin{remark}
  If we look at the definition of \emph{stable} $p$-harmonic maps:
  \begin{equation}
    \frac{d^2}{dt^2}\bigg|_{t=0}\int \abs{d\brr{\frac{u + tw}{\abs{u + tw}}}}^p
    \geq 0 \quad \forall w \in C_0^\infty(\Omega, \ell^2);
  \end{equation}
  we arrive at the following equivalent definition (for $w$ such that $w(x) \bot u(x)$ a.e.):
  \begin{equation}
    \int \abs{du}^{p-2}\brr{\abs{dw}^2 - \abs{du}^{2}\abs{w}^2} + (p-2)\int \abs{du}^{p-4}\brt{du,dw}^2 \geq 0
  \end{equation}
  Thus, for $p > 2$, spectral stability is stronger than the classical stability.
  At the same time, by arguing as in the proof of~\cite[Theorem~2.7]{Schoen-Uhlenbeck:1984:min-harm-maps},
  this definition is equivalent to the classical definition
  of stability into the \emph{infinite-dimensional sphere} for $p=2$. When $p < 2$, the second variation of the
  energy does not exist if $\abs{du}^{p-2} \not \in L^1_{loc}$.
\end{remark}

\begin{lemma}\label{lem:stable-energy-min}
  Spectrally stable $p$-harmonic maps $u \in H^{1,p}(\Omega, \Sph^\infty)$ are $p$-energy minimizing,
  that is,
  \begin{equation}
    \int_{\Omega} \abs{du}^p \leq \int_{\Omega} \abs{dw}^p
    \quad \forall w \in H^{1,p}\brr{\Omega, \Sph^\infty} \text{ with } w - u \in H^{1,p}_0(\Omega, \ell^2).
  \end{equation}
  In particular, they are stationary.
\end{lemma}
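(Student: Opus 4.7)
The plan is to exploit the spherical constraint $|u|^2 = |w|^2 = 1$ to convert spectral stability directly into an energy inequality. First I would test the weak form of the $p$-harmonic equation $\delta(|du|^{p-2}du) = |du|^p u$ against the admissible variation $w - u \in H^{1,p}_0(\Omega, \ell^2) \cap L^\infty$ (the $L^\infty$ bound $|w - u| \le 2$ follows from $u, w$ being sphere-valued, which makes the right-hand side pairing integrable). This yields
\begin{equation}
  \int |du|^{p-2}\langle du, dw - du\rangle \,dv_g = \int |du|^p \langle u, w - u\rangle\, dv_g.
\end{equation}
The identity $|u|^2 = |w|^2 = 1$ gives $\langle u, w - u\rangle = -\tfrac{1}{2}|w - u|^2$ pointwise, so rearranging produces
\begin{equation}
  \int |du|^p = \int |du|^{p-2}\langle du, dw\rangle + \tfrac{1}{2}\int |du|^p |w - u|^2.
\end{equation}

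Next I would apply the spectral stability condition~\eqref{ineq:stable-map} with the scalar test function $\phi = |w - u|$. By Proposition~\ref{prop:lip-sobolev-comp}, the map $v \mapsto |v|_{\ell^2}$ sends $H^{1,p}_0(\Omega, \ell^2)$ to $H^{1,p}_0(\Omega)$ with $|d|w-u|| \le |d(w-u)|$, and the stability inequality extends from $C^\infty_0(\Omega)$ to $H^{1,p}_0(\Omega)$ by density (both sides being continuous in $\phi$ via Hölder's inequality, since $|du| \in L^p(\Omega)$). This gives
\begin{equation}
  \int |du|^p |w-u|^2 \leq \int |du|^{p-2}|d(w-u)|^2.
\end{equation}
Substituting this bound and expanding $|d(w-u)|^2 = |dw|^2 - 2\langle du, dw\rangle + |du|^2$ collapses the cross term with the first summand in the identity above and yields
\begin{equation}
  \int |du|^p \leq \tfrac{1}{2}\int |du|^p + \tfrac{1}{2}\int |du|^{p-2}|dw|^2.
\end{equation}
An application of Young's inequality $|du|^{p-2}|dw|^2 \le \tfrac{p-2}{p}|du|^p + \tfrac{2}{p}|dw|^p$ then gives $\int |du|^p \leq \int |dw|^p$. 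Stationarity is immediate: for any family $\phi_t$ of diffeomorphisms with $\phi_0 = \id$ and $\phi_t = \id$ outside a compact set, the competitor $u \circ \phi_t$ satisfies $u \circ \phi_t - u \in H^{1,p}_0(\Omega,\ell^2)$ and takes values in $\Sph^\infty$, so energy minimality yields $E_p[u] \leq E_p[u \circ \phi_t]$, hence the first variation vanishes.

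The only genuinely non-formal step is the justification that $|w - u|$ is an admissible test in~\eqref{ineq:stable-map}; this reduces to the chain-rule estimate from Proposition~\ref{prop:lip-sobolev-comp} together with the density argument above. Everything else is algebraic manipulation that is valid precisely because $u$ maps into the unit sphere, so I do not anticipate a serious obstacle beyond being careful that all integrals are finite (which is ensured by $|du| \in L^p$ and $|w - u| \in L^\infty$).
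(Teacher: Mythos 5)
Your proof is correct and takes essentially the same route as the paper: test the weak equation against $u - w$, use the pointwise identity $2\langle u, u-w\rangle = |u-w|^2$ coming from the spherical constraint, apply spectral stability with the scalar test function $|u-w|$, and finish with Young's inequality (the paper uses Hölder's inequality here, which is an equivalent final step). You are somewhat more explicit than the paper about why $|u-w|$ is an admissible test function in the stability inequality — invoking the Lipschitz chain rule bound $|d|u-w||\le|d(u-w)|$ from Proposition~\ref{prop:lip-sobolev-comp} and then a density argument — which is a genuine gap-filler, since the stability inequality is stated only for $\phi \in C^\infty_0(\Omega)$; this is sound, though for the density step one should really pass to an a.e.-convergent subsequence and use Fatou on the $\int \phi^2 |du|^p$ side (the dominated-convergence argument you sketch is slightly delicate because $|du|^p$ is only in $L^1$), while the right-hand side converges by Hölder as you note.
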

\begin{proof}
  We will need the $p$-harmonic map equation $\delta (\abs{du}^{p-2}du) = \abs{du}^p u$ and the identity
  $2\brt{u, u - w} = \abs{u - w}^2$ for $\abs{u} = \abs{w} = 1$. If
  $w \in H^{1,p}\brr{\Omega, \Sph^\infty}$ and
  $u - w \in H^{1,p}_0(\Omega, \ell^2)$, one has
  \begin{multline}
    2\int \brt{du, du - dw}\abs{du}^{p-2} = 2\int \brt{u, u - w} \abs{du}^p
    \\ = \int \abs{u - w}^2 \abs{du}^p \leq \int \abs{du - dw}^2 \abs{du}^{p-2},
  \end{multline}
  where we used the stability inequality~\eqref{ineq:stable-map}. Whence we obtain
  \begin{equation}
    \int_{\Omega} \abs{du}^{p} \leq \int_{\Omega} \abs{dw}^2 \abs{du}^{p-2}
  \end{equation}
  The inequality of the lemma then immediately follows from Hölder's inequality.
\end{proof}

\begin{lemma}\label{lem:p-harm:exist}
  Let $\Omega$ be either a bounded domain of $(M,g)$ or any domain of $\R^m$, $p \in (1,\infty)$, and $f \in H^{1,p}(\Omega,\ell^2)$. Then there exists a $p$-harmonic function
  $u \in H^{1,p}(\Omega,\ell^2)$ (i.e. $\delta(\abs{du}^{p-2}du) = 0$ weakly) such that
  \begin{equation}
    E_p[u;\Omega] = \int_{\Omega} \abs{du}^p = \inf \set*{E_p[w;\Omega]}{w \in f + H^{1,p}_0(\Omega,\ell^2)}.
  \end{equation}
\end{lemma}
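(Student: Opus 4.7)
The plan is the standard direct method of the calculus of variations, transferred to the $\ell^2$-valued setting using the tools assembled in Section~\ref{sec:tensor-prod} and Section~\ref{sec:applic-tensor-prod}. Set $m_* = \inf\set{E_p[w;\Omega]}{w \in f + H^{1,p}_0(\Omega,\ell^2)} \le E_p[f;\Omega] < \infty$ and pick a minimizing sequence $w_n = f + v_n$ with $v_n \in H^{1,p}_0(\Omega,\ell^2)$ and $E_p[w_n;\Omega] \to m_*$. Then $\|dv_n\|_{L^p(\Omega,\ell^2\otimes T^*\Omega)} \le \|dw_n\|_{L^p} + \|df\|_{L^p}$ is bounded; by the (scalar) Poincaré or Sobolev inequality on $\Omega$, applied componentwise via Lemma~\ref{lem:scalar-to-vector} (this is where boundedness of $\Omega$ or the homogeneous interpretation of $H^{1,p}_0$ on an unbounded Euclidean domain is used), the full norm $\|v_n\|_{H^{1,p}(\Omega,\ell^2)}$ is uniformly bounded.

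Next, Lemma~\ref{lem:sobolev-lp-isom} identifies $H^{1,p}(\Omega,\ell^2)$ with a closed subspace of $L^p(I,\ell^2)$, which is reflexive for $p\in(1,\infty)$. Hence $H^{1,p}(\Omega,\ell^2)$ is reflexive, and after extracting a subsequence I may assume $v_n \rightharpoonup v$ weakly in $H^{1,p}(\Omega,\ell^2)$; since $H^{1,p}_0$ is a norm-closed linear subspace, it is weakly closed, so $v \in H^{1,p}_0(\Omega,\ell^2)$ and $u := f + v$ lies in the admissible class.

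The functional $w \mapsto E_p[w;\Omega] = \int_\Omega |dw|^p$ is convex on $H^{1,p}(\Omega,\ell^2)$: indeed $\ell^2 \otimes T^*\Omega \ni \xi \mapsto |\xi|^p$ is convex as the composition of the (convex) norm with the convex increasing map $t \mapsto t^p$ on $[0,\infty)$. Being strongly continuous (in fact locally Lipschitz on bounded sets), it is therefore weakly lower semicontinuous, yielding
\begin{equation}
  E_p[u;\Omega] \le \liminf_{n\to\infty} E_p[w_n;\Omega] = m_*,
\end{equation}
so that $u$ is a minimizer.

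Finally, the $p$-harmonic equation follows by a standard inner variation: for any $\phi \in C^\infty_0(\Omega,\ell^2)$ the function $t \mapsto E_p[u+t\phi;\Omega]$ is finite and differentiable on a neighbourhood of $0$, and its derivative at $0$ equals $p\int_\Omega |du|^{p-2}\brt{du,d\phi}$; since $u+t\phi$ remains admissible and $u$ is a minimizer, this derivative vanishes, giving $\delta(|du|^{p-2}du)=0$ weakly. The only mildly delicate step is verifying the coercivity of $E_p$ on the affine class $f+H^{1,p}_0(\Omega,\ell^2)$ in the unbounded Euclidean case; once one adopts the appropriate (homogeneous) definition of $H^{1,p}_0$ and invokes Lemma~\ref{lem:scalar-to-vector} to promote scalar Poincaré/Sobolev inequalities to $\ell^2$-valued ones, everything else is routine.
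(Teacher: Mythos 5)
Your proof is correct and follows the same route as the paper: direct method, reflexivity of $H^{1,p}(\Omega,\ell^2)$ (via the identification with $L^p(I,\ell^2)$), convexity of $E_p$ giving weak lower semicontinuity, coercivity via the Sobolev/Poincaré inequality promoted to $\ell^2$-valued functions, and the Euler--Lagrange equation from the first variation. The paper's proof is simply more terse, citing the direct method and listing the same three ingredients (reflexivity, convexity, coercivity) without unpacking them.
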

\begin{proof}
  The existence of a minimizer follows by the standard direct method (for example, see~\cite[Proposition~14.11.2]{Taylor:2023:pde-3}):
  $H^{1,p}(\Omega,\ell^2)$ is reflexive, $E_p$ is convex, and there exists an upper bound
  \begin{equation}
    \norm{w}_{H^{1,p}(\Omega,\ell^2)}^p \leq C (E_p[w;\Omega] + \norm{f}_{H^{1,p}(\Omega,\ell^2)}^p),
  \end{equation}
  implied by the Sobolev inequality.
\end{proof}

In dealing with regularity of $p$-harmonic maps it is useful to keep in mind the following elementary inequalities for vectors
on a Hilbert space $H$: for $p > 1$,
$a,b \in H$,
\begin{equation}\label{eq:V-def}
  V(a) := \abs{a}^{(p-2)/2}a,
\end{equation}
and some constant $C = C(p)$, we have
\begin{equation}\label{ineq:diff-lp-eq}
  \begin{split}
    \abs{V(a) - V(b)}^2 &\leq C\brt{\abs{a}^{p-2}a - \abs{b}^{p-2}b, a-b}, \\
    C^{-1}\abs{a-b}^2\brr{\abs{a} + \abs{b}}^{p-2} &\leq \abs{V(a) - V(b)}^2 \leq C\abs{a-b}^2\brr{\abs{a} + \abs{b}}^{p-2}, \\
    \text{which imply }  \abs{a - b}^p &\leq C\brt{\abs{a}^{p-2}a - \abs{b}^{p-2}b, a-b} \text{ for } p \geq 2.
  \end{split}
\end{equation}
The inequalities need to be proved only in dimension two by restricting to $Span \brc{a,b}$.

Note that the first two inequalities in~\eqref{ineq:diff-lp-eq} imply that any $p$-harmonic function with a given trace is unique, hence energy minimizing by the triangle inequality.
\begin{proposition}[Convex-hull property]\label{prop:conv-hull-prop}
  Let $\Omega$ have a smooth boundary, and let $u \in H^{1,p}(\Omega, \ell^2)$ be a $p$-harmonic function, where $p \in (1,\infty)$. Then $u(x) \in \operatorname{conv} u(\partial \Omega)$
  almost everywhere.
\end{proposition}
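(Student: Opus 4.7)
The plan is to compare $u$ with its nearest-point projection onto the closed convex hull of the boundary values. Let $K \subset \ell^2$ denote the closed convex hull of the essential image of the trace $u|_{\partial\Omega}$. Since $K$ is closed and convex in a Hilbert space, the nearest-point projection $\pi_K \colon \ell^2 \to K$ is well-defined and $1$-Lipschitz.

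Set $w := \pi_K \circ u$. Because $\ell^2$ is reflexive and therefore has the Radon--Nikod\'ym property, and $\pi_K$ is Lipschitz (after a harmless translation so that a reference point lies in $K$), Proposition~\ref{prop:lip-sobolev-comp} gives $w \in H^{1,p}(\Omega,\ell^2)$ with $|dw| \leq |du|$ almost everywhere. Since $\pi_K$ restricts to the identity on $K$ and the boundary trace of $u$ takes values in $K$, the traces of $u$ and $w$ coincide, so $u - w \in H^{1,p}_0(\Omega,\ell^2)$.

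Next, I will test the weak equation $\delta(|du|^{p-2}du) = 0$ against $u - w$ to obtain
\[
\int_\Omega |du|^p = \int_\Omega \langle |du|^{p-2}du,\, dw\rangle
\leq \int_\Omega |du|^{p-1}|dw|
\leq \Big(\int_\Omega |du|^p\Big)^{\frac{p-1}{p}}\Big(\int_\Omega |dw|^p\Big)^{\frac{1}{p}},
\]
where the first inequality is pointwise Cauchy--Schwarz on $T^*\Omega \otimes \ell^2$ and the second is H\"older's inequality. Combined with the pointwise bound $|dw| \leq |du|$, this chain forces equality throughout; the equality case of pointwise Cauchy--Schwarz together with $|dw| = |du|$ a.e.\ then forces $du = dw$ a.e. Since $u - w \in H^{1,p}_0(\Omega,\ell^2)$ and $d(u-w) = 0$ a.e., the Poincar\'e inequality (lifted from the scalar case by Lemma~\ref{lem:scalar-to-vector}) yields $u = w$ a.e., whence $u \in K$ almost everywhere in $\Omega$.

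The main obstacle I anticipate is making the boundary-trace identification rigorous in the $\ell^2$-valued Sobolev setting: one needs $(\pi_K \circ u)|_{\partial\Omega} = \pi_K \circ (u|_{\partial\Omega})$ and the equivalence between vanishing trace and membership in $H^{1,p}_0(\Omega,\ell^2)$. Both follow from the scalar Sobolev trace theory combined with the scalar-to-vector lifting of Lemma~\ref{lem:scalar-to-vector} and the continuity of $\pi_K$; once these are in place, the remainder of the argument is essentially the standard scalar proof.
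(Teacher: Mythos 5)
Your proof is correct and follows essentially the same strategy as the paper: project onto the (closed) convex hull of the boundary trace via the $1$-Lipschitz nearest-point map, invoke Proposition~\ref{prop:lip-sobolev-comp} to get $|dw|\le|du|$ with unchanged trace, and conclude $u=w$. The only difference is that the paper appeals directly to the previously established fact that a $p$-harmonic function is the unique energy minimizer with its trace, whereas you re-derive that conclusion in place by testing the weak equation against $u-w$ and tracking the equality cases of Cauchy--Schwarz and H\"older — a valid, slightly more self-contained variant of the same argument.
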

\begin{proof}
  Let $C= \overbar{\operatorname{conv}}\, u(\partial \Omega)$ and
  $P \colon \ell^2 \to C$ be the minimal distance projector, that is, $P(a) = \operatorname{argmin}_{c \in C} \abs{a - c}$. Since $P$ is a $1$-Lipschitz map,
  $P(u) \in H^{1,p}$ and $\abs{dP(u)}\leq \abs{du}$  by Proposition~\ref{prop:lip-sobolev-comp}. As we discussed, $u$ is the unique energy minimizing function in its trace,
  so it should be $P(u) = u$.
\end{proof}

\begin{proposition}\label{prop:p-harm-func-reg}
  Let $p \geq 2$ and $B_R \subset M$ be a (smooth) geodesic ball. Then
  there exist constants $c > 0$ and $\gamma \in (0,1)$, depending on $g$, $m$, and $p$, such that
  any $p$-harmonic function $u \in H^{1,p}(B_R,\ell^2)$ is (locally) in $C^{1, 2\gamma/{p}}$, and
  \begin{equation}\label{eq:p-harm-func-grad-sup}
    \sup_{B_{r/2}} \abs{du}^p \leq c \dashint_{B_r} \abs{du}^p,
  \end{equation}
  \begin{equation}\label{eq:p-harm-grad-decay}
    \Phi[u;r] \leq c \brr{\frac{r}{R}}^{2\gamma}\Phi[u;R],
  \end{equation}
  where
  \begin{equation}\label{eq:p-func-grad-decay}
    \Phi[u;r] = \dashint_{B_r} \abs{V(du) - [V(du)]_{B_r}}^2.
  \end{equation}
\end{proposition}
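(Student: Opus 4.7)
The statement is a classical gradient sup bound together with Campanato-type decay for $V(du)$, known for $p$-harmonic functions with scalar or finite-dimensional vector target (Uhlenbeck, Tolksdorf, DiBenedetto), with constants depending on $m$, $p$, $g$ but not on the target dimension. The only novelty here is the $\ell^2$-valued target, and, as anticipated in Remark~\ref{rem:scalar-to-vector}, this should require essentially no new idea.

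The plan is to rewrite Uhlenbeck's argument with $du$ viewed as a section of $T^*\Omega \otimes \ell^2$, using $\abs{du}^2 = \sum_i \abs{du^i}^2$ and $V(du) = \abs{du}^{(p-2)/2} du$ understood $\ell^2 \otimes T^*\Omega$-valued. Three ingredients are required: first, a Caccioppoli inequality for $V(du)$, obtained by testing $\delta(\abs{du}^{p-2} du^i) = 0$ against expressions like $\eta^2 u^i$ (with a cutoff $\eta$) and summing in $i$; second, Moser iteration for $\abs{du}^p$, combining the Caccioppoli bound with the $\ell^2$-valued Sobolev embedding provided by Lemma~\ref{lem:scalar-to-vector} to yield the sup estimate~\eqref{eq:p-harm-func-grad-sup}; and third, Campanato decay for $V(du)$, obtained by formally differentiating the equation to see that $V(du)$ solves a linear degenerate elliptic system, then freezing the coefficient $\abs{du}^{p-2}$ at a Lebesgue point of $du$, comparing with the corresponding constant-coefficient linear system (whose $\ell^2$-valued solutions enjoy the standard Campanato decay), and iterating on shrinking balls to obtain~\eqref{eq:p-harm-grad-decay}. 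All three steps use only the Hilbert inner product on $\ell^2$; they do not rely on target dimension or on compactness of any embedding $H^{1,p}(\Omega,\ell^2) \hookrightarrow L^p(\Omega,\ell^2)$. Campanato's characterization (valid for $\ell^2$-valued functions by the duality argument of Remark~\ref{rem:scalar-to-vector}) then gives $V(du) \in C^{0,\gamma}_{loc}$, and since $V^{-1}(b) = \abs{b}^{(2-p)/p} b$ is $(2/p)$-Hölder on $\ell^2$ for $p \geq 2$, this upgrades to $u \in C^{1,2\gamma/p}_{loc}$.

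The main technical point that requires care is the freezing/comparison step, because the coefficient $\abs{du}^{p-2}$ vanishes where $du = 0$; one regularizes to $(\varepsilon + \abs{du}^2)^{(p-2)/2}$, proves the estimates with constants independent of $\varepsilon$, and lets $\varepsilon \to 0$. This is Uhlenbeck's regularization and goes through verbatim in the $\ell^2$-valued setting. As a complementary approach that avoids rewriting Uhlenbeck altogether, one can approximate $u$ by the finite-dimensional $p$-harmonic extensions $v_N$ of $P_N u|_{\partial B_R}$, where $P_N$ is the projection of $\ell^2$ onto its first $N$ coordinates: Lemma~\ref{lem:p-harm:exist} provides $v_N$, energy minimization together with $\abs{d(P_N u)} \leq \abs{du}$ yields $\int \abs{dv_N}^p \leq \int \abs{du}^p$, the monotonicity inequality in~\eqref{ineq:diff-lp-eq} tested against $u - v_N$ (viewed in $H^{1,p}(B_R,\ell^2)$ with common trace) gives $v_N \to u$ in $H^{1,p}$, and the $N$-independent finite-dimensional estimates then pass to the limit.
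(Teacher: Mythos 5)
Your primary route---adapt Uhlenbeck / Giaquinta--Modica, exploiting the Hilbert structure of $\ell^2$ and the fact that the classical constants do not depend on target dimension---matches the paper's proof, which follows~\cite{Giaquinta-Modica:1986:p-harm-func-reg} through the $\ell^2$-valued function theory of Lemmas~\ref{lem:scalar-to-vector},~\ref{lem:sobolev-lp-isom} and Remark~\ref{rem:scalar-to-vector}. Two technical points need adjustment. First, testing $\delta(\abs{du}^{p-2}du^i)=0$ against $\eta^2 u^i$ and summing gives a Caccioppoli bound on $\int \eta^2 \abs{du}^p$ in terms of $u$, not a Caccioppoli inequality for $V(du)$. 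To show $V(du) \in H^{1,2}_{\mathrm{loc}}$ and obtain the second-order Caccioppoli estimate needed to deduce that $\abs{du}^p$ is a subsolution (whence~\eqref{eq:p-harm-func-grad-sup}), one differentiates the equation; in the weak setting this means difference quotients as in Bojarski--Iwaniec / Uhlenbeck, which the paper carries out via Proposition~\ref{prop:diff-quot}, replaced by parallel transport along geodesics when $g$ is not flat.

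Second, while it is true that $\ell^2$-valued solutions of constant-coefficient elliptic systems satisfy the standard Campanato decay, the paper sidesteps verifying this infinite-dimensional fact: it observes that after freezing the coefficient at $t_0 = d_{x_0}u/\abs{d_{x_0}u} = \sum_{i=1}^n s_i \otimes b_i$ (with $n \le m$), the comparison system~\eqref{eq:elliptic-system} decouples along $\ell^2 = V \oplus V^\perp$ with $V = \operatorname{Span}\brc{b_i}$ finite-dimensional. The nontrivial elliptic coupling is confined to the $V$-component, and the $V^\perp$-component is simply $\ell^2$-valued harmonic, so the needed decay reduces to a finite-dimensional system plus a trivial harmonic estimate. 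Finally, your alternative route via finite-dimensional $p$-harmonic extensions $v_N$ of $P_N u|_{\partial B_R}$ has a genuine gap: $v_N$ has trace $P_N u|_{\partial B_R}$, not $u|_{\partial B_R}$, so $u - v_N \notin H^{1,p}_0(B_R,\ell^2)$ and you cannot test~\eqref{ineq:diff-lp-eq} against it "with common trace" as stated. One would have to correct with a small extension of the tail $u^{>N}|_{\partial B_R}$ and propagate the error, which requires extra work; as offered, that side argument does not close.
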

\begin{proof}
  Keeping in mind Lemma~\ref{lem:scalar-to-vector} and Remark~\ref{rem:scalar-to-vector}, the proposition can be proved, for example, by following the arguments
  of \cite{Giaquinta-Modica:1986:p-harm-func-reg} ($p$-harmonic functions are energy minimizing by~\eqref{ineq:diff-lp-eq}). Let us briefly mention the main points.

  One begins by following~\cite[Theorem~3.1]{Giaquinta-Modica:1986:p-harm-func-reg}.
  The difference quotient criterion for $\ell^2$-valued Sobolev functions is still valid (see Proposition~\ref{prop:diff-quot}).
  Hence we can act like in the proof of~\cite[Proposition~2]{Bojarski-Iwaniec:1987:diff-quot} to obtain that
  \begin{equation}\label{eq:Vdu-is-H1}
    V(du) \in H^{1,2}_{loc} \implies \abs{V(du)} = \abs{du}^{p/2} \in H^{1,2}_{loc}.
  \end{equation}
  To arrive at the same conclusion on a manifold, one replaces difference quotients by parallel transport along geodesics.
  Then we reproduce the formulas~\cite[(1.5)–(1.9)]{Uhlenbeck:1977:p-harm-func} by calculating $\Delta(\abs{du}^{p-2}du)$ in two different ways:
  on the one hand, $\Delta = \delta d + d\delta$, and on the other hand, $\Delta \omega|_{x=x_0} = -\sum_i \nabla_i \nabla_i \omega|_{x=x_0}$,
  where $\nabla_i$ is the Levi-Civita connection along an orthogonal basis with vanishing Christoffel symbols at $x_0$.
  Note that we can first perform these calculations for smooth functions and then derive the final result by approximation, since we
  already have~\eqref{eq:Vdu-is-H1}.

  Thus, we have \cite[(3.6)]{Giaquinta-Modica:1986:p-harm-func-reg}, so $\abs{du}^p$ is a subsolution for an elliptic PDE with measurable coefficients
  (corresponding to the linearized $p$-Laplacian divided by $|du|^{p-2}$). Hence $\abs{du} \in L^\infty_{loc}$, and $u$ is locally Lipschitz.
  To prove decay~\eqref{eq:p-harm-grad-decay} in the Euclidean case, $g = g_{\R^m}$, we compare $u$
  with the minimizer of a certain quadratic functional (\cite[(3.14)]{Giaquinta-Modica:1986:p-harm-func-reg} and below).
  The existence of this minimizer follows from the direct method (cf. Lemma~\ref{lem:p-harm:exist}). In the case of the $p$-Laplacian,
  this amounts to comparing $u$ with the function $v$ solving
  \begin{equation}\label{eq:elliptic-system}
    \begin{dcases}
      \int_{B_{r/2}(x_0)} \brr{\abs{dv}^2 + (p-2) \brt{t_0, dv}^2} \to \min
      \\ v - u \in H^{1}_0(B_{r/2}(x_0), \ell^2)
    \end{dcases},
  \end{equation}
  where $t_0  = d_{x_0}u/{\abs{d_{x_0}u}} \in \R^m \otimes \ell^2$. Writing $t_0 = \sum_{i=1}^n s_i \otimes b_i$ with $b_i \in \ell^2$,
  set $V:= Span\, \brc{b_i}_{i=1}^n$ so that $n = \dim V \leq m$. Choosing an orthonormal decomposition
  $\ell^2 = V \oplus V^{\bot} \approx \R^n \oplus \ell^2$,
  one sees that~\eqref{eq:elliptic-system} splits into
    \begin{equation}
    \begin{dcases}
      \operatorname{div} (A \nabla v') = 0 \ \text{ in } \ B_{r/2}(x_0)
      \\ v' = u'  \ \text{ on } \ \partial B_{r/2}(x_0)
    \end{dcases},\quad
    \begin{dcases}
      \Delta_{\R^m} v'' = 0 \ \text{ in } \ B_{r/2}(x_0)
      \\ v'' = u''  \ \text{ on } \ \partial B_{r/2}(x_0)
    \end{dcases},
  \end{equation}
  where $v = (v',v'')$, $v' \in H^{1}(B_{r/2}(x_0), \R^n)$, and $v'' \in H^{1}(B_{r/2}(x_0), \ell^2)$.
  In particular, the nontrivial elliptic system is confined to a finite-dimensional component.
  Therefore, the estimates \cite[(3.15–3.18)]{Giaquinta-Modica:1986:p-harm-func-reg} apply (they, in fact, hold for arbitrary Hilbert-valued
  elliptic systems; cf.~\cite[Chapter~III]{Giaquinta:1983:nonlinear-elliptic-systems}). We then proceed as in the remainder of the proof of
  \cite[Theorem~3.1]{Giaquinta-Modica:1986:p-harm-func-reg}.

  The case of a general metric $g$ follows by
  comparing~\eqref{eq:p-harm-grad-decay} with the decay for $p$-energy minimizers associated with the frozen metric $g(x_0)$
  (see, for example, \cite[Theorem~4.3]{Giaquinta-Modica:1986:p-harm-func-reg}, although this is somewhat stronger than needed).

  See~\cite[Lemma~3]{Duzaar-Mingione:2004:p-harm-approx} for the Hölder exponent of $u$.
\end{proof}

\begin{definition}
  For an arbitrary Banach space $E$ and a domain $\Omega \subset \R^m$,
  we say that a function $f \in L^1_{loc}(\Omega,E)$ belongs to $\BMO(\Omega,E)$ if
  \begin{equation}
    \norm{f}_{\BMO(\Omega,E)} := \sup_{B \subset \Omega}
    \ \dashint_{B}\abs{f - [f]_{B}} < \infty,
  \end{equation}
  where $B = B_r(p)$ and $[f]_{B} := \dashint_{B} f = \frac{1}{\abs{B}}\int_B f $.
\end{definition}
Vector-valued $\BMO$ shares many properties with its scalar counterpart. We will mostly need the facts from
\cite[Section~2.3]{Moser:2005:part-reg-harm-maps}, and
one can check directly that their proofs work for the Banach-valued case as well:
\begin{itemize}
  \item one can equivalently replace balls with cubes in the definition of $\BMO$;
  \item $f \in \BMO(\Omega, E) \implies \abs{f} \in \BMO(\Omega, \R)$ (this is one of the reasons, since many arguments involve just the norm);
  \item all the seminorms $\norm{f}_{\BMO_p(\Omega,E)} = \sup_{B \subset \Omega} \ \brr{\dashint_{B}\abs{f - [f]_{B}}^p}^{1/p}$ are equivalent
  for all $p\in (0,\infty)$ (see also~\cite[Theorem~3.2.30]{Hytonen-Neerven-Veraar-Weis:2016:analysis-banach-1});
  \item For domains $\Omega' \Subset \Omega \subset M$ and a function $\phi \in C^\infty_0(\Omega')$, there exists a constant $C$
  (see~\cite[Lemma~2.6]{Moser:2005:part-reg-harm-maps}) such that for $f \in \BMO(\Omega, E)$,
  \begin{equation}\label{ineq:bmo-contin}
    \norm{\phi\cdot(f - [f]_{\Omega'})}_{\BMO(\R^m, E)} \leq C  \norm{f}_{\BMO(\Omega, E)}.
  \end{equation}
\end{itemize}
\begin{definition}
  For a Banach space $E$,
  a function $f \in L^1(\R^m, E)$ is said to belong to the Hardy space $\mathcal{H}^1(\R^m, E)$ if
  \begin{equation}
    \norm{f}_{\mathcal{H}^1(\R^m, E)} := \norm{\mathcal{G}f}_{L^1(\R^m, E)} < \infty
  \end{equation}
  where
  \begin{equation}
    (\mathcal{G}f)(x) :=\sup_{\phi}\sup_{ t>0} \abs{(f * \phi_t)(x)},
  \end{equation}
  $\phi_t(x) = t^{-m}\phi(x/t)$, and $\phi$ runs over all $C^\infty_0(B_1(0))$ with $\norm{d\phi}_{L^\infty} \leq 1$.
\end{definition}
We call a function $a \colon \R^m \to E$ an atom, if
$\int a = 0$, $\supp a \subset B$ for some ball $B$, and $\norm{a}_{L^\infty} \leq \frac{1}{\abs{B}}$.
Repeating the proof of \cite[Proposition~2.1]{Moser:2005:part-reg-harm-maps}, we see
that the atomic decomposition still holds:
\begin{equation}
  \norm{f}_{\mathcal{H}^1(\R^m, E)} \approx \inf \set*{\sum_i \abs{\lambda_i}}{ f = \sum_i \lambda_i a_i,\ \lambda_i \in \R, \text{ and $a_i$ is an atom}}.
\end{equation}
From the atomic decomposition, it is then straightforward to check that for $f \in \mathcal{H}^1(\R^m, E)$, $h \in L^\infty(\R^m,E^*)$, one has
\begin{equation}\label{ineq:hardy-bmo}
  \int_{\R^m} \brt{f,h} \leq C\norm{f}_{\mathcal{H}^1(\R^m, E)} \norm{h}_{\BMO(\R^m,E^*)},
\end{equation}
and this duality extends to the whole $\BMO(\R^m,E^*)$ by continuity.
In fact, (modulo constants) $\BMO(\R^m,E^*) \hookrightarrow \mathcal{H}^1(\R^m, E)^*$ isometrically with the equality if and only if $E$ has the Radon-Nikodým property
(see~\cite{Blasco:1988:hardy-bmo-duality}), but we will not use this fact.
\begin{proposition}\label{prop:div-0-hardy}
  Let $E$ be a Banach space, $(\Omega,g) \subset \R^m$ be a bounded domain with a smooth metric $g$, and $p \in (1,\infty)$. Then
  there exists a constant $C = C(p,g,m)$ such that for every $f \in H^{1,p}_0(\Omega, \ell^2)$ and every 1-form
  $\omega \in L^{p'}(\Omega, T^*\R^m \otimes \mathfrak{L}[\ell^2,E])$
  with $\delta_g \omega = 0$ and $1/p' + 1/p = 1$, we have $\brt{\omega, df}_g \in \mathcal{H}^1(\R^m, E)$
  and
  \begin{equation}
    \norm{\brt{\omega, df}_g}_{\mathcal{H}^1(\R^m, E)} \leq C \norm{df}_{L^{p}(\Omega, \ell^2)}\norm{\omega}_{L^{p'}(\Omega, T^*\R^m \otimes \mathfrak{L}[\ell^2,E])},
  \end{equation}
  where $\langle\brt{\omega, df}_g, h \rangle_{E,E^*} = \brt{\omega, df\otimes h}_{g}$ pointwise and $(df \otimes h)(x) \in T^*\R^m \otimes\ell^2 \widehat{\otimes}_{\pi} E^*$.
\end{proposition}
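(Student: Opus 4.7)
The plan is to estimate $\norm{F}_{\mathcal{H}^1(\R^m, E)}$, where $F := \brt{\omega, df}_g$ is extended by zero outside $\Omega$, by controlling its grand maximal function $\mathcal{G}(F)\in L^1(\R^m)$. The starting point is the distributional identity $F = \delta_g(f\omega)$ on $\R^m$: for any test $\phi\in C^\infty_0(\R^m)$, integration by parts on $\Omega$ combined with $\delta_g\omega = 0$ in $\Omega$ and the vanishing trace of $f$ on $\partial\Omega$ yields $\int_{\R^m}\phi F = -\int_\Omega f\brt{d\phi,\omega}_g$. Writing $\tilde\omega_i = \sqrt{\det g}\,g^{ij}\omega_j$ converts $\delta_g\omega = 0$ into Euclidean divergence-freeness $\partial_i\tilde\omega_i = 0$ with $\norm{\tilde\omega}_{L^{p'}} \leq C(g)\norm{\omega}_{L^{p'}}$, so it suffices to treat the Euclidean case $g = g_{\R^m}$, absorbing the metric factors into the constant $C = C(p,g,m)$.

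Next, given $\phi\in C^\infty_0(B_1(0))$ with $\norm{d\phi}_{L^\infty}\leq 1$, I write $(\phi_t * F)(x) = -\int \nabla\phi_t(x-y)\,f(y)\,\omega(y)\,dy$. For balls with $B_t(x)\subset\Omega$, the divergence-free condition together with $\phi_t$ vanishing on $\partial B_t(x)$ gives $\int_{B_t(x)}\nabla\phi_t(x-y)\,c\,\omega(y)\,dy = 0$ for every constant $c\in\ell^2$. Choosing $c = [f]_{B_t(x)}$ and combining Hölder's inequality with the vector-valued Sobolev–Poincaré inequality $\norm{f - [f]_{B_t}}_{L^{s^*}(B_t)} \leq C\norm{df}_{L^s(B_t)}$ (for $s\in(1,m)$, Sobolev conjugate $s^* = ms/(m-s)$) yields the pointwise bound
\begin{equation}
    \abs{(\phi_t * F)(x)} \leq C\, M_s(df)(x)\, M_{s^{*'}}(\omega)(x),
\end{equation}
where $M$ is the Hardy–Littlewood maximal operator and $M_q(h) := (M\abs{h}^q)^{1/q}$. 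The constraint $s^{*'} < p'$ rewrites as $s > mp/(m+p)$, a threshold strictly smaller than $\min(p,m)$, so I can pick $s\in(1,\min(p,m))$ satisfying both $s < p$ and $s^{*'} < p'$. Consequently $M_s \colon L^p \to L^p$ and $M_{s^{*'}} \colon L^{p'} \to L^{p'}$ are bounded, and Hölder's inequality with exponents $(p,p')$ gives $\norm{\mathcal{G}(F)}_{L^1(\R^m)} \leq C\norm{df}_{L^p}\norm{\omega}_{L^{p'}}$. All pointwise manipulations transfer to the Banach/Hilbert-valued setting via Lemma~\ref{lem:scalar-to-vector} and Proposition~\ref{prop:lip-sobolev-comp}, using that $\ell^2$ has the Radon–Nikodým property.

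The main obstacle is that the constant-subtraction step fails for balls meeting $\partial\Omega$: integration by parts then picks up a boundary flux $c\int_{\partial\Omega\cap B_t(x)}\phi_t\,\omega\cdot\nu\,dS$ that is not controlled by $\norm{\omega}_{L^{p'}(\Omega)}$. I would resolve this by a case split based on the relative measure $\abs{B_t(x)\setminus\Omega}/\abs{B_t(x)}$: when this ratio is bounded below by a fixed $\alpha > 0$ (which holds uniformly provided $\partial\Omega$ has mild regularity), the vanishing of $f$ outside $\Omega$ combined with a Friedrichs-type Sobolev–Poincaré inequality $\norm{f}_{L^{s^*}(B_t)} \leq C\norm{df}_{L^s(B_t)}$ permits choosing $c = 0$ and closes the estimate. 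A cleaner alternative is to construct a divergence-free extension of $\omega$ to all of $\R^m$ (via a Helmholtz-type decomposition on a smooth enlargement of $\Omega$) and reduce to the classical global Coifman–Lions–Meyer–Semmes setting; either route delivers the asserted inequality.
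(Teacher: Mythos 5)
Your proposal follows the same route as the paper: the paper's proof is essentially a pointer to Moser's compensated-compactness theorem (Theorem~2.4 in the cited reference), and the key observation it records — that $f\in H^{1,p}_0(\Omega,\ell^2)$ gives the distributional identity $\int\langle\omega,\phi\,df\rangle_g = -\int\langle\omega,f\,d\phi\rangle_g$ for all $\phi\in C^\infty_0(\R^m)$ after zero-extension — is exactly your starting point. Your reduction of $\delta_g\omega=0$ to Euclidean divergence-freeness by the substitution $\tilde\omega_i=\sqrt{\det g}\,g^{ij}\omega_j$ is the same "metric absorbed into constants" step the paper makes; your maximal-function bound and the exponent bookkeeping (the constraint $s>mp/(m+p)$, compatible with $s<\min(p,m)$) are correct and are precisely what Moser's argument does.

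The boundary caveat you flag is a legitimate subtlety, and your proposed resolution is the standard one. The constant-subtraction step does require $B_t(x)\subset\Omega$, and when the ball meets $\partial\Omega$ one instead takes $c=0$ and uses the Friedrichs form of Sobolev--Poincaré, valid because the zero-extended $f$ vanishes on a definite fraction of a dilated ball $B_{Kt}(x)$. This requires a measure-density condition on $\Omega$ — satisfied for a ball (an exterior-ball argument gives an explicit $K$ and fraction $3^{-m}$), which is the only case in which the proposition is ever invoked (Lemma~\ref{lem:small-energy-reg} applies it with $\Omega=B_{1/5}$). The paper's terse proof does not spell this out, deferring entirely to Moser; your reconstruction is more explicit on this point. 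Your alternative via a divergence-free Bogovskii-type extension of $\omega$ would need boundary regularity anyway and is less direct; the Friedrichs route is what one should use. Overall the proposal is correct and matches the paper's approach, with the boundary case being the only item needing the extra sentence you sketch.
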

\begin{proof}
  We extend $\omega$ and $f$ by zero to the whole $\R^m$.
  Since $\delta_g \omega =0$, we have that $\int \brt{\omega, \phi df}_g dv_g = -\int \brt{\omega, fd\phi}_g dv_g$ for all $\phi \in C^\infty_0(\R^m)$.
  On a bounded domain, we have $(C')^{-1}g_{\R^m} \leq g \leq C' g_{\R^m}$.
  The rest of the proof goes exactly as in~\cite[Theorem~2.4]{Moser:2005:part-reg-harm-maps}. Note that $f$ is Hilbert-valued, so we have the Sobolev
  inequality, and for $\omega$, we only need the integrability of the norm.
\end{proof}

\begin{lemma}[Small energy regularity]\label{lem:small-energy-reg}
  Let $(B_r(x_0), g) \subset T_{x_0} M \approx \R^m$ be a smooth geodesic ball and $p \geq 2$.
  Then for any $\gamma \in (0,1)$, there exists $\epsilon = \epsilon(g,m,p,\gamma) > 0$  such that
  for any stationary $p$-harmonic map $u \in H^{1,p}(B_r(x_0), \Sph^\infty)$ with
  $r^{p-m}E[u; B_r(x_0)] < \epsilon^p$, one has $u \in C^{0,\gamma}(B_{r/2}(x_0),\Sph^\infty)$.
\end{lemma}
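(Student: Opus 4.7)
The plan is to adapt the classical Hardy-BMO $\epsilon$-regularity argument (Evans, Bethuel, Toro--Wang) for stationary $p$-harmonic maps into spheres to the $\Sph^\infty$-target, using the machinery assembled in this section. The infinite-dimensional target introduces no essential difficulty because all estimates rely only on the Hilbert structure of $\ell^2$, the pointwise constraint $|u|^2 \equiv 1$, and pointwise-in-$\ell^2$ inequalities (Remark~\ref{rem:scalar-to-vector}); the Hardy-BMO duality for $\ell^2$-valued functions follows from the atomic decomposition and the fact that $\ell^2$ has the Radon-Nikod\'ym property.

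The first step is to use the monotonicity formula~\eqref{eq:monoton-formula} to upgrade the smallness hypothesis to every sub-ball: for all $y\in B_{r/2}(x_0)$ and $\rho \leq r/4$, one has $\rho^{p-m} E_p[u;B_\rho(y)] \leq C\epsilon^p$. By Poincar\'e-Sobolev this gives $\norm{u - [u]_{B_\rho(y)}}_{L^p(B_\rho(y),\ell^2)} \leq C\rho^{m/p}\epsilon$ and hence $\norm{u}_{\BMO(B_{r/2}(x_0),\ell^2)} \leq C\epsilon$. The key structural observation is that $|u|^2 \equiv 1$ gives $\sum_i u^i du^i = 0$, so that, together with the $p$-harmonic map equation, the antisymmetric $\ell^2\wedge\ell^2$-valued 1-form
\begin{equation}
  \omega^{ij} := |du|^{p-2}\brr{u^i du^j - u^j du^i},\qquad i,j\in\mathbb{N},
\end{equation}
is weakly divergence-free: a direct computation using $\delta(fV) = f\delta V - \brt{df, V}$ yields $\delta_g\omega^{ij} = u^i|du|^p u^j - u^j|du|^p u^i = 0$. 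The identity $\sum_{i,j}(u^i du^j - u^j du^i)^2 = 2|du|^2$ (which uses both $|u|^2 = 1$ and $\sum_i u^i du^i = 0$) shows that, viewed as an element of $L^{p'}(\Omega, T^*M\otimes\mathfrak{L}[\ell^2,\ell^2])$, the operator norm of $\omega$ is controlled pointwise by $C|du|^{p-1}$.

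On a ball $B_\rho(y) \subset B_{r/2}(x_0)$ I would introduce the $p$-harmonic replacement $v\in u + H^{1,p}_0(B_\rho(y),\ell^2)$ from Lemma~\ref{lem:p-harm:exist} and test the difference equation $\delta(|du|^{p-2}du - |dv|^{p-2}dv) = |du|^p u$ against $w := u - v\in H^{1,p}_0(B_\rho(y),\ell^2)$. Using~\eqref{ineq:diff-lp-eq} on the left-hand side, together with the algebraic identity $|du|^p u^j = \sum_i\brt{\omega^{ji}, du^i}$ (immediate from the definitions and $\sum_i u^i du^i = 0$) followed by integration by parts permissible since $\delta\omega^{ij} = 0$ and $w\in H^{1,p}_0$, one arrives at
\begin{equation}
  \int_{B_\rho}|dw|^p \leq C\int_{B_\rho}|du|^p\brt{u,w} = C\sum_{i,j}\int_{B_\rho}\brr{u^i - [u^i]_{B_\rho}}\brt{\omega^{ij}, dw^j},
\end{equation}
where the mean values may be subtracted because $\int\brt{\omega^{ij}, dw^j} = 0$ for each $(i,j)$. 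Proposition~\ref{prop:div-0-hardy}, applied with $E=\ell^2$ so that $\omega$ is treated as an $\mathfrak{L}[\ell^2,\ell^2]$-valued divergence-free 1-form, places the $\ell^2$-valued pairing $\brt{\omega,dw}$ in $\mathcal{H}^1(\R^m,\ell^2)$, and the Hardy-BMO duality~\eqref{ineq:hardy-bmo} together with the localization bound~\eqref{ineq:bmo-contin} yields
\begin{equation}
  \int_{B_\rho}|dw|^p \leq C\epsilon\,\norm{du}_{L^p(B_\rho,\ell^2)}^{p-1}\,\norm{dw}_{L^p(B_\rho,\ell^2)},
\end{equation}
whence by absorption $\int_{B_\rho}|dw|^p \leq C\epsilon^{p/(p-1)}\int_{B_\rho}|du|^p$.

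Combining this absorption inequality with the sup-estimate~\eqref{eq:p-harm-func-grad-sup} for $v$ and the triangle inequality $|du|^p \leq C(|dv|^p + |dw|^p)$ produces the tilt-excess decay $\int_{B_{\theta\rho}(y)}|du|^p \leq (C\theta^m + C\epsilon^{p/(p-1)})\int_{B_\rho(y)}|du|^p$ for any $\theta\in(0,1/2)$. Fixing $\theta$ small depending on the target exponent $\gamma\in(0,1)$ and then $\epsilon$ small enough so that $C\theta^p + C\theta^{p-m}\epsilon^{p/(p-1)} \leq \theta^{p\gamma}$, one obtains $(\theta\rho)^{p-m}E_p[u;B_{\theta\rho}(y)] \leq \theta^{p\gamma}\rho^{p-m}E_p[u;B_\rho(y)]$, which iterates on dyadic scales to the Morrey bound $s^{p-m}E_p[u;B_s(y)] \leq C(s/r)^{p\gamma}\epsilon^p$ for all $y\in B_{r/2}(x_0)$ and $s \leq r/4$. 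The Morrey-Campanato characterization of H\"older continuity, applied to $\ell^2$-valued maps via Remark~\ref{rem:scalar-to-vector}, then gives $u\in C^{0,\gamma}(B_{r/2}(x_0),\Sph^\infty)$. The main technical obstacle will be making rigorous the manipulations with infinite sums over the $\ell^2$-indices $i,j$ in the identity $|du|^p u^j = \sum_i\brt{\omega^{ji}, du^i}$ and the subsequent Hardy-space estimate; both are resolved by viewing $\omega$ globally as an $\mathfrak{L}[\ell^2,\ell^2]$-valued 1-form with pointwise operator norm $\leq C|du|^{p-1}$, so that Proposition~\ref{prop:div-0-hardy} and the $\ell^2$-valued Hardy-BMO duality subsume the componentwise sums.
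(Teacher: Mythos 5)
Your proof is correct and follows essentially the same strategy as the paper's: rescale, exploit the divergence-free antisymmetric 1-form $\omega^{ij}=|du|^{p-2}(u^i du^j - u^j du^i)$ together with the pointwise bound $|\omega|\lesssim|du|^{p-1}$, compare $u$ with a $p$-harmonic replacement via inequality~\eqref{ineq:diff-lp-eq}, estimate the resulting bilinear term through the $\ell^2$-valued $\mathcal{H}^1$--$\BMO$ duality (Proposition~\ref{prop:div-0-hardy} and~\eqref{ineq:hardy-bmo}) against the small $\BMO$ norm of $u$, combine with the sup-gradient bound~\eqref{eq:p-harm-func-grad-sup} for the replacement, and iterate the resulting energy decay to conclude via Morrey's lemma. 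The only organizational difference is that you test the difference equation against $w=u-v$ first and then integrate by parts using $\delta\omega^{ij}=0$ to produce the factor $(u^i-[u^i]_{B_\rho})$, whereas the paper first rewrites the right-hand side of the equation as $\delta[\omega(u-A)]^i$ and then tests; the two routes are equivalent reorderings of the same computation. You are also slightly more explicit than the paper about invoking the monotonicity formula up front to propagate the smallness hypothesis to all sub-balls, which is indeed needed for the rescaling step.
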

\begin{proof}
 Our proof is an adaptation of~\cite*{Chang-Wang-Yang:1999:reg-harm-maps} for the
 setting of $\ell^2$-valued $p$-harmonic maps.

 Up to rescaling the ball and the metric, we can assume that $r = 1$ and $E[u; B_1(x_0)] < \epsilon^p$.
 Since we also assume that $B_1(x_0) = B_1(0) \subset \R^m$, we will work with Euclidean balls inside $B_1(0)$.

 Let us rewrite the $p$-harmonic map equation into spheres, $\delta(\abs{du}^{p-2}du^i) = |du|^p u^i$,
 in an alternative way. Since we are dealing with sphere-valued maps, one has
 $\sum_j u^j du^j = 0$, and as a consequence, for any constant vector $A \in \ell^2$, we have
\begin{equation}
  \delta(\abs{du}^{p-2}du^i) = \sum_j \brt{\abs{du}^{p-2} (u^i du^j - u^j du^i), d(u^j-A^j)} = \delta [\omega(u-A)]^i
\end{equation}
 weakly. Here
 \begin{equation}
  \omega^{ij}  = \abs{du}^{p-2} \brr{u^j du^i - u^i du^j}
  \quad\text{and}\quad \delta \omega = 0,
 \end{equation}
  since
 \begin{equation}
  \delta(\abs{du}^{p-2} u^i du^j) = \abs{du}^{p} u^i u^j - \abs{du}^{p-2}\brt{du^i, du^j}.
 \end{equation}
 We can estimate
 \begin{equation}
  \abs{\omega}_{T^*M\otimes \mathfrak{L}[\ell^2]} \leq \abs{\omega}_{T^*M\otimes \mathcal{HS}[\ell^2]} =  \brr{\sum_{ij} \abs{\omega^{ij}}^2}^{1/2} \leq 2 \abs{du}^{p-1},
 \end{equation}
 so $\omega \in L^{p'}(B_1, T^*M \otimes \mathfrak{L}[\ell^2])$, where $p' = p/(p-1)$.

 Now, let us fix $x \in B_{3/4}(0)$ and set $B_r := B_r(x)$. Let $h \in  H^{1,p}(B_{1/5}, \ell^2)$
 be the $p$-harmonic extension of $u|_{\partial B_{1/5}}$ (produced by Lemma~\ref{lem:p-harm:exist}), that is, $\delta(|dh|^{p-2}dh) = 0$,
 and take a function $\phi \in C_0^{\infty}(B_{1/4})$ with $\phi|_{B_{1/5}} \equiv 1$.
 The previous calculation together with inequalities~\eqref{ineq:diff-lp-eq}, \eqref{ineq:hardy-bmo}, and Proposition~\ref{prop:div-0-hardy} yield
 \begin{align}
  \int_{B_{1/5}}\abs{du-dh}^p
      &\leq c_1 \int_{B_{1/5}} \brt{\abs{du}^{p-2}du -\abs{dh}^{p-2}dh, d(u-h)}
  \\  &= c_1 \int_{B_{1/5}} \brt{\omega, d(u-h) \otimes (u-A)}
  \\  &= c_1 \int_{\R^m} \brt{\omega, d(u-h) \otimes \phi(u-A)}
  \\  &\leq c_2 \norm{\phi(u-A)}_{\BMO} \norm{du - dh}_{L^{p}(B_{1/5}, \ell^2)}\norm{\omega}_{L^{p'}(B_{1/5}, \cdots)}
  \\  &\leq c_3 \norm{u}_{\BMO(B_{1/4},\ell^2)}\norm{du - dh}_{L^{p}(B_{1/5},\ell^2)}\norm{du}_{L^{p}(B_{1/5},\ell^2)}^{p-1}
  \\  &\leq c_4 \epsilon \norm{du - dh}_{L^{p}(B_{1/5},\ell^2)}\norm{du}_{L^{p}(B_{1/5},\ell^2)}^{p-1},
 \end{align}
 where we used~\eqref{ineq:bmo-contin}, with the appropriate $A$, and the fact that
 \begin{equation}
  \norm{u}_{\BMO(B_{1/4},\ell^2)}^p \leq C \sup_{B \subset B_{1/4}} r^{p-m}E_p[u;B],
 \end{equation}
 which follows by the Hölder and Poincaré inequalities.
 Thus,
 \begin{equation}
  \int_{B_{1/5}}\abs{du-dh}^p \leq c_5 \epsilon^{p'} \int_{B_{1/5}}\abs{du}^p \leq C'' \epsilon^p.
 \end{equation}
 Now, for any $0 < s < 1/2$, using the fact that
 \begin{equation}
  \sup_{B_{s/5}}\abs{dh}^p \leq \int_{B_{1/5}} \abs{dh}^p \leq C''\int_{B_{1/5}} \abs{du}^p
 \end{equation}
 by~\eqref{eq:p-harm-func-grad-sup}, we obtain
 \begin{align}
  \brr{\frac{s}{5}}^p\dashint_{B_{s/5}}\abs{du}^p
      &\leq c_6 s^{p-m} \int_{B_{1/5}}\abs{du-dh}^p + c_6 s^p\dashint_{B_{s/5}}\abs{dh}^p
  \\  &\leq c_7 s^{p-m} \epsilon^{p'} \int_{B_{1/5}}\abs{du}^p +  c_6 s^p \sup_{B_{s/5}}\abs{dh}^p
  \\  &\leq c_8 \brr{\epsilon^{p'}s^{p-m}  + s^p} \int_{B_{1/5}}\abs{du}^p,
 \end{align}
 where $c_8 = c_8(p,m,g)$. Now, we first choose $0 < s < 1/2$ such that $c_8 s^p < \frac{1}{2}s^{p\gamma}$, and then
 $\epsilon > 0$ such that $c_8 \epsilon^{p'}s^{p-m} < \frac{1}{2}s^{p\gamma}$. As a result, we have
 \begin{equation}\label{ineq:energy-decay}
  (sr)^p\dashint_{B_{sr}(x)}\abs{du}^p \leq s^{p\gamma} r^p\dashint_{B_{r}(x)}\abs{du}^p
 \end{equation}
 for $r = 1/5$, all $x \in  B_{3/4}(0)$, all metrics $g'$ in a $C^\infty$-neighborhood of $g$, and stationary $p$-harmonic maps $u$ on $(B_1(0), g')$
 satisfying the small-energy hypothesis. By rescaling,
 $u_{\rho}(y) = u(\rho y+x)$, we see that~\eqref{ineq:energy-decay} holds for all $0 < r \leq 1/5$. By taking $r_k = s^k/5$ and
 $r \in (r_{k+1}, r_k]$, we derive
 an exponential energy decay
 \begin{equation}\label{ineq:morrey-decay}
  r^p\dashint_{B_{r}(x)}\abs{du}^p \leq c_9 r^{p\gamma}.
 \end{equation}
 It remains to apply Morrey's lemma (see Remark~\ref{rem:scalar-to-vector} and, for example, \cite[Lemma~2.1]{Moser:2005:part-reg-harm-maps})
 to conclude that $u \in C^{0,\gamma}(B_{1/2}(0),\Sph^\infty)$.
\end{proof}

\begin{lemma}\label{lem:cont-harm-smooth}
  Let $u\in H^{1,p}(\Omega, \Sph^\infty)$ be a stationary $p$-harmonic map with $p \geq 2$.
  If $u \in C^0(\Omega, \Sph^\infty)$,
  then $u \in C^{1,\beta}(\Omega,\Sph^\infty)$ for some $\beta \in (0,1)$.
\end{lemma}
\begin{proof}
  Take $x_0 \in \Omega$ and $\phi \in C^\infty_0(B_r(x_0))$ with $\phi|_{B_{r/2}(x_0)} \equiv 1$. By testing $\delta(\abs{du}^{p-2}du) = \abs{du}^{p} u$
  against $[u-u(x_0)]\phi$ and
  applying Young's inequality, it is fairly standard (since $u$ is continuous) to derive the Caccioppoli-type inequality
  \begin{equation}
    r^p\dashint_{B_{r/2}(x_0)}\abs{du}^p \leq c_1\dashint_{B_{r}(x_0)}\abs{u - u(x_0)}^p \leq c_2\brr{\operatorname*{osc}_{B_{r}(x_0)} u}^p
  \end{equation}
  for some $r_0 = r_0(x_0)$ all $r < r_0$.

  Therefore, by decreasing the ball if necessary, we can apply the previous lemma to establish that
  $u$ satisfies the Morrey decay~\eqref{ineq:morrey-decay} with  $p/(p+1) < \gamma < 1$
  and that $u \in C^{0,\gamma}(\Omega, \ell^2)$. Then
  we pick an appropriate ball $B_r(x) \subset \Omega$, choose
  the $p$-harmonic extension $h \in  H^{1,p}(B_{r}, \ell^2)$ of $u|_{\partial B_{r}}$ by Lemma~\ref{lem:p-harm:exist}, and
  compare $V(du)$ with $V(dh)$, using~\eqref{eq:p-func-grad-decay} and the convex-hull property of $h$ (Proposition~\ref{prop:conv-hull-prop}).
  See, for example, \cite[Lemma~6]{Duzaar-Mingione:2004:p-harm-approx} for the details.
\end{proof}

\begin{lemma}\label{lem:subseq-strong-conv}
  Let $p \geq 2$, $\brc{u_\epsilon} \subset H^{1,p}(\Omega, \Sph^\infty)$ be a bounded
  sequence of spectrally stable $p$-harmonic maps. Then there exists a spectrally stable $p$-harmonic map
  $u \in H^{1,p}(\Omega, \Sph^\infty)$ such that, up to a subsequence,
  $\abs{du_\epsilon}^p \to \abs{du}^p$ in $L^1(\Omega')$ for every $\Omega' \Subset \Omega$. In fact, up to isometry,
  $u_\epsilon \to u$ in $H^{1,p}(\Omega', \Sph^\infty)$.
\end{lemma}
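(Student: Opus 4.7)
The plan is to adapt the strategy of Lemmas~\ref{lem:strong-conv}--\ref{lem:regularity-in-good-pt} to the present setting, exploiting the fact that spectral stability of each $u_\epsilon$ makes \emph{every} point of $\Omega$ stable for the sequence $(\alpha_\epsilon,\mu_\epsilon):=(\abs{du_\epsilon}^{p-2},\abs{du_\epsilon}^p)$ in the sense of Definition~\ref{def:stable-points}.

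First, I would use Lemma~\ref{lem:main}\ref{lem:main:it:0} to bound $\brc{u_\epsilon^*u_\epsilon}$ in $\overbar{H}^{1,p}(\Omega)\widehat{\otimes}_\pi H^{1,p}(\Omega)$ and pass to subsequences so that $u_\epsilon^*u_\epsilon\oset{w^*}{\to}T$, $\alpha_\epsilon\oset{w^*}{\to}\alpha\in L^{p/(p-2)}_+(\Omega)$, and $\mu_\epsilon\oset{w^*}{\to}\mu$ as Radon measures on $\overline\Omega$. Lemma~\ref{lem:main}\ref{lem:main:it:1} identifies $T=u^*u$ for some $u\in H^{1,p}(\Omega,\ell^2)$, and Corollary~\ref{cor:l-inf-convergence} together with Lemma~\ref{lem:main}\ref{lem:main:it:3} yields, up to a first isometric identification, strong convergence $u_\epsilon\to u$ in $L^p(\Omega,\ell^2)$. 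Extracting a further subsequence so that $\abs{u_\epsilon}\to\abs{u}$ a.e., the constraint $\abs{u_\epsilon}\equiv 1$ forces $\abs{u}\equiv 1$ a.e.

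Next, the spectral stability inequality~\eqref{ineq:stable-map} applied to each $u_\epsilon$ is exactly~\eqref{ineq:stable-point} on every open set, so every point of $\Omega$ is stable for $(\alpha_\epsilon,\mu_\epsilon)$. I would therefore apply Lemma~\ref{lem:strong-conv} on a neighborhood of each $x\in\Omega$; its condition~\ref{it:lower-than} is trivially satisfied because $\abs{u_\epsilon}^2=\abs{u}^2=1$ a.e. The lemma yields locally $\alpha=\abs{du}^{p-2}$, strong $L^{p/(p-2)}$-convergence $\alpha_\epsilon\to\abs{du}^{p-2}$, and the weak equation $\delta(\alpha\,du)=u\mu$. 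With no unstable points present, Lemma~\ref{lem:regularity-in-good-pt} then identifies $\mu^c=\abs{du}^p$ and rules out atoms of $\mu$, so $\mu=\abs{du}^p\,dv_g$ on $\Omega$ and $u$ is weakly $p$-harmonic into $\Sph^\infty$.

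For the global convergence, a diagonal subsequence gives $\abs{du_\epsilon}\to\abs{du}$ a.e.\ on $\Omega$. The weak$^*$ convergence $\mu_\epsilon\oset{w^*}{\to}\abs{du}^p\,dv_g$ (testing against $\1\in C(\overline\Omega)$, or exhausting $\Omega$ by compact subsets and using the uniform energy bound to rule out loss at the boundary) yields $\int\abs{du_\epsilon}^p\to\int\abs{du}^p$; Scheffé's lemma upgrades this to $\abs{du_\epsilon}^p\to\abs{du}^p$ in $L^1(\Omega)$. Lemma~\ref{lem:main}\ref{lem:main:it:5} with $q=p$ then delivers $u_\epsilon^*u_\epsilon\to u^*u$ in $\overbar{H}^{1,p}\widehat{\otimes}_\pi H^{1,p}$ globally, and Lemma~\ref{lem:main}\ref{lem:main:it:3} furnishes isometric embeddings $I_\epsilon$ of the ranges of $u_\epsilon$ with $I_\epsilon u_\epsilon\to u$ in $H^{1,p}(\Omega,\ell^2)$. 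Spectral stability of the limit follows by passing to the limit in~\eqref{ineq:stable-map}: the right-hand side converges by $L^{p/(p-2)}$-convergence of $\alpha_\epsilon$, and the left-hand side by $L^1$-convergence of $\abs{du_\epsilon}^p$, for every $\phi\in C_0^\infty(\Omega)$.

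The main difficulty I foresee is the no-atoms/no-escape-of-mass step: Lemma~\ref{lem:regularity-in-good-pt} handles interior atoms cleanly because spectral stability makes all interior points stable, but when $\partial\Omega\neq\varnothing$ some additional care is needed to guarantee that no mass of $\mu_\epsilon$ escapes to the boundary (this can be done either by invoking the monotonicity formula~\eqref{eq:monoton-formula} together with Lemma~\ref{lem:stable-energy-min} to control $r^{p-m}E_p[u_\epsilon;B_r(x)]$ near $\partial\Omega$, or by simply reading the conclusion as convergence on compact subsets of $\Omega$). Once this is in place, every other ingredient is a direct application of the tensor-product toolkit from Section~\ref{sec:applic-tensor-prod}.
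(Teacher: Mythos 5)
Your proposal follows the same route as the paper: set $\alpha_\epsilon=\abs{du_\epsilon}^{p-2}$, $\mu_\epsilon=\abs{du_\epsilon}^p\,dv_g$, note that spectral stability of each $u_\epsilon$ makes every interior point stable (so $D=\varnothing$), and invoke Lemmas~\ref{lem:strong-conv} and~\ref{lem:regularity-in-good-pt} after passing to weak$^*$ limits. This is exactly what the paper does, after first observing via Lemma~\ref{lem:main} and Corollary~\ref{cor:l-inf-convergence} that the tensor limit is induced by some $u\in H^{1,p}(\Omega,\ell^2)$ with $\abs{u}\equiv 1$.

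One correction to your final paragraph: the worry about mass escaping to $\partial\Omega$, and the detour through a.e.\ pointwise convergence and Scheffé's lemma, are both unnecessary. Since the hypothesis~\eqref{ineq:stable-map} holds on all of $\Omega$ (not merely up to subsequence on a small ball), Lemma~\ref{lem:strong-conv} applies with the neighborhood equal to the full domain, and its conclusions (2) and (3) already give $\alpha_\epsilon=\abs{du_\epsilon}^{p-2}\to\abs{du}^{p-2}$ \emph{strongly in} $L^{p/(p-2)}(\Omega)$ and $u_\epsilon^*u_\epsilon\to u^*u$ \emph{strongly in} $\overbar{H}^{1,p}(\Omega)\widehat{\otimes}_\pi H^{1,p}(\Omega)$. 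The first of these is equivalent to $\abs{du_\epsilon}^p\to\abs{du}^p$ in $L^1(\Omega)$ (apply the continuous map $\phi\mapsto\abs{\phi}^{p/(p-2)}$ from $L^{p/(p-2)}$ to $L^1$), and the second gives, via Lemma~\ref{lem:main}\ref{lem:main:it:3}, the isometric convergence $I_\epsilon u_\epsilon\to Iu$ in $H^{1,p}(\Omega,\ell^2)$. So no diagonal argument, no Scheffé, no monotonicity formula, and no re-invocation of Lemma~\ref{lem:main}\ref{lem:main:it:5} is needed; those steps are already folded into the conclusions of the lemma you cite. The passage to the limit in the stability inequality then works exactly as you describe.
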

\begin{proof}
  By Lemma~\ref{lem:main}, up to a subsequence, $\tf{u_\epsilon} \oset{w^*}{\to}\tf{u}$ in $H^{1,p}(\Omega)\widehat{\otimes}_\pi H^{1,p}(\Omega)$
  and $\tf{u_\epsilon} {\to}\tf{u}$ in $L^{p}(\Omega) \widehat{\otimes}_\pi L^{p}(\Omega)$
  for some $u \in H^{1,p}(\Omega, \ell^2)$. By Corollary~\ref{cor:l-inf-convergence},
  one concludes that $\m{\tf{u}}=\abs{u}^2 \equiv 1$.

  Then one way is to argue as in \cite*[Lemma~2.2]{Hong-Wang:1999:stable-harmonic-maps} by using
  partial regularity estimates for $p$-harmonic maps outside of a set of zero $(m\!-\!p)$-Hausdorff measure. However,
  in the infinite-dimension case, the stability inequality~\eqref{ineq:stable-map} is powerful enough so
  that one can set $\mu_\epsilon = |du_\epsilon|^pdv_g$, $\alpha_\epsilon = |du_\epsilon|^{p-2}$, choose
  a subsequence $\mu_\epsilon \oset{w^*}{\to} \mu$, $\alpha_\epsilon \oset{w^*}{\to} \alpha$, and apply
  Lemmas~\ref{lem:strong-conv},~\ref{lem:regularity-in-good-pt} with $D = \varnothing$.
\end{proof}

\begin{theorem}\label{thm:reg-harm-map}
  Let $M$ be a Riemannian manifold with $m = \dim M\geq 2$, $p\geq 2$,
  $d = \floor{p + 2\sqrt{p-1}} + 3$,
  and $u \in H^{1,p}_{loc}(M,\Sph^\infty)$ be a locally \emph{spectrally stable} $p$-harmonic map. Then
  there exists a closed subset $\Sigma$ of Hausdorff dimension at most $m-d$ such that
  $u \in C^{1,\gamma}(M\setminus \Sigma, \Sph^\infty)$ for some $\gamma \in (0,1)$. If $m = d$, the set of singularities
  $\Sigma$ is discrete.

  In particular, when $M$ is closed and $m \leq p+2 + 2\sqrt{p-1}$ (equivalently,
  $\max \{2, m-2\sqrt{m-2}\} \leq p < \infty$), the map $u$ is $C^{1,\gamma}$ and takes values in a finite-dimensional subsphere
  $\Sph^n \subset \Sph^\infty$.
\end{theorem}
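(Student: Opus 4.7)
The plan is to run the Federer--Almgren dimension reduction scheme, using Lemma~\ref{lem:subseq-strong-conv} for the compactness of blow-ups and the spectral computation already carried out in the proof of Theorem~\ref{thm-max-on-sphere} as the endpoint obstruction.

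First I would define $\Sigma := \set*{x \in M}{u \text{ is not } C^{1,\gamma} \text{ on any neighborhood of } x}$. Since spectrally stable maps are stationary (Lemma~\ref{lem:stable-energy-min}), the monotonicity formula~\eqref{eq:monoton-formula} applies, so the density $\Theta(x) := \lim_{r\downarrow 0} e^{Cr} r^{p-m} E_p[u; B_r(x)]$ is well defined and upper semicontinuous. Lemmas~\ref{lem:small-energy-reg} and~\ref{lem:cont-harm-smooth} together show that $x \notin \Sigma$ if and only if $\Theta(x) < \epsilon^p$ for the small-energy threshold~$\epsilon$; hence $\Sigma$ is closed, and a Vitali covering argument against the monotonicity inequality yields $\mathcal{H}^{m-p}(\Sigma) = 0$.

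To sharpen this to $\dim_{\mathcal H}(\Sigma) \le m - d$, I would argue by contradiction. Suppose $\dim_{\mathcal H}(\Sigma) > m - d$ and fix $x_0 \in \Sigma$ of positive $(m-d)$-density. In a normal coordinate chart centered at $x_0$, the rescalings $u_r(y) := u(x_0 + ry)$ are spectrally stable $p$-harmonic for the rescaled metrics, which converge smoothly to $g_{\R^m}$, and their $p$-energies on any fixed Euclidean ball are uniformly bounded by monotonicity. Lemma~\ref{lem:subseq-strong-conv}, applied on an exhaustion and diagonalized, then produces (after reidentifying $\ell^2$ by isometries) a subsequential limit $u_0 \in H^{1,p}_{loc}(\R^m, \Sph^\infty)$ that is itself a spectrally stable $p$-harmonic map. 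Equality in the monotonicity formula in the flat limit forces $u_0$ to be $0$-homogeneous, and upper semicontinuity of $\Theta$ transports the $(m-d)$-density hypothesis to the singular set $\Sigma_0$ of $u_0$. Since $\Sigma_0$ is a cone, I may pick a singular point off the origin and blow up again; the resulting tangent map is translation-invariant in the chosen direction and $0$-homogeneous in the orthogonal complement. Iterating this splitting produces a nonconstant, spectrally stable, $0$-homogeneous $p$-harmonic map $u_\ast \colon \R^\ell \to \Sph^\infty$ with isolated singularity at the origin, where $\ell \le d - 1$.

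The main obstacle, and the only step requiring genuine new computation, is to rule out such $u_\ast$ for $\ell \le d - 1 = \lfloor p + 2\sqrt{p-1}\rfloor + 2$. Writing $u_\ast(x) = v(x/\abs{x})$ with $v \in H^{1,p}(\Sph^{\ell-1}, \Sph^\infty)$, one has $\abs{du_\ast}^2(x) = \abs{x}^{-2}\abs{dv}^2(x/\abs{x})$. Testing the spectral stability inequality~\eqref{ineq:stable-map} against separated test functions $\phi(x) = \eta(\abs{x})\psi(x/\abs{x})$ and integrating in the radial variable reduces the obstruction, after optimizing the radial profile via the sharp weighted Hardy inequality on $\R^\ell$ with weight $\abs{x}^{2-p}$, to a one-dimensional spectral inequality on $\Sph^{\ell-1}$ which forces $v$ to be constant. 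This is exactly the Jacobi-type condition~\eqref{eq:jacobi-root} analyzed in the proof of Theorem~\ref{thm-max-on-sphere}, the borderline example being $v = \SHM{0} \colon \Sph^{\ell-1} \to \Sph^{\ell-2}$, which becomes spectrally stable precisely at $\ell = d$. Thus $u_\ast$ must be constant, contradicting its nontriviality, which proves $\dim_{\mathcal H}(\Sigma) \le m - d$; in the borderline case $m = d$ no splitting is possible and $\Sigma$ is already discrete.

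For the final assertion, $m < d$ gives $\Sigma = \varnothing$, hence $u \in C^{1,\gamma}(M, \Sph^\infty)$. Each coordinate $u^i$ satisfies the same equation $\delta(\abs{du}^{p-2} du^i) = \abs{du}^p u^i$; by Lemma~\ref{lem:lambda-alph-mu-subdiff} the pair $(\abs{du}^{p-2}, \abs{du}^p)$ is regular, and the associated self-adjoint operator on $H^1(\abs{du}^{p-2})$ has compact resolvent (via Sobolev compactness on the closed manifold), so its $\lambda = 1$-eigenspace is finite dimensional and contains all of the $u^i$. Consequently the coordinates span a finite-dimensional subspace of $L^2$, and $u$ takes values in some $\Sph^n \subset \Sph^\infty$.
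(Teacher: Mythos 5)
Your proposal follows the same Federer--Almgren dimension-reduction skeleton as the paper: monotonicity, small-energy regularity, blow-up compactness via Lemma~\ref{lem:subseq-strong-conv}, and a non-existence statement for stable $0$-homogeneous tangent maps. The overall architecture is fine, but two crucial steps are not actually supplied.

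First, the key technical input is the \emph{non-existence} of nonconstant spectrally stable $p$-harmonic maps of the form $x\mapsto v(x/|x|)\colon\R^\ell\to\Sph^\infty$ for $p\leq \ell\leq d-1$. You sketch a separation of variables combined with the weighted Hardy inequality; after optimizing the radial profile this indeed reduces stability to an integral inequality on $\Sph^{\ell-1}$ of the schematic form
\begin{equation}
\int_{\Sph^{\ell-1}}\psi^2\abs{dv}^p - \int_{\Sph^{\ell-1}}\abs{d\psi}^2\abs{dv}^{p-2}
\leq \left(\tfrac{\ell-p}{2}\right)^2\int_{\Sph^{\ell-1}}\psi^2\abs{dv}^{p-2}\qquad\forall\,\psi.
\end{equation}
But this is a \emph{constraint}, not yet a contradiction: one must still choose $\psi$ (depending on $v$) so that the inequality fails for any nonconstant $v$, and your proposal does not do this. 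Your appeal to the Jacobi-root computation~\eqref{eq:jacobi-root} in the proof of Theorem~\ref{thm-max-on-sphere} conflates two different problems: that computation determines the index of one explicit family of maps $\SHM{k}$ (a sufficiency statement about maximizers), whereas here one needs a universal non-existence statement over all $C^1$ maps $v\colon\Sph^{\ell-1}\to\Sph^\infty$. The paper handles this by invoking \cite[Theorem~3.1]{Xin-Yang:1995:stable-p-harm-maps}, whose argument tests stability against a specific basis of variations built from conformal Killing fields on the target; that computation is genuinely different from the one in Theorem~\ref{thm-max-on-sphere} and is not implied by it.

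Second, in the final paragraph you assert that the operator associated with $(\abs{du}^{p-2},\abs{du}^p)$ has compact resolvent ``via Sobolev compactness on the closed manifold.'' This does not follow: $\abs{du}$ may vanish, so the weighted space $\wH{\abs{du}^{p-2}}$ does not embed compactly into $L^2(\abs{du}^p)$ by any off-the-shelf Rellich theorem. The paper establishes this compactness through the nontrivial estimate~\eqref{ineq:m-harm-comp-estim}, which uses $V(du)\in H^1$ (a consequence of $C^{1,\gamma}$-regularity and the difference-quotient technique) to factor the embedding as multiplication by $\abs{du}^{p/2}$ followed by the compact inclusion $H^1(M)\to L^2(M)$. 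Without an argument of this type, the finite-dimensionality of the image remains unproved.
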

\begin{proof}
  The proof is a generalization of \cite*{Xin-Yang:1995:stable-p-harm-maps}. Let
  $\Sigma$ be the set of singular points of $u$ (i.e., the complement
  of the largest open set where $u$ is continuous). Since spectrally stable $p$-harmonic maps
  are stationary by Lemma~\ref{lem:stable-energy-min}, it follows from
  the monotonicity formula,
  Lemmas~\ref{lem:small-energy-reg} and~\ref{lem:cont-harm-smooth} that
  \begin{equation}
    \Sigma = \set*{x \in M}{\lim_{r\to 0} r^{p-m} \int_{B_{r}(x)} \abs{du}^p \geq \epsilon_0 > 0}
  \end{equation}
  for some $\epsilon_0$.

  To estimate the Hausdorff dimension of $\Sigma$, one follows \cite*{Hong-Wang:1999:stable-harmonic-maps}
  with \cite*[Lemma~2.2]{Hong-Wang:1999:stable-harmonic-maps} replaced by Lemma~\ref{lem:subseq-strong-conv}.
  Namely,
  let us pick a point $x \in \Sigma$ and define $v_\epsilon(y) = u(\operatorname{exp}_x(\epsilon y))$.
  The monotonicity formula implies that the sequence $\brc{v_\epsilon}$ is bounded
  in $H^{1,p}(B_1(0))$, where $B_1(0) \subset T_x M$. By Lemma~\ref{lem:subseq-strong-conv}, we find a $p$-harmonic
  map $v$ such that, up to isometry and a subsequence, $v_\epsilon \to v$ in $H^{1,p}$, and therefore,
  $\abs{\partial_r v_\epsilon}^2 \to \abs{\partial_r v}^2$.
  Then~\cite*[Proposition~10]{Takeuchi:1994:p-harm-regularity-to-spheres} yields
  \begin{align}
    \lim_{\epsilon\to0}p\int_{B_1(0)}e^{Cr}r^{p-m}\abs{dv_\epsilon}^{p-2}\abs{\partial_r v_\epsilon}^2
       &= \lim_{\epsilon\to0}\int_{B_\epsilon(x)}e^{Cr}r^{p-m}\abs{du}^{p-2}\abs{\partial_r u}^2
    \\ &\leq \brr{\lim_{\epsilon\to0} e^{c_2\epsilon}\epsilon^{p-m} \int_{B_\epsilon(x)}\abs{du}^p - L}
    \\ &= 0,
  \end{align}
  where $L = \lim_{\epsilon\to 0} \epsilon^{p-m} \int_{B_{\epsilon}(x)} \abs{du}^p$.
  Hence, $v$ is radially constant, that is, $\partial_r v = 0$.

  Then we have all the ingredients to apply the arguments of \cite*[Theorem~3.1]{Xin-Yang:1995:stable-p-harm-maps} (in our case, $A=K=1$):
  if $p \leq n \leq d-1$, then
  there are no nonconstant stable $p$-harmonic maps of the form
  $\phi(\frac{x}{\abs{x}})\colon \R^{n} \to \Sph^\infty$
  where $\phi \in C^1(\Sph^{n-1}, \Sph^\infty)$.
  By the dimension reduction
  argument~\cite[Section~5]{Schoen-Uhlenbeck:1982:regularity-theory-harm-maps},
  the Hausdorff dimension of $\Sigma$ is at most $m-d$.
  When $m = d$, the proof of discreteness repeats the one at the very end of
  \cite[Section~5]{Schoen-Uhlenbeck:1982:regularity-theory-harm-maps}.

  To prove the last statement, we argue as follows. By
  the previous argument, $m \leq d - 1$ means that the singular set $\Sigma$
  is empty. So, $u \in C^{1,\gamma}(M,\Sph^\infty)$. Then we would like to show
  that the embedding $H^1_u \to L^2_u$ is compact, where $H^1_u := H^1(\abs{du}^{p-2}, \abs{du}^{p})$ and $L^2_u := L^2(\abs{du}^{p})$ (from Section~\ref{sec:weighted-stuff})
  with the norms
  \begin{gather}
    \norm{\phi}_{H^1_u}^2 := \int \abs{d\phi}^2 \abs{du}^{p-2} + \int \phi^2 \abs{du}^{p}
    \\ \shortintertext{and}
    \norm{\phi}_{L^2_u}^2 := \int \phi^2 \abs{du}^{p}.
  \end{gather}

  The following proof of compactness was inspired by notes of Mikhail Karpukhin and Daniel Stern \cite{Karpukhin-Stern:2024:private}.
  Let us recall the definition of
  $V(du)$~\eqref{eq:V-def}, $\abs{V(du)}^2 = \abs{du}^p$.
  By the standard difference quotient technique (more formally, the parallel transport along geodesics), using inequalities~\eqref{ineq:diff-lp-eq},
  one can derive that $V(du) \in H^{1}(M,\ell^2)$ (see, for example, equations \cite[(8)]{Bojarski-Iwaniec:1987:diff-quot} and
  \cite[(2.3)]{Duzaar-Fuchs:1990:p-harm-bochner}) and, for $\phi \in C^\infty(M)$,
  \begin{multline}
   \int \phi^2 \abs{dV(du)}^2 \leq c_1 \int \abs{dV(du)}\abs{\phi}\abs{du}^{p/2}\abs{d\phi}
   + c_1 \int \abs{du}^{p+2}\phi^2 \\+ c_1 \int \abs{V(du)} \abs{du} \abs{dV(du)}\phi^2.
  \end{multline}
  Therefore, applying the inequality $ab\leq \frac{\delta}{2} a^2 + \frac{1}{2\delta}b^2$ yields
  \begin{equation}
    \int \phi^2 \abs{dV(du)}^2 \leq c_2 \norm{du}_{L^\infty}^2 \norm{\phi}_{H^1_u}^2.
  \end{equation}
  By Proposition~\ref{prop:lip-sobolev-comp}, we have $\abs{d \abs{V(du)}} \leq \abs{dV(du)}$, and the previous inequality implies
  \begin{equation}\label{ineq:m-harm-comp-estim}
    \norm{d(\abs{du}^{p/2}\phi)}_{L^2} \leq \norm{\phi d\abs{V(du)}}_{L^2} + \norm{\abs{du}^{p/2}d\phi}_{L^2}
    \leq c_3 \norm{du}_{L^\infty} \norm{\phi}_{H^1_u}.
  \end{equation}
  Hence, the embedding $H^1_u \to L^2_u$ factors through the multiplication by $|du|^{p/2}$ followed by the compact embedding $H^1 \to L^2$.
  Finally, the compactness of the embedding implies that
  there are only finitely many linearly independent coordinate functions of $u$, which means
  $u(x) \in \Sph^n$ for $\Sph^n \subset \Sph^\infty$ and $\abs{du}^p dv_g$-a.e. $x\in M$.
  It is then clear that $u(x) \in \Sph^n$ for a.e. $x\in M$ with respect to the
  Riemannian volume measure $v_g$.
\end{proof}

\medskip
\printbibliography

\end{document}